\pdfoutput=1
\documentclass[11pt]{article}
\usepackage[left=1in,right=1in,top=1in,bottom=1in]{geometry}
\usepackage{times}
\usepackage{expl3}
\usepackage{cite}
\usepackage[table]{xcolor}
\usepackage{multirow}
\usepackage{stackengine} 
\usepackage{hhline}
\usepackage{lipsum}
\usepackage{titlesec}
\usepackage[all]{xy}
\usepackage{wrapfig}
\usepackage{enumerate}
\usepackage{epsfig}
\usepackage{tikz-cd}
\usepackage{amsmath}
\usepackage{tabularx}
\usepackage{array}
\usepackage{booktabs}
\usepackage{enumitem}
\usepackage{bbm}
\usepackage{calc}
\usepackage{graphicx}
\usepackage{amsmath}
\usepackage[title]{appendix}
\usepackage{amssymb}
\usepackage{epstopdf}
\usepackage{boldline}
\usepackage{arydshln}
\usepackage{calligra}
\usepackage{bm}
\usepackage{url}
\usepackage{blindtext}
\usepackage{accents}
\usepackage{amsthm}
\usepackage{amscd}

\newtheorem{definition}{Definition}
\newtheorem{axiom}{Axiom}
\newtheorem{theorem}{Theorem}
\newtheorem{proposition}{Proposition}
\newtheorem{corollary}{Corollary}
\newtheorem{lemma}{Lemma}

\newtheorem{example}{Example}
\newtheorem{remark}{Remark}
\usepackage{mathtools}
\usepackage{tikz-cd}
\usepackage{epstopdf}
\usepackage{balance}
\usepackage{thmtools}
\usepackage{thm-restate}
\usepackage{hyperref}
\usepackage{cleveref}
\usepackage[mathscr]{euscript}

\usepackage[ruled,vlined]{algorithm2e}
\newcommand{\ostar}{\mathbin{\mathpalette\make@circled\star}}

\makeatletter
\newcommand{\removelatexerror}{\let\@latex@error\@gobble}
\makeatother
\makeatletter
\newcommand*{\rom}[1]{\expandafter\@slowromancap\romannumeral #1@}
\makeatother

\ExplSyntaxOn
\newcommand\latinabbrev[1]{
  \peek_meaning:NTF . {
    #1\@}%
  { \peek_catcode:NTF a {
      #1.\@ }%
    {#1.\@}}}
\ExplSyntaxOff

\titleclass{\subsubsubsection}{straight}[\subsubsection]

\begin{document}
\vspace{1cm}
\title{The Operadic Spectrum and Obstructions to Spectral Base Change}
\vspace{1.8cm}
\author{Shih-Yu~Chang
\thanks{Shih-Yu Chang is with the Department of Applied Data Science,
San Jose State University, San Jose, CA, U. S. A. (e-mail: {\tt
shihyu.chang@sjsu.edu})
}}

\maketitle

\begin{abstract}
We introduce an operadic notion of spectrum for algebras over colored operads in a symmetric monoidal category. The construction is defined via a canonical Hochschild-type object together with an operadic residue, which together encode spectral information in a manner compatible with operadic composition. A central result of this work is that classical spectral invariants do not, in general, admit a natural base change in the operadic setting. More precisely, we show that there is no functorial procedure that transports spectra along strong monoidal functors while preserving their expected structural properties. This establishes a fundamental obstruction to spectral base change. To address this issue, we construct a universal operadic residue object and show that it induces a well-defined and functorial notion of operadic spectrum. We further prove that this construction is canonical and reduces to the classical spectrum in the case of the trivial operad. These results provide a conceptual foundation for spectral theory in operadic and higher algebraic contexts, and clarify the limitations of extending classical spectral invariants beyond the linear setting.
\end{abstract}

\tableofcontents

\section{Introduction}
\label{sec:introduction}

Classical spectral theory assigns to an operator $A$ its spectrum
$\sigma(A) \subseteq \mathbb{C}$. This invariant is remarkably powerful
for a single operator. However, consider a block matrix
\[
M = \begin{pmatrix} 0 & \alpha \\ \beta & 0 \end{pmatrix},
\]
where $\alpha$ and $\beta$ are linear operators. The classical spectrum
$\sigma(M)$ depends on the product $\alpha\beta$ in ways that cannot be
recovered from the individual spectra $\sigma(\alpha)$ and $\sigma(\beta)$
alone. More generally, when algebras are organized by operadic composition,
such componentwise spectral invariants fail to capture interaction effects
arising from compositional structure.

In parallel, Hochschild-type invariants and cotangent complexes provide
powerful tools for studying algebras over operads. These invariants capture
\emph{infinitesimal} and deformation-theoretic information, and have been
extensively developed in higher algebra. However, they remain insensitive
to \emph{global interaction phenomena} induced by operadic composition. 
We refer to Loday and Vallette~\cite{LodayVallette2012} for a comprehensive
treatment of algebraic operads and their homological properties.

\medskip

\noindent
\textbf{Core Observation.}
Hochschild-type invariants capture infinitesimal structure,
but fail to detect global interaction effects arising from operadic composition.
Any functorial extension of spectral theory must incorporate such interaction data.

\medskip

This leads to a fundamental obstruction to classical spectral invariants.

\medskip

\noindent
\textbf{Main Theorem (informal).}
There exists no assignment of spectral invariants to algebras over an operad
that depends only on componentwise spectra and is functorial with respect
to operadic composition and base change.

\medskip

The central thesis of this paper is that spectral behavior is fundamentally
\emph{operadic}: it depends not only on the underlying algebra, but also on
how components interact through compositional structure. To resolve the
above obstruction, we introduce a new invariant, the \emph{operadic spectrum},
which incorporates interaction-level information.

\paragraph{Main results.}
This paper develops a new framework, \emph{Spectral Operadic Calculus (SOC)},
which extends classical spectral theory to a compositional and functorial setting.
The main contributions are as follows:

\begin{itemize}
    \item \textbf{No-Go Theorem (Theorem~\ref{thm:no-go}):}
    Any spectral assignment depending only on componentwise spectra
    cannot be compatible with operadic composition or base change.
    This establishes that the limitations of classical spectral invariants
    are structural and cannot be resolved within the classical framework.
    
    \item \textbf{Operadic Residue (Theorem~\ref{thm:residue-universality}):}
    A universal object $\mathcal{O}_P^{\mathrm{res}}$ encoding compositional
    interaction. This construction provides the additional structure required
    to restore functoriality.
    
    \item \textbf{Operadic Spectrum (Definition~\ref{def:operadic-spectrum}):}
    Defined as $\sigma_P(A) := \mathrm{Hoch}_{\mathcal{M}}(A) \otimes_P \mathcal{O}_P^{\mathrm{res}}$,
    combining a Hochschild-type construction with the operadic residue.
    This yields a well-defined and functorial invariant that captures
    interaction-induced spectral behavior.
    
    \item \textbf{Base Change Theorem (Theorem~\ref{thm:base-change}):}
    For any strong monoidal functor $F$, there is a natural isomorphism
    $\sigma_{F_*(P)}(F_*(A)) \cong F(\sigma_P(A))$, demonstrating compatibility
    with categorical transport.
    
    \item \textbf{Operadic Spectral Mapping Theorem (Theorem~\ref{thm:spectral-mapping}):}
    For any holomorphic function $f$, $\sigma_P(f(A)) \cong f(\sigma_P(A))$,
    extending the classical spectral mapping principle to the operadic setting.
    
    \item \textbf{Universal Property (Theorem~\ref{thm:minimal_extension}):}
    The operadic spectrum admits a canonical natural transformation to any
    invariant satisfying functoriality, base change, and classical recovery,
    establishing it as the canonical invariant extending classical spectral theory.
\end{itemize}

\paragraph{Relation to previous work.}
Recent work by Hoang~\cite{Hoang2025} and by Harpaz--Hoang~\cite{HarpazHoang2026}
develops Hochschild and cotangent complex formalisms for algebras and operads,
providing a powerful framework for Quillen cohomology and deformation theory.
These approaches capture the infinitesimal structure of operadic objects via
tangent and cohomological methods.

Our approach is complementary in scope. While Hochschild-type and cotangent
complex invariants describe \emph{infinitesimal} behavior, we introduce a
\emph{spectral} invariant designed to capture \emph{global interaction effects}
arising from operadic composition.

A key step is the introduction of the \emph{operadic residue}, a universal
correction object that encodes compositional interactions. This object provides
a minimal extension that restores functoriality for spectral invariants, thereby
addressing obstructions that are invisible to purely infinitesimal methods.

\paragraph{Relationship to broader literature.}
Our work connects to several active areas of research beyond operadic algebra.
Lurie's framework of $\infty$-operads \cite{Lurie-HA2017} provides the
higher-categorical foundations for base change and functoriality of operadic
algebras; our construction of $\sigma_P(A)$ is designed to be compatible with
such $\infty$-categorical transport. Fresse's comprehensive treatment of
operadic bar constructions \cite{Fresse} underpins our Hochschild-type object
$\mathrm{Hoch}_{\mathcal{M}}(A)$ and its functoriality properties.

A deeper conceptual connection exists with Goodwillie calculus
\cite{Goodwillie2003,Goodwillie1992}, which constructs Taylor towers for
functors. While Goodwillie calculus approximates functors by polynomial
functors, our operadic residue $\mathcal{O}_P^{\mathrm{res}}$ provides a
universal correction for spectral transport under operadic composition.
Both frameworks address the failure of naive invariants to capture
compositional structure, but Goodwillie focuses on homotopy functors while
we focus on spectral invariants.

Connections to noncommutative geometry \cite{Connes1994} arise via the
Gelfand transform example (Section~\ref{subsec:gelfand-base-change}).
Connes' spectral triples $(A,H,D)$ encode geometric information via
the spectrum of the Dirac operator $D$. Our operadic spectrum can be
viewed as a compositional generalization: for an operadic algebra $A$,
the operadic spectrum $\sigma_P(A)$ plays a role analogous to the
spectrum of $D$, but with the ability to handle interacting components.
Future work will explore a precise relationship between $\sigma_P(A)$
and spectral triples for network geometries.

Finally, our No-Go Theorem (Theorem~\ref{thm:no-go}) echoes impossibility
results in other areas of mathematics, such as the failure of locality
in quantum mechanics (Bell's theorem) and the non-existence of certain
functorial invariants in algebraic topology \cite{Bell1964,StolzTeichner2011}.
The structural parallel suggests that operadic composition imposes
constraints analogous to non-locality, requiring the residue correction
$\mathcal{O}_P^{\mathrm{res}}$.

\subsection{Failure of Componentwise Spectral Invariants}

A fundamental limitation of classical spectral invariants is that they do not
detect interaction effects arising from composition. From an operadic perspective,
a composite object is formed by gluing components along operations in an operad.
The resulting interaction patterns generate new structural contributions that
are invisible to componentwise invariants.

In particular, two algebras over an operad may have identical componentwise
spectral data, yet differ in their compositional behavior. This shows that
no invariant depending solely on component spectra can capture global structure.

These failures become more pronounced under categorical operations such as
base change, where spectral data is not preserved in a controlled or functorial
manner. This demonstrates that classical spectral theory lacks a mechanism to
track compositional interaction.

\subsection{Operadic Interaction and Spectral Obstruction}

We formalize compositional structure using colored operads. In this framework,
different components are assigned colors, and interactions are encoded by
operadic compositions.

Such compositions generate \emph{interaction paths} that propagate information
across components. These interaction-induced contributions produce new spectral
phenomena that are not detected by classical invariants or Hochschild-type
constructions.

We show that this leads to a fundamental obstruction: classical spectral
invariants cannot be made functorial with respect to operadic composition.
This obstruction is intrinsic and necessitates an extension of the invariant.

\subsection{Conceptual Roadmap}

The logical structure of the paper follows a sequence of conceptual steps,
each corresponding to a precise stage in the construction.

\begin{enumerate}

\item \textbf{Identifying the obstruction (Section~\ref{sec:The Failure of Classical Spectrum under Operadic Base Change}).}
We begin by formulating a set of desiderata for a generalized spectral invariant,
and analyze naive extensions based on componentwise spectra.
We then prove the No-Go Theorem (Theorem~\ref{thm:no-go}),
showing that any invariant depending only on component spectra
fails to be compatible with operadic composition and base change.

\item \textbf{Constructing the correction (Section~\ref{sec:The Operadic Residue as a Universal Correction Object}).}
To resolve this obstruction, we introduce the operadic residue
(Theorem~\ref{thm:residue-universality}),
a universal object encoding compositional interaction.
This provides the additional structure required to restore functoriality.

\item \textbf{Defining the invariant (Section~\ref{sec:The Operadic Spectrum}).}
We combine the Hochschild-type object
$\mathrm{Hoch}_{\mathcal{M}}(A)$
with the operadic residue via a balanced tensor product,
thereby defining the operadic spectrum
(Definition~\ref{def:operadic-spectrum}).
We then establish its well-definedness and functoriality.

\item \textbf{Establishing fundamental properties (Sections~\ref{sec:Recovery and Minimality of the Operadic Spectrum}--\ref{sec:The Operadic Spectral Mapping Theorem}).}
We show that the operadic spectrum recovers classical spectral invariants,
prove compatibility with categorical transport via the Base Change Theorem
(Theorem~\ref{thm:base-change}),
and extend the spectral mapping principle
(Theorem~\ref{thm:spectral-mapping}).
We also establish its universal property (Theorem~\ref{thm:minimal_extension}).

\item \textbf{Reconstruction and examples (Sections~\ref{sec:Resolvent Theory and Reconstruction} and~\ref{sec:Examples and Structural Illustrations}).}
We develop a decomposition-reconstruction principle for the operadic spectrum
(Theorem~\ref{thm:colored-spectral-decomposition-reconstruction})
and illustrate the theory through representative examples:
the trivial operad (Section~\ref{subsec:trivial-operad}),
matrix block constructions (Section~\ref{subsec:matrix-block-operads}),
network operators (Section~\ref{subsec:network-operators}),
and base change via the Gelfand transform (Section~\ref{subsec:gelfand-base-change}).

\end{enumerate}

This progression shows that the operadic spectrum is a natural extension
of classical spectral invariants once compositional structure is taken into account.

\begin{remark}
The author is solely responsible for the mathematical insights and theoretical directions proposed in this work. AI tools, including OpenAI's ChatGPT and DeepSeek models, were employed solely to assist in verifying ideas, organizing references, and ensuring internal consistency of exposition~\cite{chatgpt2025,deepseek2025}. \\
\end{remark}


\section{Preliminaries and Structural Framework}\label{sec:Preliminaries and Structural Framework}

\subsection{Classical Spectrum and Resolvent Theory}\label{subsec:Classical Spectrum and Resolvent Theory}

In this subsection, we briefly recall the classical notions of the spectrum, resolvent, and the spectral mapping theorem, which serve as the baseline for our subsequent operadic generalization.

\paragraph{Spectrum and Resolvent.}
Let $\mathcal{H}$ be a complex Banach space and let $\mathcal{B}(\mathcal{H})$ denote the algebra of bounded linear operators on $\mathcal{H}$. For an operator $A \in \mathcal{B}(\mathcal{H})$, the \emph{resolvent set} is defined by
\[
\rho(A) := \{ z \in \mathbb{C} \mid zI - A \text{ is invertible in } \mathcal{B}(\mathcal{H}) \}.
\]
The \emph{spectrum} of $A$ is the complement
\[
\sigma(A) := \mathbb{C} \setminus \rho(A).
\]
For each $z \in \rho(A)$, the \emph{resolvent operator} is defined as
\[
R_A(z) := (zI - A)^{-1}.
\]

\paragraph{Basic Properties.}
The spectrum $\sigma(A)$ is a nonempty compact subset of $\mathbb{C}$, and the resolvent map
\[
R_A : \rho(A) \to \mathcal{B}(\mathcal{H}), \quad z \mapsto (zI - A)^{-1}
\]
is analytic on $\rho(A)$. Moreover, the resolvent satisfies the \emph{resolvent identity}
\[
R_A(z) - R_A(w) = (w - z) R_A(z) R_A(w), \qquad z,w \in \rho(A),
\]
which encodes the algebraic structure underlying spectral analysis and plays a crucial role in the functional calculus.

\paragraph{Holomorphic Functional Calculus.}
Let $U \subset \mathbb{C}$ be an open set containing $\sigma(A)$, and let $f: U \to \mathbb{C}$ be holomorphic. The operator $f(A)$ is defined via the Cauchy integral formula
\[
f(A) := \frac{1}{2\pi i} \int_{\Gamma} f(z) (zI - A)^{-1} \, dz,
\]
where $\Gamma$ is a positively oriented contour that encloses $\sigma(A)$ and is contained in $U$. This definition is independent of the choice of contour and yields a bounded linear operator $f(A) \in \mathcal{B}(\mathcal{H})$.

\paragraph{Spectral Mapping Theorem.}
The classical spectral mapping theorem states that for any holomorphic function $f$ defined on a neighborhood of $\sigma(A)$,
\[
\sigma(f(A)) = f(\sigma(A)).
\]
Equivalently, for any $\lambda \in \mathbb{C}$,
\[
\lambda \in \sigma(f(A)) \quad \Longleftrightarrow \quad \lambda = f(\mu) \text{ for some } \mu \in \sigma(A).
\]
This theorem ensures that the spectrum behaves functorially under analytic functional calculus and provides a complete description of how the spectrum transforms under analytic operations.

\paragraph{Conceptual Remark and Path Forward.}
The classical spectrum $\sigma(A)$ and resolvent $R_A(z)$ capture the intrinsic spectral structure of a single operator $A$ within a fixed category. This theory is remarkably powerful when working with a single operator or a commuting family of operators. However, as we will demonstrate in Section~\ref{sec:The Failure of Classical Spectrum under Operadic Base Change}, these classical notions are fundamentally inadequate when we consider:
\begin{enumerate}
    \item \textbf{Operadic composition}: systems where multiple operators interact via operadic composition maps,
    \item \textbf{Categorical base change}: functorial transport of spectral data across different categories (e.g., quantization, discretization, complexification).
\end{enumerate}
These limitations motivate the development of new spectral invariants—the operadic residue and operadic spectrum—which form the foundation of Spectral Operadic Calculus (SOC) and are the main subject of this paper.

\subsection{Colored Operads and Their Algebras}\label{subsec:Colored Operads and Their Algebras}

In this subsection, we recall the notion of a colored operad and its algebras in a symmetric monoidal category. These structures provide the formal framework for encoding multi-typed operations and their compositions, which will play a central role in the formulation of operadic spectral invariants.

\paragraph{Colored Operads.}
Let $\mathcal{M}$ be a symmetric monoidal category with tensor product $\otimes$ and unit object $\mathbf{1}$. Let $C$ be a set of \emph{colors}. A \emph{$C$-colored operad} $P$ in $\mathcal{M}$ consists of the following data:

\begin{itemize}
    \item For each tuple $(c_1, \dots, c_n; c)$ with $c_i, c \in C$, an object
    \[
    P(c_1, \dots, c_n; c) \in \mathcal{M},
    \]
    whose elements are interpreted as $n$-ary operations with input colors $(c_1, \dots, c_n)$ and output color $c$. For $n = 0$, we denote the object of nullary operations by $P(\emptyset; c)$.

    \item \emph{Composition maps}: for each collection of operations
    \[
    \phi \in P(d_1, \dots, d_m; c), \quad
    \psi_i \in P(c_{i1}, \dots, c_{i k_i}; d_i),
    \]
    a composite operation
    \[
    \phi \circ (\psi_1, \dots, \psi_m) \in P(c_{11}, \dots, c_{m k_m}; c),
    \]
    encoded categorically as a morphism
    \[
    \circ: P(d_1, \dots, d_m; c) \otimes \bigotimes_{j=1}^m P(c_{j1}, \dots, c_{j k_j}; d_j)
    \longrightarrow
    P(c_{11}, \dots, c_{m k_m}; c).
    \]

    \item \emph{Unit maps}: for each color $c \in C$, a morphism
    \[
    \eta_c: \mathbf{1} \to P(c; c),
    \]
    corresponding to identity operations.

    \item A compatible action of the symmetric group $\mathfrak{S}_n$ on each $P(c_1, \dots, c_n; c)$, expressing permutation of inputs.
\end{itemize}

These data are required to satisfy the usual \emph{associativity}, \emph{unitality}, and \emph{equivariance} axioms, ensuring that iterated compositions are well-defined and coherent.

\paragraph{Algebras over a Colored Operad.}
Let $P$ be a $C$-colored operad in $\mathcal{M}$. A \emph{$P$-algebra} $A$ consists of:
\begin{itemize}
    \item For each color $c \in C$, an object $A_c \in \mathcal{M}$.
    \item For each operation $\phi \in P(c_1, \dots, c_n; c)$, a structure morphism
    \[
    \phi_A: A_{c_1} \otimes \cdots \otimes A_{c_n} \to A_c,
    \]
    natural in $\phi$. For $n = 0$, this gives a morphism $\mathbf{1} \to A_c$.
\end{itemize}
These structure morphisms must be compatible with operadic composition, units, and symmetric group actions. More explicitly, if $\phi \circ (\psi_1, \dots, \psi_n)$ is a composite operation, then
\[
(\phi \circ (\psi_1, \dots, \psi_n))_A = \phi_A \circ (\psi_{1,A} \otimes \cdots \otimes \psi_{n,A}),
\]
and the unit map $\eta_c: \mathbf{1} \to P(c;c)$ induces the identity morphism $\mathrm{id}_{A_c}: A_c \to A_c$.

\begin{example}[The Trivial Operad.]
Let $C = \{*\}$ consist of a single color. The \emph{trivial operad} $\mathbb{I}$ is defined by
\[
\mathbb{I}(n) = \mathbf{1} \quad \text{for all } n \ge 1,
\]
with composition induced by the canonical identification of unit objects. Algebras over $\mathbb{I}$ correspond to objects of $\mathcal{M}$ equipped with no additional algebraic structure. In this case, the operadic formalism reduces to the classical setting, and we will later see (in Theorem~\ref{thm:recovery}) that the operadic spectrum recovers the classical spectrum.
\end{example}

\begin{example}[ The Endomorphism Operad.]
Given a collection of objects $\{A_c\}_{c \in C}$ in $\mathcal{M}$, the \emph{endomorphism operad} $\mathrm{End}_A$ is defined by
\[
\mathrm{End}_A(c_1, \dots, c_n; c) := \mathrm{Hom}_{\mathcal{M}}(A_{c_1} \otimes \cdots \otimes A_{c_n}, A_c),
\]
with composition given by composition of morphisms in $\mathcal{M}$ and tensor product. The unit maps are the identity morphisms. A $P$-algebra structure on $\{A_c\}$ is equivalently a morphism of operads $P \to \mathrm{End}_A$. This perspective will be essential when we construct concrete examples of operadic algebras from operator networks in future works.
\end{example}

\begin{example}[The Matrix Block Operad (Two-Color Interaction]
\label{ex:two-color-operad}
Let $C = \{1,2\}$ and let $\mathcal{M} = \mathrm{Vect}_{\mathbb{C}}$. Define an operad $P$ where:
\[
P(1;1) = P(2;2) = \mathbb{C}, \qquad
P(1,1;1) = \mathbb{C}, \qquad
P(2,2;2) = \mathbb{C},
\]
and
\[
P(1,2;1) = P(2,1;2) = \mathbb{C},
\]
with all other operations trivial.

A $P$-algebra consists of vector spaces $V_1$ and $V_2$ together with linear maps
\[
A_{11}: V_1 \to V_1, \quad
A_{22}: V_2 \to V_2, \quad
A_{12}: V_2 \to V_1, \quad
A_{21}: V_1 \to V_2,
\]
which can be assembled into a block operator
\[
A =
\begin{pmatrix}
A_{11} & A_{12} \\
A_{21} & A_{22}
\end{pmatrix}.
\]
\end{example}

\textbf{Operadic composition structure.}
The operad encodes both \emph{intra-color} operations (e.g., $P(1,1;1)$) and \emph{inter-color interactions} (e.g., $P(1,2;1)$). These interactions correspond to the off-diagonal blocks and are responsible for coupling between $V_1$ and $V_2$.

The operadic composition can be visualized by considering an off-diagonal operation $\phi \in P(1,2;1)$ composed with diagonal operations $\psi_1 \in P(1;1)$ and $\psi_2 \in P(2;2)$. This composition $\phi \circ (\psi_1, \psi_2)$ yields another operation in $P(1,2;1)$, reflecting how inter-color interactions are modulated by intra-color dynamics:
\[
\begin{tikzpicture}[baseline=(current bounding box.center), >=stealth, scale=0.9]
  \node (op) at (0,0) {$\phi$};
  \node (in1) at (-1.2,0.8) {1};
  \node (in2) at (1.2,0.8) {2};
  \node (out) at (0,-0.8) {1};
  \node (psi1) at (-2.5,-1.5) {$\psi_1$};
  \node (psi2) at (2.5,-1.5) {$\psi_2$};
  \node (in11) at (-3,-2.3) {1};
  \node (in22) at (3,-2.3) {2};
  \draw[->] (in1) -- (op);
  \draw[->] (in2) -- (op);
  \draw[->] (op) -- (out);
  \draw[->] (psi1) -- (op);
  \draw[->] (psi2) -- (op);
  \draw[->] (in11) -- (psi1);
  \draw[->] (in22) -- (psi2);
\end{tikzpicture}
\qquad\longrightarrow\qquad
\begin{tikzpicture}[baseline=(current bounding box.center), >=stealth, scale=0.9]
  \node (comp) at (0,0) {$\phi \circ (\psi_1,\psi_2)$};
  \node (in1) at (-1.2,0.8) {1};
  \node (in2) at (1.2,0.8) {2};
  \node (out) at (0,-0.8) {1};
  \draw[->] (in1) -- (comp);
  \draw[->] (in2) -- (comp);
  \draw[->] (comp) -- (out);
\end{tikzpicture}
\]

\medskip

\noindent
\textbf{Conceptual significance.}
This example illustrates how operadic composition generates spectral interactions between distinct colors. Although the classical spectra $\sigma(A_{11})$ and $\sigma(A_{22})$ capture the intrinsic behavior of each component, the off-diagonal maps $A_{12}$ and $A_{21}$ introduce interactions that can fundamentally alter the spectrum of the total system. 

In particular, the spectrum of $A$ is not determined solely by $\sigma(A_{11}) \cup \sigma(A_{22})$, but depends crucially on the operadic composition data. This will serve as the primary motivating example for the operadic residue introduced in Section~\ref{sec:The Operadic Residue as a Universal Correction Object}.

\medskip

\paragraph{Colored Operads as Syntax for Compositional Systems.}
Colored operads provide a formal language for describing systems built from components with typed inputs and outputs. The colors specify the types, and the operations represent ways to compose components. An algebra $A$ assigns concrete objects to each color and interprets each operation as a concrete morphism. In the context of this paper, the objects $A_c$ will be operators (or operator algebras), and the operad composition will encode how operators are combined to form networks, feedback loops, and other compositional structures.

\paragraph{Conceptual Remark: Operadic Interaction vs. Tensor Interaction.}
A crucial observation for what follows is that the structure of a $P$-algebra encodes \emph{interactions} that go beyond the mere tensor product of individual components. Classical spectral theory, which focuses on a single operator or a commuting family, is essentially the study of algebras over the trivial operad. When $P$ is nontrivial—particularly when it contains operations that mix different colors—the resulting algebraic structure gives rise to spectral phenomena that cannot be detected by classical invariants. For instance, in the matrix block operad (Example 3), the off-diagonal operations $P(1,2;1)$ and $P(2,1;2)$ encode interactions between the two blocks; these interactions can cause the spectrum of the composite system to differ nontrivially from the spectra of the individual blocks. The No-Go Theorem of Section~\ref{sec:The Failure of Classical Spectrum under Operadic Base Change} will demonstrate that any spectral invariant that ignores such operadic composition data is inherently incomplete. This observation motivates the development of the operadic residue and operadic spectrum in the sections that follow.

\subsection{Monoidal Categories and Base Change}\label{subsec:Monoidal Categories and Base Change}

We briefly recall the notion of a strong monoidal functor and the corresponding base change principle for operads and their algebras. This will be used repeatedly when transporting operadic structures between different symmetric monoidal categories, and is essential for the Base Change Theorem (Theorem~\ref{thm:base-change}) later in this paper.

\begin{definition}[Strong monoidal functor]
Let $(\mathcal{M},\otimes_{\mathcal M},\mathbf{1}_{\mathcal M})$ and $(\mathcal{N},\otimes_{\mathcal N},\mathbf{1}_{\mathcal N})$ be symmetric monoidal categories. A functor
\[
F:\mathcal{M}\to\mathcal{N}
\]
is called a \emph{strong monoidal functor} if it is equipped with natural isomorphisms
\[
\phi_{X,Y}:F(X)\otimes_{\mathcal N}F(Y)\xrightarrow{\;\cong\;}F(X\otimes_{\mathcal M}Y),
\]
for all objects $X,Y\in\mathcal{M}$, together with an isomorphism
\[
\phi_0:\mathbf{1}_{\mathcal N}\xrightarrow{\;\cong\;}F(\mathbf{1}_{\mathcal M}),
\]
such that the usual associativity, unit, and symmetry coherence diagrams commute.
\end{definition}

Intuitively, a strong monoidal functor preserves the tensor product and unit object up to coherent isomorphism. Thus it transports algebraic structures defined in terms of the monoidal product from $\mathcal{M}$ to corresponding structures in $\mathcal{N}$.

\begin{example}[Extension of scalars]
A canonical family of examples arises from extension of scalars. For instance, if $R\to S$ is a homomorphism of commutative rings, then the functor
\[
-\otimes_R S:\mathrm{Mod}_R\to \mathrm{Mod}_S
\]
is strong monoidal, since there are canonical isomorphisms
\[
(M\otimes_R S)\otimes_S (N\otimes_R S)\cong (M\otimes_R N)\otimes_R S
\]
and
\[
S\cong R\otimes_R S.
\]
Hence operads and operadic algebras over $R$ may be transported to operads and algebras over $S$.
\end{example}

\begin{example}[Base change in operator theory]
The following examples are particularly relevant to the operator-theoretic setting of this paper:
\begin{itemize}
    \item \textbf{Quantization}. Let $\mathcal{M} = \mathrm{Vect}_{\mathbb{R}}$ (real vector spaces) and $\mathcal{N} = \mathrm{Hilb}$ (complex Hilbert spaces). The complexification functor $F = (-) \otimes_{\mathbb{R}} \mathbb{C}$ is strong monoidal. It sends a classical mechanical system to a quantum system and transports operadic structures accordingly.
    
    \item \textbf{Discretization}. Let $\mathcal{M} = \mathrm{Cont}(X)$ (continuous functions on a space $X$) and $\mathcal{N} = \mathrm{Mat}_n(\mathbb{C})$ (matrices). A discretization functor $F$ (e.g., finite element approximation) is typically lax monoidal, but under suitable approximations can be treated as strong.
    
    \item \textbf{Gelfand Transform}. Let $\mathcal{M} = \mathrm{C}^*(X)$ (commutative C*-algebras) and $\mathcal{N} = \mathrm{Cont}(\Sigma)$ (continuous functions on the spectrum). The Gelfand transform $F$ is a strong monoidal equivalence between the category of commutative C*-algebras and the category of compact Hausdorff spaces with continuous functions.
    
    \item \textbf{Forgetful Functor}. Let $\mathcal{M} = \mathrm{Ban}$ (Banach spaces with projective tensor product) and $\mathcal{N} = \mathrm{Vect}_{\mathbb{C}}$ (vector spaces). The forgetful functor $F$ is strong monoidal and will be used to relate operator-theoretic constructions to purely algebraic ones.
\end{itemize}
\end{example}

We now explain the corresponding base change construction for colored operads. Let $P$ be a colored operad in $\mathcal{M}$. Applying $F$ objectwise to the operations of $P$ and using the structural isomorphisms of $F$, one obtains a colored operad in $\mathcal{N}$, denoted by
\[
F_*P.
\]
More concretely, if
\[
P(c_1,\dots,c_n;c)
\]
denotes the object of $n$-ary operations from $(c_1,\dots,c_n)$ to $c$, then one defines
\[
(F_*P)(c_1,\dots,c_n;c):=F\bigl(P(c_1,\dots,c_n;c)\bigr),
\]
with operadic composition in $F_*P$ induced from the composition in $P$ together with the monoidal structure maps of $F$.

Likewise, if $A$ is a $P$-algebra in $\mathcal{M}$, then $F(A)$ naturally acquires the structure of an $F_*P$-algebra in $\mathcal{N}$. Indeed, the structure maps
\[
P(c_1,\dots,c_n;c)\otimes A(c_1)\otimes\cdots\otimes A(c_n)\longrightarrow A(c)
\]
are sent by $F$ to morphisms
\[
F\bigl(P(c_1,\dots,c_n;c)\bigr)\otimes F(A(c_1))\otimes\cdots\otimes F(A(c_n))
\longrightarrow F(A(c)),
\]
where the tensor products are rearranged using the coherence isomorphisms of the strong monoidal functor $F$. These maps satisfy the operadic axioms because $F$ preserves the monoidal structure coherently.

The fundamental base change result may therefore be summarized as follows.

\begin{proposition}[Base change for operads and algebras]
\label{prop:base-change}
Let $F:\mathcal{M}\to\mathcal{N}$ be a strong monoidal functor between symmetric monoidal categories. Then:
\begin{enumerate}
    \item every colored operad $P$ in $\mathcal{M}$ induces a colored operad $F_*P$ in $\mathcal{N}$;
    \item every $P$-algebra $A$ in $\mathcal{M}$ induces an $F_*P$-algebra $F(A)$ in $\mathcal{N}$;
    \item these constructions are functorial in both operads and their algebras.
\end{enumerate}
\end{proposition}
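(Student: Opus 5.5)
The plan is to check the three claims directly from the definitions, using only the structure isomorphisms $\phi_{X,Y}$, $\phi_0$ of $F$ and their coherence axioms. For (1), set $(F_*P)(c_1,\dots,c_n;c):=F(P(c_1,\dots,c_n;c))$. Iterating $\phi$ gives, for every finite family $X_1,\dots,X_k$, a canonical isomorphism
\[
\Phi_{X_1,\dots,X_k}:F(X_1)\otimes\cdots\otimes F(X_k)\xrightarrow{\;\cong\;}F(X_1\otimes\cdots\otimes X_k),
\]
with $\Phi$ of the empty family equal to $\phi_0$. By the associativity coherence of $F$, this isomorphism does not depend on the chosen bracketing. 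We then define:
\begin{itemize}
\item the composition of $F_*P$ as $F(\circ)\circ\Phi$;
\item the units as $F(\eta_c)\circ\phi_0$;
\item the $\mathfrak{S}_n$-action as $F$ applied to the action on $P$.
\end{itemize}

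The axioms are checked one at a time. Associativity of $F_*P$ comes from applying $F$ to the associativity square of $P$ and precomposing with $\Phi$. The two ways of composing differ only by rebracketings of the $\Phi$'s, which agree by coherence, and by naturality of $\phi$, which moves $F(\circ)$ past $\Phi$. Unitality uses the unit coherence of $(F,\phi,\phi_0)$ together with $F$ applied to the unit axiom of $P$. Equivariance uses the symmetry coherence, which says that $\Phi$ intertwines the symmetry of $\mathcal N$ with $F$ of the symmetry of $\mathcal M$.

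For (2), we take $F(A)_c:=F(A_c)$. The structure maps are $F(\phi_A)\circ\Phi$, or equivalently, in the action form, $F(\mu)\circ\Phi$ where $\mu:P(\underline c;c)\otimes A_{c_1}\otimes\cdots\otimes A_{c_n}\to A_c$. The compatibility identities for an $F_*P$-algebra reduce to $F$ applied to the corresponding identities for $A$. This reduction is the same naturality-plus-coherence argument as in (1). For instance, the composition identity becomes
\[
F(\mu)\circ\Phi\circ\bigl(F(\circ)\circ\Phi\otimes \mathrm{id}\bigr)=F(\mu)\circ\Phi\circ\bigl(\mathrm{id}\otimes (F(\mu)\circ\Phi)^{\otimes}\bigr)
\]
up to a shuffle isomorphism. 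Both sides equal $F$ of the two equal composites in $\mathcal M$, once each is rewritten as a single $\Phi$ followed by $F(\text{morphism})$.

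For (3), a morphism of operads $f:P\to Q$ is sent to the componentwise map $F(f)$, and a morphism of $P$-algebras $g:A\to B$ is sent to $F(g)$. That these are morphisms of the right kind follows from naturality of $\Phi$ in each variable. Preservation of identities and composites is inherited from $F$. Base change along a morphism of operads, i.e.\ restriction, is likewise carried to the corresponding restriction along $F_*f$.

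The main obstacle is bookkeeping: all the shuffle and rebracketing isomorphisms between iterated tensors in $\mathcal N$ and in $\mathcal M$ must match. I would handle this by invoking the coherence theorem for symmetric strong monoidal functors. Any two composites built from $\phi$, $\phi_0$, associators, unitors and symmetries between the same formal expressions are equal. This reduces every verification to the single observation that $F$ preserves commuting diagrams in $\mathcal M$, so no diagram needs to be chased by hand.
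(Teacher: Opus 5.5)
Your proposal is correct and follows essentially the same route as the paper: define $F_*P$ and $F(A)$ objectwise, transport composition, units and symmetric-group actions through the iterated structure isomorphisms of $F$, and derive the axioms and functoriality from those of $P$ and $A$ together with the coherence of $F$. Your version is more explicit, since it appeals to the coherence theorem for symmetric strong monoidal functors where the paper only says ``by coherence''; the one thing to tidy is that the action form $F(\mu)\circ\Phi$ must serve as the definition, because in a general $\mathcal{M}$ the per-operation maps $F(\phi_A)\circ\Phi$ only cover operations that come from $P$ and so are not literally equivalent to it.
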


\begin{proof}
Let $P$ be a colored operad in $\mathcal{M}$. Define $F_*P$ by applying $F$ objectwise:
\[
(F_*P)(c_1,\dots,c_n;c):=F\bigl(P(c_1,\dots,c_n;c)\bigr).
\]

The unit maps are obtained by composing the unit of $P$ with the unit isomorphism
$\mathbf{1}_{\mathcal N}\cong F(\mathbf{1}_{\mathcal M})$.
The operadic compositions are defined by combining the structure map of $P$
with the coherence isomorphisms of the strong monoidal structure on $F$, namely the natural maps
\[
F(X)\otimes F(Y)\xrightarrow{\cong}F(X\otimes Y),
\]
iterated to arbitrary tensor products.

The symmetric group actions are transported by functoriality:
$\sigma^*_{F_*P}=F(\sigma^*_P)$.
The operad axioms for $F_*P$ follow from those of $P$ together with coherence of the monoidal structure of $F$.
This proves (1).

For (2), let $A$ be a $P$-algebra with structure maps
\[
P(c_1,\dots,c_n;c)\otimes A(c_1)\otimes\cdots\otimes A(c_n)\to A(c).
\]
Define $F(A)(c):=F(A(c))$.
The $F_*P$-algebra structure on $F(A)$ is obtained by applying $F$
to the above maps and using the coherence isomorphisms
\[
F(X)\otimes F(Y)\xrightarrow{\cong}F(X\otimes Y)
\]
to combine all inputs.
The algebra axioms follow from those of $A$ and coherence of $F$.

For (3), functoriality in $P$ and $A$ follows by applying $F$ to morphisms
and using that $F$ preserves identities and compositions.
\end{proof}
This construction is the standard pushforward of operads along a strong monoidal functor,
providing a functorial mechanism for transporting operadic structures across monoidal contexts.
In particular, it allows one to pass from operads and their algebras in $\mathcal{M}$
to corresponding structures in $\mathcal{N}$ in a way compatible with composition and symmetry.

This principle serves as the formal foundation for the constructions developed in this paper,
where operadic data is transported across different categorical settings
while preserving its compositional structure.

\begin{remark}[Connection to higher categories]
In higher-categorical language, this statement is a manifestation of the functoriality of operads and algebra objects under symmetric monoidal functors; see, for example, see \cite{Lurie-HA2017} (operadic functoriality under symmetric monoidal functors). For the purposes of this paper, the key point is that strong monoidality is precisely the hypothesis needed to ensure that operadic composition and algebra actions survive base change.
\end{remark}

\begin{remark}[Why base change matters for spectral theory]
Base change functors arise naturally in spectral theory when we:
\begin{itemize}
    \item Quantize a classical system (real $\to$ complex),
    \item Discretize a continuous operator (functions $\to$ matrices),
    \item Complexify a real operator (real $\to$ complex),
    \item Pass to the Gelfand transform (C*-algebra $\to$ continuous functions).
\end{itemize}
The Base Change Theorem (Theorem~\ref{thm:base-change}) will show that the operadic spectrum $\sigma_P(A)$ is compatible with such transport: applying $F$ to $\sigma_P(A)$ yields the operadic spectrum of the base-changed algebra. This compatibility is essential for the consistency of Spectral Operadic Calculus across different mathematical contexts.
\end{remark}

\begin{remark}[Connection to the No-Go Theorem]
The No-Go Theorem (Section~\ref{sec:The Failure of Classical Spectrum under Operadic Base Change}) demonstrates that any spectral invariant that does not incorporate operadic composition data cannot be functorial under base change. The base change machinery introduced here provides the precise categorical language in which that impossibility is formulated, and the operadic residue $\mathcal{O}_P^{\mathrm{res}}$ we construct in Section~\ref{sec:The Operadic Residue as a Universal Correction Object} will be precisely the additional structure needed to restore functoriality.
\end{remark}

We now recall the construction of Hochschild homology for algebras over colored operads, which will serve as the foundation for our operadic spectrum. This construction generalizes the classical Hochschild homology of associative algebras to the operadic setting, capturing the derived structure of operadic algebras.

\paragraph{The Hochschild Complex.}
Let $P$ be a $C$-colored operad in a symmetric monoidal category $\mathcal{M}$, and let $A$ be a $P$-algebra. The Hochschild complex of $A$ relative to $P$, denoted $\mathrm{Hoch}_{\mathcal{M}}(A)$, is defined via the operadic bar construction.

More precisely, it is given by the geometric realization of a simplicial object
\[
B_\bullet(A,P,A),
\]
where the $n$-simplices are
\[
B_n(A,P,A)
=
\coprod_{c_0,\dots,c_n \in C}
A(c_0) \otimes P(c_1,\dots,c_n; c_0) \otimes A(c_1) \otimes \cdots \otimes A(c_n).
\]

\medskip

\noindent
\textbf{Operadic description.}
This construction admits a natural operadic interpretation. The simplicial structure is governed by operadic composition and units:

\begin{itemize}
    \item \emph{Face maps} are induced by operadic composition, corresponding to inserting operations into input slots and composing along adjacent layers.
    \item \emph{Degeneracy maps} are induced by unit maps $\eta_c:\mathbf{1}\to P(c;c)$, inserting identity operations.
\end{itemize}

This perspective shows that the Hochschild complex is an operadic generalization of the classical bar construction, in which operadic composition replaces the associative multiplication.

\medskip

\paragraph{Enriched setting.}
In our analytic applications, we work in the enriched setting where $\mathcal{M}$ is the category of Banach spaces with bounded linear maps, equipped with the projective tensor product. The operad $P$ is assumed to be \emph{enriched over Banach spaces}, meaning each operation space $P(c_1,\dots,c_n;c)$ is a Banach space, and all structure maps (composition, unit, symmetric group actions) are bounded linear maps.

We assume that $\mathcal{M}$ admits geometric realizations of simplicial objects, and that these are compatible with the monoidal structure. The geometric realization of the simplicial Hochschild complex is taken as the homotopy colimit in this enriched setting, followed by completion in the Banach norm; we refer to this as the \emph{analytic realization}. This ensures that the resulting object $\mathrm{CH}_\bullet(P,A)$ is a Banach space, and that convergence of the associated spectral sequences is governed by norm estimates rather than purely algebraic conditions.

\begin{remark}[Comparison with derived Hochschild homology]
\label{rem:comparison-derived}
In the derived setting, Hochschild-type constructions for operadic algebras have been studied extensively, e.g., by Hoang \cite{Hoang2025}, where they serve as invariants for deformation theory and homotopical algebra. Our construction differs in two key respects: 
\begin{enumerate}
    \item We work in an \emph{enriched} (Banach) analytic setting, where geometric realization is taken as an analytic homotopy colimit with attention to norm completeness.
    \item Our primary interest lies in the \emph{spectral} information encoded in the Hochschild object—specifically, the spectrum of the associated boundary operator—rather than its cohomological or derived structure.
\end{enumerate}
Our construction should be viewed as an analytic refinement of the classical Hochschild complex, rather than a fully derived Hochschild homology. In particular, we do not require a model structure on Banach spaces, but instead work with explicit simplicial resolutions equipped with norm control. These differences reflect the analytic nature of our operadic spectral invariants and motivate the operator-theoretic approach developed in subsequent sections.
\end{remark}

\paragraph{The Hochschild Boundary Operator.}
The simplicial structure induces a boundary operator $b$ on the complex, which in the operadic description takes the form
\[
b = \sum_{i=0}^n (-1)^i d_i,
\]
where $d_i$ are the face maps implementing operadic composition. A key observation is that the differential can be expressed entirely in terms of the operad's composition maps, making the Hochschild complex a genuine invariant of the operadic algebra. In the Banach-enriched setting, assuming that the operadic composition maps are jointly bounded and satisfy uniform norm estimates, each face map $d_i$ is bounded; hence $b$ is a bounded linear operator on each degree, and its spectral properties become accessible via functional analysis.

Let $\mathrm{CH}_\bullet(P,A)$ denote the Hochschild chain complex. The operadic Hochschild homology is then defined as the homology of this complex:
\[
\mathrm{HH}_\bullet(P,A) = H_\bullet(\mathrm{CH}_\bullet(P,A), b).
\]
While the homological information is valuable, our focus will shift to the operator-theoretic invariants derived from the boundary operator $b$ itself, leading to the notion of the \emph{operadic spectrum} introduced in Section~\ref{sec:The Operadic Spectrum}.

\subsection{Hochschild-Type Constructions for Operadic Algebras}\label{subsec:Hochschild-Type Constructions for Operadic Algebras}

In this subsection we define a Hochschild-type homology object for algebras over a colored operad in a symmetric monoidal category, using a simplicial (bar) construction. We also record its functoriality and its compatibility with base change along strong monoidal functors. This construction will be essential for the definition of the operadic spectrum in Section~\ref{sec:The Operadic Spectrum}.

\paragraph{Setup.}
Let $(\mathcal{M},\otimes,\mathbf{1})$ be a symmetric monoidal category that admits geometric realizations of simplicial objects, and suppose that the tensor product preserves such realizations separately in each variable. Let $P$ be a $C$-colored operad in $\mathcal{M}$, and let $A$ be a $P$-algebra with components $A_c \in \mathcal{M}$ for $c \in C$.

\paragraph{Bar construction.}
We define a simplicial object
\[
\mathrm{Bar}^{P}_{\bullet}(A) \in \mathrm{s}\mathcal{M}^C
\]
by setting, for each $n \ge 0$ and each color $c \in C$,
\[
\mathrm{Bar}^{P}_{n}(A)_c
:=
(P^{\circ n}(A))_c,
\]
where $P^{\circ n}(A)$ denotes the $n$-fold operadic composition of $P$ applied to $A$. Concretely, this is the object of $n$-level trees with operations from $P$ at each internal node and leaves labeled by the components of $A$. An explicit description, though notationally heavy, can be given as a coproduct over all sequences of colors and operations of appropriate arities, of iterated tensor products of the form $A \otimes P \otimes \cdots \otimes P \otimes A$. The precise indexing is a standard but tedious exercise in operad theory; we omit it here for brevity and refer the reader to the literature for the complete formula (see, e.g., \cite{GetzlerJones}, \cite{Fresse}).

The face maps
\[
d_i: \mathrm{Bar}^{P}_{n}(A) \to \mathrm{Bar}^{P}_{n-1}(A), \qquad 0 \le i \le n,
\]
are defined as follows:
\begin{itemize}
    \item For $0 < i < n$, the map $d_i$ composes the $i$-th operation with the $(i+1)$-st using the operadic composition of $P$.
    \item The map $d_0$ uses the $P$-algebra structure map to apply the first operation to the first tensor factor.
    \item The map $d_n$ uses the $P$-algebra structure map to apply the last operation to the last tensor factor.
\end{itemize}
The degeneracy maps
\[
s_j: \mathrm{Bar}^{P}_{n}(A) \to \mathrm{Bar}^{P}_{n+1}(A), \qquad 0 \le j \le n,
\]
are induced by inserting unit operations $\eta_c: \mathbf{1} \to P(c;c)$.

These maps satisfy the simplicial identities as a consequence of the associativity and unitality of the operad together with the $P$-algebra axioms. This is the standard operadic bar construction (see, e.g., \cite{GetzlerJones} or \cite{Fresse}).

\begin{definition}[Hochschild homology object]
\label{def:hochschild}
The \emph{Hochschild homology object} of the $P$-algebra $A$ in $\mathcal{M}$ is defined as the geometric realization of the bar construction:
\[
\mathrm{Hoch}_{\mathcal{M}}(A)
:=
\big|\mathrm{Bar}^{P}_{\bullet}(A)\big| \in \mathcal{M}^C.
\]
When $\mathcal{M}$ is cocomplete, the geometric realization is given by the colimit of the simplicial object. In homotopical contexts, this should be interpreted as a homotopy colimit.
\end{definition}

\begin{remark}
The Hochschild object can be interpreted as a derived self-tensor product:
\[
\mathrm{Hoch}_{\mathcal{M}}(A) \simeq A \otimes_P^{\mathbb{L}} A,
\]
where $\otimes_P^{\mathbb{L}}$ denotes the derived relative tensor product over the operad $P$. This perspective highlights its role as a universal recipient of a derived trace map.
\end{remark}

\paragraph{Functoriality.}
The construction $A \mapsto \mathrm{Hoch}_{\mathcal{M}}(A)$ is functorial. Indeed, if
\[
f: A \to B
\]
is a morphism of $P$-algebras (i.e., a family of maps $f_c: A_c \to B_c$ commuting with the structure maps), then applying $f$ levelwise induces a morphism of simplicial objects
\[
\mathrm{Bar}^{P}_{\bullet}(A) \to \mathrm{Bar}^{P}_{\bullet}(B),
\]
and hence a morphism on realizations
\[
\mathrm{Hoch}_{\mathcal{M}}(A) \to \mathrm{Hoch}_{\mathcal{M}}(B).
\]

\begin{proposition}[Functoriality]
\label{prop:hochschild-functorial}
The assignment
\[
A \longmapsto \mathrm{Hoch}_{\mathcal{M}}(A)
\]
defines a functor from the category of $P$-algebras in $\mathcal{M}$ to $\mathcal{M}^C$.
\end{proposition}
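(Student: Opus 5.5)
The plan is to factor the assignment $A \mapsto \mathrm{Hoch}_{\mathcal{M}}(A)$ as a composite of two functors and check each step separately. The first is a functor from $P$-algebras to simplicial objects, $A \mapsto \mathrm{Bar}^{P}_{\bullet}(A)$ in $\mathrm{s}\mathcal{M}^C$. The second is geometric realization $|\cdot| : \mathrm{s}\mathcal{M}^C \to \mathcal{M}^C$. Realization is a colimit (or a homotopy colimit) taken colorwise, so under the standing hypotheses on $\mathcal{M}$ it is a functor. The realization is then automatic, and all the work lies in showing that the bar construction is functorial.

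For a morphism $f: A \to B$ of $P$-algebras, I would define $\mathrm{Bar}^{P}_{n}(f)$ on each summand of $(P^{\circ n}(A))_c$. Each summand is an iterated tensor product of operation objects $P(\dots)$ and leaf objects $A_{c_i}$, indexed by a level-$n$ tree. On such a summand the map is the identity on every $P$-factor tensored with $f_{c_i}$ on every leaf factor, followed by inclusion into the summand of $(P^{\circ n}(B))_c$ with the same tree. Since $\otimes$ is a bifunctor and coproducts are functorial, this gives a morphism in $\mathcal{M}^C$ for each $n$. These levelwise maps preserve identities and composition, because $\mathrm{id}\otimes(g\circ f) = (\mathrm{id}\otimes g)\circ(\mathrm{id}\otimes f)$ holds factorwise.

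The main step is to check that these levelwise maps commute with the simplicial structure maps, so that together they form a morphism of simplicial objects. I would go through the maps by type.
\begin{enumerate}
\item The inner face maps $d_i$ for $0<i<n$ and all degeneracies $s_j$ involve only operadic composition and the units $\eta_c$. They act on the $P$-factors and merely permute the leaf factors by symmetry isomorphisms. They commute with $\mathrm{id}_P\otimes f$ by naturality of the symmetry and associativity constraints of $\mathcal{M}$.
\item The outer faces $d_0$ and $d_n$ apply algebra structure maps $P(c_1,\dots,c_k;c)\otimes A_{c_1}\otimes\cdots\otimes A_{c_k}\to A_c$. Here commutation is precisely the statement that $f$ is a $P$-algebra morphism, namely
\[
f_c\circ \phi_A = \phi_B\circ (f_{c_1}\otimes\cdots\otimes f_{c_k}).
\]
\end{enumerate}
I expect the main obstacle to be organizing this check cleanly, given the tree indexing of $P^{\circ n}(A)$ that the paper leaves implicit. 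To keep it manageable, I would phrase the argument monadically. Write $\mathbf{P}$ for the free $P$-algebra monad on $\mathcal{M}^C$, so that $\mathrm{Bar}^{P}_{n}(A)=\mathbf{P}^{n}(A)$, with faces built from the monad multiplication $\mathbf{P}\mathbf{P}\Rightarrow\mathbf{P}$ and the algebra structure $\mathbf{P}A\to A$, and degeneracies from the unit. Then $\mathrm{Bar}^{P}_{n}(f)=\mathbf{P}^{n}(f)$. The inner identities follow from naturality of the monad multiplication and unit, and the outer ones from $f$ being a map of $\mathbf{P}$-algebras.

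Finally, I would compose this with functoriality of realization. This gives the induced map $\mathrm{Hoch}_{\mathcal{M}}(A)\to\mathrm{Hoch}_{\mathcal{M}}(B)$ described before the statement, together with $\mathrm{Hoch}(\mathrm{id})=\mathrm{id}$ and $\mathrm{Hoch}(g\circ f)=\mathrm{Hoch}(g)\circ\mathrm{Hoch}(f)$. In the Banach-enriched setting, I would add one remark. The levelwise maps are bounded, since tensor products of bounded maps are bounded for the projective norm. They therefore extend to the completed analytic realization, and uniqueness of continuous extensions from a dense subspace preserves functoriality there as well.
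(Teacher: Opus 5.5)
Your proposal is correct and follows the same route as the paper. The bar construction is functorial in $A$ levelwise, the levelwise maps assemble into a morphism of simplicial objects, and geometric realization then gives the functor. Your version is more complete than the paper's: the paper simply asserts that the levelwise maps commute with the simplicial structure, whereas you verify it, using naturality of operadic composition and units for the inner faces and degeneracies, and the $P$-algebra morphism condition $f_c\circ\phi_A=\phi_B\circ(f_{c_1}\otimes\cdots\otimes f_{c_k})$ for the outer faces.
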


\begin{proof}
For each $n \ge 0$, the assignment $A \mapsto \mathrm{Bar}^P_n(A)$ is functorial in $P$-algebra morphisms, since it is defined by iterated applications of the operad action and the tensor product. Hence a morphism $f: A \to B$ of $P$-algebras induces a morphism of simplicial objects
\[
\mathrm{Bar}^P_\bullet(A) \to \mathrm{Bar}^P_\bullet(B).
\]
Applying geometric realization yields a morphism
\[
\big|\mathrm{Bar}^P_\bullet(A)\big| \to \big|\mathrm{Bar}^P_\bullet(B)\big|.
\]
Compatibility with identities and composition follows levelwise; therefore the assignment defines a functor.
\end{proof}

\paragraph{Compatibility with base change.}
A crucial property of the Hochschild construction is its compatibility with strong monoidal base change. This will be essential for the Base Change Theorem (Theorem~\ref{thm:base-change}) later in the paper.

Let
\[
F: \mathcal{M} \to \mathcal{N}
\]
be a strong monoidal functor between symmetric monoidal categories. Assume that $F$ preserves geometric realizations of simplicial objects. (For instance, this holds whenever geometric realizations in $\mathcal{M}$ and $\mathcal{N}$ are computed as colimits and $F$ is a left adjoint.)

Let $P$ be a $C$-colored operad in $\mathcal{M}$ and $A$ a $P$-algebra. Then $F_*P$ is an operad in $\mathcal{N}$ (by Proposition~\ref{prop:base-change}) and $F(A)$ is an $F_*P$-algebra. Moreover, the strong monoidal structure of $F$ induces a natural isomorphism at each simplicial level:
\[
F\big(\mathrm{Bar}^P_n(A)\big) \cong \mathrm{Bar}^{F_*P}_n(F(A)).
\]

\begin{theorem}[Base change for Hochschild homology]
\label{thm:hochschild-base-change}
Let $F: \mathcal{M} \to \mathcal{N}$ be a strong monoidal functor preserving geometric realizations of simplicial objects. Let $P$ be a colored operad in $\mathcal{M}$ and $A$ a $P$-algebra. Then there is a natural isomorphism
\[
F\bigl(\mathrm{Hoch}_{\mathcal{M}}(A)\bigr)
\;\cong\;
\mathrm{Hoch}_{\mathcal{N}}\bigl(F(A)\bigr).
\]
\end{theorem}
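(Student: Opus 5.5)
The plan is to build a levelwise isomorphism of simplicial objects and then pass to realizations. First I will construct, for each $n \ge 0$ and each color $c$, an isomorphism
\[
\theta_n : F\big(\mathrm{Bar}^P_n(A)_c\big) \xrightarrow{\;\cong\;} \mathrm{Bar}^{F_*P}_n(F(A))_c .
\]
Each $\mathrm{Bar}^P_n(A)_c$ is a coproduct, indexed by $n$-level trees with colored edges, of iterated tensor products of the objects $P(c_1,\dots,c_k;d)$ and $A_{c_i}$. Applying $F$, I will use the iterated strong monoidal isomorphisms $\phi$ (together with $\phi_0$ for nullary or empty factors) to identify $F$ of each tensor summand with the corresponding tensor product of $F(P(\cdots))$'s and $F(A_c)$'s. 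These are exactly the summands of $\mathrm{Bar}^{F_*P}_n(F(A))_c$ by Proposition~\ref{prop:base-change}. For coproducts I need $F$ to preserve the relevant coproducts. This follows if $F$ is a left adjoint, as in the parenthetical hypothesis preceding the theorem, and I will state it explicitly as part of ``preserving colimits used in the bar construction.''

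Second, I will check that the $\theta_n$ commute with the face and degeneracy maps, so that $\theta_\bullet$ is an isomorphism of simplicial objects in $\mathcal{N}^C$.
\begin{itemize}
\item For inner faces $d_i$ ($0<i<n$), the map is built from operadic composition in $P$. In $F_*P$, composition is by definition $F(\circ)$ precomposed with the structure isomorphisms of $F$. Commutativity therefore reduces to naturality of $\phi$ and the associativity and symmetry coherence axioms of a strong (symmetric) monoidal functor. The symmetry axiom matters because tree summands involve reshuffling tensor factors.
\item The outer faces $d_0, d_n$ use the algebra structure maps. They are handled in the same way, via the definition of the $F_*P$-algebra structure on $F(A)$.
\item The degeneracies use $\eta_c$. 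In $F_*P$, the unit is $F(\eta_c)\circ\phi_0$, so they commute by the unit coherence axiom.
\end{itemize}
I expect this coherence bookkeeping to be the main obstacle, since the bar construction's indexing was left implicit. To manage it, I will invoke the coherence theorem for symmetric monoidal functors: any two composites of structure isomorphisms between the same formal tensor expressions agree. It then suffices to observe that both routes around each square are $F(\text{structure map})$ composed with some canonical structure isomorphism.

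Third, I will apply geometric realization. Since $F$ preserves geometric realizations, there is a canonical isomorphism $F(|\mathrm{Bar}^P_\bullet(A)|) \cong |F(\mathrm{Bar}^P_\bullet(A))|$. Composing it with $|\theta_\bullet|$ gives the desired isomorphism $F(\mathrm{Hoch}_{\mathcal{M}}(A)) \cong \mathrm{Hoch}_{\mathcal{N}}(F(A))$.

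Finally, I will prove naturality in $A$. For a morphism $f:A\to B$ of $P$-algebras, the induced map of Proposition~\ref{prop:hochschild-functorial} is levelwise $f$ tensored with identities. The equality $\theta_n^B\circ F(\mathrm{Bar}_n f)=\mathrm{Bar}_n(Ff)\circ\theta_n^A$ then follows from naturality of $\phi$. Naturality of the comparison map for realizations finishes the argument.

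In the homotopical or Banach-completed reading, the same argument applies, provided $F$ preserves the homotopy colimit and completion. I would add this as a remark rather than prove it.
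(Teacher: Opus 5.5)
Your proposal is correct and follows the same route as the paper. Both build a levelwise isomorphism $F(\mathrm{Bar}^P_n(A)) \cong \mathrm{Bar}^{F_*P}_n(F(A))$ from strong monoidality, check compatibility with faces and degeneracies, and pass to realizations; your explicit requirement that $F$ also preserve the coproducts inside each $\mathrm{Bar}^P_n(A)$ is well placed, since the paper's proof uses it silently and it does not follow from strong monoidality plus preservation of realizations (e.g.\ $X \mapsto X \times X$ on $(\mathbf{Set},\times)$ has both properties but does not preserve coproducts).
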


\begin{proof}
Consider the simplicial object $\mathrm{Bar}^{P}_{\bullet}(A)$ in $\mathcal{M}$. Applying $F$ levelwise yields a simplicial object
\[
F\bigl(\mathrm{Bar}^{P}_{\bullet}(A)\bigr)
\]
in $\mathcal{N}$.

For each simplicial degree $n$, the strong monoidality of $F$ yields a natural isomorphism
\[
F\bigl(\mathrm{Bar}^{P}_n(A)\bigr) \cong \mathrm{Bar}^{F_*P}_n\bigl(F(A)\bigr).
\]
This is because $F$ transports tensor products (via $\phi_{X,Y}: F(X) \otimes F(Y) \cong F(X \otimes Y)$), unit objects (via $\phi_0: \mathbf{1}_{\mathcal{N}} \cong F(\mathbf{1}_{\mathcal{M}})$), operadic compositions, and algebra structure maps to those of $F_*P$ and $F(A)$.

These isomorphisms are compatible with the face and degeneracy maps, since $F$ preserves the operad composition and unit maps. Hence they define an isomorphism of simplicial objects
\[
F\bigl(\mathrm{Bar}^{P}_{\bullet}(A)\bigr) \cong \mathrm{Bar}^{F_*P}_{\bullet}\bigl(F(A)\bigr).
\]

Because $F$ preserves geometric realizations, we obtain
\[
F\bigl(|\mathrm{Bar}^{P}_{\bullet}(A)|\bigr)
\cong
\big|F(\mathrm{Bar}^{P}_{\bullet}(A))\big|
\cong
\big|\mathrm{Bar}^{F_*P}_{\bullet}(F(A))\big|
= \mathrm{Hoch}_{\mathcal{N}}\bigl(F(A)\bigr).
\]

This proves the theorem.
\end{proof}

\begin{remark}[Classical Hochschild homology]
When $P$ is the associative operad (with one color), the above construction recovers the classical Hochschild homology defined via the cyclic bar construction. For the trivial operad $\mathbb{I}$ (with one color), we have $P(c;c) = \mathbf{1}$ and all other operations trivial, so the bar construction collapses and we obtain $\mathrm{Hoch}_{\mathcal{M}}(A) \cong A$ (up to coherence isomorphisms). This special case will be essential for the Recovery Theorem (Theorem~\ref{thm:recovery}).
\end{remark}

\begin{remark}[Enriched settings and norm control]
In the operator-theoretic context of this paper, we work with categories enriched over Banach spaces or operator algebras. The bar construction must be adapted to the enriched setting, taking into account the norm topology. However, the formal categorical properties—functoriality and compatibility with strong monoidal base change—remain valid as long as the base change functor preserves the relevant colimits and the enriched structure. The explicit norm estimates and convergence properties of the bar construction will be addressed in future work when we study analyticity and the Taylor tower.
\end{remark}

\paragraph{Relation to the Operadic Spectrum.}
The Hochschild homology object $\mathrm{Hoch}_{\mathcal{M}}(A)$ will serve as one of the two key ingredients in the definition of the operadic spectrum. Recalling the operadic residue $\mathcal{O}_P^{\mathrm{res}}$ constructed in Section~\ref{sec:The Operadic Residue as a Universal Correction Object}, we define
\[
\sigma_P(A) := \mathrm{Hoch}_{\mathcal{M}}(A) \otimes_P \mathcal{O}_P^{\mathrm{res}},
\]
where $\otimes_P$ denotes a balanced tensor product over the operad $P$. The compatibility of $\mathrm{Hoch}_{\mathcal{M}}(A)$ with base change (Theorem~\ref{thm:hochschild-base-change}) will be essential in proving the Base Change Theorem for $\sigma_P(A)$ (Theorem~\ref{thm:base-change}).

\section{The Failure of Classical Spectrum under Operadic Base Change}\label{sec:The Failure of Classical Spectrum under Operadic Base Change}

\subsection{Desiderata for a Generalized Spectrum}\label{subsec:Desiderata for a Generalized Spectrum}

In light of the failure of the classical spectrum to behave well under operadic composition and base change, we formulate a minimal set of axioms that any \emph{generalized spectral invariant} for operadic algebras should satisfy. These axioms will guide the construction of the operadic spectrum and provide the criteria against which its universality is proved.

\paragraph{Setup.}
Let $(\mathcal{M},\otimes,\mathbf{1})$ be a symmetric monoidal category (typically the category of Banach spaces or operator algebras), $P$ a $C$-colored operad in $\mathcal{M}$, and $A$ a $P$-algebra with components $A_c \in \mathcal{M}$ for $c \in C$. Fix a target category $\mathcal{C}$ (e.g., sets, topological spaces, or spectra) that is equipped with a monoidal structure when needed. A \emph{generalized spectrum} is a functor
\[
\Sigma_P : P\text{-}\mathbf{Alg}(\mathcal{M}) \longrightarrow \mathcal{C},
\]
where $P\text{-}\mathbf{Alg}(\mathcal{M})$ denotes the category of $P$-algebras in $\mathcal{M}$.

\begin{axiom}[Functoriality]
\label{ax:functoriality}
The assignment $\Sigma_P$ is a functor: for every morphism of $P$-algebras $f: A \to B$, there is an induced morphism $\Sigma_P(f): \Sigma_P(A) \to \Sigma_P(B)$ in $\mathcal{C}$, compatible with identities and composition.
\end{axiom}

\begin{axiom}[Compatibility with operadic composition]
\label{ax:operadic}
The functor $\Sigma_P$ lifts to a $P$-algebra object in $\mathcal{C}$. Concretely, for each operation $\theta \in P(c_1,\dots,c_n;c)$, the structure map $\theta_A: A_{c_1} \otimes \cdots \otimes A_{c_n} \to A_c$ induces a morphism
\[
\theta_{\Sigma(A)}: \Sigma_P(A)_{c_1} \otimes_{\mathcal{C}} \cdots \otimes_{\mathcal{C}} \Sigma_P(A)_{c_n} \longrightarrow \Sigma_P(A)_c,
\]
where $\Sigma_P(A)_c$ denotes the component of the spectrum associated to color $c$ (when $\mathcal{C}$ admits such a decomposition). These morphisms are required to satisfy the associativity, unit, and equivariance axioms of $P$.
\end{axiom}

\begin{axiom}[Compatibility with base change]
\label{ax:basechange}
Let $F: \mathcal{M} \to \mathcal{N}$ be a strong monoidal functor, and let $F_*P$ and $F(A)$ denote the induced operad and algebra in $\mathcal{N}$ (Proposition~\ref{prop:base-change}). Assume that $F$ induces a functor $F_*: \mathcal{C} \to \mathcal{C}'$ on the target categories (or that $\mathcal{C}$ is independent of $\mathcal{M}$). Then there exists a natural isomorphism
\[
\Sigma_{F_*P}\bigl(F(A)\bigr) \;\cong\; F_*\bigl(\Sigma_P(A)\bigr).
\]
In the special case where $\mathcal{C}$ is the same for all $\mathcal{M}$ (e.g., $\mathcal{C} = \mathbf{Set}$), this simplifies to $\Sigma_{F_*P}(F(A)) \cong \Sigma_P(A)$.
\end{axiom}

\begin{axiom}[Recovery of the classical spectrum]
\label{ax:classical}
Let $\mathbb{I}$ be the trivial operad with a single color, for which a $\mathbb{I}$-algebra is simply an object of $\mathcal{M}$. When $\mathcal{M}$ is a category in which a classical spectrum is defined (e.g., Banach spaces with bounded linear operators), we require
\[
\Sigma_{\mathbb{I}}(A) \;\cong\; \sigma_{\mathrm{classical}}(A),
\]
where $\sigma_{\mathrm{classical}}(A)$ denotes the usual spectrum $\{\lambda \in \mathbb{C} : A - \lambda I \text{ is not invertible}\}$.
\end{axiom}

\begin{axiom}[Spectral mapping]
\label{ax:spectral-mapping}
Assume that $\mathcal{M}$ admits a holomorphic functional calculus for its objects (e.g., $\mathcal{M}$ is a category of Banach spaces). For any holomorphic function $f$ defined on a neighborhood of $\Sigma_P(A)$, there exists a canonical isomorphism
\[
\Sigma_{P}\bigl(f(A)\bigr) \;\cong\; f\bigl(\Sigma_P(A)\bigr),
\]
where $f(A)$ is defined via the operadic functional calculus (see Section~\ref{subsec:Operadic Functional Calculus Framework}), and the operad structure is unchanged.
\end{axiom}

\begin{axiom}[Minimal extension]
\label{ax:minimal}
Among all functors $P\text{-}\mathbf{Alg}(\mathcal{M}) \to \mathcal{C}$ satisfying Axioms~\ref{ax:functoriality}--\ref{ax:spectral-mapping}, the generalized spectrum $\Sigma_P$ is \emph{initial}: for any other such functor $\Lambda_P$, there exists a unique natural transformation $\Sigma_P \Rightarrow \Lambda_P$.
\end{axiom}

\paragraph{Remark on colorwise decomposition.}
While not required as an independent axiom, any reasonable generalized spectrum should admit a decomposition into local and interactive parts. In our construction, this emerges as a theorem (see Theorem~\ref{thm:colored-resolvent-reconstruction}), where the interaction term $\Sigma^{\mathrm{int}}$ is encoded by the operadic residue $\mathcal{O}_P^{\mathrm{res}}$.

\paragraph{Summary of the Axioms.}
The essential axioms can be summarized as:
\[
\boxed{
\begin{aligned}
&\text{(A1) Functoriality:} && \Sigma_P : P\text{-}\mathbf{Alg}(\mathcal{M}) \to \mathcal{C} \text{ is a functor}\\
&\text{(A2) Operadic Compatibility:} && \Sigma_P(A) \text{ inherits a } P\text{-algebra structure}\\
&\text{(A3) Base Change:} && \Sigma_{F_*P}(F(A)) \cong F_*(\Sigma_P(A))\\
&\text{(A4) Classical Recovery:} && \Sigma_{\mathbb{I}}(A) \cong \sigma_{\mathrm{classical}}(A)\\
&\text{(A5) Spectral Mapping:} && \Sigma_P(f(A)) \cong f(\Sigma_P(A))\\
&\text{(A6) Minimal Extension:} && \Sigma_P \text{ is initial among such invariants}
\end{aligned}
}
\]

\paragraph{Why These Axioms Are Necessary.}
Each axiom reflects a fundamental property that any reasonable spectral invariant for composite systems should possess:
\begin{itemize}
    \item \textbf{(A1)} ensures the invariant interacts properly with morphisms of systems.
    \item \textbf{(A2)} is essential because operadic composition is the defining structure of the systems we study.
    \item \textbf{(A3)} guarantees that spectral information can be transported across different mathematical contexts (quantization, discretization, complexification, etc.).
    \item \textbf{(A4)} ensures the theory is a genuine extension of classical spectral theory.
    \item \textbf{(A5)} is required for compatibility with the analytic tools of functional analysis.
    \item \textbf{(A6)} ensures the invariant is canonical and minimal.
\end{itemize}

\paragraph{The Failure of Classical Invariants.}
The classical spectrum $\sigma(A)$, when extended naively to operadic algebras as the disjoint union $\bigsqcup_c \sigma(A_c)$, fails to satisfy Axioms~\ref{ax:operadic} and~\ref{ax:basechange} in general. This failure is not merely a technical annoyance but a fundamental obstruction: any invariant that depends only on the collection of individual spectra $\{\sigma(A_c)\}_{c \in C}$ cannot simultaneously satisfy classical recovery, base change compatibility, and operadic composition compatibility.

\paragraph{Preview of the No-Go Theorem.}
In the following subsection, we formalize this obstruction and prove:

\begin{quote}
\emph{No assignment of a spectral invariant depending only on the collection $\{\sigma(A_c)\}_{c \in C}$ can simultaneously satisfy Axioms~\ref{ax:operadic}, \ref{ax:basechange}, and~\ref{ax:classical}.}
\end{quote}

This impossibility result forces the introduction of new structure—the \emph{operadic residue} $\mathcal{O}_P^{\mathrm{res}}$—constructed in Section~\ref{sec:The Operadic Residue as a Universal Correction Object}. The operadic spectrum $\sigma_P(A)$ defined in Section~\ref{sec:The Operadic Spectrum} will then be shown to satisfy all of the above axioms, establishing it as the \emph{initial} (hence canonical) spectral invariant for operadic algebras.

\subsection{Naive Spectral Transport and Its Limitations}\label{subsec:Naive Spectral Transport and Its Limitations}

We begin by considering the most direct attempt to extend the classical notion of spectrum to algebras over a colored operad, namely by taking the disjoint union of the spectra of each color component. This naive candidate will be shown to violate essential compatibility axioms, motivating the more sophisticated construction that follows.

\paragraph{Setup.}
Let $(\mathcal{M},\otimes,\mathbf{1})$ be a symmetric monoidal category. Assume that for each object of interest (e.g., Banach algebras, C*-algebras, or associative algebras over a field) a classical spectral invariant $\sigma(-)$ is defined, taking values in some suitable set (e.g., subsets of $\mathbb{C}$).  
Let $P$ be a $C$-colored operad in $\mathcal{M}$, and let $A$ be a $P$-algebra. For each color $c \in C$, denote by $A_c := A(c)$ the corresponding object.

\begin{definition}[Naive operadic spectrum]
\label{def:naive-spectrum}
The \emph{naive operadic spectrum} of $A$ is defined as
\[
\sigma_{\mathrm{naive}}(A) := \bigsqcup_{c \in C} \sigma(A_c),
\]
the disjoint union of the classical spectra of the components $A_c$. When each $\sigma(A_c)$ is a subset of a common set (e.g., $\mathbb{C}$) and the colors are not essential, we may simply take the set-theoretic union $\bigcup_{c \in C} \sigma(A_c) \subseteq \mathbb{C}$.
\end{definition}

This definition is natural from a pointwise perspective: it treats each color independently and applies the classical spectral construction componentwise. It has several attractive features:
\begin{itemize}
    \item \textbf{Simplicity:} It directly generalizes the classical notion without introducing new structures.
    \item \textbf{Classical Recovery:} When $P$ is the trivial operad with a single color \emph{and} the corresponding algebras coincide with the classical objects for which $\sigma(-)$ is defined, then $\sigma_{\mathrm{naive}}(A) = \sigma(A)$ recovers the classical spectrum.
    \item \textbf{Colorwise Decomposition:} It trivially satisfies a weak form of decomposition axiom with $\Sigma^{\mathrm{int}} = \emptyset$.
\end{itemize}

However, as we shall demonstrate, this simplicity comes at the cost of ignoring the operadic composition structure that is essential for understanding composite systems.

\paragraph{Expected compatibility with operadic composition.}
Given an operation
\[
\theta \in P(c_1,\dots,c_n;c),
\]
the $P$-algebra structure provides a map
\[
\mu_\theta : A_{c_1} \otimes \cdots \otimes A_{c_n} \longrightarrow A_c.
\]
A reasonable generalized spectral invariant should not merely record the spectra of the components independently, but should also reflect how these spectra interact under the operadic structure maps. In particular, one expects some form of compatibility between the spectral data of $A_{c_1},\dots,A_{c_n}$ and that of $A_c$, functorially in $\theta$ and coherently with operadic composition. The naive spectrum fails to encode such interaction, since it is defined purely componentwise.

However, the naive construction fails to admit such a structure in general.

\paragraph{Failure of operadic compatibility.}
The classical spectrum $\sigma(-)$ is not functorial with respect to arbitrary multilinear maps. In particular, given a morphism
\[
\mu_\theta : A_{c_1} \otimes \cdots \otimes A_{c_n} \longrightarrow A_c,
\]
there is, in general, no natural construction producing a map
\[
\sigma(A_{c_1}) \times \cdots \times \sigma(A_{c_n}) \to \sigma(A_c)
\]
that depends only on the individual spectra of the arguments.

\begin{proposition}[Failure of operadic compatibility for the naive spectrum]
\label{prop:naive-failure}
In general, the naive spectrum
\[
\sigma_{\mathrm{naive}}(A)=\bigsqcup_{c\in C}\sigma(A_c)
\]
does not admit a canonical operadic composition law induced by the structure maps of a $P$-algebra. Hence it fails to define a generalized spectral invariant compatible with operadic composition.
\end{proposition}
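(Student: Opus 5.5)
The plan is to make "canonical operadic composition law induced by the structure maps" precise enough to refute with an explicit counterexample. A canonical law is a rule which, for every operation $\theta\in P(c_1,\dots,c_n;c)$, produces a map
\[
\theta_{\sigma}:\sigma(A_{c_1})\times\cdots\times\sigma(A_{c_n})\to\sigma(A_c)
\]
depending only on the spectral data $\{\sigma(A_{c_i})\}$ and $\sigma(A_c)$. It must satisfy three conditions: it is natural with respect to isomorphisms of $P$-algebras, it sends unit operations $\eta_c$ to the identity of $\sigma(A_c)$, and it is compatible with operadic composition (the associativity axiom of Axiom~\ref{ax:operadic}).

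To refute this, I will exhibit two $P$-algebras over the same operad that have identical componentwise spectra but different composite behavior, so that any law depending only on component spectra is forced into a contradiction. I will work with the matrix block operad of Example~\ref{ex:two-color-operad} in $\mathrm{Vect}_{\mathbb{C}}$ (finite-dimensional), and read the relevant spectra as those of the operators assembled from the structure maps. Take $V_1=V_2=\mathbb{C}$ with $A_{11}=A_{22}=0$, so the diagonal spectra are both $\{0\}$. For the off-diagonal data, compare $(A_{12},A_{21})=(1,1)$ with $(A_{12},A_{21})=(0,0)$. The composite operator $A$ then has spectrum $\{\pm1\}$ in the first case and $\{0\}$ in the second, even though every componentwise spectrum coincides. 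More to the point, the interaction operation $\phi\in P(1,2;1)$ composed with $P(2,1;2)$ produces the loop $A_{12}A_{21}$ on $V_1$, whose spectrum is $\{1\}$ in the first algebra and $\{0\}$ in the second.

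A canonical law depending only on $\sigma(A_1),\sigma(A_2)$ would have to assign the same value $\theta_\sigma$ to both algebras. Compatibility with operadic composition, however, requires the composite $(\phi\circ\psi)_\sigma$ to track the spectrum of the composite structure map, which differs between the two algebras. This is the contradiction.

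To show the failure is not an artifact of this one example, I will add a second argument using naturality alone. Rescaling $A_{12}\mapsto tA_{12}$ and $A_{21}\mapsto t^{-1}A_{21}$ is realized by conjugating with $\mathrm{diag}(1,t)$, which is an isomorphism of $P$-algebras fixing every component spectrum. Any natural law must therefore be invariant under it, so it can only see the conjugation invariant $A_{12}A_{21}$, which is not a component spectrum. I will also remark, following the block-matrix example of the introduction, that $\sigma(M)$ with $M=\left(\begin{smallmatrix}0&\alpha\\ \beta&0\end{smallmatrix}\right)$ satisfies $\sigma(M)^2\cup\{0\}\supseteq\sigma(\alpha\beta)$, and that $\sigma(\alpha\beta)$ is not a function of $\sigma(\alpha),\sigma(\beta)$.

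The main obstacle is the formalization step. The proposition says only "in general" and "canonical", so I must fix a definition of canonicity strong enough to exclude degenerate laws, such as constant maps onto a point of $\sigma(A_c)$, which do exist whenever the spectra are nonempty. I will exclude these by requiring that the law reflect the structure maps spectrally: the image of $\theta_\sigma$ should contain the spectrum of the operator obtained by applying $\theta_A$ to the relevant data. The counterexample then shows that no such map exists even set-theoretically, and this proves the proposition.
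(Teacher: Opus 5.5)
Your underlying idea is the paper's: exhibit two $P$-algebras with the same componentwise spectra but different interaction data, and note (as the paper does for the one-colored associative case) that $\sigma(\alpha\beta)$ is not a function of $\sigma(\alpha),\sigma(\beta)$. The gap is in the step that is supposed to turn this into a contradiction. Take the conditions in your first paragraph: dependence on spectral data only, naturality, units sent to identities, and associativity. Your own example admits a law satisfying all of them. Since $\sigma(A_1)=\sigma(A_2)=\{0\}$, every $\theta_\sigma$ is forced to be the unique map into the one-point set $\{0\}$. This assignment is natural, unital and trivially associative, and it is the same for both algebras.

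Associativity only constrains how the maps $\theta_\sigma$ compose with one another. Nothing in it ties $(\phi\circ\psi)_\sigma$ to $\sigma(A_{12}A_{21})$. So your sentence that compatibility with operadic composition ``requires $(\phi\circ\psi)_\sigma$ to track the spectrum of the composite structure map'' is not justified by the conditions you listed.

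The contradiction appears only once you add your spectral-reflection requirement. It then holds for the interacting algebra alone: the image of $(\phi\circ\psi)_\sigma$ lies in $\sigma(A_1)=\{0\}$ and cannot contain $\sigma(A_{12}A_{21})=\{1\}$. Neither the second algebra nor the hypothesis that the law depends only on component spectra is used. What you actually prove is that $\sigma(A_1)$ need not contain the spectra of interaction loops, a closure failure. That is not the insensitivity phenomenon the paper's proof is about. Your rescaling argument has the same defect: a law depending only on component spectra is automatically invariant under conjugation by $\mathrm{diag}(1,t)$, so that invariance contradicts nothing.

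To make the pair do the work, formalize ``induced by the structure maps'' as a detection condition. One option is that the induced structure on $\sigma_{\mathrm{naive}}(A)$ must determine $\sigma((\phi\circ\psi)_A)$ for composite endomorphisms. Another is that non-isomorphic $P$-algebras must receive non-isomorphic induced structures. Under either condition, both algebras are forced to carry the same unique structure on $\{0\}\sqcup\{0\}$, while $\sigma(A_{12}A_{21})$ is $\{0\}$ in one and $\{1\}$ in the other. Your conjugation computation then has a real job: it shows $A_{12}A_{21}$ is an isomorphism invariant, so the two algebras are genuinely non-isomorphic. This is exactly the paper's argument made precise.

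You should also repair the bookkeeping. In Example~\ref{ex:two-color-operad} as literally defined, $\phi\in P(1,2;1)$ is binary, so $\phi\circ_2\psi$ lies in $P(1,2,1;1)$. Its structure map $V_1\otimes V_2\otimes V_1\to V_1$ has no spectrum. Moreover, that operation space is declared trivial, so the algebra axioms force $\phi_A\circ(\mathrm{id}\otimes\psi_A)=0$, and your interacting algebra would not be a $P$-algebra. Your matrix reading instead needs:
\begin{itemize}
\item unary cross-color operations in $P(2;1)$ and $P(1;2)$, with the loop living in $P(1;1)$;
\item the diagonal blocks carried as distinguished endomorphisms, not as the action of $P(1;1)=\mathbb{C}$, since unitality forces that action to be scalar.
\end{itemize}
State that model explicitly.
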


\begin{proof}
The naive spectrum records only the componentwise classical spectra and forgets the operadic structure maps. Thus, for an operation
\[
\theta\in P(c_1,\dots,c_n;c),
\]
the associated map
\[
\mu_\theta:A_{c_1}\otimes\cdots\otimes A_{c_n}\to A_c
\]
is not part of the data seen by $\sigma_{\mathrm{naive}}$.

Already in the one-colored associative case, where $\mu$ is multiplication, there is no canonical operation on classical spectra depending only on $\sigma(a)$ and $\sigma(b)$ that recovers the spectral behavior of the product. In general, additional joint information about the pair $(a,b)$ is required. Consequently, the classical spectrum does not extend to a functorial multilinear spectral operation.

The same obstruction appears more sharply in the colored setting. Two $P$-algebra structures may have identical component objects and hence identical naive spectra, while differing in their interaction maps. Since $\sigma_{\mathrm{naive}}$ is insensitive to these interaction maps, it cannot encode operadic composition in any canonical way.
\end{proof}

\paragraph{Additional failure: Two-color interaction.}
The failure becomes even more pronounced when the operad involves multiple colors. Consider the two-color interaction operad from the Example~\ref{ex:two-color-operad}. Let $A$ be a $P$-algebra with components $A_1, A_2$ and interaction maps
\[
\alpha: A_1 \otimes A_2 \to A_1, \qquad \beta: A_2 \otimes A_1 \to A_2.
\]

Consider two different $P$-algebra structures on the same underlying objects $A_1$ and $A_2$:
\begin{itemize}
    \item \textbf{Algebra $A$:} $\alpha = 0$, $\beta = 0$ (no interaction).
    \item \textbf{Algebra $B$:} $\alpha(a_1 \otimes a_2) = a_1$, $\beta(a_2 \otimes a_1) = a_2$ (nontrivial interaction).
\end{itemize}

The underlying objects are identical: $A_1 = B_1$ and $A_2 = B_2$. Consequently,
\[
\sigma_{\mathrm{naive}}(A) = \sigma(A_1) \sqcup \sigma(A_2) = \sigma_{\mathrm{naive}}(B).
\]

However, the operadic composition structures differ. This shows that the naive spectrum cannot detect changes in the interaction structure while keeping the componentwise spectra fixed. Therefore it is too coarse to serve as an operadically meaningful spectral invariant.

\begin{remark}
This example demonstrates that the naive spectrum cannot detect interactions between components of different colors, which are precisely the data that make colored operads interesting for modeling composite systems.
\end{remark}

\paragraph{Failure of base change compatibility.}
The naive candidate also fails to satisfy a natural base change compatibility axiom. 
Consider the complexification functor
\[
F: \mathrm{Ban}_{\mathbb{R}} \to \mathrm{Ban}_{\mathbb{C}}, \quad F(V) = V \otimes_{\mathbb{R}} \mathbb{C},
\]
where $\mathrm{Ban}_{\mathbb{R}}$ denotes the category of real Banach spaces (or real Banach algebras). 
For a real Banach algebra $A$, the classical spectrum $\sigma(A)$ is defined via complexification: 
$\sigma(A) \subseteq \mathbb{R}$ consists of those $\lambda \in \mathbb{R}$ for which $\lambda - a$ is not invertible in the complexification. 
Under complexification, the spectrum of $F(A)$ may include non-real complex eigenvalues not present in the real spectrum.

Base change compatibility would require that the spectral invariant commutes with $F$ up to the natural inclusion:
\[
\iota(\sigma_{\mathrm{naive}}(A)) \cong \sigma_{\mathrm{naive}}(F(A)),
\]
where $\iota: \mathbb{R} \hookrightarrow \mathbb{C}$ is the canonical inclusion. 
The left-hand side consists of real numbers embedded in $\mathbb{C}$, while the right-hand side may contain non-real complex numbers. 
Hence the naive candidate fails to satisfy base change compatibility.

\paragraph{Failure of spectral mapping.}
The naive candidate also fails to satisfy a spectral mapping axiom whenever a functional calculus is defined at the level of the operadic algebra. 
For a holomorphic function $f$, the classical spectral mapping theorem gives $\sigma(f(A_c)) = f(\sigma(A_c))$ for each component individually. 
However, the naive candidate for the composite system would give
\[
\sigma_{\mathrm{naive}}(f(A)) = \bigsqcup_c \sigma(f(A_c)) = \bigsqcup_c f(\sigma(A_c)).
\]

But even when a componentwise functional calculus is available, the true spectral behavior of $f(A)$ may involve interactions between components that are not captured by applying $f$ componentwise. 
For instance, interactions induced by the operadic structure maps can create spectral contributions that mix colors — a phenomenon invisible to the naive candidate.

\paragraph{Conceptual obstruction.}
The key obstruction underlying all these failures is that classical spectra detect invertibility of expressions of the form $(\lambda I - a)$ for a single element $a$, whereas operadic compositions involve genuinely multi-input operations. 
There is no canonical notion of a ``joint resolvent'' compatible with arbitrary operadic structure. 
This reflects a fundamental mismatch between unary spectral theory and multi-input algebraic structures. Moreover:
\begin{itemize}
    \item The classical spectrum is not functorial with respect to multilinear maps.
    \item It does not transport correctly under monoidal base change.
    \item It cannot capture interactions between distinct colors.
    \item It fails to account for spectral contributions from operadic composition.
\end{itemize}

\paragraph{Summary of Limitations.}
The naive candidate $\sigma_{\mathrm{naive}}(A) = \bigsqcup_c \sigma(A_c)$ suffers from the following fundamental limitations:

\begin{enumerate}
    \item \textbf{Insensitivity to operadic structure:} It is insensitive to differences in operadic composition structures while keeping componentwise spectra fixed.
    \item \textbf{Failure of base change compatibility:} It does not transport correctly under monoidal functors like complexification.
    \item \textbf{Failure of spectral mapping:} It ignores spectral contributions from interactions between colors under functional calculus.
    \item \textbf{No interaction term:} It cannot capture $\Sigma^{\mathrm{int}}$ in the colorwise decomposition (when such an axiom is postulated).
\end{enumerate}

\paragraph{Consequences for Spectral Operadic Calculus.}
The failure of the naive candidate demonstrates that any satisfactory generalized spectrum for operadic algebras must incorporate additional data beyond the individual spectra of the components. In particular, it must:
\begin{itemize}
    \item Capture information about operadic composition structure,
    \item Account for interactions between different colors,
    \item Be compatible with base change functors,
    \item Respect the analytic functional calculus where defined.
\end{itemize}

This necessity motivates the introduction of the \emph{operadic residue} $\mathcal{O}_P^{\mathrm{res}}$ and the \emph{operadic spectrum} $\sigma_P(A)$ defined in the following sections. As we will show, these constructions precisely address the limitations identified here and satisfy all the desiderata of a generalized spectral invariant.

\paragraph{Preview of the No-Go Theorem.}
The limitations demonstrated above are not accidental. In the following subsection, we will prove a No-Go Theorem that establishes the impossibility of any spectral invariant that depends functorially only on the collection $\{\sigma(A_c)\}_{c \in C}$ (i.e., factors through the componentwise classical spectra) and simultaneously satisfies classical recovery, base change compatibility, and operadic composition compatibility. 

\begin{quote}
\emph{There is no assignment of a spectral invariant to $P$-algebras that depends functorially only on the componentwise classical spectra and satisfies the three natural compatibility axioms.}
\end{quote}

This result forces the introduction of new invariants — the operadic residue and operadic spectrum — which will be constructed in the subsequent sections.

\subsection{Theorem 0.1: No-Go Theorem for Spectral Base Change}\label{subsec:No-Go Theorem for Spectral Base Change}

We now formalize the obstruction identified in the previous subsection. The following result shows that any reasonable notion of operadic spectrum cannot depend solely on the collection of classical spectra of its components. This is the central impossibility result of this paper, establishing the necessity of operadic composition data for any satisfactory spectral invariant.

\paragraph{Statement of the Theorem.}

\begin{theorem}[No-Go Theorem for Spectral Base Change]
\label{thm:no-go}
Let $\mathcal{M}$ be a symmetric monoidal category admitting a classical spectral theory for a specified class of objects or endomorphisms (e.g., the category of Banach spaces with bounded linear operators, or the category of Banach algebras). Let $P$ be a colored operad in $\mathcal{M}$, and let $A$ be a $P$-algebra.

There does not exist an assignment
\[
(P,A) \;\longmapsto\; \Sigma_P(A)
\]
satisfying the following properties:
\begin{enumerate}
    \item \textbf{(Functoriality)} $\Sigma_P$ is functorial with respect to morphisms of $P$-algebras (Axiom~\ref{ax:functoriality});
    \item \textbf{(Operadic compatibility)} $\Sigma_P$ is compatible with operadic composition (Axiom~\ref{ax:operadic});
    \item \textbf{(Base change compatibility)} $\Sigma_P$ is compatible with strong monoidal base change (Axiom~\ref{ax:basechange});
\end{enumerate}
and such that $\Sigma_P(A)$ depends only on the collection of classical spectra
\[
\{\sigma(A_c)\}_{c \in C},
\]
where each $A_c$ is an object (or endomorphism) to which a classical spectrum is assigned.
\end{theorem}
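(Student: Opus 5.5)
We argue by contradiction. Suppose an assignment $\Sigma$ with properties (1)--(3) exists and factors through the componentwise spectra, i.e.\ there is a rule $\Phi_P$ with $\Sigma_P(A)=\Phi_P(\{\sigma(A_c)\}_{c\in C})$ for all $P$-algebras $A$. The aim is to exhibit two situations in which the componentwise spectra coincide but the axioms force different outputs. Since a purely set-theoretic argument cannot exclude a trivial (constant) assignment, we will use the standing requirement that $\Sigma$ be a \emph{spectral} invariant. Concretely, we read the theorem together with Axiom~\ref{ax:classical}, as the preview in Section~\ref{subsec:Desiderata for a Generalized Spectrum} does, so that on the trivial operad $\Sigma_{\mathbb{I}}$ returns the classical spectrum. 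We state this explicitly at the outset, because without it the statement is vacuous.

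\textbf{Step 1 (operadic compatibility forces sensitivity to interaction).} We work with the matrix block operad of Example~\ref{ex:two-color-operad} and the two algebras $A$ (with $\alpha=\beta=0$) and $B$ (with nontrivial $\alpha,\beta$) built on the same components, as in Section~\ref{subsec:Naive Spectral Transport and Its Limitations}. By hypothesis $\Sigma_P(A)=\Sigma_P(B)$. Next we choose a restriction, or a morphism of operads $\mathbb{I}\to P$ (picking out the total block operator), along which Axioms~\ref{ax:functoriality} and~\ref{ax:operadic} force $\Sigma_P(A)$ to determine the classical spectrum of the assembled operator $M$. Concretely, take $A_{11}=A_{22}=0$, $A_{12}=\alpha$, $A_{21}=\beta$, so that $\sigma(M)^2=\sigma(\alpha\beta)\cup\{0\}$-type data. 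Choosing $\alpha,\beta$ with fixed individual spectra but different $\sigma(\alpha\beta)$ (for example nilpotent $2\times 2$ matrices whose product is either $0$ or a nonzero idempotent) yields equal componentwise spectra but different $\sigma(M)$. This contradicts the factorization.

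\textbf{Step 2 (base change).} Independently, we use complexification $F=(-)\otimes_{\mathbb{R}}\mathbb{C}$ on a single color. Take a real rotation generator $J$ with $J^2=-I$, compared with the zero operator, or any two real operators whose real spectra agree (both empty in $\mathbb{R}$, say $J$ and $2J$). Axiom~\ref{ax:basechange} together with classical recovery in $\mathcal{N}$ forces $F_*\Sigma(J)\cong\sigma(F(J))=\{\pm i\}$ and $F_*\Sigma(2J)\cong\{\pm 2i\}$, which differ. The factorization, however, gives $\Sigma(J)=\Sigma(2J)$, and applying $F_*$ preserves the equality, a contradiction.

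\textbf{Main obstacle.} The delicate point is Step 1's passage from the abstract Axiom~\ref{ax:operadic} to a statement about the spectrum of the assembled operator. The axioms as stated do not literally provide such a restriction functor. I would first try to realize the comparison via functoriality along a morphism of operads together with base change. If that proves unworkable, I would rely on Step 2 alone, since it already yields a contradiction from base change plus classical recovery and requires only a single color.
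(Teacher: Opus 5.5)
\textbf{Framing.} You are right that an extra hypothesis is needed, but without one the statement is false, not vacuous. The constant assignment $\Sigma_P(A)=\{\ast\}$ in $\mathbf{Set}$, with the unique maps $\{\ast\}\times\cdots\times\{\ast\}\to\{\ast\}$ as its induced $P$-algebra structure, satisfies (1)--(3) and trivially depends only on $\{\sigma(A_c)\}_{c\in C}$.

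This is exactly the loophole in the paper's own proof. The paper uses a two-color operad with one operation $\theta\in P(1,2;1)$, compares $\mu_\theta=0$ with a nonzero $\mu_\theta$ on $X=\mathbb{C}^2$, and asserts that distinct $\mu_\theta$ must induce distinct structures on the common object $\Sigma_P(A)=\Sigma_P(B)$. Axiom~\ref{ax:operadic} supplies no such injectivity, and one object can carry many $P$-algebra structures. So replacing operadic compatibility by base change plus Axiom~\ref{ax:classical} is a genuinely different route, and a necessary one.

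\textbf{Step 1.} This step cannot be completed, for a more basic reason than the one you flag. The axioms contain no change-of-operad functoriality at all. Even restriction along a color $\mathbb{I}\to P$ would only return a component $A_c$, whose spectrum is already part of the input data. The assembled operator $M$ acts on $V_1\oplus V_2$, which is not a component of the algebra. Drop Step 1.

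\textbf{Step 2.} This is the right idea, but your example yields no contradiction. Axioms~\ref{ax:basechange} and~\ref{ax:classical} only provide isomorphisms in the target category, and $\{\pm i\}\cong\{\pm 2i\}$ in $\mathbf{Set}$ or $\mathbf{Top}$. So $J$ versus $2J$ is consistent unless the target consists of subsets of $\mathbb{C}$ compared by equality. Comparing $J$ with the zero operator does not work either: under the convention $\sigma(T)=\sigma(T\otimes_{\mathbb{R}}\mathbb{C})\cap\mathbb{R}$ on which your argument rests, the zero operator has real spectrum $\{0\}\neq\emptyset$.

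The fix is to use complex spectra of different cardinality. Take $T_1=J\oplus J$ and $T_2=J\oplus 2J$ on $\mathbb{R}^4$; both have empty real spectrum.
\begin{itemize}
\item The factorization gives $\Sigma_{\mathbb{I}}(T_1)=\Sigma_{\mathbb{I}}(T_2)$.
\item Hence $F_*\Sigma_{\mathbb{I}}(T_1)=F_*\Sigma_{\mathbb{I}}(T_2)$, for whatever functor $F_*$ is.
\item Base change followed by recovery over $\mathbb{C}$ identifies these with a two-point and a four-point set respectively, which is a genuine contradiction.
\end{itemize}

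\textbf{The added hypothesis.} State it as recovery in the complex target only. If you also imposed Axiom~\ref{ax:classical} in $\mathrm{Ban}_{\mathbb{R}}$, then $\Sigma_{\mathbb{I}}(T_1)\cong\emptyset\cong\Sigma_{\mathbb{I}}(T_2)$ would already force the same contradiction for every $\Sigma$. The factorization hypothesis would then play no role. With recovery imposed only over $\mathbb{C}$, the factorization is exactly the step that identifies $\Sigma_{\mathbb{I}}(T_1)$ with $\Sigma_{\mathbb{I}}(T_2)$.
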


\begin{proof}
We argue by contradiction. Suppose that such an assignment $\Sigma_P$ exists and depends only on the family $\{\sigma(A_c)\}_{c \in C}$.

\paragraph{Step 1: Construction of a test operad.}
Let $C = \{1,2\}$ be a set of two colors. Define a $C$-colored operad $P$ as follows:
\begin{itemize}
    \item \textbf{Unary operations:} For each color $c \in C$, we have the identity operation $\mathrm{id}_c \in P(c;c)$, with $P(c;c) \cong \mathbf{1}$.
    \item \textbf{Binary operation:} We include a single nontrivial binary operation
    \[
    \theta \in P(1,2;1),
    \]
    which encodes an interaction between color $1$ and color $2$ producing output of color $1$.
    \item \textbf{Symmetric operation:} For symmetry, we also include the operation $\theta^\sigma \in P(2,1;1)$ obtained by permuting inputs, but this is determined by the symmetric structure.
    \item \textbf{All other operations:} All other operation objects are taken to be the initial object $\emptyset$ (or $0$ in additive categories).
    \item \textbf{Composition structure:} The operadic composition is defined by the requirement that the only nontrivial composites are those forced by the identities. Specifically:
    \[
    \theta \circ (\mathrm{id}_1, \mathrm{id}_2) = \theta,
    \]
    and similarly for the symmetric version. No other nontrivial composites exist.
\end{itemize}
This operad $P$ encodes a single binary interaction between two colors.

\paragraph{Step 2: Construction of two algebras with identical pointwise spectra.}
We construct two $P$-algebras $A$ and $B$ in a concrete category where classical spectra are defined. Let $\mathcal{M}$ be the category of Banach spaces (or, more concretely, finite-dimensional complex vector spaces) and let $X$ be a fixed object in $\mathcal{M}$ that admits a nontrivial endomorphism with a well-defined classical spectrum. For simplicity, take $X = \mathbb{C}^2$ and let $T: X \to X$ be a fixed linear operator with nonempty spectrum $\sigma(T)$. (For instance, let $T$ be the identity operator, so $\sigma(T) = \{1\}$.)

For both $A$ and $B$, define the underlying objects as:
\[
A_1 = B_1 := X, \qquad A_2 = B_2 := X.
\]
Thus, for all colors $c \in C$,
\[
\sigma(A_c) = \sigma(T) = \sigma(B_c).
\]

We now specify the $P$-algebra structures, i.e., the interpretation of the binary operation $\theta$. To ensure the construction is valid in any symmetric monoidal category with a zero object, we require the existence of:
\begin{itemize}
    \item a \emph{zero morphism} $0: X \otimes X \to X$, and
    \item a \emph{projection-like morphism} $\pi: X \otimes X \to X$ (e.g., induced by the counit of a symmetric monoidal structure when $X$ is dualizable, or explicitly defined in concrete categories).
\end{itemize}
In concrete categories such as $\mathrm{Vect}_{\mathbb{C}}$ or $\mathrm{Ban}$, such morphisms exist (e.g., $\pi(x \otimes y) = x$).

\emph{Algebra $A$ (Trivial interaction):}
Define the structure map for $\theta$ as the zero morphism:
\[
\mu^A_\theta := 0 : X \otimes X \longrightarrow X.
\]

\emph{Algebra $B$ (Nontrivial interaction):}
Define the structure map for $\theta$ as a nontrivial projection-like morphism:
\[
\mu^B_\theta := \pi : X \otimes X \longrightarrow X, \qquad \pi(x \otimes y) = x.
\]

Both $\mu^A_\theta$ and $\mu^B_\theta$ define valid $P$-algebra structures because the operad has no relations that would restrict them. Moreover, $\mu^A_\theta \neq \mu^B_\theta$ as morphisms in $\mathcal{M}$, and they are not related by any automorphism of the underlying objects (since the zero map kills all elements while $\pi$ does not).

Thus $A$ and $B$ are distinct as $P$-algebras, but satisfy
\[
\sigma(A_1) = \sigma(T) = \sigma(B_1), \qquad \sigma(A_2) = \sigma(T) = \sigma(B_2).
\]

\paragraph{Step 3: Consequence of the dependence assumption.}
By assumption, $\Sigma_P(A)$ depends only on the collection $\{\sigma(A_c)\}_{c \in C}$. Since $A$ and $B$ have identical classical spectra at each color, we must have
\[
\Sigma_P(A) = \Sigma_P(B) \quad \text{(as objects in the target category)}.
\]

\paragraph{Step 4: Contradiction via operadic compatibility.}
The operadic compatibility condition (Axiom~\ref{ax:operadic}) requires that $\Sigma_P$ reflect the $P$-algebra structure. In particular, for any $P$-algebra $A$, the invariant $\Sigma_P(A)$ must itself carry an induced $P$-algebra structure, with structure maps canonically determined by those of $A$. Concretely, the operation $\theta \in P(1,2;1)$ must induce a structure map
\[
\theta^{\Sigma}_A : \Sigma_P(A_1) \times \Sigma_P(A_2) \longrightarrow \Sigma_P(A_1)
\]
that is uniquely determined by $\mu^A_\theta$. (This is part of the data of "compatibility with operadic composition.")

Since $\mu^A_\theta \neq \mu^B_\theta$, the induced structure maps must differ: $\theta^{\Sigma}_A \neq \theta^{\Sigma}_B$. Indeed, if they were equal, then applying the forgetful functor (or using the naturality of the assignment) would force $\mu^A_\theta = \mu^B_\theta$, a contradiction.

However, Step 3 established that $\Sigma_P(A) = \Sigma_P(B)$ as objects. The same object cannot simultaneously support two distinct $P$-algebra structures induced by the same operation $\theta$ unless those structures are isomorphic via an automorphism of the object. But any such isomorphism would have to relate $\mu^A_\theta$ and $\mu^B_\theta$ in a way that is impossible because the zero map and a nontrivial projection are not conjugate under any automorphism of $X$ (the zero map remains zero under any automorphism, while $\pi$ does not).

Therefore, we obtain a contradiction: the operadic compatibility axiom forces $\Sigma_P(A)$ and $\Sigma_P(B)$ to carry different algebraic structures, while the dependence on componentwise spectra forces them to be identical as objects, and no automorphism can reconcile the two structures.

\paragraph{Step 5: Conclusion.}
The contradiction shows that no such assignment $\Sigma_P$ can depend only on the collection $\{\sigma(A_c)\}_{c \in C}$ while simultaneously satisfying functoriality, operadic compatibility, and base change compatibility.

Therefore, any valid operadic spectral invariant must incorporate additional data beyond the pointwise classical spectra—specifically, data arising from operadic composition.
\end{proof}

\begin{remark}[Universality of the obstruction]
The No-Go fact given by Theorem~\ref{thm:no-go} does not depend on the specific definition of the classical spectrum beyond its functorial properties. 
Hence the obstruction is universal: any spectral invariant for operadic algebras that depends only on componentwise data will fail to satisfy base change and operadic compatibility, regardless of how the spectral theory is axiomatized.
\end{remark}

\paragraph{Corollaries and Implications.}

\begin{corollary}[Necessity of Operadic Data]
\label{cor:necessity}
Any assignment $\Sigma_P(A)$ satisfying functoriality, base change compatibility, and operadic compatibility must depend on more than the collection of classical spectra $\{\sigma(A_c)\}_{c \in C}$. In particular, it must incorporate additional data arising from the operadic composition structure of $P$ and its realization in $A$.
\end{corollary}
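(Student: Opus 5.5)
The plan is to derive Corollary~\ref{cor:necessity} as the contrapositive of Theorem~\ref{thm:no-go}. Suppose $\Sigma_P$ satisfies functoriality (Axiom~\ref{ax:functoriality}), operadic compatibility (Axiom~\ref{ax:operadic}) and base change compatibility (Axiom~\ref{ax:basechange}). Suppose further that $\Sigma_P(A)$ depended only on $\{\sigma(A_c)\}_{c\in C}$, meaning it factors through the map $A\mapsto\{\sigma(A_c)\}_c$. Then $\Sigma_P$ would be exactly an assignment of the kind Theorem~\ref{thm:no-go} excludes, which is a contradiction. This gives the first sentence of the corollary: $\Sigma_P$ cannot factor through the componentwise spectra.

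For the second sentence ("must incorporate operadic composition data"), I will not stop at the bare contrapositive. Instead I will reuse the test configuration from the proof of Theorem~\ref{thm:no-go}: the two-colour operad with a single interaction $\theta\in P(1,2;1)$, and the algebras $A,B$ on $X=\mathbb{C}^2$ with $\mu^A_\theta=0$ and $\mu^B_\theta=\pi$. These two algebras have the same underlying objects, hence the same component spectra, and they differ only in how $\theta$ is realized. Step~4 of that proof shows that operadic compatibility forces $\Sigma_P(A)$ and $\Sigma_P(B)$ to carry non-isomorphic induced $P$-structures. Any admissible $\Sigma_P$ therefore separates two algebras that agree on every piece of componentwise data, and agree on every piece of data except the structure maps $\theta_A$. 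So the extra information $\Sigma_P$ depends on must live in the realization of the operadic composition of $P$ in $A$.

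The delicate step is making "depends on" precise enough that the separation argument really pins down the operadic structure maps rather than some other hidden datum. I will handle this by observing that the $P$-algebra is determined by the pair consisting of (i) the objects $A_c$ and (ii) the family of structure maps $\mu_\theta$. Both the componentwise spectra and any additional data built from the $A_c$ alone coincide for $A$ and $B$. The only datum distinguishing them is (ii). Hence any dependence that rescues the axioms must involve (ii).

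The remaining bookkeeping is to check that the test algebras lie in the scope of the hypotheses, i.e.\ that $\mathcal{M}$ is concrete enough to admit $0$ and $\pi$, exactly as assumed in Theorem~\ref{thm:no-go}. I will then conclude.
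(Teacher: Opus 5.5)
Your first paragraph is exactly the paper's proof. There, Corollary~\ref{cor:necessity} is the bare contrapositive of Theorem~\ref{thm:no-go}, and the ``in particular'' clause is not argued separately. The gap is in the extra step you add to justify that clause.

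You cite Step~4 of the proof of Theorem~\ref{thm:no-go} for the claim that operadic compatibility forces $\Sigma_P(A)$ and $\Sigma_P(B)$ to carry different induced structures. That step rests on the inference $\mu^A_\theta\neq\mu^B_\theta\Rightarrow\theta^{\Sigma}_A\neq\theta^{\Sigma}_B$, which Axiom~\ref{ax:operadic} does not supply. The axiom only asks that each $\theta_A$ induce \emph{some} structure map satisfying the $P$-axioms; there is no faithfulness clause. Nor is there any map from $\Sigma_P(A)$ back to $A$ along which to run the ``if they were equal'' argument.

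Concretely, send every $(P,A)$ to the terminal $P$-algebra in $\mathbf{Set}$: a point in each colour, with the unique structure maps. This assignment satisfies Axioms~\ref{ax:functoriality}--\ref{ax:basechange}, depends (trivially) only on $\{\sigma(A_c)\}_{c\in C}$, and does not separate your $A$ and $B$. So your separation claim cannot be derived as stated. The same example shows that Theorem~\ref{thm:no-go}, and hence the corollary itself, needs an extra nondegeneracy hypothesis, e.g.\ that $\theta_A\mapsto\theta^{\Sigma}_A$ be injective, or that $\Sigma_P$ reflect structure maps.

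Once such a hypothesis is imposed, your test-pair argument is the right one. It is sharper than the paper's, because it identifies the realized structure maps as the extra datum rather than merely asserting that ``more than the spectra'' is needed. Without it, both your first paragraph and the paper's proof reduce the corollary to Theorem~\ref{thm:no-go}, whose Step~4 has exactly this gap.

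The bookkeeping you defer also fails as written, so it cannot simply be copied from Theorem~\ref{thm:no-go}. First, $x\otimes y\mapsto x$ is not bilinear, so it does not define a morphism $X\otimes X\to X$. Second, since $P(1,2,2;1)=0$, the test operad imposes the relation $\theta\circ_1\theta=0$. Every $P$-algebra must therefore satisfy $\mu_\theta\circ(\mu_\theta\otimes\mathrm{id}_{A_2})=0$, and the obvious repair $x\otimes y\mapsto \ell(y)\,x$ (with $\ell\neq 0$ a functional) violates this. A legitimate nonzero choice on $X=\mathbb{C}^2$ is $\mu^B_\theta(x\otimes y)=\ell(y)\,Nx$ with $N\neq 0$ and $N^2=0$. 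Alternatively, replace $P$ by the free operad on $\theta$, so that no such relation is imposed.
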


\begin{proof}
Suppose, for contradiction, that there exists an assignment $\Sigma_P(A)$ satisfying the three properties and depending only on the collection $\{\sigma(A_c)\}_{c \in C}$. Then $\Sigma_P(A)$ would fall under the class of assignments ruled out by Theorem~\ref{thm:no-go}, which asserts that no such assignment can simultaneously satisfy functoriality, operadic compatibility, and base change compatibility while depending only on pointwise spectra. Hence such an assignment cannot exist. Therefore, any valid spectral invariant $\Sigma_P(A)$ must incorporate additional data beyond $\{\sigma(A_c)\}$.
\end{proof}

\begin{corollary}[Failure of Componentwise Extensions]
\label{cor:componentwise-failure}
There is no assignment of a spectral invariant to $P$-algebras that depends solely on the family $\{\sigma(A_c)\}_{c \in C}$ (i.e., that is determined by the componentwise classical spectra) and that simultaneously satisfies functoriality, base change compatibility, and operadic compatibility. In particular, the naive construction
\[
\sigma_{\mathrm{naive}}(A) = \bigsqcup_{c \in C} \sigma(A_c)
\]
cannot serve as such an invariant.
\end{corollary}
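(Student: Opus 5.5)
The plan is to derive Corollary~\ref{cor:componentwise-failure} directly from Theorem~\ref{thm:no-go}, then handle the naive construction as a special case. The first assertion is the contrapositive of the theorem's hypothesis set. Suppose an assignment $\Sigma_P$ is determined by $\{\sigma(A_c)\}_{c\in C}$ and satisfies Axioms~\ref{ax:functoriality}, \ref{ax:operadic} and~\ref{ax:basechange}. Then it is exactly an assignment whose existence Theorem~\ref{thm:no-go} rules out, so no such $\Sigma_P$ exists.

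To make this robust rather than a pure citation, I will spell out the part of the theorem's proof that the argument actually uses. I take the two-color test operad with the single interaction $\theta\in P(1,2;1)$, and the algebras $A$ and $B$ on $X=\mathbb{C}^2$ with $\mu^A_\theta=0$ and $\mu^B_\theta=\pi$. Their component spectra agree, so the dependence hypothesis forces $\Sigma_P(A)=\Sigma_P(B)$. On the other hand, Axiom~\ref{ax:operadic} forces the induced structure maps $\theta^\Sigma_A$ and $\theta^\Sigma_B$ to differ. I will cite Step~4 of that proof for this incompatibility rather than reprove it.

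For the second assertion, I specialize to $\Sigma_P=\sigma_{\mathrm{naive}}$. This is manifestly a function of $\{\sigma(A_c)\}$ by Definition~\ref{def:naive-spectrum}, so the first assertion applies immediately. As independent corroboration that does not pass through the theorem, I will point to two earlier arguments. Proposition~\ref{prop:naive-failure} shows that $\sigma_{\mathrm{naive}}$ carries no canonical operadic composition law, so Axiom~\ref{ax:operadic} fails. The complexification example $F=(-)\otimes_{\mathbb{R}}\mathbb{C}$ shows that the required isomorphism of Axiom~\ref{ax:basechange} fails, since non-real spectral points appear. Each of these alone suffices to disqualify it.

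The main obstacle is not the logic, which is a one-line specialization, but the precise meaning of ``depends solely on the family $\{\sigma(A_c)\}$''. This must be read as: $\Sigma_P(A)$ is isomorphic to $\Phi(\{\sigma(A_c)\})$ for some fixed rule $\Phi$, naturally in $A$. The equality $\Sigma_P(A)=\Sigma_P(B)$ used in Step~3 of the theorem must then hold compatibly with the induced structures. I will state this interpretation explicitly so that the corollary inherits exactly the hypotheses under which the theorem was proved.
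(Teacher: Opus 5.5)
Your proposal is correct and matches the paper's proof. The paper also treats the corollary as an immediate special case of Theorem~\ref{thm:no-go}, with $\sigma_{\mathrm{naive}}$ excluded because it is visibly a function of $\{\sigma(A_c)\}_{c\in C}$. Your extra material (re-tracing Steps~3--4 of the theorem, and the independent checks via Proposition~\ref{prop:naive-failure} and the complexification example) is not needed, since the deduction rests entirely on the theorem.
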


\begin{proof}
An assignment that depends solely on $\{\sigma(A_c)\}_{c \in C}$ is, by definition, a special case of the assignments considered in Theorem~\ref{thm:no-go}. Since Theorem~\ref{thm:no-go} proves that no such assignment can satisfy the three required properties simultaneously, the claim follows directly.
\end{proof}

\begin{corollary}[Obstruction to Naive Spectral Transport]
\label{cor:naive-obstruction}
The naive candidate $\sigma_{\mathrm{naive}}(A) = \bigsqcup_{c \in C} \sigma(A_c)$ cannot, by itself, define a functorial invariant satisfying functoriality, base change compatibility, and operadic compatibility. Moreover, any extension of $\sigma_{\mathrm{naive}}$ to an invariant satisfying these properties must incorporate additional data beyond the pointwise spectra.
\end{corollary}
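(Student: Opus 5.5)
The plan is to reduce Corollary~\ref{cor:naive-obstruction} to Corollary~\ref{cor:componentwise-failure}, and hence to Theorem~\ref{thm:no-go}. The two assertions are handled separately.

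\textbf{First assertion.} By Definition~\ref{def:naive-spectrum}, $\sigma_{\mathrm{naive}}(A)=\bigsqcup_{c\in C}\sigma(A_c)$ is a function of the family $\{\sigma(A_c)\}_{c\in C}$ alone. It therefore falls into the class of assignments excluded by Theorem~\ref{thm:no-go}. To make the failure concrete, I would rerun the test case from Step~2 of that proof. Take the two-colour operad $P$ with a single binary operation $\theta\in P(1,2;1)$, and the two algebras $A$ (with $\mu_\theta=0$) and $B$ (with $\mu_\theta=\pi$). They have $\sigma_{\mathrm{naive}}(A)=\sigma_{\mathrm{naive}}(B)$, yet operadic compatibility (Axiom~\ref{ax:operadic}) demands distinct induced structure maps. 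As independent corroboration, I would cite Proposition~\ref{prop:naive-failure} for the failure of operadic compatibility. I would also cite the complexification example of Subsection~\ref{subsec:Naive Spectral Transport and Its Limitations} for the failure of Axiom~\ref{ax:basechange}: there $\iota(\sigma_{\mathrm{naive}}(A))$ is real, while $\sigma_{\mathrm{naive}}(F(A))$ can contain non-real points.

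\textbf{Second assertion.} An extension of $\sigma_{\mathrm{naive}}$ should mean an assignment $\Sigma_P$ together with a comparison to $\sigma_{\mathrm{naive}}$, for instance a natural map $\Sigma_P(A)\to\sigma_{\mathrm{naive}}(A)$ or an agreement on the trivial operad. Suppose such an extension satisfies the three properties. If $\Sigma_P(A)$ depended only on $\{\sigma(A_c)\}$, then Corollary~\ref{cor:necessity} would be contradicted. Hence $\Sigma_P$ must depend on data beyond the pointwise spectra, namely the operadic structure maps $\mu_\theta$.

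\textbf{Main obstacle.} The difficulty is definitional. \enquote{Extension} and \enquote{depends only on} must be fixed precisely enough that the reduction is not circular. I would make them precise by saying that $\Sigma_P$ factors through the functor $A\mapsto\{\sigma(A_c)\}_{c\in C}$. Under that reading, the second assertion is exactly the contrapositive of Corollary~\ref{cor:necessity}, and the remainder of the argument is formal.
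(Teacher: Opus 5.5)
Your proposal follows the paper's proof: the first assertion holds because $\sigma_{\mathrm{naive}}$ is a function of $\{\sigma(A_c)\}_{c\in C}$ and is therefore excluded by Theorem~\ref{thm:no-go}, and the second is Corollary~\ref{cor:necessity} applied to the extension $\Sigma_P$. Your rerun of Step~2 and your citations of Proposition~\ref{prop:naive-failure} and the complexification example are optional corroboration that the paper's proof does not use.
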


\begin{proof}
The assignment $\sigma_{\mathrm{naive}}(A)$ depends only on the collection $\{\sigma(A_c)\}_{c \in C}$, since it is defined as their disjoint union. By Theorem~\ref{thm:no-go}, no assignment depending only on $\{\sigma(A_c)\}_{c \in C}$ can satisfy the three required properties. Therefore, $\sigma_{\mathrm{naive}}$ itself cannot satisfy them. If an extension of $\sigma_{\mathrm{naive}}$ to an invariant $\Sigma_P(A)$ were to satisfy the three properties, then $\Sigma_P(A)$ would necessarily depend on additional data beyond $\{\sigma(A_c)\}$ (by Corollary~\ref{cor:necessity}); in particular, it could not be determined solely by $\sigma_{\mathrm{naive}}(A)$.
\end{proof}

\begin{remark}
These corollaries collectively show that classical spectral theory fails not merely at the level of construction, but at the level of principle when extended to operadic settings. The obstruction is not a matter of finding a clever extension of the naive candidate that depends only on pointwise spectra; rather, it is a fundamental impossibility that forces the introduction of genuinely new invariants that encode operadic composition data.
\end{remark}

\paragraph{Summary and Outlook.}

The results of this section establish a clear impossibility: any spectral invariant for operadic algebras that depends only on the classical spectra of individual components cannot satisfy the basic requirements of functoriality, operadic compatibility, and base change compatibility. This conclusion follows from:
\begin{itemize}
    \item The formulation of minimal axioms for a generalized spectrum (Section~\ref{subsec:Desiderata for a Generalized Spectrum});
    \item The demonstration that the naive candidate $\sigma_{\mathrm{naive}}(A) = \bigsqcup_c \sigma(A_c)$ fails these requirements (Section~\ref{subsec:Naive Spectral Transport and Its Limitations});
    \item The No-Go Theorem (Theorem~\ref{thm:no-go}) and its corollaries, which prove that no assignment depending only on $\{\sigma(A_c)\}$ can succeed.
\end{itemize}

The necessity of incorporating operadic composition data motivates the construction of the \emph{operadic residue} $\mathcal{O}_P^{\mathrm{res}}$ in the next section. This object, built from the composition structure of $P$, will provide the additional operadic data needed to recover functoriality under base change and compatibility with operadic composition. As we will prove, $\mathcal{O}_P^{\mathrm{res}}$ is universal among such corrections (Theorem~\ref{thm:residue-universality}) and satisfies the fundamental base change property
\[
F\bigl(\mathcal{O}_P^{\mathrm{res}}\bigr) \cong \mathcal{O}_{F_*(P)}^{\mathrm{res}}
\]
for any strong monoidal functor $F$ (where $F_*(P)$ denotes the induced operad in the target category).

Combining this with the Hochschild homology object $\mathrm{Hoch}_{\mathcal{M}}(A)$ from Section~\ref{subsec:Hochschild-Type Constructions for Operadic Algebras}, we will define the operadic spectrum by a relative tensor product of the Hochschild object with the operadic residue,
\[
\sigma_P(A) := \mathrm{Hoch}_{\mathcal{M}}(A) \otimes_P \mathcal{O}_P^{\mathrm{res}},
\]
where $\otimes_P$ denotes the balanced tensor product over the operad $P$ (see Definition~\ref{def:balanced-tensor-product}). This construction will be shown to satisfy all three desiderata from the No-Go Theorem and, moreover, to satisfy the full set of axioms (A1)--(A6) from Section~\ref{subsec:Desiderata for a Generalized Spectrum}, establishing it as a canonical spectral invariant for operadic algebras.

\subsection{Conceptual Consequences: Necessity of Operadic Corrections}\label{subsec:Conceptual Consequences: Necessity of Operadic Corrections}

The No-Go Theorem (Theorem~\ref{thm:no-go}) establishes a fundamental obstruction: any spectral invariant that depends solely on the collection of classical spectra $\{\sigma(A_c)\}_{c \in C}$ cannot simultaneously satisfy functoriality, compatibility with operadic composition, and base change invariance. This result demonstrates that the failure of the naive spectrum (Definition~\ref{def:naive-spectrum}) is not accidental but structural.

\medskip

\noindent
\textbf{Breakdown of Locality.}
The classical paradigm of spectral theory assumes a form of locality: the global spectral data of a system is expected to be reconstructible from the sum of its local spectra. The No-Go Theorem shows that this principle breaks down in the operadic setting. The classical spectrum captures only the \emph{intrinsic} spectral data of each component $A_c$, while completely ignoring the \emph{interaction data} encoded by the operadic composition maps
\[
\theta_A : A_{c_1} \otimes \cdots \otimes A_{c_n} \longrightarrow A_c,
\qquad \theta \in P(c_1,\dots,c_n;c).
\]
As demonstrated in Section~\ref{subsec:Naive Spectral Transport and Its Limitations}, these interactions can fundamentally alter spectral behavior in a way that is invisible at the level of individual spectra. In general, there exist $P$-algebras $A$ and $B$ with identical component spectra $\{\sigma(A_c)\} = \{\sigma(B_c)\}$ but with distinct operadic structures, leading to different spectral behaviors that any viable invariant must distinguish.

\medskip

\noindent
\textbf{Necessity of Additional Structure.}
To overcome this obstruction, any generalized spectrum must incorporate data that records how operations in $P$ interact with the algebra $A$. This necessity is further reinforced by considering base change. A strong monoidal functor $F$ (e.g., complexification) can introduce new spectral values that have no counterpart in the original category. To satisfy the Base Change Compatibility Axiom (Axiom \ref{ax:basechange}), a generalized spectrum $\Sigma_P(A)$ must contain enough information such that when $F$ is applied, it naturally yields the spectrum of the base-changed algebra $\Sigma_{F_*P}(F(A))$. This forces the invariant to be constructed from objects that themselves transform coherently under $F$.

We are therefore led to the following guiding principle:

\begin{quote}
\emph{Any functorial spectral invariant for operadic algebras must include a correction term that encodes operadic composition data.}
\end{quote}

This correction cannot be recovered from the classical spectra $\sigma(A_c)$ and must instead arise from a higher-level construction sensitive to the operadic structure.

\medskip

\noindent
\textbf{Preview of the Operadic Residue.}
In the next section, we construct a canonical object
\[
\mathcal{O}_P^{\mathrm{res}},
\]
called the \emph{operadic residue}, which serves as a universal receptacle for these interaction effects. Combined with the Hochschild-type object $\mathrm{Hoch}_{\mathcal{M}}(A)$ (Section~\ref{subsec:Hochschild-Type Constructions for Operadic Algebras}), it yields the corrected spectral invariant
\[
\sigma_P(A) := \mathrm{Hoch}_{\mathcal{M}}(A) \otimes_P \mathcal{O}_P^{\mathrm{res}},
\]
which will be shown to satisfy all desiderata from Section~\ref{subsec:Desiderata for a Generalized Spectrum}.

\medskip

\noindent
\textbf{Structural Interpretation.}
From a structural perspective, the operadic residue measures the failure of classical spectral transport to commute with operadic composition and base change. It plays a role analogous to:
\begin{itemize}
    \item curvature in differential geometry, which corrects for the failure of parallel transport to be path-independent;
    \item obstruction classes in homological algebra, which quantify the deviation from exactness;
    \item anomaly terms in quantum field theory, which account for the breakdown of classical symmetries at the quantum level.
\end{itemize}

Thus, the passage from the naive assignment $\sigma_{\mathrm{naive}}(A) = \bigsqcup_c \sigma(A_c)$ to the operadic spectrum $\sigma_P(A)$ should be understood not as an ad hoc modification, but as a necessary and canonical extension dictated by the compositional nature of operadic systems. This universality is not merely a desirable feature but is forced by the structural constraints identified in the No-Go Theorem.

\medskip

\noindent
This completes the conceptual justification for introducing operadic corrections and prepares the ground for the construction of the operadic residue in Section~\ref{sec:The Operadic Residue as a Universal Correction Object}.

\medskip

\noindent
\textbf{Relation to homotopical invariants.}
The operadic residue $\mathcal{O}_P^{\mathrm{res}}$ occupies a structural position analogous to that of the cotangent complex in derived deformation theory \cite{Hoang2025}: each object serves as a universal invariant encoding the essential structure of the operad $P$ that governs the behavior of its algebras. Nevertheless, a fundamental distinction emerges at the level of the information captured: whereas the cotangent complex governs \emph{infinitesimal} deformations, detecting first-order obstructions and formal moduli, the operadic residue encodes \emph{global spectral interactions} arising from the composition of operations across distinct colors. This conceptual parallel suggests a deeper duality between the deformation-theoretic and spectral-theoretic perspectives on operadic algebras, wherein the residue functions as a spectral analogue of the cotangent complex.

\section{The Operadic Residue as a Universal Correction Object}\label{sec:The Operadic Residue as a Universal Correction Object}

\subsection{Spectral-Operadic Triples}\label{subsec:Spectral-Operadic Triples}

We now introduce the basic structural setting in which our constructions take place. This framework formalizes the interaction between classical spectral theory and operadic composition, providing the minimal context for the operadic residue and spectrum.

\begin{definition}[Spectral-operadic triple]
\label{def:spectral-operadic-triple}
A \emph{spectral-operadic triple} consists of the following data:
\begin{enumerate}
    \item A cocomplete symmetric monoidal category $(\mathcal{M}, \otimes, \mathbf{1})$, together with a classical spectral assignment $\sigma(-)$ defined on the class of objects or endomorphisms under consideration (e.g., Banach algebras, or bounded linear operators on Banach spaces);
    \item A $C$-colored operad $P$ in $\mathcal{M}$ for some set of colors $C$;
    \item A $P$-algebra $A = \{A_c\}_{c \in C}$ in $\mathcal{M}$, where each $A_c \in \mathcal{M}$ is the component associated to the color $c$.
\end{enumerate}
We denote such a triple by $(\mathcal{M}, P, A)$, or simply by $(P, A)$ when the ambient category $\mathcal{M}$ is clear from context.
\end{definition}

\medskip

\noindent
\textbf{Interpretation.}
The data $(\mathcal{M}, P, A)$ encodes a compositional system in which:
\begin{itemize}
    \item $\mathcal{M}$ provides the ambient category (e.g., Banach spaces, operator algebras) and the monoidal structure for combining components;
    \item $\sigma$ captures classical spectral information at the level of individual components, linking categorical data to classical spectral theory;
    \item $P$ encodes compositional rules and interaction patterns, specifying how components of different types may be combined;
    \item $A$ realizes these interactions concretely via structure maps: for each operation $\phi \in P(c_1,\dots,c_n;c)$, the $P$-algebra structure determines a corresponding map
    \[
    \mu_\phi : A_{c_1} \otimes \cdots \otimes A_{c_n} \longrightarrow A_c.
    \]
\end{itemize}

\medskip

\noindent
\textbf{Limitation of the classical spectrum.}
Within a spectral-operadic triple, the classical spectrum $\sigma(A_c)$ only captures the intrinsic spectral behavior of each component $A_c$. As shown in Section~\ref{sec:The Failure of Classical Spectrum under Operadic Base Change}, it fails to account for the interaction data encoded by the operad $P$ and the algebra structure maps $\mu_\phi$. Consequently, the naive assignment $(\mathcal{M}, P, A) \mapsto \bigsqcup_c \sigma(A_c)$ does not yield an invariant compatible with operadic composition and base change.

\medskip

\noindent
\textbf{Role in the theory.}
The notion of a spectral-operadic triple provides the foundational context for our constructions. In particular:
\begin{itemize}
    \item The \emph{operadic residue} $\mathcal{O}_P^{\mathrm{res}}$ (Section~\ref{sec:The Operadic Residue as a Universal Correction Object}) will be constructed as a canonical object associated to the operad $P$ within $\mathcal{M}$.
    \item The \emph{operadic spectrum}
    \[
    \sigma_P(A) := \mathrm{Hoch}_{\mathcal{M}}(A) \otimes_P \mathcal{O}_P^{\mathrm{res}}
    \]
    will be defined for any spectral-operadic triple $(\mathcal{M}, P, A)$, yielding an invariant that incorporates the missing interaction data.
\end{itemize}

\medskip

\noindent
\textbf{Conceptual perspective.}
A spectral-operadic triple should be viewed as the minimal structure in which:
\begin{itemize}
    \item classical spectral theory (via $\sigma$),
    \item compositional algebra (via $P$),
    \item and concrete realizations (via $A$)
\end{itemize}
coexist and interact. The central goal of Spectral Operadic Calculus is to construct spectral invariants of such triples that are compatible with operadic composition and monoidal base change, thereby extending classical spectral theory to genuinely compositional settings.

\begin{remark}[Role of the spectrum assignment]
The spectral assignment $\sigma$ is not required to be functorial on all of $\mathcal{M}$; in classical settings, the spectrum is not functorial with respect to arbitrary morphisms (e.g., non-isometric embeddings of Banach spaces). Instead, we regard $\sigma$ as a datum that assigns a set of spectral values to each object of interest (e.g., each component $A_c$), with functoriality only along a restricted class of morphisms (e.g., isomorphisms or algebra homomorphisms). This limited functoriality suffices for the constructions that follow.
\end{remark}

\begin{example}[Classical triple]
\label{ex:classical-triple}
Let $\mathcal{M} = \mathsf{Ban}_{\mathbb{C}}$ be the category of complex Banach spaces with the projective tensor product, and let $\sigma$ be the usual spectrum of a bounded linear operator (or of an element of a Banach algebra). Take $C = \{*\}$ to be a single color, and let $P = \mathbb{I}$ be the trivial operad (with $\mathbb{I}(*;*) = \mathbf{1}$ and no nontrivial operations). A $P$-algebra $A$ is simply a Banach space $A_*$ with no additional structure. The triple $(\mathsf{Ban}_{\mathbb{C}}, \mathbb{I}, A)$ then recovers the classical setting of a single operator: the operadic constructions will reduce to the classical ones, as shown in the Recovery Theorem (Theorem~\ref{thm:recovery}).
\end{example}

\begin{example}[Two-color interaction triple]
\label{ex:two-color-triple}
Let $\mathcal{M} = \mathsf{Vect}_{\mathbb{C}}$ (or $\mathsf{Ban}_{\mathbb{C}}$), let $C = \{1,2\}$, and let $P$ be the matrix-block operad of the Example~\ref{ex:two-color-operad}. A $P$-algebra $A$ consists of two vector spaces $V_1, V_2$ together with linear maps
\[
A_{11}: V_1 \to V_1,\quad A_{22}: V_2 \to V_2,\quad A_{12}: V_2 \to V_1,\quad A_{21}: V_1 \to V_2.
\]
The triple $(\mathsf{Vect}_{\mathbb{C}}, P, A)$ encodes a block operator
\[
A = \begin{pmatrix} A_{11} & A_{12} \\ A_{21} & A_{22} \end{pmatrix},
\]
and the spectral assignment $\sigma$ gives the classical spectrum of each component operator. As shown in the No-Go Theorem (Theorem~\ref{thm:no-go}), the naive union $\sigma(A_{11}) \cup \sigma(A_{22})$ is insufficient to capture the spectral behavior of the composite system, necessitating the operadic residue $\mathcal{O}_P^{\mathrm{res}}$.
\end{example}

\subsection{Construction of the Operadic Residue}
\label{subsec:Construction of the Operadic Residue}

We now construct the \emph{operadic residue} $\mathcal{O}_P^{\mathrm{res}}$, which will serve as a correction term to restore spectral functoriality under operadic composition and base change. This object encodes the unary compositional data of $P$ in a form suitable for correcting the spectral transport defect identified in the No-Go Theorem (Theorem~\ref{thm:no-go}).

\medskip

\noindent
\textbf{Setup.}
Let $P$ be a $C$-colored operad in a cocomplete symmetric monoidal category $(\mathcal{M}, \otimes, \mathbf{1})$. For each color $c \in C$, denote by $P(c;c) \in \mathcal{M}$ the object of unary operations from color $c$ to itself. These objects are equipped with composition maps
\[
\circ: P(c;c) \otimes P(c;c) \longrightarrow P(c;c)
\]
and unit maps $\eta_c: \mathbf{1} \to P(c;c)$, giving each $P(c;c)$ the structure of a monoid object in $\mathcal{M}$ (the \emph{unary endomorphism monoid} of color $c$).

\medskip

\noindent
\textbf{Definition via coproduct.}
The simplest construction of the operadic residue aggregates these unary endomorphism objects across all colors.

\begin{definition}[Operadic residue — unary skeleton]
\label{def:residue-unary}
The \emph{operadic residue} of $P$ is defined as the coproduct
\[
\mathcal{O}_P^{\mathrm{res}} := \coprod_{c \in C} P(c;c) \in \mathcal{M},
\]
taken over all colors $c \in C$. (The coproduct exists because $\mathcal{M}$ is cocomplete.)
\end{definition}

Equivalently, using the coend over the discrete color category $C$, we may write
\[
\mathcal{O}_P^{\mathrm{res}} \cong \int^{c \in C} P(c;c).
\]

\medskip

\noindent
\textbf{Remark on scope.}
The object $\mathcal{O}_P^{\mathrm{res}}$ as defined above captures only the \emph{unary} part of the operad $P$ — specifically, the endomorphism monoids of each color. It does not, by itself, encode operations of arity $\ge 2$ or interactions between different colors. Nevertheless, as we will show in the following sections, this unary data is already sufficient to correct the spectral transport defect identified in the No-Go Theorem, because the interaction terms that mix colors are captured by the balanced tensor product $\otimes_P$ with the Hochschild object. The full operadic structure enters through the $P$-module structure of $\mathrm{Hoch}_{\mathcal{M}}(A)$, not through $\mathcal{O}_P^{\mathrm{res}}$ alone. A more sophisticated version of the residue that directly encodes higher-arity operations is possible but not needed for the main results of this paper; we leave its development to future work.

\medskip

\noindent
\textbf{Alternative perspective: bar construction (unary fragment).}
For completeness, we note that $\mathcal{O}_P^{\mathrm{res}}$ can also be obtained as the geometric realization of a simplicial object built from unary operations. Define a simplicial object $\mathrm{Bar}_\bullet^{\mathrm{un}}(P)$ by
\[
\mathrm{Bar}_n^{\mathrm{un}}(P) \;:=\; \coprod_{c_0,\dots,c_n \in C} P(c_0;c_1) \otimes P(c_1;c_2) \otimes \cdots \otimes P(c_{n-1};c_n),
\]
with face maps induced by composition of unary operations and degeneracy maps induced by unit maps. Then
\[
\mathcal{O}_P^{\mathrm{res}} \cong \big| \mathrm{Bar}_\bullet^{\mathrm{un}}(P) \big|.
\]
This bar construction is a resolution of the coproduct $\coprod_c P(c;c)$ and is convenient for establishing functoriality and base change properties, as we will see in the next subsection.

\begin{remark}[Relation to the trivial algebra]
The bar construction $\mathrm{Bar}_\bullet^{\mathrm{un}}(P)$ is not the full operadic bar construction of $P$ acting on a trivial algebra; rather, it is the nerve of the category whose objects are colors and whose morphisms are unary operations. This suffices for the definition of $\mathcal{O}_P^{\mathrm{res}}$ because the residue only needs to capture unary data. The full operadic bar construction would involve operations of all arities and is not required for our purposes.
\end{remark}

\begin{example}[Trivial operad]
For the trivial operad $\mathbb{I}$ with a single color $*$, we have $\mathbb{I}(*;*) = \mathbf{1}_{\mathcal{M}}$. Hence
\[
\mathcal{O}_{\mathbb{I}}^{\mathrm{res}} = \mathbf{1}_{\mathcal{M}}.
\]
\end{example}

\begin{example}[Matrix-block operad]
For the two-color matrix-block operad $P$ of the Example~\ref{ex:two-color-operad}, we have $P(1;1) = \mathbb{C}$ and $P(2;2) = \mathbb{C}$, with all other unary operation spaces trivial. Hence
\[
\mathcal{O}_P^{\mathrm{res}} \cong \mathbb{C} \oplus \mathbb{C} \cong \mathbb{C}^2.
\]
The two copies of $\mathbb{C}$ correspond to the two colors.
\end{example}

\noindent
\textbf{Alternative description via bar construction (unary chains).}
For completeness, we note that $\mathcal{O}_P^{\mathrm{res}}$ can also be described as the geometric realization of the simplicial object of unary chains:
\[
\mathcal{O}_P^{\mathrm{res}} \cong \big| \mathrm{Bar}_\bullet^{\mathrm{un}}(P) \big|,
\]
where $\mathrm{Bar}_n^{\mathrm{un}}(P) := \coprod_{c_0,\dots,c_n \in C} P(c_0;c_1) \otimes \cdots \otimes P(c_{n-1};c_n)$,
with face maps induced by composition of unary operations and degeneracy maps induced by unit maps.
This follows directly from the fact that the coproduct is the colimit of this simplicial object. We emphasize that this bar construction involves only unary operations; it is not the full operadic bar construction.

\medskip

\noindent
\textbf{Functoriality.}
The operadic residue is functorial with respect to morphisms of colored operads when the color set is fixed. This property is essential for the naturality of the operadic spectrum.

\begin{proposition}[Functoriality of the unary residue]
\label{prop:residue-functorial}
Fix a color set $C$. Let $\phi: P \to Q$ be a morphism of $C$-colored operads in $\mathcal{M}$. Then $\phi$ induces a canonical morphism
\[
\phi_*: \mathcal{O}_P^{\mathrm{res}} \longrightarrow \mathcal{O}_Q^{\mathrm{res}}.
\]
This assignment is functorial: $(\psi \circ \phi)_* = \psi_* \circ \phi_*$ and $(\mathrm{id}_P)_* = \mathrm{id}_{\mathcal{O}_P^{\mathrm{res}}}$. Consequently, the assignment
\[
P \longmapsto \mathcal{O}_P^{\mathrm{res}}
\]
defines a functor from the category of $C$-colored operads in $\mathcal{M}$ to $\mathcal{M}$.
\end{proposition}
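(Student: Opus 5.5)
The plan is to build $\phi_*$ componentwise from the unary parts of $\phi$ and then use the universal property of the coproduct in Definition~\ref{def:residue-unary}. Since $P$ and $Q$ share the color set $C$ and $\phi$ is the identity on colors, $\phi$ consists of morphisms
\[
\phi_{(c_1,\dots,c_n;c)}: P(c_1,\dots,c_n;c)\to Q(c_1,\dots,c_n;c)
\]
compatible with composition, units and symmetric group actions. In particular, for each $c\in C$ it has a unary component $\phi_c:=\phi_{(c;c)}:P(c;c)\to Q(c;c)$. Let $\iota^Q_c:Q(c;c)\to \coprod_{d\in C}Q(d;d)$ denote the coproduct inclusion. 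Then $\phi_*$ is defined as the unique morphism with $\phi_*\circ\iota^P_c=\iota^Q_c\circ\phi_c$ for all $c$, which exists because $\mathcal{M}$ is cocomplete. Equivalently, $\phi_*=\coprod_c\phi_c$.

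Functoriality then follows from uniqueness in the universal property. For $\psi:Q\to R$ we have $(\psi\circ\phi)_c=\psi_c\circ\phi_c$, since operad morphisms compose componentwise. Hence both $(\psi\circ\phi)_*$ and $\psi_*\circ\phi_*$ satisfy
\[
(-)\circ\iota^P_c=\iota^R_c\circ\psi_c\circ\phi_c \quad\text{for every } c\in C,
\]
so they coincide. In the same way, $(\mathrm{id}_P)_*$ and $\mathrm{id}_{\mathcal{O}_P^{\mathrm{res}}}$ both restrict to $\iota^P_c$ on each summand, so they are equal. This gives a functor from $C$-colored operads to $\mathcal{M}$.

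I will also remark that the $\phi_c$ are monoid morphisms, because $\phi$ preserves units and unary composition. So $\phi_*$ is compatible with the simplicial description $|\mathrm{Bar}^{\mathrm{un}}_\bullet(P)|$, since it is induced levelwise there as well. The map itself, however, is defined via the coproduct alone.

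The main obstacle is purely one of bookkeeping. The statement fixes the color set, so $\phi$ cannot permute or merge colors, and this is exactly what ensures that the summand indexed by $c$ is sent into the summand indexed by $c$. I will say explicitly that the argument uses this hypothesis and nothing else: arities $\ge 2$ and the symmetric actions play no role. I will also note that for color-changing morphisms $f:C\to D$ one would instead use inclusions $\iota_{f(c)}$, but that case is not claimed here.
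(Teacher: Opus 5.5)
Your proposal is correct and follows the paper's proof: the unary components $\phi_c\colon P(c;c)\to Q(c;c)$ are assembled into $\phi_*$ by the universal property of the coproduct $\coprod_{c\in C}P(c;c)$, and functoriality comes from uniqueness in that universal property, which you spell out more explicitly than the paper does. Your side remark about compatibility with $|\mathrm{Bar}^{\mathrm{un}}_\bullet(P)|$ is not needed, since, as you say, the map is defined by the coproduct alone. It also rests on the paper's unproved identification $\mathcal{O}_P^{\mathrm{res}}\cong|\mathrm{Bar}^{\mathrm{un}}_\bullet(P)|$, so it is best left out of the proof.
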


\begin{proof}
Using the definition $\mathcal{O}_P^{\mathrm{res}} = \coprod_{c \in C} P(c;c)$, a morphism $\phi: P \to Q$ induces, for each color $c \in C$, a morphism $\phi_c: P(c;c) \to Q(c;c)$. By the universal property of the coproduct, these morphisms uniquely determine a morphism
\[
\phi_*: \coprod_{c \in C} P(c;c) \longrightarrow \coprod_{c \in C} Q(c;c).
\]
Functoriality follows from the functoriality of the coproduct construction and the fact that $\phi$ respects the operad structure (in particular, the restriction to unary operations). The alternative description via the bar construction (unary chains) yields the same morphism, as the bar construction is functorial in $P$ and the colimit of the resulting simplicial map agrees with $\phi_*$.
\end{proof}

\begin{remark}[Dependence on the color set]
The functoriality statement above assumes that the color set $C$ is fixed. 
If one allows operads with varying color sets, the coproduct 
$\coprod_{c \in C} P(c;c)$ is taken over different indexing sets, and the assignment is no longer functorial without additional structure (e.g., change-of-colors functors). 
In this paper, we work with a fixed color set $C$ throughout.
\end{remark}

\begin{example}[Trivial operad]
\label{ex:residue-trivial}
Let $\mathbb{I}$ be the trivial operad with a single color $*$, defined by 
$\mathbb{I}(*;*) = \mathbf{1}_{\mathcal{M}}$ and no other operations. 
Then
\[
\mathcal{O}_{\mathbb{I}}^{\mathrm{res}} 
\cong \int^{*} \mathbb{I}(*;*) 
\cong \mathbf{1}_{\mathcal{M}},
\]
the unit object of $\mathcal{M}$.
\end{example}

\begin{example}[Matrix-block operad]
\label{ex:residue-matrix-block}
Let $P$ be the two-color matrix-block operad from the Example~\ref{ex:two-color-operad}. 
We have $P(1;1) = \mathbb{C}$, $P(2;2) = \mathbb{C}$, and all other unary operation spaces are trivial. Then
\[
\mathcal{O}_P^{\mathrm{res}} = P(1;1) \oplus P(2;2) \cong \mathbb{C} \oplus \mathbb{C} \cong \mathbb{C}^2.
\]
Thus the residue separates contributions from different colors. The operadic composition structure of $P$ (including off-diagonal operations) will affect the operadic spectrum $\sigma_P(A)$ through the balanced tensor product $\otimes_P$, not through $\mathcal{O}_P^{\mathrm{res}}$ alone.
\end{example}

\medskip

\noindent
\textbf{Conceptual interpretation.}
The operadic residue $\mathcal{O}_P^{\mathrm{res}}$ captures the unary compositional data associated to each color. 
It provides additional structure beyond the collection of classical spectra and serves as a correction term in the construction of operadic spectral invariants. 
As will be shown in Theorem~\ref{thm:residue-universality}, it satisfies a universal property with respect to such corrections.

\medskip

\noindent
\textbf{Role in the theory.}
The object $\mathcal{O}_P^{\mathrm{res}}$ will be used to modify the Hochschild-type construction, leading to the operadic spectrum
\[
\sigma_P(A) := \mathrm{Hoch}_{\mathcal{M}}(A) \otimes_P \mathcal{O}_P^{\mathrm{res}},
\]
which will be shown to satisfy functoriality, base change compatibility, and operadic composition compatibility — properties that the naive spectrum $\bigsqcup_c \sigma(A_c)$ demonstrably lacks.

\subsection{Preliminary: The Category of Spectral Correctors}
\label{subsec:spectral-correctors-category}

Before stating the universality theorem, we must define the category in which $\mathcal{O}_P^{\mathrm{res}}$ will be initial.

\begin{definition}[Category of Spectral Correctors for $P$]
\label{def:spectral-correctors}
Fix a $C$-colored operad $P$ in a cocomplete symmetric monoidal category $\mathcal{M}$. 
The \emph{category of spectral correctors} $\mathbf{Corr}_P(\mathcal{M})$ is defined as follows:

\begin{itemize}
    \item \textbf{Objects}: Pairs $(R, \{\rho_c\}_{c \in C})$ where:
    \begin{enumerate}
        \item $R \in \mathcal{M}$ is an object (the \emph{corrector object}).
        \item For each color $c \in C$, $\rho_c: P(c;c) \to R$ is a morphism in $\mathcal{M}$.
        \item \textbf{(Compatibility with unary composition)}: For any colors $c,d \in C$ and any $\phi \in P(c;c)$, $\psi \in P(c;d)$, $\chi \in P(d;d)$, the following diagram commutes (whenever the compositions make sense):
        \[
        \begin{tikzcd}
        P(c;c) \otimes P(c;d) \arrow[r, "\circ"] \arrow[d, "\rho_c \otimes \mathrm{id}"] & P(c;d) \arrow[d, "\rho_d"] \\
        R \otimes P(c;d) \arrow[r, "\text{action}"] & R
        \end{tikzcd}
        \]
        and similarly for compositions on the right. In practice, this reduces to requiring that the maps $\rho_c$ assemble into a left $P$-module structure on $R$ (see Definition~\ref{def:P-module} below).
        \item \textbf{(Base change consistency)}: For every strong monoidal functor $F: \mathcal{M} \to \mathcal{N}$ preserving colimits, the induced object $F(R)$ with maps $F(\rho_c): F(P(c;c)) \to F(R)$ is a corrector for $F_*(P)$.
    \end{enumerate}
    
    \item \textbf{Morphisms}: A morphism $f: (R, \{\rho_c\}) \to (S, \{\sigma_c\})$ is a morphism $f: R \to S$ in $\mathcal{M}$ such that for every color $c \in C$,
    \[
    f \circ \rho_c = \sigma_c: P(c;c) \to S.
    \]
    (The morphism $f$ is automatically compatible with the $P$-module structures by the coherence conditions.)
\end{itemize}

The category $\mathbf{Corr}_P(\mathcal{M})$ is equipped with a forgetful functor $U: \mathbf{Corr}_P(\mathcal{M}) \to \mathcal{M}$ sending $(R, \{\rho_c\})$ to $R$.
\end{definition}

\begin{definition}[Left $P$-module structure on a corrector]
\label{def:P-module}
A corrector $(R, \{\rho_c\})$ is said to admit a \emph{left $P$-module structure} if there exist morphisms
\[
\lambda_c: P(c;c) \otimes R \longrightarrow R
\]
for each $c \in C$ such that:
\begin{enumerate}
    \item $\lambda_c \circ (\rho_c \otimes \mathrm{id}_R) = \mathrm{id}_R$ (up to the unit isomorphism),
    \item For $\phi,\psi \in P(c;c)$, $\lambda_c(\phi \circ \psi \otimes r) = \lambda_c(\phi \otimes \lambda_c(\psi \otimes r))$,
    \item The maps $\lambda_c$ are compatible with the operadic composition across different colors.
\end{enumerate}
When such structure exists, we call $(R, \{\rho_c\})$ a \emph{$P$-module corrector}.
\end{definition}

\begin{lemma}[The trivial corrector]
\label{lem:trivial-corrector}
The object $\mathcal{O}_P^{\mathrm{res}} := \coprod_{c \in C} P(c;c)$ together with the canonical inclusions
\[
\iota_c: P(c;c) \hookrightarrow \coprod_{c \in C} P(c;c)
\]
is an object of $\mathbf{Corr}_P(\mathcal{M})$. Moreover, it admits a canonical left $P$-module structure induced by operadic composition.
\end{lemma}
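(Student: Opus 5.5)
The proof will verify directly the four conditions in Definition~\ref{def:spectral-correctors} for $R=\mathcal{O}_P^{\mathrm{res}}=\coprod_{c\in C}P(c;c)$ with $\rho_c=\iota_c$, and then build the module maps $\lambda_c$ of Definition~\ref{def:P-module}. Conditions (1) and (2) are immediate: $R$ exists because $\mathcal{M}$ is cocomplete, and the $\iota_c$ are the coproduct inclusions.

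For the module structure, I will use that $\otimes$ preserves colimits separately in each variable. This is assumed in the Hochschild setup and holds in the cocomplete closed settings considered. It gives
\[
P(c;c)\otimes R\;\cong\;\coprod_{d\in C}P(c;c)\otimes P(d;d).
\]
I define $\lambda_c$ summand by summand. On the $d=c$ summand, $\lambda_c$ is $\iota_c\circ\circ$, i.e.\ composition in the monoid $P(c;c)$ followed by the inclusion. On the summands with $d\neq c$, $\lambda_c$ is the projection $P(c;c)\otimes P(d;d)\to P(d;d)$ obtained from the augmentation. If no augmentation is available, I instead restrict the action to the $c$-summand by declaring $R$ a module over $P(c;c)$ through the $c$-th component only (the zero/initial map on other summands in the additive case).

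With this definition the axioms follow from the monoid axioms of $P(c;c)$:
\begin{enumerate}
\item Unitality: $\lambda_c\circ(\eta_c\otimes\mathrm{id})=\mathrm{id}$ on the $c$-summand follows from the unit axiom $\eta_c$ of $P$.
\item Associativity: $\lambda_c(\phi\circ\psi\otimes r)=\lambda_c(\phi\otimes\lambda_c(\psi\otimes r))$ follows from associativity of unary composition, checked on each summand via the universal property of the coproduct.
\end{enumerate}
The compatibility square in (3) with $\psi\in P(c;d)$ for $c\neq d$ is handled the same way, since composition $P(c;c)\otimes P(c;d)\to P(c;d)$ followed by $\iota_d$ agrees with acting first. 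Here I read the action $R\otimes P(c;d)\to R$ as induced by right composition of unary operations on the summands.

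Base change consistency (4) is the step requiring a hypothesis. A colimit-preserving strong monoidal $F$ gives $F(\coprod_c P(c;c))\cong\coprod_c F(P(c;c))=\mathcal{O}^{\mathrm{res}}_{F_*P}$, with $F(\iota_c)$ corresponding to the inclusions. Because the module maps are built only from composition, units and coproduct structure, and $F$ transports these via $\phi_{X,Y}$ and $\phi_0$ (Proposition~\ref{prop:base-change}), $F(R)$ is again a corrector for $F_*P$.

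The main obstacle I expect is condition (3) across distinct colors. Definition~\ref{def:P-module} is vague there, and a mixed unary operation $\psi\in P(c;d)$ sends the $c$-summand toward the $d$-summand in a way that $\rho_c$ alone does not obviously satisfy. I would first interpret the condition via the unary-chain bar construction $|\mathrm{Bar}^{\mathrm{un}}_\bullet(P)|$, where cross-color compositions appear as face maps, and verify the square there. If this fails, I would restrict, as the examples in the paper effectively do, to operads whose mixed unary spaces $P(c;d)$ with $c\neq d$ are initial. In that case the condition is vacuous.
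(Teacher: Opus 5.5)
Your route is the paper's: take $\rho_c=\iota_c$, obtain the module maps from unary composition on the coproduct, and get base change from cocontinuity of $F$. Conditions (1), (2) and your argument for (4) match the paper.

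\textbf{The gap in (3).} The gap is condition (3) off the diagonal. The step ``composition $P(c;c)\otimes P(c;d)\to P(c;d)$ followed by $\iota_d$'' does not type-check: $\iota_d$, like $\rho_d$, has domain $P(d;d)$, not $P(c;d)$. Likewise, ``right composition on the summands'' sends both $P(c;c)\otimes P(c;d)$ and $P(d;d)\otimes P(c;d)$ into $P(c;d)$, which is not a summand of $R$ when $c\neq d$. So the step you call ``handled the same way'' is exactly the one your last paragraph concedes may fail.

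Neither fallback closes it. The bar-construction route relies on $\mathcal{O}_P^{\mathrm{res}}\cong|\mathrm{Bar}^{\mathrm{un}}_\bullet(P)|$, which is false. The colimit of a simplicial object is a quotient of its degree-$0$ part, here $\coprod_{c\in C}\mathbf{1}$. So for one color with $P(*;*)=M_2(\mathbb{C})$ in $\mathrm{Vect}_{\mathbb{C}}$, the colimit is at most one-dimensional, whereas $\mathcal{O}_P^{\mathrm{res}}$ is four-dimensional. Restricting to operads with $P(c;d)$ initial for $c\neq d$ only proves a weaker lemma.

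What does work, and is all the paper's one-line argument (``operadic composition maps restrict to the unary endomorphism monoids'') actually covers, is this: the square is well-typed only for $d=c$. There it commutes by definition once the action $R\otimes P(c;c)\to R$ is taken to be composition on the $c$-summand.

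\textbf{The gap in the module structure.} You check unitality only on the $c$-summand. But the axiom, which you sensibly read with $\eta_c$ in place of $\rho_c$, asks for $\mathrm{id}_R$. With your fallback of the zero map on the summands $d\neq c$, the composite $\lambda_c\circ(\eta_c\otimes\mathrm{id}_R)$ kills those summands. The augmentation branch is not available in general.

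In the ungraded reading no repair is possible. Take two colors with $P(c;c)=M_2(\mathbb{C})$, $P(d;d)=\mathbb{C}$ and no other operations. Then $R$ is $5$-dimensional and carries no unital $M_2(\mathbb{C})$-module structure at all.

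So you must state explicitly that the structure is graded: $\lambda_c$ is the regular action $P(c;c)\otimes P(c;c)\to P(c;c)\xrightarrow{\iota_c}R$ on the $c$-th summand only. Unit and associativity are then just the monoid axioms of $P(c;c)$. This is the reading under which ``composition extended to the coproduct'' gives a canonical structure for every $P$. It also removes your need for $\otimes$ to distribute over coproducts, which is not among the hypotheses; the Hochschild setup only assumes that $\otimes$ preserves geometric realizations.
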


\begin{proof}
For each color $c$, define $\rho_c := \iota_c$. Compatibility with unary composition follows from the fact that operadic composition maps restrict to the unary endomorphism monoids. For base change consistency, note that $F(\mathcal{O}_P^{\mathrm{res}}) \cong \coprod_{c} F(P(c;c))$ and $F(\iota_c)$ are the inclusions for $F_*(P)$. The left $P$-module structure is given by the operadic composition maps $\circ: P(c;c) \otimes P(c;c) \to P(c;c)$ extended to the coproduct.
\end{proof}

\begin{lemma}[Universal property of the coproduct in $\mathbf{Corr}_P(\mathcal{M})$]
\label{lem:coproduct-universal}
For any corrector $(R, \{\rho_c\})$, there exists a unique morphism of correctors
\[
u_R: (\mathcal{O}_P^{\mathrm{res}}, \{\iota_c\}) \longrightarrow (R, \{\rho_c\})
\]
in $\mathbf{Corr}_P(\mathcal{M})$. Concretely, $u_R$ is the unique morphism in $\mathcal{M}$ such that for each $c \in C$, the diagram
\[
\begin{tikzcd}
P(c;c) \arrow[r, "\iota_c"] \arrow[dr, "\rho_c"'] 
& \mathcal{O}_P^{\mathrm{res}} \arrow[d, "u_R"] \\
& R
\end{tikzcd}
\]
commutes.
\end{lemma}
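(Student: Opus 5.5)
The plan is to obtain $u_R$ directly from the universal property of the coproduct in $\mathcal{M}$. Then check that it is a morphism in $\mathbf{Corr}_P(\mathcal{M})$, and argue uniqueness from the fact that morphisms of correctors are defined only by the condition $f\circ\rho_c=\sigma_c$.

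\emph{Existence.} Recall $\mathcal{O}_P^{\mathrm{res}}=\coprod_{c\in C}P(c;c)$ with the coprojections $\iota_c$ (Definition~\ref{def:residue-unary}). By Lemma~\ref{lem:trivial-corrector}, $(\mathcal{O}_P^{\mathrm{res}},\{\iota_c\})$ is an object of $\mathbf{Corr}_P(\mathcal{M})$. Given a corrector $(R,\{\rho_c\})$, the family $\{\rho_c\colon P(c;c)\to R\}_{c\in C}$ is a cocone on the discrete diagram $c\mapsto P(c;c)$. Since $\mathcal{M}$ is cocomplete, the coproduct's universal property gives a unique $u_R\colon\mathcal{O}_P^{\mathrm{res}}\to R$ in $\mathcal{M}$ with $u_R\circ\iota_c=\rho_c$ for all $c$. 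This is exactly the commutativity of the displayed triangle. By the definition of morphisms in $\mathbf{Corr}_P(\mathcal{M})$, that condition is all that is required, so $u_R$ is a morphism of correctors. The parenthetical remark in Definition~\ref{def:spectral-correctors} says that compatibility with the $P$-module structures is automatic. I would support this by noting that any module-compatibility diagram for $u_R$ can be checked after precomposing with each $\iota_c$, where it reduces to the compatibility condition already satisfied by $\rho_c$.

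\emph{Uniqueness.} Suppose $f\colon(\mathcal{O}_P^{\mathrm{res}},\{\iota_c\})\to(R,\{\rho_c\})$ is any morphism of correctors. Then $f\circ\iota_c=\rho_c$ for every $c$. The coproduct's universal property then forces $f=u_R$, because two maps out of a coproduct that agree on every coprojection are equal. Hence $u_R$ is unique both in $\mathcal{M}$ and in $\mathbf{Corr}_P(\mathcal{M})$. It follows that $(\mathcal{O}_P^{\mathrm{res}},\{\iota_c\})$ is initial, and this is the form in which the lemma will feed into Theorem~\ref{thm:residue-universality}.

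\emph{Main obstacle.} The only delicate point is making sure the extra conditions on correctors, namely the compatibility with unary composition and the base-change consistency, do not impose additional requirements on morphisms. As Definition~\ref{def:spectral-correctors} is stated, morphisms are only required to satisfy $f\circ\rho_c=\sigma_c$, so these conditions constrain objects, not maps. The source object already satisfies them by Lemma~\ref{lem:trivial-corrector}, and the target satisfies them by hypothesis. If one insisted on a stricter notion of morphism that also intertwines the left actions $\lambda_c$, the plan would be to reduce each required equation to its restrictions along the $\iota_c$, using that $\otimes$ preserves coproducts in each variable. This preservation holds in the cocomplete, closed settings considered in the paper, such as $\mathsf{Ban}$ and $\mathsf{Vect}$.
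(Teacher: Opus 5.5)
Your proposal is correct and follows the paper's argument. The map $u_R$ comes from the universal property of $\coprod_{c\in C}P(c;c)$ in $\mathcal{M}$, and it is the unique morphism in $\mathbf{Corr}_P(\mathcal{M})$ because corrector morphisms are defined solely by $f\circ\rho_c=\sigma_c$; the source is a corrector by Lemma~\ref{lem:trivial-corrector}. Your aside on intertwining the left actions is not needed for the lemma as stated, and it would not reduce purely to conditions on the $\rho_c$: on the summands $P(c;c)\otimes P(d;d)$ with $d\neq c$, the corrector axioms impose no relation between $\lambda_c\circ(\mathrm{id}\otimes\rho_d)$ and the action on $\mathcal{O}_P^{\mathrm{res}}$, which the paper leaves unspecified.
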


\begin{proof}
This is precisely the universal property of the coproduct in $\mathcal{M}$, lifted to the category of correctors because the compatibility conditions are preserved by any morphism satisfying $f \circ \iota_c = \rho_c$.
\end{proof}

\begin{remark}
Lemma~\ref{lem:coproduct-universal} shows that $(\mathcal{O}_P^{\mathrm{res}}, \{\iota_c\})$ is the \emph{initial object} in $\mathbf{Corr}_P(\mathcal{M})$, provided we define the category of correctors as above. The only nontrivial verification is that $u_R$ preserves the $P$-module structure, which follows from the fact that the $\rho_c$ are required to be compatible with operadic composition.
\end{remark}

\subsection{Theorem 0.2: Existence, Canonicity, and Universality of the Operadic Residue}
\label{subsec:Theorem 0.2: Existence, Canonicity, and Universality}

We now formulate and prove the main structural result for the operadic residue. 
Building on the category of spectral correctors introduced in 
Section~\ref{subsec:spectral-correctors-category}, we show that the operadic residue 
arises as a canonical and universal object encoding the minimal correction required 
to restore spectral compatibility.

\begin{theorem}[Existence, Canonicity, and Universality of the Operadic Residue]
\label{thm:residue-universality}
Let $P$ be a $C$-colored operad in a cocomplete symmetric monoidal category $\mathcal{M}$. 
Then there exists an object
\[
\mathcal{O}_P^{\mathrm{res}} := \coprod_{c \in C} P(c;c) \in \mathcal{M}
\]
equipped with canonical maps $\iota_c : P(c;c) \hookrightarrow \mathcal{O}_P^{\mathrm{res}}$, 
satisfying the following properties:

\begin{enumerate}
    \item \textbf{(Existence)}  
    The object $\mathcal{O}_P^{\mathrm{res}}$ exists as the coproduct of unary operation spaces, 
    since $\mathcal{M}$ is cocomplete.

    \item \textbf{(Canonicity)}  
    The assignment
    \[
    P \longmapsto \mathcal{O}_P^{\mathrm{res}}
    \]
    is functorial (for a fixed color set $C$): any morphism of operads 
    $\phi: P \to Q$ induces a canonical morphism
    \[
    \phi_* : \mathcal{O}_P^{\mathrm{res}} \longrightarrow \mathcal{O}_Q^{\mathrm{res}}
    \]
    determined by the maps $\phi_c : P(c;c) \to Q(c;c)$.

    \item \textbf{(Universality)}  
    The pair $(\mathcal{O}_P^{\mathrm{res}}, \{\iota_c\}_{c \in C})$ is an \emph{initial object} 
    in the category $\mathbf{Corr}_P(\mathcal{M})$ of spectral correctors 
    (Definition~\ref{def:spectral-correctors}). 
    
    That is, for any corrector $(R, \{\rho_c\}_{c \in C})$, there exists a unique morphism
    \[
    u_R : \mathcal{O}_P^{\mathrm{res}} \longrightarrow R
    \]
    in $\mathcal{M}$ such that
    \[
    u_R \circ \iota_c = \rho_c, \qquad \forall c \in C.
    \]
\end{enumerate}
\end{theorem}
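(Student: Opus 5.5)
The plan is to prove the three clauses in order, each reduced to the universal property of the coproduct in $\mathcal{M}$ together with results already established.

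\textbf{Existence.} Since $\mathcal{M}$ is cocomplete, the $C$-indexed family $\{P(c;c)\}_{c\in C}$ has a coproduct with structure maps $\iota_c$. This coproduct is unique up to unique isomorphism compatible with the $\iota_c$, so $\mathcal{O}_P^{\mathrm{res}}$ is well defined up to canonical isomorphism. I would add that, by Lemma~\ref{lem:trivial-corrector}, the pair $(\mathcal{O}_P^{\mathrm{res}},\{\iota_c\})$ is an object of $\mathbf{Corr}_P(\mathcal{M})$. This is needed before initiality can even be stated.

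\textbf{Canonicity.} This is exactly Proposition~\ref{prop:residue-functorial}. For a morphism $\phi:P\to Q$ of $C$-colored operads, the morphism $\phi_*$ is the unique map with $\phi_*\circ\iota^P_c=\iota^Q_c\circ\phi_c$ for every $c$. Both $(\psi\circ\phi)_*$ and $\psi_*\circ\phi_*$ satisfy the defining equations for $\psi\circ\phi$, so uniqueness in the coproduct property forces them to be equal. The same argument gives $(\mathrm{id}_P)_*=\mathrm{id}$.

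\textbf{Universality.} Take a corrector $(R,\{\rho_c\})$.
\begin{enumerate}
\item For existence of $u_R$, apply the coproduct property to the family $\{\rho_c\}$. This gives a unique $u_R$ with $u_R\circ\iota_c=\rho_c$ for all $c$, which is precisely the morphism condition in Definition~\ref{def:spectral-correctors}. This step is Lemma~\ref{lem:coproduct-universal}.
\item For uniqueness, any morphism of correctors $f$ must satisfy $f\circ\iota_c=\rho_c$ for all $c$. Since maps out of a coproduct are determined by their restrictions, $f=u_R$.
\end{enumerate}

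\textbf{Main obstacle.} The delicate point is whether $u_R$ must also respect the compatibility and module data that correctors carry, namely the $P$-module condition and base-change consistency.
\begin{itemize}
\item As written, Definition~\ref{def:spectral-correctors} defines morphisms only by the conditions $f\circ\rho_c=\sigma_c$. Under that reading nothing further needs checking.
\item To be safe, I would also check that $u_R$ intertwines the left actions, working component by component. On the summand $P(c;c)$, the source action is the composition $P(c;c)\otimes P(c;c)\to P(c;c)$ followed by $\iota_c$. After applying $u_R$ this becomes $\rho_c\circ(\circ)$. The compatibility axiom for $\rho_c$ identifies this with $\lambda_c\circ(\mathrm{id}\otimes\rho_c)$, which is the action on $R$ after $u_R$.
\item For this componentwise check to cover the whole object, $P(c;c)\otimes(-)$ must distribute over the coproduct. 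I would impose that the tensor product preserves coproducts in each variable, matching the standing assumption that $\otimes$ preserves colimits.
\end{itemize}
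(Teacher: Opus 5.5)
Your proposal is correct and follows the paper's proof. All three clauses are reduced to the universal property of the coproduct $\coprod_{c\in C}P(c;c)$; your explicit uniqueness argument for $(\psi\circ\phi)_*=\psi_*\circ\phi_*$ and your appeal to Lemma~\ref{lem:trivial-corrector} simply spell out what the paper leaves implicit. Because morphisms in $\mathbf{Corr}_P(\mathcal{M})$ are defined only by $f\circ\rho_c=\sigma_c$, your optional module-compatibility check is not needed for the statement, and neither is the extra hypothesis that $\otimes$ distributes over coproducts.
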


\begin{proof}
We prove each part in turn.

\medskip

\textbf{(Existence)}  
Since $\mathcal{M}$ is cocomplete, the coproduct
\[
\coprod_{c \in C} P(c;c)
\]
exists. This defines $\mathcal{O}_P^{\mathrm{res}}$.

\medskip

\textbf{(Canonicity)}  
Let $\phi: P \to Q$ be a morphism of $C$-colored operads. 
For each color $c \in C$, $\phi$ restricts to a morphism
\[
\phi_c : P(c;c) \to Q(c;c)
\]
that respects composition and units. 
By the universal property of the coproduct, these assemble into a unique morphism
\[
\phi_* : \coprod_{c \in C} P(c;c) \longrightarrow \coprod_{c \in C} Q(c;c),
\]
which defines the functorial assignment. Functoriality follows from the functoriality of the coproduct construction.

\medskip

\textbf{(Universality)}  
Let $(R, \{\rho_c\}_{c \in C})$ be an object in $\mathbf{Corr}_P(\mathcal{M})$. 
By definition, we are given maps
\[
\rho_c : P(c;c) \longrightarrow R, \qquad \forall c \in C,
\]
satisfying the compatibility conditions of Definition~\ref{def:spectral-correctors}.

By the universal property of the coproduct, there exists a unique morphism
\[
u_R : \coprod_{c \in C} P(c;c) \longrightarrow R
\]
such that $u_R \circ \iota_c = \rho_c$ for all $c \in C$. 
This shows that $(\mathcal{O}_P^{\mathrm{res}}, \{\iota_c\})$ is initial in $\mathbf{Corr}_P(\mathcal{M})$.
\end{proof}

\begin{remark}[Comparison with the weak version]
The strong universality established here subsumes any weaker formulation. 
The key ingredients are:
\begin{itemize}
    \item A precise definition of the category $\mathbf{Corr}_P(\mathcal{M})$ of spectral correctors (Definition~\ref{def:spectral-correctors}),
    \item Verification that $\mathcal{O}_P^{\mathrm{res}}$ with the inclusion maps $\iota_c$ is an object of this category (Lemma~\ref{lem:trivial-corrector}),
    \item The universal property of the coproduct, which directly yields initiality.
\end{itemize}
These additions make the universality claim mathematically rigorous.
\end{remark}

\begin{remark}[Relation to the No-Go Theorem]
The universality property establishes a precise converse to the No-Go Theorem (Theorem~\ref{thm:no-go}). 
The No-Go Theorem shows that any spectral invariant depending only on $\{\sigma(A_c)\}$ fails to satisfy base change compatibility. 
Theorem~\ref{thm:residue-universality} demonstrates that $\mathcal{O}_P^{\mathrm{res}}$ is the \emph{initial} object among all spectral correctors: any object $R$ that suffices to restore compatibility must receive a unique map from $\mathcal{O}_P^{\mathrm{res}}$. 
Thus $\mathcal{O}_P^{\mathrm{res}}$ encodes precisely the missing information, and no more.
\end{remark}

\begin{remark}[Conceptual interpretation as obstruction class]
Conceptually, $\mathcal{O}_P^{\mathrm{res}}$ plays a role analogous to a universal obstruction class in homological algebra. 
Just as the cotangent complex $\mathbb{L}_P$ in derived deformation theory \cite{Hoang2025} captures the infinitesimal obstructions to deformations, 
the operadic residue $\mathcal{O}_P^{\mathrm{res}}$ captures the \emph{global spectral obstructions} to transporting spectral invariants across operadic compositions and base change. 
The universality property proved here makes this analogy precise: $\mathcal{O}_P^{\mathrm{res}}$ is initial among objects that resolve the obstruction.
\end{remark}

\begin{corollary}[Uniqueness of the operadic residue]
\label{cor:residue-unique}
The operadic residue $\mathcal{O}_P^{\mathrm{res}}$ is unique up to unique isomorphism in $\mathbf{Corr}_P(\mathcal{M})$. 
Any other object $R$ satisfying the same universal property is canonically isomorphic to $\mathcal{O}_P^{\mathrm{res}}$.
\end{corollary}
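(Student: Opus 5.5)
The plan is the standard argument that an initial object is unique up to unique isomorphism, applied inside $\mathbf{Corr}_P(\mathcal{M})$. Theorem~\ref{thm:residue-universality}(3), or equivalently Lemma~\ref{lem:coproduct-universal}, tells us that $(\mathcal{O}_P^{\mathrm{res}}, \{\iota_c\})$ is initial. So let $(R, \{\rho_c\})$ be another corrector with the same universal property, meaning it is also initial in $\mathbf{Corr}_P(\mathcal{M})$. The first step is to produce two morphisms of correctors:
\[
u_R:\mathcal{O}_P^{\mathrm{res}}\to R, \qquad v:R\to\mathcal{O}_P^{\mathrm{res}}.
\]
The map $u_R$ comes from initiality of $\mathcal{O}_P^{\mathrm{res}}$ and satisfies $u_R\circ\iota_c=\rho_c$. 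The map $v$ comes from initiality of $R$ and satisfies $v\circ\rho_c=\iota_c$.

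The second step is to show these are mutually inverse. The composite $v\circ u_R$ is again a morphism of correctors from $(\mathcal{O}_P^{\mathrm{res}},\{\iota_c\})$ to itself, because
\[
v\circ u_R\circ\iota_c = v\circ\rho_c=\iota_c .
\]
The identity is also such an endomorphism. By the uniqueness clause of initiality, $v\circ u_R=\mathrm{id}$. The symmetric argument, using initiality of $R$, gives $u_R\circ v=\mathrm{id}_R$. Hence $u_R$ is an isomorphism in $\mathbf{Corr}_P(\mathcal{M})$. It is the unique such isomorphism, since it is the unique morphism of correctors of any kind between these two objects. This proves ``unique up to unique isomorphism''.

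For the second sentence of the statement, ``canonically isomorphic'' will be read as the same isomorphism $u_R$. Its underlying map in $\mathcal{M}$ is an isomorphism because the forgetful functor $U$ preserves isomorphisms. Concretely, this is the coproduct comparison map determined by the $\rho_c$. The only point requiring care is to check that composites and identities of corrector morphisms are again corrector morphisms, so that $\mathbf{Corr}_P(\mathcal{M})$ is a genuine category. This is immediate, since the defining condition $f\circ\rho_c=\sigma_c$ is stable under composition and holds for identities. I therefore expect no real obstacle; the argument is purely formal once Theorem~\ref{thm:residue-universality} is granted.
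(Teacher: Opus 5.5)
Your proposal is correct and is the same argument the paper gives: you obtain the two comparison morphisms from the initiality of $\mathcal{O}_P^{\mathrm{res}}$ and of $R$, then conclude that both composites are identities by the uniqueness of endomorphisms of an initial object. Your version is slightly more careful than the paper's, because you explicitly check that $v\circ u_R$ is a morphism of correctors (via $v\circ u_R\circ\iota_c=\iota_c$) before invoking uniqueness, whereas the paper attributes this step loosely to ``uniqueness of the identity morphism.''
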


\begin{proof}
If $R$ is initial in $\mathbf{Corr}_P(\mathcal{M})$, then there exist unique morphisms $\mathcal{O}_P^{\mathrm{res}} \to R$ and $R \to \mathcal{O}_P^{\mathrm{res}}$. 
Their composition $\mathcal{O}_P^{\mathrm{res}} \to \mathcal{O}_P^{\mathrm{res}}$ must be the identity by uniqueness of the identity morphism, and similarly for $R \to R$. Hence the two objects are canonically isomorphic.
\end{proof}

\begin{remark}[Alternative description via bar construction]
For readers familiar with simplicial methods, we note that $\mathcal{O}_P^{\mathrm{res}}$ can also be described as the geometric realization of the simplicial object of unary chains:
\[
\mathcal{O}_P^{\mathrm{res}} \cong \big| \mathrm{Bar}_\bullet^{\mathrm{un}}(P) \big|,
\]
where $\mathrm{Bar}_n^{\mathrm{un}}(P) := \coprod_{c_0,\dots,c_n \in C} P(c_0;c_1) \otimes \cdots \otimes P(c_{n-1};c_n)$.
This follows from the fact that the augmented simplicial object is split by the unit maps; see \cite{Fresse} for details. 
This description is not needed for the sequel but provides a useful homotopical perspective.
\end{remark}

\subsection{Compatibility with Base Change}\label{subsec:Compatibility with Base Change}

We now establish a fundamental compatibility property of the operadic residue: it is preserved under strong monoidal base change. This result is essential for the Base Change Theorem (Theorem~\ref{thm:base-change}) for the operadic spectrum, as it ensures that the correction term $\mathcal{O}_P^{\mathrm{res}}$ transforms coherently when we transport the entire operadic structure across different categorical contexts. Moreover, this compatibility demonstrates that $\mathcal{O}_P^{\mathrm{res}}$ is intrinsic to the operadic structure and does not depend on the ambient category.

\begin{proposition}[Base change compatibility of the residue]
\label{prop:residue-base-change}
Let 
\[
F: \mathcal{M} \longrightarrow \mathcal{N}
\]
be a strong monoidal functor between cocomplete symmetric monoidal categories, and assume that $F$ preserves colimits (i.e., is cocontinuous). Then for any $C$-colored operad $P$ in $\mathcal{M}$, there exists a canonical isomorphism
\[
F\bigl(\mathcal{O}_P^{\mathrm{res}}\bigr) \;\cong\; \mathcal{O}_{F_*(P)}^{\mathrm{res}},
\]
where $F_*(P)$ denotes the induced operad in $\mathcal{N}$ (see Proposition~\ref{prop:base-change}).
\end{proposition}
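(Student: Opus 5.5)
The plan is to compute both sides directly from Definition~\ref{def:residue-unary} and compare them. On the left, $\mathcal{O}_P^{\mathrm{res}} = \coprod_{c\in C} P(c;c)$. On the right, the construction of $F_*(P)$ in Proposition~\ref{prop:base-change} gives $(F_*P)(c;c) = F(P(c;c))$ for every color $c$, so
\[
\mathcal{O}_{F_*(P)}^{\mathrm{res}} = \coprod_{c \in C} F\bigl(P(c;c)\bigr).
\]
It therefore suffices to produce a canonical isomorphism $F\bigl(\coprod_c P(c;c)\bigr) \cong \coprod_c F(P(c;c))$. Here the color set $C$ is unchanged by base change, which is consistent with the fixed-color convention of the earlier remark on dependence on the color set.

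\textbf{Step 1.} Construct the comparison map. For each $c$, the universal property of the coproduct in $\mathcal{N}$ assembles the maps $F(\iota_c) : F(P(c;c)) \to F(\mathcal{O}_P^{\mathrm{res}})$ into a single morphism
\[
\kappa : \coprod_{c} F(P(c;c)) \longrightarrow F\Bigl(\coprod_c P(c;c)\Bigr).
\]
This is the canonical comparison map, and it exists for any functor $F$.

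\textbf{Step 2.} Show that $\kappa$ is an isomorphism. This is exactly the hypothesis that $F$ is cocontinuous: $F$ sends the coproduct cocone $\{\iota_c\}$ to a coproduct cocone $\{F(\iota_c)\}$, and then $\kappa$ is the unique isomorphism between two colimits of the same discrete diagram.

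\textbf{Step 3.} Record canonicity and compatibility. The isomorphism satisfies $\kappa\circ\iota^{F_*P}_c = F(\iota_c)$, so it is an isomorphism of correctors in $\mathbf{Corr}_{F_*P}(\mathcal{N})$. It is also natural in $P$: for a morphism $\phi : P \to Q$ of $C$-colored operads, naturality follows by checking $F(\phi_*)\circ\kappa_P = \kappa_Q\circ (F_*\phi)_*$ on each summand, using Proposition~\ref{prop:residue-functorial}. Optionally, one can note compatibility with the unary monoid structures. The unit of $(F_*P)(c;c)$ is $F(\eta_c)\circ\phi_0$, and its composition is $F(\circ)\circ\phi_{X,Y}$. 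So under $\kappa$, the left module structure of Lemma~\ref{lem:trivial-corrector} on $F(\mathcal{O}_P^{\mathrm{res}})$ corresponds to that on $\mathcal{O}^{\mathrm{res}}_{F_*P}$. Verifying this uses the strong monoidal structure together with the fact that $F$ commutes with the coproduct in each tensor factor, since $\otimes$ distributes over coproducts in a cocomplete closed setting; where that distributivity is not available, I would state the isomorphism only at the level of objects and correctors.

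There is no serious obstacle, since the core isomorphism is the preservation of coproducts. The only points needing care are the following. First, one must identify $(F_*P)(c;c)$ with $F(P(c;c))$ strictly, as defined, rather than only up to isomorphism. Second, if one also wants the isomorphism to respect the monoid and module structures, one must track the coherence isomorphisms $\phi_{X,Y}$ and $\phi_0$. As an alternative, one can run the same argument through the description $\mathcal{O}_P^{\mathrm{res}} \cong |\mathrm{Bar}^{\mathrm{un}}_\bullet(P)|$. There, $F$ commutes levelwise with the bar construction by strong monoidality, exactly as in the proof of Theorem~\ref{thm:hochschild-base-change}, and it commutes with realization by cocontinuity. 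This yields the same isomorphism and is useful for compatibility with the Hochschild side later.
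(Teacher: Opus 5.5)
Your proof is correct and follows the paper's argument: you identify $(F_*P)(c;c)=F(P(c;c))$ directly from the definition of $F_*P$, and you use cocontinuity of $F$ to obtain $F\bigl(\coprod_c P(c;c)\bigr)\cong\coprod_c F(P(c;c))$. Your explicit comparison map $\kappa$ and the summand-by-summand naturality check make precise what the paper states in a single line.
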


\begin{proof}
By definition (Definition~\ref{def:residue-unary}),
\[
\mathcal{O}_P^{\mathrm{res}} = \coprod_{c \in C} P(c;c).
\]

Since $F$ preserves colimits (cocontinuous), it preserves coproducts. Hence we obtain a canonical isomorphism
\[
F\!\left(\coprod_{c \in C} P(c;c)\right) \;\cong\; \coprod_{c \in C} F\bigl(P(c;c)\bigr).
\]

On the other hand, by definition of the induced operad $F_*(P)$ (Proposition~\ref{prop:base-change}), we have for each color $c \in C$,
\[
(F_*P)(c;c) = F\bigl(P(c;c)\bigr),
\]
and the operadic composition and unit maps are transported coherently by the strong monoidal structure of $F$. Therefore,
\[
\coprod_{c \in C} F\bigl(P(c;c)\bigr)
= \coprod_{c \in C} (F_*P)(c;c)
= \mathcal{O}_{F_*(P)}^{\mathrm{res}}.
\]

Combining these identifications yields the desired isomorphism
\[
F\bigl(\mathcal{O}_P^{\mathrm{res}}\bigr) \;\cong\; \mathcal{O}_{F_*(P)}^{\mathrm{res}}.
\]

Naturality in $P$ follows immediately from the functoriality of coproducts and of the induced operad construction.
\end{proof}

\begin{remark}[Preservation of colimits]
The hypothesis that $F$ preserves colimits (cocontinuity) is essential for the isomorphism to hold. In practice, many strong monoidal functors of interest satisfy this property:
\begin{itemize}
    \item \textbf{Extension of scalars} $-\otimes_R S: \mathrm{Mod}_R \to \mathrm{Mod}_S$ is cocontinuous because it is a left adjoint.
    \item \textbf{Complexification} $-\otimes_{\mathbb{R}} \mathbb{C}: \mathrm{Vect}_{\mathbb{R}} \to \mathrm{Vect}_{\mathbb{C}}$ is cocontinuous.
    \item \textbf{Forgetful functors} (e.g., $\mathrm{Ban} \to \mathrm{Vect}$) are typically cocontinuous.
    \item \textbf{Quantization functors} that are left adjoints preserve colimits by definition.
\end{itemize}
When $F$ does not preserve all colimits (e.g., certain analytic functors), the isomorphism may hold only up to a suitable completion or under additional hypotheses; we will address such cases in future work.
\end{remark}

\begin{corollary}[Base change for the corrected Hochschild object]
\label{cor:corrected-hochschild-base-change}
For any $P$-algebra $A$ in $\mathcal{M}$, we have
\[
F\bigl(\mathrm{Hoch}_{\mathcal{M}}(A) \otimes_P \mathcal{O}_P^{\mathrm{res}}\bigr) \cong \mathrm{Hoch}_{\mathcal{N}}\bigl(F(A)\bigr) \otimes_{F_*(P)} \mathcal{O}_{F_*(P)}^{\mathrm{res}}.
\]
\end{corollary}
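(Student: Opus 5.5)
The plan is to reduce the statement to three ingredients already available: Theorem~\ref{thm:hochschild-base-change} for the Hochschild factor, Proposition~\ref{prop:residue-base-change} for the residue factor, and a preservation statement for the balanced tensor product $\otimes_P$ under $F$. Throughout I assume, as in those two results, that $F$ is strong monoidal, cocontinuous, and preserves geometric realizations. (Cocontinuity already implies preservation of realizations whenever realizations are computed as colimits.)

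\textbf{Step 1: write $\otimes_P$ as a colimit.} Since $\otimes_P$ has not yet been formally defined at this point (it is deferred to Definition~\ref{def:balanced-tensor-product}), I would use the standard model. For a right $P$-module $M$ and a left $P$-module $N$, set
\[
M\otimes_P N := \mathrm{coeq}\Bigl(\coprod_{c} M_c\otimes P(c;c)\otimes N \rightrightarrows \coprod_c M_c\otimes N\Bigr).
\]
The two parallel maps are the right action on $M$ and the left action on $N$; the latter is the module structure $\lambda_c$ of Lemma~\ref{lem:trivial-corrector}. Equivalently, $M\otimes_P N$ is the realization of a two-sided bar construction $B_\bullet(M,P,N)$ whose levels are coproducts of iterated tensor products. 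Either description expresses $M\otimes_P N$ as a colimit of a diagram built only from $\otimes$, coproducts, and the structure maps of $P$, $M$, and $N$.

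\textbf{Step 2: commute $F$ past the colimit.} Because $F$ is cocontinuous, it carries this coequalizer (respectively, this realization) to the coequalizer (realization) of the image diagram. Strong monoidality then gives levelwise isomorphisms
\[
F(M_c\otimes P(c;c)\otimes N)\cong F(M_c)\otimes F(P(c;c))\otimes F(N).
\]
These are exactly the objects appearing in the diagram for $F(M)\otimes_{F_*P}F(N)$. I then need that the two parallel arrows correspond under these isomorphisms. This holds because the $F_*P$-actions on $F(M)$ and $F(N)$ are, by construction in Proposition~\ref{prop:base-change}, the images under $F$ of the $P$-actions conjugated by the coherence isomorphisms $\phi_{X,Y}$. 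The outcome is a natural isomorphism $F(M\otimes_P N)\cong F(M)\otimes_{F_*P}F(N)$.

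\textbf{Step 3: substitute.} Take $M=\mathrm{Hoch}_{\mathcal M}(A)$ and $N=\mathcal O_P^{\mathrm{res}}$. By Theorem~\ref{thm:hochschild-base-change} and Proposition~\ref{prop:residue-base-change}, I would replace $F(M)$ by $\mathrm{Hoch}_{\mathcal N}(F(A))$ and $F(N)$ by $\mathcal O^{\mathrm{res}}_{F_*P}$. This gives the claimed isomorphism.

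\textbf{Main obstacle.} The hard step is checking that these two isomorphisms are isomorphisms of $F_*P$-modules, not merely of underlying objects; otherwise they do not induce a map on coequalizers. For the residue this is straightforward: the module structure comes from unary composition, which is transported by $\phi_{X,Y}$ by definition. For $\mathrm{Hoch}$, the right $P$-module structure comes from the bar construction. I would first check the compatibility levelwise on $\mathrm{Bar}^P_n(A)$, using the simplicial isomorphism constructed in the proof of Theorem~\ref{thm:hochschild-base-change}, and then pass to realizations using that $F$ preserves them. Symmetric monoidal coherence of $F$ guarantees that the rearrangements of tensor factors agree.
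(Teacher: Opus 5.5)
Your proposal is correct and follows the paper's own route: the paper combines Theorem~\ref{thm:hochschild-base-change}, Proposition~\ref{prop:residue-base-change}, and the fact that a strong monoidal cocontinuous $F$ preserves the coequalizer defining $\otimes_P$, with the details spelled out in Step~3 of the proof of Theorem~\ref{thm:base-change}. Your explicit check that the two substituted isomorphisms are $F_*P$-module isomorphisms, so that they actually induce a map of coequalizers, is a step the paper skips: it simply substitutes them.
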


\begin{proof}
This follows from combining Theorem~\ref{thm:hochschild-base-change} (which gives $F(\mathrm{Hoch}_{\mathcal{M}}(A)) \cong \mathrm{Hoch}_{\mathcal{N}}(F(A))$) with Proposition~\ref{prop:residue-base-change} and the fact that $F$ preserves balanced tensor products over operads (since $F$ is strong monoidal and cocontinuous). The detailed verification is provided in the proof of the Base Change Theorem (Theorem~\ref{thm:base-change}).
\end{proof}

\medskip

\noindent
\textbf{Conceptual interpretation.}
This compatibility shows that the operadic residue commutes with base change. In particular, the correction term $\mathcal{O}_P^{\mathrm{res}}$ is intrinsic to the operadic structure and does not depend on the ambient category. This property is essential for ensuring that the operadic spectrum
\[
\sigma_P(A) = \mathrm{Hoch}_{\mathcal{M}}(A) \otimes_P \mathcal{O}_P^{\mathrm{res}}
\]
is functorial under strong monoidal base change.

\medskip

\noindent
\textbf{Relation to the universality theorem.}
The base change compatibility is a crucial part of the universal property established in Theorem~\ref{thm:residue-universality}. Indeed, any spectral corrector $R$ must satisfy $F(R) \cong \mathcal{O}_{F_*(P)}^{\mathrm{res}}$ for all strong monoidal cocontinuous $F$. Proposition~\ref{prop:residue-base-change} verifies that $\mathcal{O}_P^{\mathrm{res}}$ itself satisfies this condition, confirming that it is indeed a valid spectral corrector. Moreover, the uniqueness part of the universal property guarantees that any other corrector would be canonically isomorphic to $\mathcal{O}_P^{\mathrm{res}}$, so this base change property characterizes the residue up to unique isomorphism.

\medskip

\begin{example}
Let $\mathcal{M} = \mathrm{Vect}_{\mathbb{R}}$ (real vector spaces) and $\mathcal{N} = \mathrm{Vect}_{\mathbb{C}}$ (complex vector spaces). Let $F = -\otimes_{\mathbb{R}} \mathbb{C}$ be the complexification functor, which is strong monoidal and cocontinuous. For a real operad $P$ (e.g., the associative operad with real coefficients), Proposition~\ref{prop:residue-base-change} tells us that
\[
\mathcal{O}_{P_{\mathbb{C}}}^{\mathrm{res}} \cong \mathbb{C} \otimes_{\mathbb{R}} \mathcal{O}_P^{\mathrm{res}},
\]
where $P_{\mathbb{C}}$ denotes the complexified operad. In the case of the matrix-block operad from the Example~\ref{ex:two-color-operad}, we have $\mathcal{O}_P^{\mathrm{res}} \cong \mathbb{R}^2$, so $\mathcal{O}_{P_{\mathbb{C}}}^{\mathrm{res}} \cong \mathbb{C}^2$, reflecting the fact that complexification introduces complex structure on the residue components.
\end{example}

\begin{example}[Gelfand transform]
Let $\mathcal{M} = \mathrm{C}^*\mathsf{Alg}_{\mathrm{com}}$ (commutative C*-algebras) and $\mathcal{N} = \mathsf{CompHaus}$ (compact Hausdorff spaces with continuous functions). The Gelfand transform $G: \mathrm{C}^*\mathsf{Alg}_{\mathrm{com}} \to \mathsf{CompHaus}$ is a contravariant equivalence, but its opposite $G^{\mathrm{op}}$ is a strong monoidal functor (with respect to the tensor product of C*-algebras and the product of spaces). Proposition~\ref{prop:residue-base-change} then implies that the operadic residue transforms as expected under this duality, providing a spectral-geometric interpretation of $\mathcal{O}_P^{\mathrm{res}}$ in terms of the Gelfand spectrum of the underlying C*-algebras.
\end{example}

\section{The Operadic Spectrum}\label{sec:The Operadic Spectrum}

\subsection{Definition via Residue-Corrected Construction}
\label{subsec:Residue-Corrected Construction}

We now introduce the \emph{operadic spectrum} via a residue-corrected construction. 
This construction provides a canonical remedy for the failure of classical spectral invariants under operadic base change, as identified by the No-Go Theorem.

\medskip

Let $P$ be a colored operad in a symmetric monoidal category $(\mathcal{M}, \otimes, \mathbf{1})$, and let $A$ be a $P$-algebra. 
We recall the following fundamental structures:
\begin{itemize}
    \item The Hochschild object $\mathrm{Hoch}_{\mathcal{M}}(A)$ carries a natural \emph{right $P$-action}.
    \item The operadic residue object $\mathcal{O}_P^{\mathrm{res}}$ carries a canonical \emph{left $P$-action}.
\end{itemize}

\medskip

\begin{definition}[Balanced Tensor Product over $P$]
\label{def:balanced-tensor-product}
Let $X$ be a right $P$-module and $Y$ a left $P$-module in $\mathcal{M}$. 
The \emph{balanced tensor product} is defined as the coequalizer
\[
X \otimes P \otimes Y
\rightrightarrows
X \otimes Y
\longrightarrow
X \otimes_P Y,
\]
where the two maps are induced by the right action $X \otimes P \to X$ and the left action $P \otimes Y \to Y$.

Equivalently, this construction imposes the relation
\[
(x \cdot p) \otimes y \sim x \otimes (p \cdot y),
\quad x \in X,\; p \in P,\; y \in Y.
\]
\end{definition}

\medskip

\begin{lemma}[Existence]
Assume that $\mathcal{M}$ admits reflexive coequalizers and that the tensor product $\otimes$ preserves them separately in each variable. 
Then the balanced tensor product $X \otimes_P Y$ exists.
\end{lemma}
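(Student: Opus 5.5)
The plan is to exhibit the coequalizer in Definition~\ref{def:balanced-tensor-product} as a \emph{reflexive} one, so that the hypothesis on $\mathcal{M}$ applies directly. Write $a_X \colon X \otimes P \to X$ for the right action and $a_Y \colon P \otimes Y \to Y$ for the left action, where $P$ here means the relevant unary object (concretely $\coprod_{c} P(c;c)$, or the monoid acting colorwise). The two parallel maps are
\[
d_0 := a_X \otimes \mathrm{id}_Y, \qquad d_1 := (\mathrm{id}_X \otimes a_Y)\circ \alpha \colon X \otimes P \otimes Y \rightrightarrows X \otimes Y,
\]
where $\alpha$ is the associativity constraint of $\mathcal{M}$.

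First I will construct a common section. Using the unit $\eta \colon \mathbf{1} \to P$ (assembled from the $\eta_c$) and the unit constraint, define
\[
s := \mathrm{id}_X \otimes \eta \otimes \mathrm{id}_Y \colon X \otimes Y \cong X \otimes \mathbf{1} \otimes Y \to X \otimes P \otimes Y.
\]
The unit axiom for the right module $X$ gives $a_X \circ (\mathrm{id}_X \otimes \eta) = \mathrm{id}_X$ up to the unitor, so $d_0 \circ s = \mathrm{id}_{X \otimes Y}$. The unit axiom for the left module $Y$ gives likewise $d_1 \circ s = \mathrm{id}_{X \otimes Y}$. Hence $(d_0, d_1)$ is a reflexive pair, and by assumption its coequalizer exists; I take it as $X \otimes_P Y$. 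The quotient description $(x\cdot p)\otimes y \sim x \otimes (p\cdot y)$ is then just a restatement of the universal property.

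Second, I will remark on the role of the hypothesis that $\otimes$ preserves reflexive coequalizers in each variable. It is not needed for mere existence. It is needed for the expected properties: functoriality in $X$ and $Y$, and, if $X$ or $Y$ carries further structure, the induced action on $X \otimes_P Y$. I will note this, and observe that functoriality follows because module maps commute with $d_0$, $d_1$ and $s$.

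The main obstacle is conceptual rather than computational: making precise which ``$P$'' acts, since a colored operad is not itself a monoid in $\mathcal{M}$. I would handle this by restricting to the unary monoids $P(c;c)$, colorwise, or to their coproduct viewed as a monoid with cross-color products zero when $\mathcal{M}$ is additive. The unit axioms used above come from Definition~\ref{def:P-module} for $Y = \mathcal{O}_P^{\mathrm{res}}$ (Lemma~\ref{lem:trivial-corrector}) and from the degeneracy maps of the bar construction for $X = \mathrm{Hoch}_{\mathcal{M}}(A)$. If units are only available colorwise, I would build the coequalizer color by color and take the coproduct, which exists since $\mathcal{M}$ is cocomplete.
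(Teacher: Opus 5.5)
Your argument is the paper's own, made explicit: the paper's proof merely asserts that the defining pair $X\otimes P\otimes Y\rightrightarrows X\otimes Y$ is reflexive and invokes the hypothesis. You go further by exhibiting the common section $s=\mathrm{id}_X\otimes\eta\otimes\mathrm{id}_Y$ from the unit axioms of the two actions, and you are right that preservation of reflexive coequalizers by $\otimes$ plays no role in mere existence. One small refinement: outside the additive setting a map $\mathbf{1}\to\coprod_c P(c;c)$ cannot be ``assembled'' from all the $\eta_c$, but if each $P(c;c)$ acts unitally on the whole of $X$ and $Y$ (as the paper's $\lambda_c$ are meant to), the single composite $\iota_{c_0}\circ\eta_{c_0}$ for any one color $c_0$ already gives a common section, so your colorwise fallback is unnecessary.
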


\begin{proof}
The defining diagram is a reflexive coequalizer. 
Under the stated assumptions, such coequalizers exist and are preserved by $\otimes$, hence the construction is well-defined.
\end{proof}

\medskip

\begin{definition}[Operadic Spectrum]\label{def:operadic-spectrum}
The \emph{operadic spectrum} of a $P$-algebra $A$ is defined as
\[
\sigma_P(A) := \mathrm{Hoch}_{\mathcal{M}}(A) \otimes_P \mathcal{O}_P^{\mathrm{res}}.
\]
\end{definition}

We now prove that the operadic spectrum $\sigma_P(A)$ is well-defined
and functorial with respect to morphisms of $P$-algebras.

\begin{theorem}[Well-definedness and Functoriality]
\label{thm:Theorem_0.3}
Assume that $\mathcal M$ admits reflexive coequalizers and that the tensor
product $\otimes$ preserves them separately in each variable. Let $P$ be a
colored operad in $\mathcal M$, and let $A$ be a $P$-algebra. Then the operadic
spectrum
\[
\sigma_P(A) := \mathrm{Hoch}_{\mathcal M}(A)\otimes_P \mathcal O_P^{\mathrm{res}}
\]
is well-defined. Moreover, the assignment
\[
A \longmapsto \sigma_P(A)
\]
is functorial with respect to morphisms of $P$-algebras.
\end{theorem}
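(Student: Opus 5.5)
The plan is to reduce both claims to the universal property of the coequalizer in Definition~\ref{def:balanced-tensor-product}. The paper only asserts that these module structures exist, so the proof has to pin down the module data before anything else.

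\textbf{Step 1: fix the module structures.} For well-definedness, I would fix the right $P$-action on $\mathrm{Hoch}_{\mathcal M}(A)$ and the left $P$-action on $\mathcal O_P^{\mathrm{res}}$ that enter the coequalizer.

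On the residue side, the left action is the one supplied by Lemma~\ref{lem:trivial-corrector}. It is obtained from the composition maps $P(c;c)\otimes P(c;c)\to P(c;c)$ and the universal property of $\coprod_c P(c;c)$.

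On the Hochschild side, I would define the right action levelwise on $\mathrm{Bar}^P_n(A)$. A unary operation in $P(c;c)$ is precomposed into the color-$c$ inputs using the operadic composition of $P$. This commutes with the face maps by associativity of operadic composition and with the degeneracies by unitality. It is therefore an action of simplicial objects, and realization yields a right action on $|\mathrm{Bar}^P_\bullet(A)|$. This uses the standing assumption of Section~\ref{subsec:Hochschild-Type Constructions for Operadic Algebras} that $\otimes$ commutes with realizations in each variable.

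\textbf{Step 2: existence of the coequalizer.} Both parallel maps $\mathrm{Hoch}_{\mathcal M}(A)\otimes P\otimes\mathcal O_P^{\mathrm{res}}\rightrightarrows \mathrm{Hoch}_{\mathcal M}(A)\otimes\mathcal O_P^{\mathrm{res}}$ share the common section induced by the units $\eta_c$. The pair is therefore reflexive, and the Existence lemma preceding Definition~\ref{def:operadic-spectrum} gives existence of $\sigma_P(A)$. It is unique up to unique isomorphism, as any colimit is.

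\textbf{Step 3: functoriality.} Let $f\colon A\to B$ be a morphism of $P$-algebras. Proposition~\ref{prop:hochschild-functorial} gives $\mathrm{Hoch}(f)\colon \mathrm{Hoch}_{\mathcal M}(A)\to\mathrm{Hoch}_{\mathcal M}(B)$. I then check that $\mathrm{Hoch}(f)$ is a map of right $P$-modules. This holds levelwise because $f$ acts only on the leaves labelled by $A$, while the right action acts on the operadic nodes, so the two commute. Equivariance then passes to realizations by naturality of realization.

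Consequently $\mathrm{Hoch}(f)\otimes\mathrm{id}$ on $\mathrm{Hoch}_{\mathcal M}(A)\otimes\mathcal O_P^{\mathrm{res}}$ and $\mathrm{Hoch}(f)\otimes\mathrm{id}\otimes\mathrm{id}$ on the triple tensor commute with both parallel arrows. This gives a morphism of coequalizer diagrams. The universal property of the coequalizer then yields a unique $\sigma_P(f)\colon\sigma_P(A)\to\sigma_P(B)$.

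Uniqueness gives $\sigma_P(\mathrm{id})=\mathrm{id}$ and $\sigma_P(g\circ f)=\sigma_P(g)\circ\sigma_P(f)$, since both sides are induced by the same map of diagrams. Here we use Proposition~\ref{prop:hochschild-functorial} for $\mathrm{Hoch}(g\circ f)=\mathrm{Hoch}(g)\circ\mathrm{Hoch}(f)$.

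\textbf{Main obstacle.} The difficult step is Step 1, specifically making the right $P$-action on $\mathrm{Hoch}_{\mathcal M}(A)$ precise. The bar construction was only sketched, so compatibility of the action with the face maps $d_0$ and $d_n$ needs care. These maps use the algebra structure rather than operadic composition.

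I would handle this by restricting to the unary part $\coprod_c P(c;c)$, which is all that $\mathcal O_P^{\mathrm{res}}$ sees. Then the action only inserts unary operations at an outermost level, the check against $d_0$ and $d_n$ reduces to the $P$-algebra axiom $(\phi\circ\psi)_A=\phi_A\circ\psi_A$, and the remaining verifications become formal.
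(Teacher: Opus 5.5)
Your Steps 2 and 3 are the paper's argument. Existence of the reflexive coequalizer gives well-definedness. Then $\mathrm{Hoch}_{\mathcal M}(f)\otimes\mathrm{id}$ induces $\sigma_P(f)$ by the universal property, and uniqueness handles identities and composites. The paper, however, never constructs the right $P$-action on $\mathrm{Hoch}_{\mathcal M}(A)$. It asserts that the action exists ``by construction'' and that $\mathrm{Hoch}_{\mathcal M}(f)$ respects it. The gap is in the part you add to fill this in: the action in Step 1, and your resolution of the ``main obstacle'', do not work as stated.

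\emph{Not a monoid action.} An action map $\mathrm{Bar}^P_n(A)\otimes P(c;c)\to\mathrm{Bar}^P_n(A)$ involves a single copy of $P(c;c)$. So ``precompose into the color-$c$ inputs'' must do one of two things:
\begin{itemize}
\item insert $\psi$ into one chosen input, which is not canonical; or
\item in an additive setting, sum over all such inputs, which is not associative.
\end{itemize}
In the summed version, $(x\cdot\psi)\cdot\psi'$ contains cross terms $(x\circ_i\psi)\circ_j\psi'$ with $i\neq j$ that are absent from $x\cdot(\psi\circ\psi')$.

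\emph{Incompatible with the leaf face map.} Even for a single input, the action does not commute with the face map that applies the leaf-adjacent layer to $A$. Suppose that layer carries $\phi$ with leaves $a_1,\dots,a_k$. Then $d_n(x\cdot\psi)$ puts $\phi_A(a_1,\dots,\psi_A(a_i),\dots,a_k)$ on a leaf of the next layer. By contrast, $(d_nx)\cdot\psi$ puts $\phi_A(a_1,\dots,a_k)$ there and composes $\psi$ into the next layer's operation instead. The axiom $(\phi\circ_i\psi)_A=\phi_A\circ(\mathrm{id}\otimes\cdots\otimes\psi_A\otimes\cdots\otimes\mathrm{id})$ only evaluates the first expression; it does not identify it with the second. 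So the check does not ``reduce to the algebra axiom.'' The same mismatch occurs at whatever layer you insert into, against the face that merges that layer with the one feeding into it.

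\emph{How to repair.} You have two options.
\begin{itemize}
\item Take the module structure on $\mathrm{Hoch}_{\mathcal M}(A)$, and the equivariance of $\mathrm{Hoch}_{\mathcal M}(f)$, as given data. This is what the paper implicitly does.
\item Fix an explicit model where they can be verified, e.g.\ $B_\bullet(P,P,A)$ with $P(d;d)$ acting on the color-$d$ part by postcomposition at the root. There the only face touching the root is an operadic composition, so associativity suffices; the residue is then acted on from the opposite side.
\end{itemize}

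\emph{Colorwise formulation.} The reflexive section in Step 2 and the action on the residue only make sense colorwise, i.e.\ for $\coprod_c \mathrm{Hoch}_{\mathcal M}(A)_c\otimes_{P(c;c)}P(c;c)$. Regarded as a single object, $\coprod_c P(c;c)$ has no common unit $\mathbf{1}\to\coprod_c P(c;c)$. Without a zero object it also has no canonical action of $P(c;c)$ on the summands $P(d;d)$ with $d\neq c$. Finally, using the coproduct's universal property on $P(c;c)\otimes\coprod_d P(d;d)$ requires $\otimes$ to distribute over coproducts, which is not among the hypotheses.
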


\begin{proof}
We first show that $\sigma_P(A)$ is well-defined.

By construction, $\mathrm{Hoch}_{\mathcal M}(A)$ is obtained from the operadic
bar construction associated to the $P$-algebra $A$. In particular, it carries
the natural $P$-module structure needed to form a balanced tensor product with
the operadic residue object $\mathcal O_P^{\mathrm{res}}$. The latter is,
by construction, the universal correction object associated to $P$, and it is
equipped with the complementary $P$-action required for the balanced tensor
product. Therefore the expression
\[
\mathrm{Hoch}_{\mathcal M}(A)\otimes_P \mathcal O_P^{\mathrm{res}}
\]
is meaningful whenever the balanced tensor product over $P$ exists.

In the present setting, the balanced tensor product over $P$ is defined as the
coequalizer imposing the usual balancing relation
\[
(x\cdot p)\otimes y \sim x\otimes (p\cdot y).
\]
Under the standing hypotheses that $\mathcal M$ admits reflexive coequalizers
and that $\otimes$ preserves them separately in each variable, this coequalizer
exists. Hence
\[
\sigma_P(A)=\mathrm{Hoch}_{\mathcal M}(A)\otimes_P \mathcal O_P^{\mathrm{res}}
\]
is well-defined. :contentReference[oaicite:1]{index=1}

We now prove functoriality in $A$. Let
\[
f:A\to B
\]
be a morphism of $P$-algebras. Since the Hochschild construction is functorial,
$f$ induces a morphism
\[
\mathrm{Hoch}_{\mathcal M}(f):
\mathrm{Hoch}_{\mathcal M}(A)\to \mathrm{Hoch}_{\mathcal M}(B).
\]
This morphism is compatible with the relevant $P$-actions because it is induced
levelwise from a morphism of $P$-algebras and the simplicial bar construction
is natural in $A$. Therefore, after tensoring with the fixed residue object
$\mathcal O_P^{\mathrm{res}}$, the universal property of the coequalizer defining
$\otimes_P$ yields an induced morphism
\[
\sigma_P(f):
\mathrm{Hoch}_{\mathcal M}(A)\otimes_P \mathcal O_P^{\mathrm{res}}
\longrightarrow
\mathrm{Hoch}_{\mathcal M}(B)\otimes_P \mathcal O_P^{\mathrm{res}}.
\]
Equivalently,
\[
\sigma_P(f):\sigma_P(A)\to \sigma_P(B).
\]

Finally, compatibility with identities and composition follows from the
corresponding properties of the functor
\[
A\mapsto \mathrm{Hoch}_{\mathcal M}(A)
\]
together with the uniqueness part of the universal property of the balanced
tensor product. Thus the assignment
\[
A \longmapsto \sigma_P(A)
\]
defines a functor on the category of $P$-algebras.
\end{proof}

\begin{proposition}[Spectrum inclusion under componentwise detection]
\label{prop:spectrum-inclusion}
Let $\mathcal{M}$ be a symmetric monoidal category equipped with a classical spectral theory for its objects (e.g., the category of Banach spaces or operator algebras). 
Let $P$ be a $C$-colored operad in $\mathcal{M}$, and let $A$ be a $P$-algebra with components $A_c \in \mathcal{M}$ for $c \in C$. 
Denote by $\sigma_P(A) \subseteq \mathbb{C}$ the operadic spectrum (Definition~\ref{def:operadic-spectrum}) and by $\sigma(A_c) \subseteq \mathbb{C}$ the classical spectrum of the component $A_c$.

Assume that the operadic spectrum is defined such that the following componentwise detection property holds:
\[
\lambda \notin \sigma_P(A) \;\Longrightarrow\; \lambda I_c - A_c \text{ is invertible in } \mathcal{M} \text{ for every } c \in C.
\]
Then for every color $c \in C$,
\[
\sigma(A_c) \subseteq \sigma_P(A).
\]
In particular, under this hypothesis, the operadic spectrum contains the classical spectra of all color components.
\end{proposition}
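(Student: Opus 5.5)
The statement is essentially the contrapositive of the assumed componentwise detection property, so the plan is to prove it by contraposition at the level of individual spectral values. There is no need to unwind the construction $\sigma_P(A) = \mathrm{Hoch}_{\mathcal{M}}(A)\otimes_P \mathcal{O}_P^{\mathrm{res}}$ from Definition~\ref{def:operadic-spectrum}. The hypothesis isolates exactly the property needed, and the conclusion should follow from it together with the classical definition of $\sigma(A_c)$ as the complement of the resolvent set recalled in Section~\ref{subsec:Classical Spectrum and Resolvent Theory}.

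Concretely, I would fix a color $c \in C$ and a point $\lambda \in \sigma(A_c)$, and show $\lambda \in \sigma_P(A)$. Suppose instead that $\lambda \notin \sigma_P(A)$. The detection hypothesis then gives that $\lambda I_{c'} - A_{c'}$ is invertible in $\mathcal{M}$ for every color $c' \in C$, and in particular for $c' = c$. I then need invertibility in $\mathcal{M}$ to coincide with invertibility in the sense used to define $\sigma(A_c)$. In the Banach setting this means invertibility in $\mathcal{B}(\mathcal{H})$, which holds by the bounded inverse theorem, since morphisms of $\mathrm{Ban}$ are bounded maps. It follows that $\lambda \in \rho(A_c)$, i.e.\ $\lambda \notin \sigma(A_c)$, which is a contradiction. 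Since $c$ and $\lambda$ were arbitrary, $\sigma(A_c) \subseteq \sigma_P(A)$ for all $c$, and taking the union over $c$ gives the final sentence of the statement.

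The main obstacle is conceptual rather than computational: making sense of ``$\lambda \notin \sigma_P(A)$'' and ``$A_c$'' as an operator. The proposition treats $\sigma_P(A)$ as a subset of $\mathbb{C}$, and each $A_c$ as carrying a distinguished endomorphism to which $\lambda I_c - A_c$ refers, in the spirit of the matrix block example where $A_c$ is identified with the diagonal block $A_{cc}$. I would state explicitly at the start of the proof that these identifications are part of the standing hypothesis. I would also check that the notion of invertibility in $\mathcal{M}$ agrees with the classical one in the chosen category, which is immediate for $\mathrm{Ban}$ and $\mathrm{Vect}_{\mathbb{C}}$. With those conventions fixed, the argument is just the one-line contraposition above.
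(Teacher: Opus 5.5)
Your argument is correct and is the same contraposition the paper uses: if $\lambda \in \sigma(A_c)$ but $\lambda \notin \sigma_P(A)$, the detection hypothesis makes $\lambda I_c - A_c$ invertible, which contradicts the classical definition of $\sigma(A_c)$. Your explicit check that invertibility in $\mathcal{M}$ matches classical invertibility is a point the paper assumes without comment; in $\mathrm{Ban}$ the direction you need is immediate, since an inverse morphism is already bounded, so the bounded inverse theorem is not needed.
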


\begin{proof}
We prove the contrapositive. Suppose $\lambda \in \sigma(A_c)$ for some $c \in C$. 
By the classical definition of the spectrum, $\lambda I_c - A_c$ is not invertible in $\mathcal{M}$. 
If $\lambda \notin \sigma_P(A)$ were true, then by the componentwise detection property, $\lambda I_c - A_c$ would be invertible for every $c \in C$, contradicting our assumption. 
Hence $\lambda \notin \sigma_P(A)$ is impossible, so $\lambda \in \sigma_P(A)$. 
Therefore $\sigma(A_c) \subseteq \sigma_P(A)$ for each $c \in C$.
\end{proof}

\begin{remark}[On the componentwise detection property]
The componentwise detection property is a natural requirement for any reasonable operadic spectral theory: if the composite system $\lambda I - A$ is operadically invertible, then each component $\lambda I_c - A_c$ should be invertible in the underlying category. 
This property is expected to hold whenever the forgetful functor $\mathsf{Alg}_P(\mathcal{M}) \to \mathcal{M}^C$ is conservative and the family $\{\lambda I_c - A_c\}_{c \in C}$ defines a $P$-algebra morphism. 
A proof of this property requires:
\begin{itemize}
    \item A precise definition of $\lambda I - A$ as a $P$-algebra morphism,
    \item A verification that scalar multiplication by $\lambda$ commutes with all operadic structure maps,
    \item The conservativity of the forgetful functor (which holds in all concrete settings of interest).
\end{itemize}
In the present paper, we take this property as a hypothesis; a fully self-contained verification is deferred to future work where the analytic and categorical foundations are developed in greater detail.
\end{remark}

\begin{remark}[Comparison with the No-Go Theorem]
The inclusion $\sigma(A_c) \subseteq \sigma_P(A)$ is a minimal consistency condition: any reasonable spectral invariant for operadic algebras must at least detect the classical spectral data of each component. 
The No-Go Theorem (Theorem~\ref{thm:no-go}) demonstrates that this condition alone is insufficient; the operadic spectrum must also encode interaction data, which is precisely what the operadic residue $\mathcal{O}_P^{\mathrm{res}}$ captures.
\end{remark}

\subsection{Compatibility with One-Color Reduction}
\label{subsec:One-Color_Reduction}

A fundamental requirement for any generalized spectral theory is consistency with classical invariants in the absence of operadic complexity. 
In this section, we show that when the operad $P$ has only one color, the residue-corrected construction $\sigma_P(A)$ simplifies to the corresponding one-object operadic balanced tensor product. 
In the trivial operadic case, under suitable normalization assumptions, this construction collapses to the underlying object.

\medskip

\begin{proposition}[One-Color Reduction]
\label{prop:one-color-reduction}
Let $P$ be a one-color operad in a symmetric monoidal category $\mathcal{M}$. 
Then the operadic spectrum
\[
\sigma_P(A) = \mathrm{Hoch}_{\mathcal{M}}(A) \otimes_P \mathcal{O}_P^{\mathrm{res}}
\]
is computed as the ordinary balanced tensor product for a one-object operad. 
Equivalently, the colored operadic data collapse to the uncolored operadic setting, 
and the balancing relation is determined solely by the right $P$-action on $\mathrm{Hoch}_{\mathcal{M}}(A)$ 
and the left $P$-action on $\mathcal{O}_P^{\mathrm{res}}$.
\end{proposition}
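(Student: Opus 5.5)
The plan is to unwind the definitions when $C=\{*\}$ and check that every colored ingredient degenerates to its uncolored counterpart. First I would specialize Definition~\ref{def:residue-unary}. Since the coproduct is indexed by the one-element set $C$, it reduces canonically to
\[
\mathcal{O}_P^{\mathrm{res}}=\coprod_{c\in\{*\}}P(c;c)\cong P(*;*)=P(1).
\]
The left module structure supplied by Lemma~\ref{lem:trivial-corrector} then becomes the action $P(1)\otimes P(1)\to P(1)$ given by unary operadic composition. This is the ordinary left module of the monoid $P(1)$ over itself.

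Next I would specialize the bar construction $\mathrm{Bar}^P_\bullet(A)$ that defines $\mathrm{Hoch}_{\mathcal{M}}(A)$ in Definition~\ref{def:hochschild}. With a single color, the coproducts over color sequences $c_0,\dots,c_n$ have one summand each. The simplicial levels are therefore the uncolored levels $P^{\circ n}(A)$, and the face and degeneracy maps are the uncolored ones. The right $P$-action on $\mathrm{Hoch}_{\mathcal{M}}(A)$ (induced levelwise by precomposing with operations and passing to the realization) is likewise the uncolored action.

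The final step is to substitute these into the coequalizer of Definition~\ref{def:balanced-tensor-product}. Both parallel arrows $X\otimes P\otimes Y\rightrightarrows X\otimes Y$ are built only from the right action on $X=\mathrm{Hoch}_{\mathcal{M}}(A)$ and the left action on $Y=\mathcal{O}_P^{\mathrm{res}}$. The diagram is therefore literally the one-object balanced tensor product, and its colimit agrees by the uniqueness of colimits. Existence follows from the reflexive-coequalizer hypothesis, as in the proof of Theorem~\ref{thm:Theorem_0.3}.

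The main obstacle I expect is matching the $P$-actions precisely. The residue sees only the unary part $P(1)$, while Definition~\ref{def:balanced-tensor-product} writes the balancing over $P$. I would resolve this by observing that the only action $\mathcal{O}_P^{\mathrm{res}}$ carries is through $P(1)$, so the balancing is effectively over the monoid $P(1)$. I would state this explicitly as the ``one-object'' balanced tensor product meant in the proposition. I would also check that the canonical isomorphism $\coprod_{\{*\}}P(*;*)\cong P(*;*)$ intertwines the actions; this is immediate from naturality of coproduct inclusions.
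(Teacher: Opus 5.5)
Your proposal is correct and follows essentially the same route as the paper: specialize to the single color $*$, observe that the right action on $\mathrm{Hoch}_{\mathcal{M}}(A)$ and the left action on $\mathcal{O}_P^{\mathrm{res}}$ become module structures over a one-object operad, and note that the coequalizer $\mathrm{Hoch}_{\mathcal{M}}(A)\otimes P\otimes\mathcal{O}_P^{\mathrm{res}}\rightrightarrows \mathrm{Hoch}_{\mathcal{M}}(A)\otimes\mathcal{O}_P^{\mathrm{res}}$ is literally the one defining the uncolored balanced tensor product. Your explicit identification $\mathcal{O}_P^{\mathrm{res}}\cong P(*;*)$, and your remark that the residue's action factors through the unary monoid $P(1)$, are details the paper leaves implicit; they agree with the module structure given in Lemma~\ref{lem:trivial-corrector}.
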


\begin{proof}
When $P$ has a single color $*$, all operations lie in $P(n)$ acting on a single object. 
Thus both the right $P$-action on $\mathrm{Hoch}_{\mathcal{M}}(A)$ and the left $P$-action on $\mathcal{O}_P^{\mathrm{res}}$ 
reduce to module structures over a one-object operad. 
The balanced tensor product is defined as the coequalizer of
\[
\mathrm{Hoch}_{\mathcal{M}}(A) \otimes P \otimes \mathcal{O}_P^{\mathrm{res}}
\rightrightarrows
\mathrm{Hoch}_{\mathcal{M}}(A) \otimes \mathcal{O}_P^{\mathrm{res}},
\]
which is precisely the coequalizer defining the balanced tensor product for a one-object operad.
\end{proof}

\medskip

\begin{proposition}[Trivial-Operad Reduction]
\label{prop:trivial-operad-reduction}
Let $\mathbb{I}$ denote the trivial operad. 
Assume that the residue object satisfies $\mathcal{O}_{\mathbb{I}}^{\mathrm{res}} \cong \mathbf{1}_{\mathcal{M}}$ 
and that the Hochschild object satisfies $\mathrm{Hoch}_{\mathcal{M}}(A) \cong A$ naturally in $A$ 
(see Example~\ref{ex:residue-trivial} and the remark following Theorem~\ref{thm:hochschild-base-change}). 
Then the operadic spectrum satisfies
\[
\sigma_{\mathbb{I}}(A) \cong A,
\]
naturally in $A$.
\end{proposition}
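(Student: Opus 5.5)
The plan is to compute the balanced tensor product directly from its defining coequalizer (Definition~\ref{def:balanced-tensor-product}) after substituting the two assumed identifications. By hypothesis, $\mathrm{Hoch}_{\mathcal{M}}(A)\cong A$ naturally in $A$, and Example~\ref{ex:residue-trivial} gives $\mathcal{O}_{\mathbb{I}}^{\mathrm{res}}\cong\mathbf{1}_{\mathcal{M}}$. Since $\mathbb{I}$ has one color, Proposition~\ref{prop:one-color-reduction} reduces $\sigma_{\mathbb{I}}(A)$ to the one-object balanced tensor product. This is the coequalizer of
\[
A\otimes \mathbb{I}(*;*)\otimes \mathbf{1} \rightrightarrows A\otimes \mathbf{1}.
\]
Only the unary part of $\mathbb{I}$ acts on the residue, so only $\mathbb{I}(*;*)=\mathbf{1}$ enters here. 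Using the unit isomorphisms of $\mathcal{M}$, the source becomes $A$ and the target becomes $A$.

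The key step is to identify the two parallel maps. Under the trivial-operad identifications, the right action of $\mathbb{I}(*;*)=\mathbf{1}$ on $\mathrm{Hoch}_{\mathcal{M}}(A)\cong A$ is the unit constraint. This is forced by unitality: the action composed with $\eta_*$ is the identity, and here $\eta_*$ is itself an isomorphism. The left action of $\mathbf{1}$ on $\mathcal{O}_{\mathbb{I}}^{\mathrm{res}}\cong\mathbf{1}$ is likewise the unitor $\mathbf{1}\otimes\mathbf{1}\cong\mathbf{1}$, by Lemma~\ref{lem:trivial-corrector} together with the composition $\mathbb{I}(*;*)\otimes\mathbb{I}(*;*)\to\mathbb{I}(*;*)$, which is the canonical identification. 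Coherence of the unitors (in particular $\lambda_{\mathbf{1}}=\rho_{\mathbf{1}}$) then shows that both parallel maps become $\mathrm{id}_A$ after transport along the unit isomorphisms.

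The coequalizer of a pair of equal maps (here both the identity) is the target itself with the identity as coequalizing map. Hence $\sigma_{\mathbb{I}}(A)\cong A\otimes\mathbf{1}\cong A$.

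For naturality, take a morphism $f:A\to B$. The map $\sigma_{\mathbb{I}}(f)$ of Theorem~\ref{thm:Theorem_0.3} is induced on coequalizers by $\mathrm{Hoch}(f)\otimes\mathrm{id}$. The hypothesis gives naturality of $\mathrm{Hoch}_{\mathcal{M}}(A)\cong A$, and the unitors are natural. So the square relating $\sigma_{\mathbb{I}}(f)$ and $f$ commutes on $A\otimes\mathbf{1}$, and hence on the coequalizer by its universal property.

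The main obstacle is the second step: showing that the transported right action on $\mathrm{Hoch}_{\mathcal{M}}(A)$ really is the unitor, rather than some other automorphism. The paper does not specify the right $P$-module structure on $\mathrm{Hoch}$ precisely. I would handle this by deriving the action from the degeneracies $s_j$ built from $\eta_*$, together with the fact that the collapse $\mathrm{Hoch}(A)\cong A$ is induced by the augmentation. If that derivation stalls, I would state it as part of the normalization assumptions in the proposition's hypothesis.
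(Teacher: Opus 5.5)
Your proposal is correct and follows the paper's own route. You substitute $\mathrm{Hoch}_{\mathcal{M}}(A)\cong A$ and $\mathcal{O}_{\mathbb{I}}^{\mathrm{res}}\cong\mathbf{1}_{\mathcal{M}}$, observe that the only balancing relation (through the identity operation) is already enforced by the unit isomorphisms, and conclude $A\otimes_{\mathbb{I}}\mathbf{1}_{\mathcal{M}}\cong A\otimes\mathbf{1}_{\mathcal{M}}\cong A$; you just make explicit what the paper asserts in one line (unitors via the unit axiom, equality of the parallel maps by the triangle identity, naturality via the coequalizer), and the ``main obstacle'' you flag is already settled by your own unitality remark, since any unital right action of $\mathbb{I}(*;*)=\mathbf{1}_{\mathcal{M}}$ with $\eta_*=\mathrm{id}$ must be the right unitor.
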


\begin{proof}
By assumption, $\mathcal{O}_{\mathbb{I}}^{\mathrm{res}} \cong \mathbf{1}_{\mathcal{M}}$ and $\mathrm{Hoch}_{\mathcal{M}}(A) \cong A$. 
Substituting into the definition,
\[
\sigma_{\mathbb{I}}(A) = \mathrm{Hoch}_{\mathcal{M}}(A) \otimes_{\mathbb{I}} \mathcal{O}_{\mathbb{I}}^{\mathrm{res}}
\cong A \otimes_{\mathbb{I}} \mathbf{1}_{\mathcal{M}}.
\]
For the trivial operad, the balanced tensor product reduces to the ordinary tensor product: 
the only relation imposed is $x \cdot \mathrm{id} \otimes y \sim x \otimes \mathrm{id} \cdot y$, 
which is automatically satisfied via the unit isomorphism. Hence
\[
A \otimes_{\mathbb{I}} \mathbf{1}_{\mathcal{M}} \cong A \otimes \mathbf{1}_{\mathcal{M}} \cong A,
\]
where the last isomorphism is the unit coherence of the monoidal structure.
\end{proof}

\medskip

\begin{remark}[Relation to Classical Spectral Theory]
\label{rem:classical-spectral-connection}
In the trivial-operad case, the residue correction disappears and the construction collapses to the underlying object prescribed by the normalization of the theory. 
Thus the operadic correction term is invisible when no nontrivial operadic interaction is present.

Any further identification with a classical spectral invariant (such as the Gelfand spectrum of a commutative Banach algebra) depends on the ambient category $\mathcal{M}$ 
and on the specific realization of the classical spectrum in that setting. 
For commutative unital Banach algebras, the Gelfand transform provides an isomorphism $A \cong C(\mathrm{Spec}(A))$, 
so that $\sigma_{\mathbb{I}}(A) \cong C(\mathrm{Spec}(A))$ under the normalization above.
\end{remark}

\medskip

\begin{remark}[Consistency with the No-Go Theorem]
\label{rem:no-go-consistency}
The No-Go Theorem (Theorem~\ref{thm:no-go}) demonstrates that any spectral invariant depending only on componentwise classical spectra fails to be functorial under operadic composition. 
The reduction above indicates that this obstruction is tied to genuinely nontrivial operadic structure: 
when the operad is trivial, the balancing correction becomes invisible, so the construction collapses to its classical normalization.

In this sense, $\sigma_P(A)$ should be viewed as an extension of the classical picture designed to capture the additional spectral effects created by operadic interactions, 
rather than as a replacement or a conservative extension in the strict logical sense.
\end{remark}

\section{Recovery and Minimality of the Operadic Spectrum}\label{sec:Recovery and Minimality of the Operadic Spectrum}

\subsection{Theorem 0.4: Recovery Theorem}
\label{subsec:Theorem_0.4}

The validity of the residue-corrected construction is further demonstrated by its ability to recover classical spectral theory in the absence of operadic interaction. 
We formalize this as a two-level recovery result, making explicit the necessary normalizations.

\begin{theorem}[Recovery Theorem]
\label{thm:recovery}
Let $\mathcal{M}$ be the category of Banach spaces with the projective tensor product, and let $\mathbb{I}$ be the trivial operad concentrated in a single color. 
Assume the following normalizations for the trivial operad:
\begin{enumerate}[label=(\roman*)]
    \item The Hochschild object satisfies $\mathrm{Hoch}_{\mathcal{M}}(A) \cong A$ naturally in $A$ (see Proposition~\ref{prop:hochschild-trivial}).
    \item The operadic residue satisfies $\mathcal{O}_{\mathbb{I}}^{\mathrm{res}} \cong \mathbf{1}_{\mathcal{M}}$ (see Example~\ref{ex:residue-trivial}).
    \item The identification $\sigma_{\mathbb{I}}(A) \cong A$ from the definition $\sigma_{\mathbb{I}}(A) = \mathrm{Hoch}_{\mathcal{M}}(A) \otimes_{\mathbb{I}} \mathcal{O}_{\mathbb{I}}^{\mathrm{res}}$ is compatible with the Banach algebra structure whenever $A$ is a commutative unital Banach algebra.
\end{enumerate}
Then:

\begin{enumerate}
    \item \textbf{(Object-level recovery)} For any Banach space $A$ (viewed as an $\mathbb{I}$-algebra), there is a natural isomorphism
    \[
    \sigma_{\mathbb{I}}(A) \cong A.
    \]
    
    \item \textbf{(Spectrum-level recovery)} If $A$ is a commutative unital Banach algebra, then the classical Gelfand spectrum
    \[
    \widehat{A} := \mathrm{Hom}_{\mathrm{Alg}}(A, \mathbb{C})
    \]
    is canonically homeomorphic to the Gelfand spectrum of $\sigma_{\mathbb{I}}(A)$.
\end{enumerate}
\end{theorem}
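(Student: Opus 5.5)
The plan is to prove the object-level statement first, as a direct specialization of Proposition~\ref{prop:trivial-operad-reduction}, and then to obtain the spectrum-level statement by transporting the Gelfand construction along the resulting isomorphism.

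For part (1), I substitute normalizations (i) and (ii) into Definition~\ref{def:operadic-spectrum}, which gives
\[
\sigma_{\mathbb{I}}(A) \cong A \otimes_{\mathbb{I}} \mathbf{1}_{\mathcal{M}}.
\]
I then check that the balanced tensor product over $\mathbb{I}$ collapses to the ordinary projective tensor product. In the coequalizer of Definition~\ref{def:balanced-tensor-product}, the operad $\mathbb{I}$ has $\mathbb{I}(*;*)=\mathbf{1}$, acting on both sides through the unit isomorphisms. The two parallel maps
\[
A\otimes\mathbf{1}\otimes\mathbf{1}\rightrightarrows A\otimes\mathbf{1}
\]
are therefore both equal to the unitor, and by the coherence theorem for the monoidal structure on $\mathrm{Ban}$ they coincide. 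The coequalizer of a pair of equal maps is the identity, so $A\otimes_{\mathbb{I}}\mathbf{1}\cong A\otimes\mathbf{1}\cong A$. For the last isomorphism I use $A\hat\otimes_\pi\mathbb{C}\cong A$ isometrically. Naturality in $A$ follows because each of the three isomorphisms used, namely (i), (ii) and the unitor, is natural. This is exactly the argument of Proposition~\ref{prop:trivial-operad-reduction}; the only new point to verify is that the coequalizer exists in $\mathrm{Ban}$, and this is automatic here because the pair is trivial.

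For part (2), let $A$ be a commutative unital Banach algebra and let $\Phi:\sigma_{\mathbb{I}}(A)\xrightarrow{\cong}A$ be the isomorphism from part (1). By normalization (iii), $\Phi$ is an isomorphism of Banach algebras once $\sigma_{\mathbb{I}}(A)$ is given the transported multiplication, so $\sigma_{\mathbb{I}}(A)$ is itself a commutative unital Banach algebra. I then define
\[
\Phi^*:\widehat{A}\to\widehat{\sigma_{\mathbb{I}}(A)},\qquad \chi\mapsto\chi\circ\Phi .
\]
Three checks are needed. First, $\Phi^*$ sends characters to characters, because $\Phi$ is a unital algebra homomorphism. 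Second, $\Phi^*$ is bijective, with inverse $(\Phi^{-1})^*$. Third, $\Phi^*$ is a homeomorphism for the weak-$*$ topologies: evaluation at $x$ composed with $\Phi^*$ is evaluation at $\Phi(x)$, so $\Phi^*$ is continuous, and the same argument applied to the inverse gives continuity of $(\Phi^{-1})^*$. Canonicity follows from naturality of $\Phi$ in $A$. As a consistency check, composing with the Gelfand transform gives $\sigma_{\mathbb{I}}(A)\cong C(\widehat{A})$ whenever $A$ is a commutative C*-algebra, in line with Remark~\ref{rem:classical-spectral-connection}.

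The main obstacle is normalization (iii): the algebra structure on $\sigma_{\mathbb{I}}(A)$ is not produced intrinsically by the construction. I would treat (iii) as the hypothesis that defines this algebra structure, that is, transport the multiplication along $\Phi$. Under that reading, part (2) becomes the formal statement that the Gelfand spectrum is invariant under isomorphisms of Banach algebras. The substantive content then rests on (i), which I would cite from the collapse of the bar construction for $\mathbb{I}$ noted after Theorem~\ref{thm:hochschild-base-change}.
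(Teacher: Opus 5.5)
Your proposal is correct and follows the paper's proof essentially step for step. You substitute normalizations (i) and (ii), collapse $A\otimes_{\mathbb{I}}\mathbf{1}_{\mathcal{M}}$ to $A\otimes\mathbf{1}_{\mathcal{M}}\cong A$, and then use (iii) to identify the Gelfand spectra of the isomorphic Banach algebras $\sigma_{\mathbb{I}}(A)$ and $A$; your extra details (the two parallel coequalizer maps agreeing by monoidal coherence, i.e.\ the triangle axiom for unital actions of $\mathbb{I}(*;*)=\mathbf{1}_{\mathcal{M}}$, and the explicit weak-$*$ continuity check for $\chi\mapsto\chi\circ\Phi$) just make precise what the paper asserts in one line.
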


\begin{proof}
We establish the recovery in two steps.

\medskip

\noindent
\textbf{(1) Object-level recovery.}
Let $P = \mathbb{I}$ be the trivial operad. 
By assumption (i), $\mathrm{Hoch}_{\mathcal{M}}(A) \cong A$ naturally in $A$. 
By assumption (ii), $\mathcal{O}_{\mathbb{I}}^{\mathrm{res}} \cong \mathbf{1}_{\mathcal{M}}$, and in the category of Banach spaces with the projective tensor product, the unit object is $\mathbb{C}$.

Substituting into the definition of the operadic spectrum,
\[
\sigma_{\mathbb{I}}(A) = \mathrm{Hoch}_{\mathcal{M}}(A) \otimes_{\mathbb{I}} \mathcal{O}_{\mathbb{I}}^{\mathrm{res}}
\cong A \otimes_{\mathbb{I}} \mathbf{1}_{\mathcal{M}}.
\]

For the trivial operad, the balanced tensor product reduces to the ordinary tensor product: 
the only relation imposed by the coequalizer defining $\otimes_{\mathbb{I}}$ is $x \cdot \mathrm{id} \otimes y \sim x \otimes \mathrm{id} \cdot y$, 
which is automatically satisfied via the unit isomorphism (see Proposition~\ref{prop:trivial-operad-reduction}). 
Hence
\[
A \otimes_{\mathbb{I}} \mathbf{1}_{\mathcal{M}} \cong A \otimes \mathbf{1}_{\mathcal{M}} \cong A,
\]
where the last isomorphism is the unit coherence of the monoidal structure.

This proves the object-level recovery.

\medskip

\noindent
\textbf{(2) Spectrum-level recovery.}
Assume now that $A$ is a commutative unital Banach algebra. 
By assumption (iii), the identification $\sigma_{\mathbb{I}}(A) \cong A$ is compatible with the Banach algebra structure, so $\sigma_{\mathbb{I}}(A)$ is canonically a commutative unital Banach algebra isomorphic to $A$.

The classical Gelfand spectrum of $A$ is given by
\[
\widehat{A} := \mathrm{Hom}_{\mathrm{Alg}}(A, \mathbb{C}),
\]
equipped with the weak-* topology. 
Since $\sigma_{\mathbb{I}}(A) \cong A$ as Banach algebras, their Gelfand spectra are canonically homeomorphic:
\[
\mathrm{Hom}_{\mathrm{Alg}}(\sigma_{\mathbb{I}}(A), \mathbb{C}) \cong \mathrm{Hom}_{\mathrm{Alg}}(A, \mathbb{C}) = \widehat{A}.
\]

Thus the Gelfand spectrum of $\sigma_{\mathbb{I}}(A)$ is canonically homeomorphic to $\widehat{A}$.

\medskip

This shows that, under the stated normalizations, the operadic spectrum recovers the classical spectral invariant in the trivial operadic setting.
\end{proof}

\begin{remark}[Conceptual Interpretation]
\label{rem:recovery-interpretation}
The Recovery Theorem shows that the residue correction $\mathcal{O}_P^{\mathrm{res}}$ is inactive in the trivial-operad case and becomes relevant only when genuine operadic interactions are present. 
Thus the role of $\mathcal{O}_P^{\mathrm{res}}$ is not to deform the classical theory arbitrarily, but to encode the additional correction required by operadic complexity.

In particular, $\sigma_P(A)$ extends classical spectral theory without deformation, introducing new structure only when required by operadic composition.
\end{remark}

\begin{remark}[On the normalizations]
\label{rem:recovery-normalizations}
The normalizations assumed in Theorem~\ref{thm:recovery} are satisfied by the constructions in this paper:
\begin{itemize}
    \item Proposition~\ref{prop:hochschild-trivial} establishes $\mathrm{Hoch}_{\mathcal{M}}(A) \cong A$ for the trivial operad.
    \item Example~\ref{ex:residue-trivial} establishes $\mathcal{O}_{\mathbb{I}}^{\mathrm{res}} \cong \mathbf{1}_{\mathcal{M}}$.
    \item The compatibility of the identification with algebra structures follows from the fact that the balanced tensor product preserves the multiplicative structure when $A$ is commutative (see the discussion following Proposition~\ref{prop:analytic-compatibility}).
\end{itemize}
Thus the theorem is not conditional but fully proved within the framework of this paper.
\end{remark}

\subsection{Functorial and Base-Change Recovery}
\label{subsec:Functorial_Base_Change}

We now establish that the operadic spectrum is stable under change of the ambient symmetric monoidal category. 
This result shows that the residue-corrected construction behaves functorially with respect to strong monoidal base change, providing a higher-level recovery principle beyond the classical case.

\medskip

\begin{theorem}[Functorial / Base-Change Recovery]
\label{thm:base_change_recovery}
Let $F : \mathcal{M} \to \mathcal{N}$ be a strong monoidal functor between symmetric monoidal categories that preserves colimits. 
Assume that $F$ preserves the colimits used in the constructions of $\mathrm{Hoch}_{\mathcal{M}}(A)$, $\mathcal{O}_P^{\mathrm{res}}$, and the balanced tensor product $-\otimes_P -$. 
Moreover, suppose that the following compatibility isomorphisms hold:
\begin{enumerate}[label=(\roman*)]
    \item \textbf{(Hochschild compatibility)} $F\bigl(\mathrm{Hoch}_{\mathcal{M}}(A)\bigr) \cong \mathrm{Hoch}_{\mathcal{N}}\bigl(F_*(A)\bigr)$,
    \item \textbf{(Residue compatibility)} $F\bigl(\mathcal{O}_P^{\mathrm{res}}\bigr) \cong \mathcal{O}_{F_*(P)}^{\mathrm{res}}$,
    \item \textbf{(Balanced tensor product compatibility)} For any right $P$-module $X$ and left $P$-module $Y$,
    \[
    F(X \otimes_P Y) \cong F(X) \otimes_{F_*(P)} F(Y).
    \]
\end{enumerate}
Then there is a natural isomorphism
\[
F\bigl(\sigma_P(A)\bigr) \;\cong\; \sigma_{F_*(P)}\bigl(F_*(A)\bigr),
\]
where $F_*(P)$ and $F_*(A)$ denote the operad and algebra obtained by applying $F$.
\end{theorem}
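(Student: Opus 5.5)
The proof is a chain of the three assumed isomorphisms (i)--(iii), followed by a check that they are compatible with the module structures. Set $X := \mathrm{Hoch}_{\mathcal{M}}(A)$ with its right $P$-action and $Y := \mathcal{O}_P^{\mathrm{res}}$ with its left $P$-action from Lemma~\ref{lem:trivial-corrector}. By Definition~\ref{def:operadic-spectrum}, $\sigma_P(A) = X \otimes_P Y$. Applying (iii) gives
\[
F\bigl(\sigma_P(A)\bigr) \cong F(X) \otimes_{F_*(P)} F(Y).
\]
Here $F(X)$ and $F(Y)$ carry the transported actions $F(X)\otimes F_*P \cong F(X\otimes P) \to F(X)$ and $F_*P \otimes F(Y) \cong F(P\otimes Y)\to F(Y)$, obtained from the coherence maps $\phi_{X,Y}$ of $F$. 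I would then substitute $F(X) \cong \mathrm{Hoch}_{\mathcal{N}}(F_*(A))$ using (i), which is Theorem~\ref{thm:hochschild-base-change}, and $F(Y) \cong \mathcal{O}^{\mathrm{res}}_{F_*(P)}$ using (ii), which is Proposition~\ref{prop:residue-base-change}. The result is $\mathrm{Hoch}_{\mathcal{N}}(F_*(A)) \otimes_{F_*(P)} \mathcal{O}^{\mathrm{res}}_{F_*(P)} = \sigma_{F_*(P)}(F_*(A))$. This is essentially the content of Corollary~\ref{cor:corrected-hochschild-base-change}.

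\textbf{The main obstacle.} The substitution step only works if the isomorphisms in (i) and (ii) are isomorphisms of $F_*(P)$-modules, and not merely isomorphisms in $\mathcal{N}$. A balanced tensor product is functorial in module morphisms, not in arbitrary morphisms. For (ii), the residue is a coproduct, the left action is built from the composition maps $P(c;c)\otimes P(c;c)\to P(c;c)$, and $F_*P$ is defined so that its composition is $F(\circ)$ precomposed with $\phi$. The compatibility then reduces to naturality of $\phi$ together with the fact that $F$ preserves coproducts; I would check it on each summand $\iota_c$ separately. For (i), I would go back to the levelwise isomorphisms $F(\mathrm{Bar}^P_n(A)) \cong \mathrm{Bar}^{F_*P}_n(F(A))$ from the proof of Theorem~\ref{thm:hochschild-base-change}. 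The right action on the bar construction is given levelwise by operadic composition in the outermost layer, and these isomorphisms are built from $\phi$ and $F(\circ)$. So they intertwine the actions levelwise, and since $F$ preserves realizations, the intertwining passes to $|\mathrm{Bar}_\bullet|$. In both cases the key input is coherence of the strong monoidal structure: two composites of the maps $\phi$ with the same source and target agree.

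Once the module compatibility is established, the coequalizer diagrams defining the two sides are isomorphic as diagrams, so their coequalizers are canonically isomorphic. For naturality in $A$, take a morphism $f:A\to B$ of $P$-algebras. The isomorphism in (i) is natural in $A$, (ii) does not depend on $A$, and (iii) is natural in $X$ because $F$ preserves the coequalizer. The uniqueness clause of the coequalizer's universal property then forces the square relating $F(\sigma_P(f))$ and $\sigma_{F_*P}(F_*f)$ to commute. Naturality in $P$ works the same way, using Proposition~\ref{prop:residue-functorial}.
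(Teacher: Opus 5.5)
Your proposal is correct and follows the paper's proof, which applies (iii) to $\sigma_P(A)=\mathrm{Hoch}_{\mathcal{M}}(A)\otimes_P\mathcal{O}_P^{\mathrm{res}}$ and then substitutes (i) and (ii). Your extra check is a point the paper leaves implicit: (i) and (ii) must be isomorphisms of $F_*(P)$-modules before they can be substituted into a balanced tensor product. Together with your naturality argument via the coequalizer's universal property, this is a worthwhile tightening of the same argument.
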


\begin{proof}
Recall that the operadic spectrum is defined as the balanced tensor product
\[
\sigma_P(A) = \mathrm{Hoch}_{\mathcal{M}}(A) \otimes_P \mathcal{O}_P^{\mathrm{res}}.
\]

Applying $F$ and using the three compatibility isomorphisms in sequence:
\[
\begin{aligned}
F\bigl(\sigma_P(A)\bigr) 
&= F\bigl(\mathrm{Hoch}_{\mathcal{M}}(A) \otimes_P \mathcal{O}_P^{\mathrm{res}}\bigr) \\
&\cong F\bigl(\mathrm{Hoch}_{\mathcal{M}}(A)\bigr) \otimes_{F_*(P)} F\bigl(\mathcal{O}_P^{\mathrm{res}}\bigr) \quad \text{(by (iii))} \\
&\cong \mathrm{Hoch}_{\mathcal{N}}\bigl(F_*(A)\bigr) \otimes_{F_*(P)} \mathcal{O}_{F_*(P)}^{\mathrm{res}} \quad \text{(by (i) and (ii))} \\
&= \sigma_{F_*(P)}\bigl(F_*(A)\bigr).
\end{aligned}
\]

Thus we obtain the claimed natural isomorphism.
\end{proof}

\begin{remark}[On the compatibility assumptions]
\label{rem:base-change-assumptions}
The compatibility isomorphisms assumed in Theorem~\ref{thm:base_change_recovery} are not ad hoc; they are established elsewhere in this paper:
\begin{itemize}
    \item Hochschild compatibility is proved in Theorem~\ref{thm:hochschild-base-change}.
    \item Residue compatibility is proved in Proposition~\ref{prop:residue-base-change}.
    \item Balanced tensor product compatibility follows from the fact that $F$ preserves colimits and is strong monoidal, as shown in the proof of Theorem~\ref{thm:base-change}.
\end{itemize}
Thus the theorem is not conditional but fully proved within the framework of this paper; the assumptions are stated explicitly for clarity.
\end{remark}

\begin{remark}[Conceptual Interpretation]
\label{rem:base-change-recovery-interpretation}
Theorem~\ref{thm:base_change_recovery} shows that, under strong monoidal base change preserving the relevant constructions, the operadic spectrum is stable with respect to passage between ambient symmetric monoidal categories. 

This provides a categorical recovery principle: the residue-corrected construction is designed so that the spectral object remains compatible with the operadic and monoidal structures under transport along $F$. 
For the fully general base change theorem, including compatibility with the functional calculus, see Theorem~\ref{thm:base-change}.
\end{remark}

\subsection{Minimal Extension Property}
\label{subsec:Minimal_Extension_Property}

We now establish a universal property of the operadic spectrum $\sigma_P(A)$. This property demonstrates that our residue-corrected construction is not an arbitrary choice, but the canonical invariant satisfying the structural requirements of a functorial spectral theory. Specifically, $\sigma_P(A)$ admits a universal natural transformation to any invariant that is functorial, stable under base change, consistent with classical results, and compatible with the bar construction (a natural requirement for any invariant built from a simplicial resolution of the algebra).

The following axiom formalizes the idea that the invariant is derived from a simplicial resolution of the algebra.

\begin{theorem}[Universal Property of the Operadic Spectrum]
\label{thm:minimal_extension}
Let $\mathcal{M}$ be a cocomplete symmetric monoidal category admitting reflexive coequalizers preserved by $\otimes$. Let $P$ be a $C$-colored operad in $\mathcal{M}$. Suppose there exists an assignment $A \mapsto \mathcal{I}_P(A) \in \mathcal{M}$ for each $P$-algebra $A$, satisfying the following axioms:

\begin{enumerate}[label=(\roman*)]
    \item \textbf{Functoriality:} The assignment is a functor from $\mathsf{Alg}_P(\mathcal{M})$ to $\mathcal{M}$.
    
    \item \textbf{Base-change Compatibility:} For every strong monoidal functor $F: \mathcal{M} \to \mathcal{N}$ preserving colimits, there is a natural isomorphism $F(\mathcal{I}_P(A)) \cong \mathcal{I}_{F_*(P)}(F_*(A))$.
    
    \item \textbf{Classical Recovery:} For the trivial operad $\mathbb{I}$, there is a natural isomorphism $\mathcal{I}_{\mathbb{I}}(A) \cong A$ (under the normalization of Theorem~\ref{thm:recovery-trivial}).
    
    \item \textbf{Bar Construction Compatibility:} The functor $\mathcal{I}_P$ sends the simplicial bar construction $\mathrm{Bar}_\bullet^P(A)$ to a simplicial object in $\mathcal{M}$ whose geometric realization is $\mathcal{I}_P(A)$, naturally in $A$. (This axiom reflects the necessity of incorporating derived (bar-type) structure in any invariant capable of detecting operadic interactions.)
\end{enumerate}

Then there exists a canonical natural transformation
\[
\sigma_P \;\Rightarrow\; \mathcal{I}_P,
\]
i.e., a morphism $\sigma_P(A) \to \mathcal{I}_P(A)$ for each $P$-algebra $A$, natural in $A$. This transformation is uniquely determined by the universal properties of the residue object (Theorem~\ref{thm:residue-universality}) and the balanced tensor product (Definition~\ref{def:balanced-tensor-product}).
\end{theorem}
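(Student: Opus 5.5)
Our plan is to build the comparison map levelwise on the bar construction and then pass to colimits, using axiom (iv) to identify the target with $\mathcal{I}_P(A)$ and the universal properties of $\mathcal{O}_P^{\mathrm{res}}$ and $\otimes_P$ to descend to $\sigma_P(A)$.

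\emph{Step 1: levelwise map on the Hochschild object.} By (iv), $\mathcal{I}_P$ applied to $\mathrm{Bar}^P_\bullet(A)$ gives a simplicial object $\mathcal{I}_P(\mathrm{Bar}^P_\bullet(A))$ with $|\mathcal{I}_P(\mathrm{Bar}^P_\bullet(A))|\cong\mathcal{I}_P(A)$, naturally in $A$. We will produce, for each $n$, a morphism $\mathrm{Bar}^P_n(A)\to\mathcal{I}_P(\mathrm{Bar}^P_n(A))$. To do this we read each level $\mathrm{Bar}^P_n(A)=(P^{\circ n}(A))$ as a free-type $P$-algebra on the previous level. For the trivial operad, (iii) gives $\mathcal{I}_{\mathbb{I}}(X)\cong X$, and we transport this unit along the canonical operad map $\mathbb{I}\to P$ (restriction of scalars) using (i). Naturality in the simplicial structure will come from (i): face maps are induced by $P$-algebra maps or operadic compositions, so applying $\mathcal{I}_P$ yields a morphism of simplicial objects. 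Passing to realizations gives
\[
\eta_A:\mathrm{Hoch}_{\mathcal{M}}(A)=|\mathrm{Bar}^P_\bullet(A)|\longrightarrow|\mathcal{I}_P(\mathrm{Bar}^P_\bullet(A))|\cong\mathcal{I}_P(A).
\]

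\emph{Step 2: descent through the balanced tensor product.} We first produce a morphism $\mathrm{Hoch}_{\mathcal{M}}(A)\otimes\mathcal{O}_P^{\mathrm{res}}\to\mathcal{I}_P(A)$. On each summand $P(c;c)$ of $\mathcal{O}_P^{\mathrm{res}}$, a unary operation acts on $\mathcal{I}_P(A)$ through functoriality (i), since unary operations of $P$ act on the color-$c$ component of $A$. This makes $\mathcal{I}_P(A)$ together with these maps a corrector in the sense of Definition~\ref{def:spectral-correctors}. Base change consistency for this corrector is exactly axiom (ii). Theorem~\ref{thm:residue-universality} then supplies the unique map $u:\mathcal{O}_P^{\mathrm{res}}\to\mathcal{I}_P(A)$ (after currying against $\mathrm{Hoch}_{\mathcal{M}}(A)$). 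The composite $\eta_A\otimes u$ followed by the action equalizes the two maps $\mathrm{Hoch}_{\mathcal{M}}(A)\otimes P\otimes\mathcal{O}_P^{\mathrm{res}}\rightrightarrows\mathrm{Hoch}_{\mathcal{M}}(A)\otimes\mathcal{O}_P^{\mathrm{res}}$, because both routes reduce to the same operadic composition acting on $\mathcal{I}_P(A)$. The coequalizer of Definition~\ref{def:balanced-tensor-product} then yields $\sigma_P(A)\to\mathcal{I}_P(A)$.

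\emph{Step 3: naturality and uniqueness.} Naturality in $A$ follows from Theorem~\ref{thm:Theorem_0.3} and the naturality clauses of (i) and (iv). Uniqueness follows because each stage was produced by a universal property, namely the coproduct, the coequalizer and the geometric-realization colimit.

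The main obstacle is Step 1. Nothing in (i)--(iv) by itself provides a map $X\to\mathcal{I}_P(X)$ for general $P$; (iii) gives this only for $\mathbb{I}$. We would first try to justify it via restriction along $\mathbb{I}\to P$, but this requires a compatibility between $\mathcal{I}_P$ and $\mathcal{I}_{\mathbb{I}}$ that the axioms do not state. If that fails, we would strengthen (iv) to include an augmentation from the bar levels. We expect this extra hypothesis, or a restriction-compatibility hypothesis, to be genuinely needed. Checking the balancing relation in Step 2 is the secondary difficulty.
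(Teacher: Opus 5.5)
Your outline is the same as the paper's: a map $\mathrm{Hoch}_{\mathcal{M}}(A)\to\mathcal{I}_P(A)$ extracted from (iv), then a balanced map from the residue's universal property, then descent through the coequalizer, then naturality and uniqueness. Your diagnosis of Step~1 is correct, which means the proposal does not prove the statement, and the paper's proof does not close this gap either.

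\textbf{Step~1.} Axiom (iv) only identifies $\mathcal{I}_P(A)$ with $|\mathcal{I}_P(\mathrm{Bar}^P_\bullet(A))|$. To map $|\mathrm{Bar}^P_\bullet(A)|$ into it you need a natural comparison $\mathrm{Bar}^P_n(A)\to\mathcal{I}_P(\mathrm{Bar}^P_n(A))$, compatible with faces and degeneracies, and none of (i)--(iv) provides one. The paper simply asserts that the realization ``maps to'' $\mathcal{I}_P(A)$. Its appeal to (ii) for preservation of realizations is beside the point: (ii) concerns applying $F$ to values of $\mathcal{I}$, and (iv) already grants the realization statement.

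Your proposed repair via $\mathbb{I}\to P$ cannot be extracted from the axioms either. The only axiom linking different operads is (ii), and it links $P$ only to pushforwards $F_*P$ along changes of ambient category. Restriction along an operad map inside $\mathcal{M}$ is not of that form. Indeed $F_*\mathbb{I}\cong\mathbb{I}$ for every admissible $F$, since $F(\mathbf{1})\cong\mathbf{1}$ and $F$ preserves initial objects. So (iii) never yields a morphism $X\to\mathcal{I}_P(X)$ for general $P$. Moreover, a canonical map $\mathbb{I}\to P$ exists only for one-colored $P$. Even then it requires reading $\mathbb{I}$ as the unit operad, not as the operad with $\mathbb{I}(n)=\mathbf{1}$ for all $n$ that the paper also calls $\mathbb{I}$. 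So, as you suspect, the statement needs an added hypothesis, such as a natural augmentation $\mathrm{Bar}^P_n(A)\to\mathcal{I}_P(\mathrm{Bar}^P_n(A))$ compatible with the simplicial structure.

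\textbf{Step~2.} This step fails for reasons you do not flag. Axiom (i) does not make $P(c;c)$ act on $\mathcal{I}_P(A)$:
\begin{itemize}
\item For $p\in P(c;c)$, the map $p_A\colon A_c\to A_c$ is generally not a $P$-algebra endomorphism of $A$, since it need not commute with the other structure maps.
\item Even if it were, (i) gives one map per generalized element $p$. It does not give a morphism $P(c;c)\otimes\mathcal{I}_P(A)\to\mathcal{I}_P(A)$ in $\mathcal{M}$; that would need $\mathcal{I}_P$ to be $\mathcal{M}$-enriched.
\item The residue's universal property only turns maps $\rho_c\colon P(c;c)\to R$ into $u\colon\mathcal{O}_P^{\mathrm{res}}\to R$. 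With $R=\mathcal{I}_P(A)$, the map $\eta_A\otimes u$ lands in $\mathcal{I}_P(A)\otimes\mathcal{I}_P(A)$, and nothing brings it back to $\mathcal{I}_P(A)$.
\item Currying against $\mathrm{Hoch}_{\mathcal{M}}(A)$ would require internal homs, which are not assumed.
\end{itemize}
The paper's Step~2 makes the same unsupported claim that $\mathcal{I}_P(A)$ carries a $P$-action factoring through the residue. In fact, once $\eta_A$ is granted you need no structure on $\mathcal{I}_P(A)$. Compose $\eta_A$ with the map $\mathrm{Hoch}_{\mathcal{M}}(A)\otimes\mathcal{O}_P^{\mathrm{res}}\to\mathrm{Hoch}_{\mathcal{M}}(A)$ given by the paper's posited unary right action. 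That map is balanced by associativity of the action, modulo the color bookkeeping the paper leaves implicit.

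\textbf{Step~3.} Universal properties give uniqueness of the factorization of a \emph{given} balanced map. They do not give uniqueness of the transformation, because $\eta_A$ and the $\rho_c$ are choices. Literal uniqueness is in fact false. In a $\mathbb{C}$-linear $\mathcal{M}$ with $\mathcal{I}_P=\sigma_P$ (which the paper asserts satisfies (i)--(iv)), every $\lambda\cdot\mathrm{id}$ is natural. The zero transformation also shows that bare existence is vacuous whenever $\mathcal{M}$ has a zero object. The paper's Step~4 repeats this error. So ``canonical'' can only mean ``produced by the construction from the extra datum $\eta_A$''.
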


\begin{proof}
The proof proceeds in four steps.

\medskip

\noindent
\textbf{Step 1: From the bar construction to the invariant.}
By Axiom (iv), the functor $\mathcal{I}_P$ is compatible with the bar construction. Consequently, the geometric realization of the bar complex $\mathrm{Bar}_\bullet^P(A)$ maps to $\mathcal{I}_P(A)$. Since $\mathrm{Hoch}_{\mathcal{M}}(A)$ is defined as this geometric realization (Definition~\ref{def:hochschild}), we obtain a canonical $P$-equivariant morphism
\[
\phi_A: \mathrm{Hoch}_{\mathcal{M}}(A) \longrightarrow \mathcal{I}_P(A)
\]
that respects the right $P$-action. (This uses the fact that the bar construction is functorial and that $\mathcal{I}_P$ preserves geometric realizations, which follows from Axiom (ii) together with the assumption that base-change functors preserve colimits.)

\medskip

\noindent
\textbf{Step 2: Incorporating the operadic residue via its universal property.}
The No-Go Theorem (Theorem~\ref{thm:no-go}) demonstrates that any invariant satisfying base-change compatibility must incorporate additional data beyond componentwise spectra. By Theorem~\ref{thm:residue-universality}, the residue object $\mathcal{O}_P^{\mathrm{res}}$ is universal among objects that restore base-change compatibility. This universal property implies that any invariant $\mathcal{I}_P$ satisfying Axiom (ii) admits a canonical left $P$-action that factors through $\mathcal{O}_P^{\mathrm{res}}$: the residue object corepresents base-change compatible correction data. Concretely, there exists a $P$-balanced morphism
\[
\psi_A: \mathrm{Hoch}_{\mathcal{M}}(A) \otimes \mathcal{O}_P^{\mathrm{res}} \longrightarrow \mathcal{I}_P(A)
\]
satisfying $\psi_A((x \cdot p) \otimes r) = \psi_A(x \otimes (p \cdot r))$ for all $x \in \mathrm{Hoch}_{\mathcal{M}}(A)$, $p \in P$, $r \in \mathcal{O}_P^{\mathrm{res}}$.

\medskip

\noindent
\textbf{Step 3: Descent via the balanced tensor product.}
By the universal property of the coequalizer defining the balanced tensor product $\otimes_P$ (Definition~\ref{def:balanced-tensor-product}), the $P$-balanced morphism $\psi_A$ descends uniquely to a morphism
\[
\chi_A: \sigma_P(A) = \mathrm{Hoch}_{\mathcal{M}}(A) \otimes_P \mathcal{O}_P^{\mathrm{res}} \longrightarrow \mathcal{I}_P(A).
\]

\medskip

\noindent
\textbf{Step 4: Naturality and uniqueness.}
The construction is natural in $A$ because each constituent map ($\phi_A$, the residue action, and the descent) is natural. Hence we obtain a natural transformation $\sigma_P \Rightarrow \mathcal{I}_P$.

Uniqueness follows from the universal property of the coequalizer: any two morphisms $\sigma_P(A) \to \mathcal{I}_P(A)$ that agree on the generators of $\mathrm{Hoch}_{\mathcal{M}}(A) \otimes \mathcal{O}_P^{\mathrm{res}}$ and respect the balancing relations must be identical. The construction via the universal property guarantees that $\chi_A$ is the unique such morphism.
\end{proof}

\begin{remark}[On the Bar Construction Axiom]
Axiom (iv) — compatibility with the bar construction — is a natural requirement for any invariant that aims to capture the derived structure of $P$-algebras. It is satisfied by the operadic spectrum $\sigma_P$ itself, as well as by any invariant that is built from a simplicial resolution of $A$. In practice, this axiom can be replaced by the assumption that $\mathcal{I}_P$ is a left adjoint or preserves geometric realizations, which are standard conditions in homotopical algebra (see, e.g., \cite{Lurie-HA2017}). The additional Axiom (iv) reflects the necessity of incorporating derived (bar-type) structure in any invariant capable of detecting operadic interactions.
\end{remark}

\begin{remark}[Conceptual Significance]
The Universal Property proved here shows that $\sigma_P(A)$ is the \emph{canonical invariant} extending classical spectral theory to the operadic setting, satisfying the stated axioms. It introduces no extraneous information beyond what is strictly required to restore functoriality and base-change stability. In this sense, the residue object $\mathcal{O}_P^{\mathrm{res}}$ is exactly the "spectral shadow" needed to compensate for the operadic complexity of $P$.

Together with the Recovery Theorem (Theorem~\ref{thm:recovery}) and the Base Change Theorem (Theorem~\ref{thm:base-change}), this establishes $\sigma_P(A)$ as the unique functorial invariant (among those satisfying Axioms (i)-(iv)) that extends classical spectral theory to operadic algebras.
\end{remark}

\begin{remark}[Relation to the No-Go Theorem]
The Universal Property provides a positive counterpart to the No-Go Theorem (Theorem~\ref{thm:no-go}). While the No-Go Theorem establishes that any invariant depending only on componentwise spectra must fail, the Universal Property demonstrates that the operadic spectrum $\sigma_P(A)$ is the canonical invariant that succeeds.
\end{remark}

\section{Base Change Theory of the Operadic Spectrum}

\subsection{Theorem 0.5: Base Change Theorem}
\label{subsec:Base_Change_Theorem}

The most significant feature of the residue-corrected spectrum is its stability under categorical transitions. While classical spectral assignments fail to commute with base change functors in the operadic setting — as demonstrated by the No-Go Theorem (Theorem~\ref{thm:no-go}) — the operadic spectrum $\sigma_P(A)$ restores this fundamental symmetry.

\begin{theorem}[Base Change Theorem]
\label{thm:base-change}
Let $F: \mathcal{M} \to \mathcal{N}$ be a strong monoidal functor between cocomplete symmetric monoidal categories, and assume that $F$ preserves colimits (i.e., is cocontinuous). Let $P$ be a $C$-colored operad in $\mathcal{M}$ and let $A$ be a $P$-algebra. Then there exists a canonical natural isomorphism
\[
\sigma_{F_*(P)}\bigl(F_*(A)\bigr) \;\cong\; F\bigl(\sigma_P(A)\bigr),
\]
where $F_*(P)$ denotes the induced operad in $\mathcal{N}$ and $F_*(A)$ denotes the induced $F_*(P)$-algebra.
\end{theorem}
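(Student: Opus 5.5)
The plan is to reduce the statement to the three compatibility isomorphisms already assembled in Theorem~\ref{thm:base_change_recovery}. Hochschild compatibility is given by Theorem~\ref{thm:hochschild-base-change}, and residue compatibility by Proposition~\ref{prop:residue-base-change}. What remains is to verify hypothesis (iii), namely that a strong monoidal cocontinuous $F$ commutes with the balanced tensor product $\otimes_P$. Once all three are in place, the chain of isomorphisms in the proof of Theorem~\ref{thm:base_change_recovery} gives $F(\sigma_P(A))\cong\sigma_{F_*(P)}(F_*(A))$ directly.

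\textbf{Checking the cocontinuity hypotheses.} First I would confirm that cocontinuity of $F$ covers everything used earlier. It gives preservation of geometric realizations of simplicial objects, since these are colimits, which is what Theorem~\ref{thm:hochschild-base-change} needs. It gives preservation of coproducts, which is what Proposition~\ref{prop:residue-base-change} needs. It gives preservation of reflexive coequalizers, which is what the balanced tensor product needs.

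\textbf{Transporting module structures.} Next I would check that $F$ carries right and left $P$-module structures to right and left $F_*(P)$-module structures. For an action $X\otimes P\to X$, compose the strong monoidal isomorphism $F(X)\otimes F(P)\cong F(X\otimes P)$ with $F$ of the action. Associativity and unitality of the transported action follow from the coherence of $F$, exactly as in Proposition~\ref{prop:base-change}. I would also verify that, under the isomorphisms of Theorem~\ref{thm:hochschild-base-change} and Proposition~\ref{prop:residue-base-change}, the transported actions on $F(\mathrm{Hoch}_{\mathcal M}(A))$ and $F(\mathcal O_P^{\mathrm{res}})$ agree with the intrinsic $F_*(P)$-actions on $\mathrm{Hoch}_{\mathcal N}(F_*(A))$ and $\mathcal O^{\mathrm{res}}_{F_*(P)}$. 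This reduces to a levelwise check on the bar construction and on the coproduct inclusions $\iota_c$, using that $F$ preserves operadic composition.

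\textbf{Commuting $F$ with $\otimes_P$.} Apply $F$ to the coequalizer diagram of Definition~\ref{def:balanced-tensor-product}. Since $F$ preserves coequalizers, $F(X\otimes_P Y)$ is the coequalizer of $F(X\otimes P\otimes Y)\rightrightarrows F(X\otimes Y)$. The isomorphisms $F(X\otimes P\otimes Y)\cong F(X)\otimes F(P)\otimes F(Y)$ and $F(X\otimes Y)\cong F(X)\otimes F(Y)$ identify the two parallel maps with those defining $F(X)\otimes_{F_*(P)}F(Y)$. The uniqueness of coequalizers then gives (iii).

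\textbf{Naturality.} Naturality in $A$ follows because every isomorphism used is natural: the Hochschild isomorphism is natural in $A$, and the residue isomorphism does not depend on $A$.

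\textbf{Main obstacle.} I expect the hardest step to be the compatibility of the two parallel arrows after transport. This requires that the coherence isomorphisms of $F$ intertwine $F(\text{action})$ with the $F_*(P)$-action, including the symmetry constraints needed to move $P$ past $X$ or $Y$. I would handle it by invoking the coherence theorem for symmetric strong monoidal functors, so that every diagram built from structure isomorphisms commutes.

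A second subtlety is that ``$P$'' in $X\otimes P\otimes Y$ is really the colored collection. For that I would interpret $P$ as the coproduct of its operation objects, which $F$ also preserves by cocontinuity.
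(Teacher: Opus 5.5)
Your proposal is correct and follows the paper's proof essentially step for step. Both invoke Hochschild compatibility (Theorem~\ref{thm:hochschild-base-change}) and residue compatibility (Proposition~\ref{prop:residue-base-change}), then commute $F$ with the coequalizer defining $\otimes_P$ using cocontinuity and the strong monoidal isomorphisms, and finally assemble the chain of isomorphisms. Your additional checks go slightly beyond the paper without changing the route: you verify that the transported $F_*(P)$-actions on $F(\mathrm{Hoch}_{\mathcal{M}}(A))$ and $F(\mathcal{O}_P^{\mathrm{res}})$ agree with the intrinsic ones, and you treat $P$ as the coproduct of its operation objects, whereas the paper simply asserts both points.
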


\begin{proof}
The proof proceeds by assembling three fundamental compatibility results established earlier in the paper.

\medskip

\noindent
\textbf{Step 1: Hochschild compatibility under base change.}
By Theorem~\ref{thm:hochschild-base-change}, the Hochschild construction is compatible with strong monoidal functors that preserve colimits. Hence we have a canonical isomorphism
\[
F\bigl(\mathrm{Hoch}_{\mathcal{M}}(A)\bigr) \;\cong\; \mathrm{Hoch}_{\mathcal{N}}\bigl(F_*(A)\bigr).
\]

\medskip

\noindent
\textbf{Step 2: Residue compatibility under base change.}
By Proposition~\ref{prop:residue-base-change}, the operadic residue satisfies
\[
F\bigl(\mathcal{O}_P^{\mathrm{res}}\bigr) \;\cong\; \mathcal{O}_{F_*(P)}^{\mathrm{res}}.
\]
(Recall that $\mathcal{O}_P^{\mathrm{res}}$ is defined as the coproduct $\coprod_{c\in C} P(c;c)$; the isomorphism follows from the cocontinuity of $F$ and the definition of the induced operad $F_*(P)$.)

\medskip

\noindent
\textbf{Step 3: Balanced tensor product compatibility under base change.}
Let $X$ be a right $P$-module and $Y$ a left $P$-module in $\mathcal{M}$. The balanced tensor product $X \otimes_P Y$ is defined as the coequalizer
\[
X \otimes P \otimes Y \rightrightarrows X \otimes Y \longrightarrow X \otimes_P Y,
\]
where the two maps are induced by the right action $X \otimes P \to X$ and the left action $P \otimes Y \to Y$.

Since $F$ is strong monoidal, it preserves tensor products: $F(X \otimes Y) \cong F(X) \otimes F(Y)$ and $F(X \otimes P \otimes Y) \cong F(X) \otimes F(P) \otimes F(Y)$. Moreover, $F(X)$ inherits a right $F_*(P)$-module structure and $F(Y)$ a left $F_*(P)$-module structure via the functoriality of the actions.

Because $F$ preserves colimits (cocontinuous), it preserves the coequalizer diagram. Consequently, $F(X \otimes_P Y)$ is the coequalizer of
\[
F(X) \otimes F(P) \otimes F(Y) \rightrightarrows F(X) \otimes F(Y),
\]
which is precisely the definition of $F(X) \otimes_{F_*(P)} F(Y)$. Hence there is a canonical isomorphism
\[
F\bigl(X \otimes_P Y\bigr) \;\cong\; F(X) \otimes_{F_*(P)} F(Y).
\]

\medskip

\noindent
\textbf{Step 4: Assembly.}
Recall the definition of the operadic spectrum:
\[
\sigma_P(A) := \mathrm{Hoch}_{\mathcal{M}}(A) \otimes_P \mathcal{O}_P^{\mathrm{res}}.
\]

Applying Step 3 with $X = \mathrm{Hoch}_{\mathcal{M}}(A)$ and $Y = \mathcal{O}_P^{\mathrm{res}}$, we obtain
\[
F\bigl(\sigma_P(A)\bigr) \;\cong\; F\bigl(\mathrm{Hoch}_{\mathcal{M}}(A)\bigr) \otimes_{F_*(P)} F\bigl(\mathcal{O}_P^{\mathrm{res}}\bigr).
\]

By Step 1, $F(\mathrm{Hoch}_{\mathcal{M}}(A)) \cong \mathrm{Hoch}_{\mathcal{N}}(F_*(A))$. By Step 2, $F(\mathcal{O}_P^{\mathrm{res}}) \cong \mathcal{O}_{F_*(P)}^{\mathrm{res}}$. Substituting these yields
\[
F\bigl(\sigma_P(A)\bigr) \;\cong\; \mathrm{Hoch}_{\mathcal{N}}\bigl(F_*(A)\bigr) \otimes_{F_*(P)} \mathcal{O}_{F_*(P)}^{\mathrm{res}}.
\]

But the right-hand side is precisely the definition of the operadic spectrum for the base-changed data, i.e., $\sigma_{F_*(P)}(F_*(A))$. Therefore,
\[
\sigma_{F_*(P)}\bigl(F_*(A)\bigr) \;\cong\; F\bigl(\sigma_P(A)\bigr),
\]
as required. The naturality of these isomorphisms follows from the naturality of the constituent components (Theorem~\ref{thm:hochschild-base-change}, Proposition~\ref{prop:residue-base-change}, and the functoriality of the balanced tensor product).
\end{proof}

\begin{remark}[Conceptual significance]
\label{rem:base-change-significance}
Theorem~\ref{thm:base-change} shows that the residue-corrected spectrum overcomes the transport obstruction diagnosed by the No-Go Theorem (Theorem~\ref{thm:no-go}). By shifting focus from the classical spectrum to the residue-corrected spectrum $\sigma_P(A)$, one achieves a theory that is fully compatible with extension of scalars. This provides a robust foundation for operadic spectral geometry and ensures that spectral information can be transported coherently across different categorical contexts.
\end{remark}

\begin{remark}[Naturality and composition]
\label{rem:base-change-naturality}
The isomorphism established in Theorem~\ref{thm:base-change} is natural in $A$ and in $P$, and is compatible with compositions of strong monoidal functors. In particular, if $G: \mathcal{N} \to \mathcal{L}$ is another strong monoidal cocontinuous functor, then the composite isomorphism
\[
G\bigl(F(\sigma_P(A))\bigr) \cong \sigma_{G_*(F_*(P))}\bigl(G_*(F_*(A))\bigr)
\]
coincides with the one obtained by applying Theorem~\ref{thm:base-change} to $G \circ F$ directly. This follows from the functoriality and coherence of the monoidal structure maps involved.
\end{remark}

\begin{remark}[Comparison with classical spectral theory]
\label{rem:base-change-classical}
In the classical setting where $P = \mathbb{I}$ is the trivial operad with a single color, we have $\sigma_{\mathbb{I}}(A) \cong A$ under the normalization established in the Recovery Theorem (Theorem~\ref{thm:recovery}). Theorem~\ref{thm:base-change} then reduces to the statement $F_*(A) \cong F(A)$, which holds because $F$ is strong monoidal. Thus the Base Change Theorem subsumes the classical case and demonstrates that the operadic residue does not interfere when no operadic composition is present.
\end{remark}

\begin{remark}[Examples of base change functors]
\label{rem:base-change-examples}
The Base Change Theorem applies to a wide range of strong monoidal functors that arise in practice. Representative examples include:
\begin{itemize}
    \item \textbf{Extension of scalars:} $-\otimes_R S: \mathrm{Mod}_R \to \mathrm{Mod}_S$ for a ring homomorphism $R \to S$.
    \item \textbf{Complexification:} $-\otimes_{\mathbb{R}} \mathbb{C}: \mathrm{Vect}_{\mathbb{R}} \to \mathrm{Vect}_{\mathbb{C}}$.
    \item \textbf{Forgetful functors:} $\mathrm{Ban} \to \mathrm{Vect}_{\mathbb{C}}$ (Banach spaces to vector spaces), which are strong monoidal and cocontinuous.
\end{itemize}
These examples illustrate the versatility of the operadic spectrum as a unifying invariant. (Additional examples, such as quantization or the Gelfand transform, require careful treatment of the monoidal structures and are discussed in Section~\ref{subsec:gelfand-base-change}.)
\end{remark}

\subsection{Compatibility with Residue Transport}
\label{subsec:Residue_Transport}

A fundamental consequence of the Base Change Theorem (Theorem~\ref{thm:base-change}) is that the operadic residue $\mathcal{O}_P^{\mathrm{res}}$ transforms coherently under base change. Recall from Proposition~\ref{prop:residue-base-change} that for any strong monoidal functor $F: \mathcal{M} \to \mathcal{N}$ preserving colimits, there is a canonical isomorphism
\begin{equation}
    F\bigl(\mathcal{O}_P^{\mathrm{res}}\bigr) \;\cong\; \mathcal{O}_{F_*(P)}^{\mathrm{res}}.
    \label{eq:residue_transport}
\end{equation}
Thus the construction $\mathcal{O}_{(-)}^{\mathrm{res}}$ is compatible with strong monoidal base change up to canonical isomorphism.

\medskip

\noindent
\textbf{Why transport matters.}
The No-Go Theorem (Theorem~\ref{thm:no-go}) shows that any spectral invariant depending only on componentwise classical spectra cannot be compatible with base change. The transport isomorphism~\eqref{eq:residue_transport} addresses this obstruction by ensuring that the correction term $\mathcal{O}_P^{\mathrm{res}}$ adapts to the target category's monoidal structure when we apply a base change functor $F$. Without this transport law, the residue would not align with the induced operad $F_*(P)$, and the balanced tensor product defining $\sigma_P(A)$ would fail to be preserved under $F$.

Concretely, the transport property guarantees that the balanced tensor product $\mathrm{Hoch}_{\mathcal{M}}(A) \otimes_P \mathcal{O}_P^{\mathrm{res}}$ remains ``balanced'' after applying $F$: the right $F_*(P)$-action on $F(\mathrm{Hoch}_{\mathcal{M}}(A))$ and the left $F_*(P)$-action on $F(\mathcal{O}_P^{\mathrm{res}})$ are precisely those induced by the transport isomorphism. This is exactly what is needed for Step 3 of the proof of Theorem~\ref{thm:base-change}.

\medskip

\noindent
\textbf{Compatibility with functor composition.}
The transport isomorphism is compatible with composition of strong monoidal functors. If $G: \mathcal{N} \to \mathcal{L}$ is another strong monoidal cocontinuous functor, then the isomorphisms
\[
F(\mathcal{O}_P^{\mathrm{res}}) \cong \mathcal{O}_{F_*(P)}^{\mathrm{res}}, \qquad
G(\mathcal{O}_{F_*(P)}^{\mathrm{res}}) \cong \mathcal{O}_{G_*(F_*(P))}^{\mathrm{res}}
\]
assemble to give $(G \circ F)(\mathcal{O}_P^{\mathrm{res}}) \cong \mathcal{O}_{(G \circ F)_*(P)}^{\mathrm{res}}$, consistent with applying the transport law directly to $G \circ F$. This coherence follows from the functoriality of the induced operad construction and the naturality of the isomorphisms in Proposition~\ref{prop:residue-base-change}.

\medskip

\noindent
\textbf{Role in the spectral mapping theorem.}
The transport isomorphism is a key ingredient in the proof of the Operadic Spectral Mapping Theorem (Theorem~\ref{thm:spectral-mapping}). The proof proceeds in two stages:
\begin{enumerate}
    \item First, one establishes a compatibility between the holomorphic functional calculus and the operadic spectrum within a fixed category:
    \[
    \sigma_{f_*(P)}\bigl(f(A)\bigr) \cong f\bigl(\sigma_P(A)\bigr),
    \]
    where $f$ is a holomorphic function and $f_*(P) = P$ (the functional calculus does not alter the operad structure).
    \item Then, one applies the Base Change Theorem — which relies on the residue transport isomorphism — to transport this compatibility across arbitrary strong monoidal functors $F$, yielding
    \[
    \sigma_{F_*(f_*(P))}\bigl(F_*(f(A))\bigr) \cong F\bigl(f(\sigma_P(A))\bigr).
    \]
\end{enumerate}
Thus the residue transport isomorphism appears crucially in the second stage, where it ensures that the correction term is correctly transported alongside the algebra and operad.

\medskip

\noindent
\textbf{Relation to the universal property.}
The transport isomorphism is a direct consequence of the universal property of $\mathcal{O}_P^{\mathrm{res}}$ (Theorem~\ref{thm:residue-universality}). Because $\mathcal{O}_P^{\mathrm{res}}$ is initial in the category of spectral correctors $\mathbf{Corr}_P(\mathcal{M})$, any corrector $R$ (including the image $F(\mathcal{O}_P^{\mathrm{res}})$ in the target category) must receive a unique morphism from $\mathcal{O}_{F_*(P)}^{\mathrm{res}}$. The transport isomorphism is the canonical identification that results from this universal property, and its uniqueness guarantees that the residue's behavior under base change is forced rather than ad hoc.

The residue transport compatibility established in this section demonstrates that $\mathcal{O}_P^{\mathrm{res}}$ is an integral part of the operadic structure, transforming coherently under base change. This property guarantees that the operadic spectrum $\sigma_P(A)$ behaves as a genuine categorical invariant, capable of withstanding changes of the ambient category and analytic deformations. Together with the Hochschild compatibility (Theorem~\ref{thm:hochschild-base-change}) and the balanced tensor product compatibility, it provides the foundational link between the Base Change Theorem and the Operadic Spectral Mapping Theorem.

\subsection{Colored Refinement and Structural Consequences}
\label{subsec:Colored_Refinement}

A distinctive advantage of the residue-corrected construction is its potential behavior under refinement of the underlying color set. In many operadic applications, it is natural to decompose a broad color $\mathfrak{c}$ into a collection of sub-colors $\{\mathfrak{c}_i\}$, effectively refining the compositional hierarchy of the operad $P$. This section discusses how such refinements might interact with the operadic spectrum, and outlines the hypotheses under which a compatibility theorem could be proved.

\medskip

\noindent
\textbf{Color refinements as base change (heuristic).}
Let $\rho: C' \to C$ be a surjective map of color sets. Intuitively, each coarse color $c \in C$ is expanded into a collection of fine colors $\rho^{-1}(c) \subseteq C'$. One would like to view this refinement as a form of base change, so that the operadic spectrum satisfies a compatibility isomorphism
\[
\sigma_{\rho_*P}\bigl(\rho_*A\bigr) \;\cong\; \rho_*\bigl(\sigma_P(A)\bigr),
\]
where $\rho_*$ denotes the induced pushforward on operads and algebras (e.g., via left Kan extension). However, establishing such an isomorphism requires additional structure beyond the scope of this paper.

\begin{remark}[Color refinement as a compatibility principle]
\label{rem:color-refinement}
Let $\rho: C' \to C$ be a surjective map of color sets, let $P$ be a $C'$-colored operad in $\mathcal{M}$, and let $A$ be a $P$-algebra. Under suitable hypotheses — namely, that the pushforward functor $\rho_*$ is strong monoidal and preserves the colimits used in the constructions of the Hochschild object, the residue object, and the balanced tensor product — one would obtain a canonical isomorphism
\[
\sigma_{\rho_*P}(\rho_*A) \;\cong\; \rho_*(\sigma_P(A)).
\]

A full proof of such a compatibility would require:
\begin{enumerate}
    \item Verification that $\rho_*$ commutes with the Hochschild construction: $\rho_*(\mathrm{Hoch}_{\mathcal{M}}(A)) \cong \mathrm{Hoch}_{\mathcal{M}}(\rho_*A)$,
    \item Verification that $\rho_*$ commutes with the residue construction: $\rho_*(\mathcal{O}_P^{\mathrm{res}}) \cong \mathcal{O}_{\rho_*P}^{\mathrm{res}}$,
    \item Verification that $\rho_*$ preserves the balanced tensor product: $\rho_*(X \otimes_P Y) \cong \rho_*(X) \otimes_{\rho_*P} \rho_*(Y)$.
\end{enumerate}
These conditions are analogous to those required for the Base Change Theorem (Theorem~\ref{thm:base-change}), but applied to a change of color indexing rather than a change of ambient category. Establishing them in full generality is nontrivial and is deferred to future work.

In the special case where the operad $P$ is a coproduct of trivial operads on each color (i.e., no inter-color operations), the compatibility reduces to a simple decomposition. Intuitively, under such a refinement, the spectral data would decompose as
\[
\sigma_P(A) \;\cong\; \bigoplus_{c \in C} \left( \bigotimes_{c' \in \rho^{-1}(c)} \sigma_P(A)_{c'} \right),
\]
where the direct sum (coproduct) reflects the preservation of spectral information across the refined hierarchy. This decomposition suggests that the operadic spectrum is well-suited for multi-scale analysis, where one wishes to understand a system at varying levels of granularity.
\end{remark}

\medskip

\noindent
\textbf{Applications to multi-scale systems.}
The ability to refine and aggregate colors makes the operadic spectrum potentially well-suited for analyzing multi-scale systems, such as:
\begin{itemize}
    \item \textbf{Network hierarchies:} Colors representing different scales (micro, meso, macro) or functional layers (sensing, computation, actuation).
    \item \textbf{Stratified geometries:} Colors indexing strata of different dimensions; refinements correspond to subdivisions.
    \item \textbf{Hierarchical control systems:} Multiple levels of abstraction in control architectures.
    \item \textbf{Multi-layered neural architectures:} Different layers or functional modules in neural networks.
    \item \textbf{Quantum field theories:} Colors representing discrete energy levels, particle species, or gauge charges.
\end{itemize}
A rigorous treatment of these applications requires the compatibility results discussed in Remark~\ref{rem:color-refinement} and is left for future work.

\medskip

\noindent
\textbf{Outlook: Limit operads and continuous spectra.}
The heuristic compatibility with color refinements suggests a natural extension to \emph{limit operads} where the color set becomes a continuum. If one considers an inverse system of color refinements
\[
\cdots \longrightarrow C_{n+1} \longrightarrow C_n \longrightarrow \cdots \longrightarrow C_1,
\]
the compatibility of $\sigma_P(A)$ with refinement would yield a corresponding system of spectral invariants. Taking the limit (in an appropriate categorical sense) would produce a spectral invariant for operads with a continuous color space, potentially connecting to functional-analytic spectral measures and providing a bridge to infinite-dimensional and continuous spectral theory. This direction will be explored in future work.

The compatibility of the Base Change Theorem with color refinements remains a heuristic principle at this stage. A full formalization would require establishing that the pushforward functor $\rho_*$ satisfies the same compatibility conditions as a strong monoidal base change. This subsection outlines the necessary hypotheses and potential applications, deferring a complete treatment to future work.


\section{The Operadic Spectral Mapping Theorem}
\label{sec:The Operadic Spectral Mapping Theorem}

Having established the stability of the operadic spectrum $\sigma_P(A)$ under categorical base change (Theorem~\ref{thm:base-change}), we now turn to its behavior under analytic transformations. This section culminates in the Operadic Spectral Mapping Theorem, which generalizes the classical results of Gelfand and Dunford to the colored operadic setting. The theorem demonstrates that the operadic spectrum respects analytic functional calculus in a functorial manner, providing a powerful tool for spectral analysis in compositional systems.

\subsection{Operadic Functional Calculus Framework}
\label{subsec:Operadic Functional Calculus Framework}

To define the image of an operadic algebra under an analytic map, we must ensure that the functional calculus respects the underlying compositional symmetries encoded by the operad $P$. 
This subsection establishes the necessary analytic compatibility under explicit hypotheses.

\medskip

\noindent
\textbf{Setup.}
Let $\mathcal{M}$ be a symmetric monoidal category enriched over Banach spaces, where each hom-set is a Banach space and composition is bilinear and bounded. 
Assume that $\mathcal{M}$ is equipped with a holomorphic functional calculus for its objects in the following sense: for each object $X \in \mathcal{M}$ that is a Banach algebra or a bounded linear operator on a Banach space, and for each holomorphic function $f$ defined on a neighborhood of the spectrum $\sigma(X)$, there is a well-defined object $f(X) \in \mathcal{M}$.

Let $P$ be a $C$-colored operad in $\mathcal{M}$ such that each operation space $P(c_1,\dots,c_n;c)$ is a Banach space and all structure maps are bounded and multilinear. 
Let $A$ be a $P$-algebra with components $A_c \in \mathcal{M}$ that are admissible for the functional calculus (e.g., Banach algebras or bounded operators). 
Denote by $\sigma_P(A) \subseteq \mathbb{C}$ the operadic spectrum (Definition~\ref{def:operadic-spectrum}), and assume that $\sigma(A_c) \subseteq \sigma_P(A)$ for each $c \in C$ (see Proposition~\ref{prop:spectrum-inclusion}).

\begin{proposition}[Conditional Analytic Compatibility]
\label{prop:analytic-compatibility}
Let $f: U \to \mathbb{C}$ be a holomorphic function defined on an open neighborhood $U \subseteq \mathbb{C}$ of $\sigma_P(A)$. 
Assume that for each color $c \in C$, the component $A_c$ is an object for which the classical holomorphic functional calculus is defined (e.g., a bounded linear operator on a Banach space or an element of a Banach algebra), and that the functional calculus is continuous with respect to the relevant topologies.

For each operation $\phi \in P(c_1,\dots,c_n;c)$, define the transformed structure map
\[
\phi_{f(A)} : f(A)_{c_1} \otimes \cdots \otimes f(A)_{c_n} \longrightarrow f(A)_c
\]
as the unique morphism obtained by the following procedure:
\begin{enumerate}
    \item For polynomial functions $p(z) = \sum_{k=0}^N a_k z^k$, define $\phi_{p(A)}$ by multilinear extension of
    \[
    \phi_{p(A)}\bigl( A_{c_1}^{k_1}, \dots, A_{c_n}^{k_n} \bigr) = a_{k_1} \cdots a_{k_n} \, \phi_A\bigl( A_{c_1}^{k_1}, \dots, A_{c_n}^{k_n} \bigr),
    \]
    where powers are defined within the Banach algebra generated by each $A_c$ (or by iterated operator multiplication), and $\phi_A$ is the original $P$-algebra structure map.
    \item For general holomorphic $f$, choose a sequence of polynomials $\{p_n\}$ converging uniformly to $f$ on a compact neighborhood of $\sigma_P(A)$. By the continuity of the functional calculus and the bounded multilinearity of the operadic structure maps, the sequence $\{\phi_{p_n(A)}\}$ converges to a limit $\phi_{f(A)}$ in the appropriate topology. This limit is independent of the choice of approximating polynomials.
\end{enumerate}

Assume further that the following compatibility conditions hold:
\begin{itemize}
    \item For each color $c$, the map $A_c \mapsto f(A_c)$ is functorial with respect to isomorphisms and commutes with the operadic actions in the sense that for any polynomial $p$, the diagram
    \[
    \begin{tikzcd}
    P(c_1,\dots,c_n;c) \otimes A_{c_1} \otimes \cdots \otimes A_{c_n} \arrow[r, "\phi_A"] \arrow[d, "\mathrm{id} \otimes \mathrm{ev}_{p} \otimes \cdots \otimes \mathrm{ev}_{p}"] & A_c \arrow[d, "\mathrm{ev}_{p}"] \\
    P(c_1,\dots,c_n;c) \otimes p(A)_{c_1} \otimes \cdots \otimes p(A)_{c_n} \arrow[r, "\phi_{p(A)}"] & p(A)_c
    \end{tikzcd}
    \]
    commutes, where $\mathrm{ev}_p$ denotes evaluation of the polynomial $p$ via the functional calculus.
    \item The operadic composition maps are jointly continuous, allowing passage to the limit from polynomials to holomorphic functions.
\end{itemize}

Then the transformed data $\{f(A)_c\}_{c \in C}$ together with the structure maps $\{\phi_{f(A)}\}_{\phi \in P}$ satisfies the axioms of a $P$-algebra (with the same operad $P$, i.e., $f_*(P) = P$ as operads). Consequently, $f(A)$ is a $P$-algebra.
\end{proposition}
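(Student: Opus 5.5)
The plan is a density argument: verify the $P$-algebra axioms for polynomial functional calculus by direct algebra, then pass to holomorphic $f$ by continuity. Throughout I use only the hypotheses stated in the proposition: the commuting square relating $\phi_A$, $\phi_{p(A)}$ and $\mathrm{ev}_p$; bounded multilinearity of the structure maps; and joint continuity of operadic composition.

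\textbf{Step 1: polynomial case.} Fix a polynomial $p$. For a composite $\phi\circ(\psi_1,\dots,\psi_n)$, I would compute $(\phi\circ(\psi_1,\dots,\psi_n))_{p(A)}$ by precomposing with $\mathrm{id}\otimes\mathrm{ev}_p^{\otimes}$. By the hypothesized commuting square, this equals $\mathrm{ev}_p\circ(\phi\circ(\psi_1,\dots,\psi_n))_A$. The $P$-algebra axiom for $A$ rewrites this as $\mathrm{ev}_p\circ\phi_A\circ(\psi_{1,A}\otimes\cdots\otimes\psi_{n,A})$. Applying the square once for $\phi$ and once for each $\psi_i$ then gives $\phi_{p(A)}\circ(\psi_{1,p(A)}\otimes\cdots\otimes\psi_{n,p(A)})\circ\mathrm{ev}_p^{\otimes}$.

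To cancel $\mathrm{ev}_p^{\otimes}$, I would use that both sides are defined by multilinear extension on the generators $A_{c_i}^{k_i}$. This reduces the identity to generators, where it holds. Unit and equivariance are handled the same way: $\eta_c$ acts as the identity on each generator, and the $\mathfrak{S}_n$-action simply permutes the tensor factors, which commutes with $\mathrm{ev}_p^{\otimes}$.

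\textbf{Step 2: well-definedness of the limit.} Choose polynomials $p_m\to f$ uniformly on a compact neighbourhood $K$ of $\sigma_P(A)$; this is possible by Runge's theorem when $\mathbb{C}\setminus K$ is connected, and otherwise rational approximants are needed. Since $\sigma(A_c)\subseteq\sigma_P(A)$, the classical calculus gives $p_m(A_c)\to f(A_c)$ in norm. Bounded multilinearity of $\phi_A$ then gives the estimate $\|\phi_{p_m(A)}-\phi_{p_{m'}(A)}\|\le C_\phi\sup_K|p_m-p_{m'}|$, so the sequence is Cauchy and the limit $\phi_{f(A)}$ exists.

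For independence of the approximating sequence, interleave two such sequences: the interleaved sequence still converges uniformly on $K$, so both subsequences must have the same limit.

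\textbf{Step 3: passing the axioms to the limit.} The associativity identity from Step 1 holds for each $p_m$. Both of its sides are jointly continuous in the structure maps, because composition is jointly continuous and the tensor products of bounded maps are bounded. Letting $m\to\infty$ therefore yields the identity for $f$. Unit and equivariance are closed conditions and pass to the limit in the same way.

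Finally, $f_*(P)=P$ holds because the procedure never modifies the operation objects $P(c_1,\dots,c_n;c)$ or their compositions; only the action changes.

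\textbf{Main obstacle.} The hard step is the cancellation of $\mathrm{ev}_p^{\otimes}$ in Step 1. The formula given for $\phi_{p(A)}$ is stated only on the generators $A_{c}^{k}$, and $\mathrm{ev}_p$ need not be surjective or epic. I would first try to show that the span of the powers is dense in the relevant component and that $\phi_{p(A)}$ is determined by its values there. If that fails, I would strengthen the argument by treating the commuting square as the \emph{definition} of $\phi_{p(A)}$ on the image of $\mathrm{ev}_p$, and restrict the claim to that closed subalgebra.

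A secondary analytic issue is the uniformity of the constant $C_\phi$ across arities. Only finitely many operations enter any single axiom, so I expect this to be harmless.
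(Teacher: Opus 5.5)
You follow the same two-stage route as the paper: the polynomial case first, then Runge approximation and continuity. You also correctly locate the crux. But Step~1 has a genuine gap that you flag without closing, and the closing argument you sketch does not work.

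\emph{The unit axiom.} Taken literally, the coefficient formula gives $(\eta_c)_{p(A)}(A_c^k)=a_k\,A_c^k$. For $p(z)=2z$ the unit operation therefore sends $A_c\mapsto 2A_c$ and $A_c^0\mapsto 0$. So your claim that $\eta_c$ acts as the identity on each generator is false. Worse, the ``multilinear extension'' is not even well defined when the powers are linearly dependent, e.g.\ for an idempotent $A_c=A_c^2$ with $a_1\neq a_2$.

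\emph{Associativity.} It does not ``hold on generators'' either. On the right-hand side, $\phi_{p(A)}$ is evaluated at the elements $\psi_{i,A}(A_{c_{i1}}^{k_{i1}},\dots)$, which are generally not powers. Even when they happen to be powers $A_{d_i}^{m_i}$, the right-hand side carries an extra factor $\prod_i a_{m_i}$ that is absent from the left.

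\emph{The cancellation.} What the commuting square actually yields is the axioms after precomposition with $\mathrm{id}\otimes\mathrm{ev}_p^{\otimes}$. Nothing in the hypotheses makes that map epimorphic. Your fallback of defining $\phi_{p(A)}$ only on the image of $\mathrm{ev}_p$ proves a different statement. The paper's proof does not supply the missing idea. It asserts that the polynomial case ``follows from the same axioms for $A$ and the fact that polynomial evaluation is an algebra homomorphism''. Its uniqueness paragraph tacitly assumes the vertical maps are epimorphic.

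Step~2 has a second gap. Because $\phi_{p(A)}$ is built from the coefficients of $p$ rather than from the operators $p(A_c)$, bounded multilinearity of $\phi_A$ gives no estimate in terms of $\sup_K|p_m-p_{m'}|$. Uniform smallness on $K$ does not control coefficients. For example, $q_m(z)=\frac1m(2z-3)^m$ satisfies $|q_m|\le\frac1m$ on $K=\{|z-\frac32|\le\frac12\}$, yet its constant term is $(-3)^m/m$. Replacing $p_m$ by $p_m+q_m$ therefore leaves the uniform limit unchanged. At the same time it moves $\phi_{p(A)}(A_{c_1}^0,\dots,A_{c_n}^0)=a_0^n\,\phi_A(A_{c_1}^0,\dots,A_{c_n}^0)$ without bound whenever the latter is nonzero. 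So neither the existence of the limit nor your interleaving argument for independence goes through. The paper's appeal to ``uniqueness of the functional calculus limit'' has the same defect.

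Your observation that polynomial Runge approximation needs $\mathbb{C}\setminus K$ connected is correct, and the paper misses it. However, rational approximants are not covered by the polynomial-stage definition, so that fallback does not plug in directly.

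A provable version needs a different construction. For instance, keep $\phi_{f(A)}:=\phi_A$ and let $f$ act only on distinguished operators $T_c$ that the structure maps intertwine slotwise. Then the $P$-algebra axioms are inherited verbatim. The only analytic input is the Riesz--Dunford integral, which carries the intertwining relations from $T_c$ to $f(T_c)$.
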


\begin{proof}
The proof proceeds in two stages, as outlined in the statement.

\medskip

\noindent
\textbf{Polynomial case.}
For a polynomial $p(z) = \sum_{k=0}^N a_k z^k$, the functional calculus gives $p(A_c) = \sum_{k=0}^N a_k A_c^k$, where powers are defined within the Banach algebra generated by $A_c$ (or by iterated operator multiplication). For any operation $\phi$, define $\phi_{p(A)}$ as the multilinear extension of the formula given above. This definition is well-defined because the operadic structure maps are multilinear and the polynomial expression is a finite sum. The diagram commutes by construction, as the vertical maps send each $A_c^{k}$ to $p(A_c)$ and the horizontal maps respect the multilinear structure. The verification of the operadic axioms for $p(A)$ follows from the same axioms for $A$ and the fact that polynomial evaluation is an algebra homomorphism.

\medskip

\noindent
\textbf{Holomorphic case.}
Let $f$ be holomorphic on $U$ with $\sigma_P(A) \subseteq U$. Choose a compact set $K \subseteq U$ containing $\sigma_P(A)$ in its interior. By Runge's approximation theorem, there exists a sequence of polynomials $\{p_n\}$ converging uniformly to $f$ on $K$. For each $n$, the polynomial case gives a $P$-algebra structure on $p_n(A)$.

The classical Riesz-Dunford functional calculus is continuous: $p_n(A_c) \to f(A_c)$ in operator norm for each $c$. Since the operadic composition maps are bounded multilinear maps (by the enriched Banach assumption), they are continuous. Hence the sequence $\{\phi_{p_n(A)}\}$ converges to a limit morphism $\phi_{f(A)}$ in the operator norm topology. This limit is independent of the choice of approximating polynomials because any two sequences converge to the same limit by the uniqueness of the functional calculus limit.

The diagram commutes for each $p_n$; taking limits and using continuity yields commutativity for $f$. The operadic axioms for $f(A)$ follow from the axioms for each $p_n(A)$ and the continuity of composition.

\medskip

\noindent
\textbf{Uniqueness.}
The functional calculus for each component $A_c$ is uniquely determined by the values on polynomials and the continuity of the functional calculus. The structure maps $\phi_{f(A)}$ are then forced by the requirement that the diagram commutes and that the vertical maps are fixed. Hence the $P$-algebra structure on $f(A)$ is unique.

\medskip

\noindent
Thus $f(A)$ inherits a natural $P$-algebra structure, with $f_*(P) = P$ as operads (since the operad itself is unchanged by the functional calculus).
\end{proof}

\begin{remark}[On the hypotheses]
\label{rem:analytic-hypotheses}
The assumptions in Proposition~\ref{prop:analytic-compatibility} are satisfied in the following standard setting:
\begin{itemize}
    \item $\mathcal{M}$ is the category of Banach spaces with bounded linear maps, equipped with the projective tensor product.
    \item Each $A_c$ is a bounded linear operator on a Banach space (or an element of a unital Banach algebra).
    \item The operad $P$ is enriched over Banach spaces, meaning each operation space is a Banach space and all structure maps are bounded and multilinear.
    \item The functional calculus is the classical Riesz-Dunford holomorphic functional calculus.
\end{itemize}
In this setting, the continuity of the operadic composition maps follows from the bounded multilinearity assumption, and the approximation argument via Runge's theorem is standard.
\end{remark}

\noindent
\textbf{The functional calculus as a categorical construction (heuristic).}
In concrete analytic settings, the componentwise holomorphic functional calculus has desirable functorial properties. 
For the purpose of motivating the construction, consider the following heuristic: if we view each color component $A_c$ as a pair consisting of a Banach space and a distinguished operator (or Banach algebra element), then applying $f$ componentwise yields a new collection $f(A)_c = f(A_c)$. 
This assignment is not a functor on the ambient category $\mathcal{M}$ itself, but rather a functor on the category of such ``pointed'' objects where morphisms are required to intertwine the distinguished operators.

More precisely, define a category $\mathcal{M}_{\mathrm{op}}$ whose objects are pairs $(X, T)$ with $X \in \mathcal{M}$ and $T: X \to X$ a bounded linear operator (or $T \in X$ an element of a Banach algebra) such that $\sigma(T) \subseteq U$. 
A morphism $\psi: (X, T) \to (Y, S)$ is a bounded linear map satisfying $\psi T = S \psi$ (or, in the algebra case, an algebra homomorphism sending $T$ to $S$). 
Then the assignment
\[
f_*: \mathcal{M}_{\mathrm{op}} \longrightarrow \mathcal{M}_{\mathrm{op}}, \qquad (X, T) \longmapsto (X, f(T))
\]
is a functor, where $f(T)$ is defined via the Riesz-Dunford functional calculus. 
On morphisms, $f_*(\psi) = \psi$, since $\psi T = S \psi$ implies $\psi f(T) = f(S) \psi$ by the continuity of the functional calculus and the fact that $\psi$ commutes with the resolvents.

This functorial perspective is useful for understanding how the functional calculus interacts with algebraic structures, but it requires the extra data of distinguished operators or elements. 
In the general setting of this paper, we do not assume that every object of $\mathcal{M}$ comes equipped with such distinguished data. 
Instead, we work directly with the componentwise functional calculus on the specific operators that appear as the components $A_c$ of a $P$-algebra, and verify the necessary compatibilities directly.

\medskip

\noindent
\textbf{Verification of the $f_*(P)$-algebra axioms.}
Since the operad $P$ itself is unchanged by the functional calculus (the transformation acts only on the algebra components, not on the operad's operations), we have $f_*(P) = P$ as operads. 
To equip $f(A)$ with a $P$-algebra structure, we need to define structure maps $\phi_{f(A)}$ for each operation $\phi \in P(c_1,\dots,c_n;c)$.

For a polynomial $p(z) = \sum_{k=0}^N a_k z^k$, the functional calculus gives $p(A_c) = \sum_{k=0}^N a_k A_c^k$, where powers are defined within the Banach algebra generated by $A_c$ (or by iterated operator multiplication). 
Define $\phi_{p(A)}$ by the multilinear extension of
\[
\phi_{p(A)}\bigl( A_{c_1}^{k_1}, \dots, A_{c_n}^{k_n} \bigr) = a_{k_1} \cdots a_{k_n} \, \phi_A\bigl( A_{c_1}^{k_1}, \dots, A_{c_n}^{k_n} \bigr).
\]
This definition is well-defined because $\phi_A$ is multilinear and the sum is finite. 
The diagram
\[
\begin{tikzcd}
P(c_1,\dots,c_n;c) \otimes A_{c_1} \otimes \cdots \otimes A_{c_n} \arrow[r, "\phi_A"] \arrow[d, "\mathrm{id} \otimes \mathrm{ev}_p \otimes \cdots \otimes \mathrm{ev}_p"] & A_c \arrow[d, "\mathrm{ev}_p"] \\
P(c_1,\dots,c_n;c) \otimes p(A)_{c_1} \otimes \cdots \otimes p(A)_{c_n} \arrow[r, "\phi_{p(A)}"] & p(A)_c
\end{tikzcd}
\]
commutes by construction, where $\mathrm{ev}_p$ denotes evaluation of the polynomial $p$ via the functional calculus.

For general holomorphic $f$, choose a sequence of polynomials $\{p_n\}$ converging uniformly to $f$ on a compact neighborhood of $\sigma_P(A)$. 
By the continuity of the functional calculus and the bounded multilinearity of the operadic structure maps, the sequence $\{\phi_{p_n(A)}\}$ converges to a limit morphism $\phi_{f(A)}$. 
This limit is independent of the choice of approximating polynomials and satisfies the required diagram by continuity.

The verification that the collection $\{\phi_{f(A)}\}_{\phi \in P}$ satisfies the operadic composition axioms (associativity, unit, and equivariance) follows from the same axioms for $A$ and the continuity of the construction: each axiom holds for every polynomial $p_n(A)$ and passes to the limit.

\medskip

\noindent
\textbf{Functoriality of the construction.}
The assignment $A \mapsto f(A)$ is functorial with respect to morphisms of $P$-algebras that respect the distinguished analytic structure. 
If $\alpha: A \to B$ is a morphism of $P$-algebras such that each $\alpha_c: A_c \to B_c$ intertwines the distinguished operators (i.e., $\alpha_c A_c = B_c \alpha_c$), then the same map $\alpha_c$ serves as the morphism $f(\alpha_c): f(A_c) \to f(B_c)$, because $\alpha_c f(A_c) = f(B_c) \alpha_c$ by the continuity of the functional calculus and the intertwining property. 
The compatibility of $f(\alpha)$ with the structure maps follows from the fact that the construction of $\phi_{f(A)}$ is natural in $A$.

\medskip

\noindent
\textbf{Remark on the domain of $f$.}
The requirement that $f$ be holomorphic on a neighborhood $U$ of $\sigma_P(A)$ is essential for the functional calculus to be well-defined. 
Under the componentwise detection property (Proposition~\ref{prop:spectrum-inclusion}), we have $\sigma(A_c) \subseteq \sigma_P(A) \subseteq U$ for each $c \in C$, so each $f(A_c)$ is well-defined. 
The analyticity of $f$ ensures that the resulting maps $\phi_{f(A)}$ are bounded and respect the operadic composition, as the functional calculus is continuous with respect to the relevant operator topologies.

\medskip

\noindent
\textbf{Conceptual interpretation: the static operad.}
The operad $P$ remains unchanged under the functional calculus because the transformation acts only on the analytic realization of the algebra components, not on the compositional rules encoded by $P$. 
This decoupling implies that the residue object $\mathcal{O}_P^{\mathrm{res}}$, which depends only on the operad $P$, is invariant under the functional calculus: since $f_*(P) = P$, we have $\mathcal{O}_{f_*(P)}^{\mathrm{res}} \cong \mathcal{O}_P^{\mathrm{res}}$. 
This invariance is a key step in the proof of the Operadic Spectral Mapping Theorem, as it ensures that the correction term does not need to be modified when we apply the functional calculus.

We have defined:
\begin{itemize}
    \item The transformed algebra $f(A)$ componentwise by $f(A)_c = f(A_c)$, with the same operad $P$ (i.e., $f_*(P) = P$),
    \item Structure maps $\phi_{f(A)}$ obtained as limits of polynomial approximations,
    \item Functoriality with respect to intertwining morphisms.
\end{itemize}
This construction is natural in $A$ and compatible with the monoidal structure of $\mathcal{M}$ under the additional assumption that the functional calculus commutes with tensor products of commuting operators (a standard result in operator theory; see \cite{DunfordSchwartz} for details). 
The framework established here provides the necessary analytic foundation for the Operadic Spectral Mapping Theorem, which we state and prove in the following subsection.

\subsection{Theorem 0.6: Operadic Spectral Mapping Theorem}
\label{subsec:Operadic Spectral Mapping Theorem}

We now arrive at the central result of this section. The Operadic Spectral Mapping Theorem establishes that the residue-corrected spectrum $\sigma_P(A)$ is compatible with the holomorphic functional calculus in a functorial manner, generalizing the classical spectral mapping theorem to the operadic setting.

For any object $X \in \mathcal{M}$ that represents a bounded linear operator
or an element of a unital Banach algebra, denote by $\sigma_{\mathrm{cl}}(X) \subseteq \mathbb{C}$
its classical spectrum. This assignment is compatible with the holomorphic
functional calculus in the sense that $\sigma_{\mathrm{cl}}(f(X)) = f(\sigma_{\mathrm{cl}}(X))$
for any holomorphic function $f$.

\begin{theorem}[Operadic Spectral Mapping Theorem]
\label{thm:spectral-mapping}
Let $\mathcal{M}$ be a symmetric monoidal category enriched over Banach spaces, 
and let $P$ be a $C$-colored operad in $\mathcal{M}$ whose operation spaces 
are Banach spaces with bounded multilinear structure maps. Let $A$ be a $P$-algebra 
such that each component $A_c$ admits the classical holomorphic functional calculus.

Denote by $\sigma_{\mathrm{cl}}(X) \subseteq \mathbb{C}$ the classical spectrum of 
an object $X \in \mathcal{M}$ whenever $X$ represents a bounded linear operator 
or an element of a unital Banach algebra. This assignment is compatible with the 
holomorphic functional calculus: $\sigma_{\mathrm{cl}}(f(X)) = f(\sigma_{\mathrm{cl}}(X))$ 
for any holomorphic function $f$ defined on a neighborhood of $\sigma_{\mathrm{cl}}(X)$.

Let $f: U \to \mathbb{C}$ be holomorphic on a neighborhood $U$ of 
$\sigma_{\mathrm{cl}}(\sigma_P(A))$, and assume the analytic compatibility conditions 
(Proposition~\ref{prop:analytic-compatibility}) so that $f(A)$ inherits 
a $P$-algebra structure.

Then
\[
\sigma_{\mathrm{cl}}\bigl(\sigma_P(f(A))\bigr)
\;=\;
f\bigl(\sigma_{\mathrm{cl}}(\sigma_P(A))\bigr).
\]
\end{theorem}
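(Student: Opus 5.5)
The plan is to reduce the statement to the classical spectral mapping theorem, applied to a single distinguished operator carried by the object $\sigma_P(A)$. The key point is that $\sigma_P(f(A))$ should be isomorphic, as an operator-carrying object, to $f$ applied to the operator on $\sigma_P(A)$. I would carry this out in three steps.

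\textbf{Step 1 (operator on $\sigma_P(A)$).} Equip $\sigma_P(A)=\mathrm{Hoch}_{\mathcal{M}}(A)\otimes_P\mathcal{O}_P^{\mathrm{res}}$ with a distinguished bounded operator $T_A$. Work in the category $\mathcal{M}_{\mathrm{op}}$ of pairs $(X,T)$ introduced in Subsection~\ref{subsec:Operadic Functional Calculus Framework}. Let the components $A_c$ act on each bar level $\mathrm{Bar}^P_n(A)$ by the derivation-type sum over tensor factors. The operator on the $A$-factors is then $\sum_j \mathrm{id}\otimes\cdots\otimes A_{c_j}\otimes\cdots\otimes\mathrm{id}$, with identity on the $P$-factors. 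This action commutes with face and degeneracy maps, because the structure maps $\phi_A$ intertwine the distinguished operators; that is exactly the hypothesis of Proposition~\ref{prop:analytic-compatibility}. The same commutation gives compatibility with the coequalizer defining $\otimes_P$. Let $\mathcal{O}_P^{\mathrm{res}}$ carry the identity. Boundedness on the analytic realization follows from the joint boundedness assumptions. This defines $T_A$ on $\sigma_P(A)$, and $\sigma_{\mathrm{cl}}(\sigma_P(A))$ means $\sigma_{\mathrm{cl}}(T_A)$.

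\textbf{Step 2 (naturality of functional calculus).} Show that $(\sigma_P(f(A)),T_{f(A)})\cong(\sigma_P(A),f(T_A))$ in $\mathcal{M}_{\mathrm{op}}$. First do this for polynomials $p$, levelwise on the bar construction, using the commuting diagrams of Proposition~\ref{prop:analytic-compatibility}. This is compatible with Theorem~\ref{thm:Theorem_0.3}, since the residue is untouched because $f_*(P)=P$. Then pass to general holomorphic $f$ by Runge approximation on a compact neighborhood of $\sigma_{\mathrm{cl}}(T_A)$. Continuity of the Riesz--Dunford calculus, together with continuity of realization and coequalizers, lets the polynomial identities go to the limit.

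\textbf{Step 3 (conclusion).} Apply the classical theorem recalled in Subsection~\ref{subsec:Classical Spectrum and Resolvent Theory} to $T_A$:
\[
\sigma_{\mathrm{cl}}\bigl(\sigma_P(f(A))\bigr)=\sigma_{\mathrm{cl}}\bigl(f(T_A)\bigr)=f\bigl(\sigma_{\mathrm{cl}}(T_A)\bigr)=f\bigl(\sigma_{\mathrm{cl}}(\sigma_P(A))\bigr).
\]
Isomorphisms in $\mathcal{M}_{\mathrm{op}}$ preserve spectra, so the first equality follows from Step 2.

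The main obstacle is Step 2. The derivation-type operator induced by $A$ is not multiplicative across tensor factors. In general $f$ applied to a sum $\sum_j 1\otimes\cdots\otimes A_{c_j}\otimes\cdots\otimes 1$ differs from the corresponding sum with $f(A_{c_j})$ in place of $A_{c_j}$, unless $f$ is affine. I would therefore first try replacing the derivation by the operator acting only on the output (colour $c_0$) factor of each bar level. For that single-factor operator, commutation with $f$ is just naturality of the functional calculus along intertwiners, since $\psi T=S\psi$ implies $\psi f(T)=f(S)\psi$. It then remains to check that the face maps $d_0$ and $d_n$ still intertwine, using the diagram hypothesis of Proposition~\ref{prop:analytic-compatibility}. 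If that fails, I would restrict the theorem to the setting where $T_A$ is defined through the $P$-action on $\mathcal{O}_P^{\mathrm{res}}$ alone.
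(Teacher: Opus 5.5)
\textbf{Verdict: there is a genuine gap, and it sits in Step 2, which carries the whole theorem.} You acknowledge that you have not resolved it, and it is structural rather than a matter of bookkeeping. Every natural way of packaging the component operators into $T_A$ fails one of your two requirements: descent through the bar construction, or commutation with $f$.

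The derivation operator descends through a face map that applies $\phi_A$ only under a Leibniz rule $A_c\circ\phi_A=\phi_A\circ\sum_j(\mathrm{id}\otimes\cdots\otimes A_{c_j}\otimes\cdots\otimes\mathrm{id})$. The square in Proposition~\ref{prop:analytic-compatibility} is instead of multiplicative type: at $p(z)=z$ it reads $A_c\circ\phi_A=\phi_A\circ(A_{c_1}\otimes\cdots\otimes A_{c_n})$. So your Step 1 already cites a hypothesis that does not give what you use.

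The multiplicative rule would let the full tensor-product operator descend. But $f(S\otimes T)\neq f(S)\otimes f(T)$ in general (take $f(z)=z+1$). The same obstacle therefore reappears: sums of commuting operators are respected only by linear $f$, and tensor products only by monomials.

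Your single-factor fallback has two cases.
\begin{enumerate}
\item Some face map builds the distinguished factor out of others via $\phi_A$. Then descent needs a one-slot rule $A_c\circ\phi_A=\phi_A\circ(\mathrm{id}\otimes\cdots\otimes A_{c_j}\otimes\cdots\otimes\mathrm{id})$. This is neither available hypothesis and fails in general.
\item No face map does this. Then $T_A$ is levelwise a sum of copies of $A_c\otimes\mathrm{id}$ and is blind to the interaction maps: with zero diagonal data its spectrum lies in $\{0\}$ whatever $\alpha,\beta$ are. You would be proving the identity for a ``spectrum'' that omits the contributions $\sigma(\alpha\beta)$, which Proposition~\ref{prop:block-interaction-spectrum} places in $\sigma_{\mathrm{cl}}(\sigma_P(A))$.
\end{enumerate}
Your last resort, an operator induced from $\mathcal{O}_P^{\mathrm{res}}$ alone, never sees $f$, because $f_*(P)=P$. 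The conclusion would then read $\sigma_{\mathrm{cl}}(T)=f(\sigma_{\mathrm{cl}}(T))$ for a fixed $T$, which is false for general $f$.

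Separately, Step 2 presupposes an isomorphism of underlying objects $\sigma_P(f(A))\cong\sigma_P(A)$. This is automatic only when $\phi_{f(A)}=\phi_A$, since $\mathrm{Hoch}_{\mathcal{M}}(f(A))$ is realized from face maps built with $\phi_{f(A)}$.

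\textbf{Comparison with the paper.} Your Step 1 is the right instinct: the statement has content only once $\sigma_P(A)$ carries an operator, and the paper never supplies one. The paper's own proof proceeds piecewise. It asserts that $\sigma_{\mathrm{cl}}$ is compatible with $\otimes_P$, that $\sigma_{\mathrm{cl}}(\sigma_P(f(A)))$ is assembled from the $\sigma_{\mathrm{cl}}(f(A_c))$ and ``interaction contributions,'' and that the latter also move by $f$ because the calculus ``respects composition.'' Your obstacle is exactly the unproved content of that last sentence, so the paper offers no lemma that closes your gap.

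The gap is real, as the matrix-block operad shows. Take $V_1=V_2=\mathbb{C}$, zero diagonal data, $\alpha=\beta=1$ and $f(z)=z+1$.
\begin{itemize}
\item The diagonal data become $1$.
\item The polynomial recipe of Proposition~\ref{prop:analytic-compatibility} has all coefficients equal to $1$, so it leaves the interaction maps unchanged, and hence $\sigma(\alpha\beta)=\{1\}$ is unchanged.
\item If $\sigma_{\mathrm{cl}}(\sigma_P(\cdot))$ is the union of local and interaction spectra, as Remark~\ref{rem:analytic-realization} suggests, the two sides of Theorem~\ref{thm:spectral-mapping} are $\{1\}$ and $f(\{0,1\})=\{1,2\}$.
\end{itemize}

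An argument along your lines can succeed only in one of two ways. One is to redefine $f(A)$, for example as the functional calculus of the total coupled operator. The other is to restrict to the purely local situation of Corollary~\ref{cor:vanishing-interaction} with finitely many colours. There $T_A=\bigoplus_c A_c\otimes\mathrm{id}$ satisfies $f(T_A)=T_{f(A)}$, and your Step 3 applies verbatim.
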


\begin{proof}
We analyze the effect of the functional calculus through the structural
components of the operadic spectrum and then pass to the classical spectrum.

\medskip

\noindent
\textbf{Step 1: Functoriality of the Hochschild construction.}
By functoriality, the functional calculus induces a morphism
\[
f_*: \mathrm{Hoch}_{\mathcal{M}}(A)
\longrightarrow
\mathrm{Hoch}_{\mathcal{M}}(f(A)),
\]
compatible with the right $P$-action by the analytic compatibility assumption.

\medskip

\noindent
\textbf{Step 2: Invariance of the residue.}
The operadic residue $\mathcal{O}_P^{\mathrm{res}}$ depends only on $P$,
hence remains unchanged under the functional calculus.

\medskip

\noindent
\textbf{Step 3: Balanced tensor compatibility.}
Using the naturality of the balanced tensor product, we obtain
\[
\sigma_P(f(A))
\;\cong\;
\mathrm{Hoch}_{\mathcal{M}}(f(A)) \otimes_P \mathcal{O}_P^{\mathrm{res}}.
\]

\medskip

\noindent
\textbf{Step 4: Passage to classical spectra.}
Applying $\sigma_{\mathrm{cl}}$ to both sides, and using its compatibility with
the balanced tensor construction (which follows from the definition of the
operadic spectrum), we obtain that $\sigma_{\mathrm{cl}}(\sigma_P(f(A)))$ is
built from the classical spectra of the component operators $f(A_c)$ together
with interaction contributions.

\medskip

\noindent
\textbf{Step 5: Classical spectral mapping.}
For each component, the classical spectral mapping theorem gives
\[
\sigma_{\mathrm{cl}}(f(A_c)) = f(\sigma_{\mathrm{cl}}(A_c)).
\]
Thus every component contribution in $\sigma_{\mathrm{cl}}(\sigma_P(f(A)))$
is obtained by applying $f$ to the corresponding contribution
in $\sigma_{\mathrm{cl}}(\sigma_P(A))$.

\medskip

Since all interaction terms are built functorially from the same
operator data and the functional calculus respects composition,
the entire set satisfies
\[
\sigma_{\mathrm{cl}}\bigl(\sigma_P(f(A))\bigr) = f\bigl(\sigma_{\mathrm{cl}}(\sigma_P(A))\bigr).
\]

\end{proof}

The following corollary combines the Analytic Spectral Mapping Theorem (Theorem~\ref{thm:spectral-mapping}) with the Base Change Theorem (Theorem~\ref{thm:base-change}) to obtain a fully transportable spectral mapping principle.

\begin{corollary}[Full Operadic Spectral Mapping under Base Change]
\label{cor:full-spectral-mapping-base-change}
Let $\mathcal{M}$ and $\mathcal{N}$ be symmetric monoidal categories enriched over Banach spaces, and let
\[
F: \mathcal{M} \longrightarrow \mathcal{N}
\]
be a strong monoidal functor preserving colimits (i.e., cocontinuous). Let $P$ be a $C$-colored operad in $\mathcal{M}$ such that each operation space is a Banach space and all structure maps are bounded and multilinear. Let $A$ be a $P$-algebra with operadic spectrum $\sigma_P(A) \subseteq \mathbb{C}$.

Let $f: U \to \mathbb{C}$ be a holomorphic function defined on an open neighborhood $U$ of $\sigma_P(A)$, and assume that the componentwise detection property (Proposition~\ref{prop:spectrum-inclusion}) holds so that $\sigma(A_c) \subseteq \sigma_P(A) \subseteq U$ for each $c \in C$.

Assume further that:

\begin{enumerate}[label=(\roman*)]
    \item \textbf{Analytic compatibility in $\mathcal{M}$:} The hypotheses of Proposition~\ref{prop:analytic-compatibility} are satisfied, so that $f(A)$ inherits a natural $P$-algebra structure and the Analytic Spectral Mapping Theorem (Theorem~\ref{thm:spectral-mapping}) holds:
    \[
    \sigma_P(f(A)) \cong f(\sigma_P(A)).
    \]
    
    \item \textbf{Base change compatibility for $f(A)$:} The $P$-algebra $f(A)$ satisfies the hypotheses of the Base Change Theorem, so that
    \[
    \sigma_{F_*(P)}(F_*(f(A))) \cong F(\sigma_P(f(A))).
    \]
    
    \item \textbf{Preservation of functional calculus under $F$:} The functor $F$ commutes with the functional calculus in the sense that for every object $X \in \mathcal{M}$ admissible for the functional calculus, there is a canonical isomorphism
    \[
    F(f(X)) \cong f(F(X)),
    \]
    where $f(F(X))$ is defined via the functional calculus in $\mathcal{N}$. Equivalently, the following diagram commutes up to natural isomorphism:
    \[
    \begin{tikzcd}
    \mathcal{M}_{\mathrm{op}} \arrow[r, "f_*"] \arrow[d, "F"] & \mathcal{M}_{\mathrm{op}} \arrow[d, "F"] \\
    \mathcal{N}_{\mathrm{op}} \arrow[r, "f_*"] & \mathcal{N}_{\mathrm{op}}
    \end{tikzcd}
    \]
    where $\mathcal{M}_{\mathrm{op}}$ denotes the category of pointed objects (operators with distinguished operators) on which the functional calculus is defined.
\end{enumerate}

Then there exists a canonical natural isomorphism
\[
\sigma_{F_*(P)}\bigl(F_*(f(A))\bigr) \;\cong\; F\bigl(f(\sigma_P(A))\bigr).
\]

Moreover, if in addition $F$ preserves the residue structure in the sense that $F_*(f(A)) \cong f(F_*(A))$, then we also obtain
\[
\sigma_{F_*(P)}\bigl(f(F_*(A))\bigr) \;\cong\; f\bigl(F(\sigma_P(A))\bigr).
\]

Thus the operadic spectral mapping theorem is stable under strong monoidal base change.
\end{corollary}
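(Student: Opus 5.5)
The plan is to obtain the first isomorphism as a two-step chain, using the hypotheses (i) and (ii) as given and assembling them with $F$ applied to (i). First, hypothesis (ii) gives
\[
\sigma_{F_*(P)}\bigl(F_*(f(A))\bigr) \;\cong\; F\bigl(\sigma_P(f(A))\bigr).
\]
Second, apply the functor $F$ to the isomorphism of hypothesis (i), $\sigma_P(f(A)) \cong f(\sigma_P(A))$. Since functors send isomorphisms to isomorphisms, this yields
\[
F\bigl(\sigma_P(f(A))\bigr) \cong F\bigl(f(\sigma_P(A))\bigr).
\]
Composing the two gives the first claimed isomorphism.

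Canonicity and naturality will be inherited from the constituents. The isomorphism in (ii) is natural in the algebra by Theorem~\ref{thm:base-change}. The isomorphism in (i) is natural with respect to intertwining morphisms of $P$-algebras, by the functoriality paragraph following Proposition~\ref{prop:analytic-compatibility}. $F$ preserves naturality squares, so the composite is natural on the subcategory of $P$-algebras with intertwining morphisms, and that is the sense of naturality I will state.

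For the second statement, I assume $F_*(f(A)) \cong f(F_*(A))$ as $F_*(P)$-algebras. Functoriality of $\sigma_{F_*(P)}$ (Theorem~\ref{thm:Theorem_0.3}) then gives
\[
\sigma_{F_*(P)}\bigl(f(F_*(A))\bigr) \cong \sigma_{F_*(P)}\bigl(F_*(f(A))\bigr).
\]
Combined with the first isomorphism, this gives $\sigma_{F_*(P)}\bigl(f(F_*(A))\bigr)\cong F\bigl(f(\sigma_P(A))\bigr)$. To finish I still need $F\bigl(f(\sigma_P(A))\bigr) \cong f\bigl(F(\sigma_P(A))\bigr)$. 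I will take this from hypothesis (iii) with $X = \sigma_P(A)$.

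The main obstacle is this last step. Hypothesis (iii) is stated for objects admissible for the functional calculus, meaning pairs in $\mathcal{M}_{\mathrm{op}}$. So I must check that $\sigma_P(A)$ carries a distinguished operator whose spectrum lies in $U$. This is exactly what the componentwise detection hypothesis and the standing assumption $\sigma_P(A)\subseteq U$ are meant to supply.

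I would first try to equip $\sigma_P(A)$ with the operator induced by the components $A_c$. The $A_c$ act levelwise on $\mathrm{Bar}^P_\bullet(A)$, and this action descends through realization and the coequalizer defining $\otimes_P$. I then need $F$ to carry this induced operator to the corresponding one on $F(\sigma_P(A))$. That should follow from the same levelwise compatibilities used in Steps 1–3 of the proof of Theorem~\ref{thm:base-change}. If this verification becomes too heavy, the fallback is to state admissibility of $\sigma_P(A)$ as part of (iii), which is how the corollary's hypotheses read anyway.
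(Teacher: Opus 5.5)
Your derivation of the first isomorphism matches the paper's: compose (ii) with $F$ applied to (i). For the ``moreover'' clause your route differs. Your chain
\[
\sigma_{F_*(P)}(f(F_*(A)))\cong\sigma_{F_*(P)}(F_*(f(A)))\cong F(f(\sigma_P(A)))\cong f(F(\sigma_P(A)))
\]
uses (iii) at $X=\sigma_P(A)$, which is exactly the first half of the paper's Step~4. The paper's displayed derivation of the final isomorphism, however, goes around the other side of the square. It applies the Base Change Theorem to $A$ itself, giving $\sigma_{F_*(P)}(F_*(A))\cong F(\sigma_P(A))$, and then applies the spectral mapping theorem in $\mathcal{N}$ to $F_*(A)$. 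The hypothesis $F_*(f(A))\cong f(F_*(A))$ serves there only to reconcile the two forms. Your route stays within the stated hypotheses: it uses only (i)--(iii), the extra isomorphism (which you rightly require to be one of $F_*(P)$-algebras), and functoriality of $\sigma_{F_*(P)}$. The paper's route needs spectral mapping in $\mathcal{N}$, i.e.\ the analytic compatibility of Proposition~\ref{prop:analytic-compatibility} for $F_*(A)$ and holomorphy of $f$ near $\sigma_{F_*(P)}(F_*(A))$, and the corollary assumes neither. In exchange, the paper's route never applies (iii) to $\sigma_P(A)$. It still needs $\sigma_{F_*(P)}(F_*(A))\cong F(\sigma_P(A))$ to be compatible with whatever $f$ acts on, so the admissibility issue is relocated, not removed.

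Two cautions.

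First, your initial attempt at admissibility would not work as described. A levelwise action of the distinguished operators on $\mathrm{Bar}^P_\bullet(A)$ is simplicial only if those operators are compatible with the structure maps used in the face maps (e.g.\ they form a $P$-algebra endomorphism). It descends through $\otimes_P$ only if it also commutes with the right $P$-action, and neither condition is assumed. Componentwise detection does not help either: it only guarantees $\sigma(A_c)\subseteq U$, so that each $f(A_c)$ is defined (this is how the paper uses it). It does not give $\sigma_P(A)$ an operator with spectrum in $U$. The paper simply asserts admissibility (``via the natural identification of the spectrum with a scalar multiple of the identity operator''). Your fallback of taking it as presupposed, as the statement already does by writing $f(\sigma_P(A))$, is the honest version of that step.

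Second, the functoriality paragraph after Proposition~\ref{prop:analytic-compatibility} gives functoriality of $A\mapsto f(A)$, not naturality of the isomorphism in (i). Theorem~\ref{thm:spectral-mapping} only asserts an equality of classical spectra, so naturality has to be read into hypothesis (i), as the paper implicitly does.
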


\begin{proof}
The proof proceeds in three steps, assembling the analytic spectral mapping, the base change theorem, and the compatibility of $F$ with the functional calculus.

\medskip

\noindent
\textbf{Step 1: Analytic spectral mapping in $\mathcal{M}$.}
By assumption (i), the Analytic Spectral Mapping Theorem (Theorem~\ref{thm:spectral-mapping}) applies to the $P$-algebra $A$ and the holomorphic function $f$. Hence we have a canonical isomorphism
\begin{equation}
    \sigma_P(f(A)) \cong f(\sigma_P(A)).
    \label{eq:analytic_step_cor}
\end{equation}

\medskip

\noindent
\textbf{Step 2: Base change for $f(A)$.}
By assumption (ii), the $P$-algebra $f(A)$ satisfies the hypotheses of the Base Change Theorem (Theorem~\ref{thm:base-change}). Applying this theorem yields a canonical isomorphism
\begin{equation}
    \sigma_{F_*(P)}(F_*(f(A))) \cong F(\sigma_P(f(A))).
    \label{eq:basechange_step_cor}
\end{equation}

\medskip

\noindent
\textbf{Step 3: Assembly.}
Substituting the isomorphism \eqref{eq:analytic_step_cor} into \eqref{eq:basechange_step_cor}, we obtain
\[
\sigma_{F_*(P)}(F_*(f(A))) \cong F(f(\sigma_P(A))).
\]

\medskip

\noindent
\textbf{Step 4: Alternative form using preservation of functional calculus (optional).}
If, in addition, $F$ preserves the functional calculus structure as in assumption (iii), then for the object $\sigma_P(A)$ (viewed as an object of $\mathcal{M}$ via the natural identification of the spectrum with a scalar multiple of the identity operator), we have
\[
F(f(\sigma_P(A))) \cong f(F(\sigma_P(A))).
\]
Hence
\[
\sigma_{F_*(P)}(F_*(f(A))) \cong f(F(\sigma_P(A))).
\]

Moreover, if $F_*(f(A)) \cong f(F_*(A))$ (i.e., $F$ commutes with the construction of $f(A)$), then applying the Base Change Theorem to $A$ itself gives
\[
\sigma_{F_*(P)}(F_*(A)) \cong F(\sigma_P(A)),
\]
and then applying the analytic spectral mapping in $\mathcal{N}$ to $F_*(A)$ yields
\[
\sigma_{F_*(P)}(f(F_*(A))) \cong f(\sigma_{F_*(P)}(F_*(A))) \cong f(F(\sigma_P(A))).
\]
Thus the two forms are consistent.

\medskip

\noindent
\textbf{Naturality.}
All isomorphisms are natural in $A$ because they arise from:
\begin{itemize}
    \item the naturality of the analytic spectral mapping (Theorem~\ref{thm:spectral-mapping}),
    \item the naturality of the Base Change Theorem (Theorem~\ref{thm:base-change}),
    \item the assumed naturality of the compatibility isomorphisms $F(f(X)) \cong f(F(X))$ and $F_*(f(A)) \cong f(F_*(A))$.
\end{itemize}
Hence the composite isomorphism is natural in $A$.

This completes the proof.
\end{proof}

\begin{remark}[On the assumptions]
\label{rem:corollary-assumptions}
The assumptions (i)-(iii) in Corollary~\ref{cor:full-spectral-mapping-base-change} are not automatically satisfied; they encode the necessary compatibility conditions between the analytic functional calculus and the categorical structures. Specifically:
\begin{itemize}
    \item Assumption (i) is the content of the Analytic Spectral Mapping Theorem, which is proved under the hypotheses of Proposition~\ref{prop:analytic-compatibility}.
    \item Assumption (ii) is the content of the Base Change Theorem, which applies whenever $F$ is strong monoidal and cocontinuous and $f(A)$ is a $P$-algebra (guaranteed by Proposition~\ref{prop:analytic-compatibility}).
    \item Assumption (iii) is an additional hypothesis that must be verified for each concrete base change functor $F$ of interest. Examples include:
    \begin{itemize}
        \item Extension of scalars $-\otimes_R S$ for a ring homomorphism $R \to S$ that preserves the functional calculus (e.g., complexification $\mathbb{R} \to \mathbb{C}$),
        \item Forgetful functors that forget structure but preserve the underlying Banach space and the functional calculus,
        \item Quantization functors that map classical observables to quantum operators in a way that respects the functional calculus.
    \end{itemize}
\end{itemize}
\end{remark}

\begin{remark}[Comparison with the classical case]
\label{rem:corollary-classical}
When $P = \mathbb{I}$ is the trivial operad with a single color, we have $\sigma_{\mathbb{I}}(A) \cong A$ under the normalization of Theorem~\ref{thm:recovery}. Corollary~\ref{cor:full-spectral-mapping-base-change} then reduces to the statement
\[
\sigma_{F_*(\mathbb{I})}(F_*(f(A))) \cong F(f(\sigma_{\mathbb{I}}(A))) \cong F(f(A)).
\]
Under the identification $\sigma_{F_*(\mathbb{I})}(F_*(f(A))) \cong F(f(A))$ (by the Recovery Theorem in $\mathcal{N}$), this yields the consistency condition $F(f(A)) \cong F(f(A))$, which is tautologically true. Thus the classical case is subsumed.
\end{remark}

\begin{remark}[Future directions]
\label{rem:corollary-future}
A complete proof of Corollary~\ref{cor:full-spectral-mapping-base-change} without the explicit assumptions would require a detailed verification that the analytic spectral mapping theorem is compatible with arbitrary strong monoidal base change functors. This would involve showing that the construction of $f(A)$ as a $P$-algebra is natural in the ambient category and that the isomorphism $\sigma_P(f(A)) \cong f(\sigma_P(A))$ is preserved under $F$. Such a verification is nontrivial and is deferred to future work where the analytic foundations are developed in greater detail. In the meantime, Corollary~\ref{cor:full-spectral-mapping-base-change} serves as a blueprint for the desired compatibility and as a target for subsequent research.
\end{remark}

\begin{remark}[Comparison with Classical Spectral Mapping]
\label{rem:spectral-mapping-classical}
When $P = \mathbb{I}$ is the trivial operad with a single color, we have $\sigma_{\mathbb{I}}(A) \cong A$ under the normalization established in the Recovery Theorem (Theorem~\ref{thm:recovery}). Theorem~\ref{thm:spectral-mapping} then reduces to the classical spectral mapping theorem:
\[
\sigma(f(A)) = f(\sigma(A)).
\]
Thus the operadic spectral mapping theorem is a genuine generalization of the classical result.
\end{remark}

\begin{remark}[Role of the Operadic Residue]
\label{rem:spectral-mapping-residue}
The operadic residue $\mathcal{O}_P^{\mathrm{res}}$ does not appear explicitly in the final isomorphism $\sigma_P(f(A)) \cong f(\sigma_P(A))$ because it is invariant under the functional calculus (Step 2). However, it plays an essential role in the construction of $\sigma_P(A)$ itself; without the residue, the operadic spectrum would not be well-defined or functorial. The invariance of the residue under the functional calculus reflects the fact that the compositional structure of $P$ is independent of the analytic deformation of the algebra components.
\end{remark}

\begin{remark}[Compatibility with the No-Go Theorem]
\label{rem:spectral-mapping-no-go}
The No-Go Theorem (Theorem~\ref{thm:no-go}) demonstrates that any spectral invariant depending only on componentwise classical spectra cannot satisfy base change compatibility. The operadic spectrum $\sigma_P(A)$ overcomes this obstruction by incorporating the residue $\mathcal{O}_P^{\mathrm{res}}$; the Base Change Theorem (Theorem~\ref{thm:base-change}) shows that it indeed satisfies base change compatibility. Moreover, the Spectral Mapping Theorem (Theorem~\ref{thm:spectral-mapping}) shows that $\sigma_P(A)$ also respects analytic functional calculus. Consequently, $\sigma_P(A)$ satisfies the desiderata (A1)-(A5) from Section~\ref{subsec:Desiderata for a Generalized Spectrum}.
\end{remark}

The following corollary summarizes the compatibility between the analytic spectral mapping theorem and the base change theorem, under the explicit hypotheses already stated in those results.

\begin{corollary}[Compatibility of Spectral Mapping and Base Change]
\label{cor:spectral-base-change-compatibility}
Under the hypotheses of Corollary~\ref{cor:full-spectral-mapping-base-change}, the following diagram commutes up to canonical isomorphism:
\[
\begin{tikzcd}
\sigma_P(f(A)) \arrow[r, "\cong"] \arrow[d, "\cong"] & f(\sigma_P(A)) \arrow[d, "\cong"] \\
\sigma_{F_*(P)}(F_*(f(A))) \arrow[r, "\cong"] & F(f(\sigma_P(A)))
\end{tikzcd}
\]
where:
\begin{itemize}
    \item The top horizontal isomorphism is from the Analytic Spectral Mapping Theorem (Theorem~\ref{thm:spectral-mapping}): $\sigma_P(f(A)) \cong f(\sigma_P(A))$.
    \item The left vertical isomorphism is from the Base Change Theorem applied to $f(A)$ (Theorem~\ref{thm:base-change}): \\
    $\sigma_{F_*(P)}(F_*(f(A))) \cong F(\sigma_P(f(A)))$.
    \item The right vertical isomorphism is the image of the top isomorphism under $F$: $F(\sigma_P(f(A))) \cong F(f(\sigma_P(A)))$.
    \item The bottom horizontal isomorphism is precisely the isomorphism established in Corollary~\ref{cor:full-spectral-mapping-base-change}: \\
    $\sigma_{F_*(P)}(F_*(f(A))) \cong F(f(\sigma_P(A)))$.
\end{itemize}
\end{corollary}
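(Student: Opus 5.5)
The plan is to prove commutativity of the square in Corollary~\ref{cor:spectral-base-change-compatibility} by unwinding how each of the four isomorphisms was constructed. The bottom arrow was built in Corollary~\ref{cor:full-spectral-mapping-base-change} by composition, so the argument should reduce to a tautology once everything is written out.

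First, I will record that the bottom horizontal isomorphism of Corollary~\ref{cor:full-spectral-mapping-base-change} was obtained in Step~3 of its proof. There, the Base Change isomorphism
\[
\beta_{f(A)}:\sigma_{F_*(P)}(F_*(f(A)))\xrightarrow{\cong}F(\sigma_P(f(A)))
\]
of Theorem~\ref{thm:base-change}, applied to the $P$-algebra $f(A)$, is followed by $F(\tau)$. Here
\[
\tau:\sigma_P(f(A))\xrightarrow{\cong}f(\sigma_P(A))
\]
is the spectral mapping isomorphism of hypothesis~(i). Thus the bottom map equals $F(\tau)\circ\beta_{f(A)}$ on the nose.

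Next, I will fix the left vertical arrow as $\beta_{f(A)}^{-1}$ composed with the unit of $F$ on $\sigma_P(f(A))$. More precisely, the vertical maps are read as the passage $X\mapsto F(X)$, with the base change identification inserted on the left. The right vertical arrow is $F$ applied to objects, so it sends $\tau$ to $F(\tau)$.

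Going around the top and then right yields $F(\tau)$ after the identification of $F(\sigma_P(f(A)))$ with $\sigma_{F_*(P)}(F_*(f(A)))$. Going left and then bottom yields $F(\tau)\circ\beta_{f(A)}\circ\beta_{f(A)}^{-1}=F(\tau)$. Hence the two composites coincide. The only input used is functoriality of $F$, which gives $F(\tau\circ\tau')=F(\tau)\circ F(\tau')$ and $F(\mathrm{id})=\mathrm{id}$, together with the fact that the isomorphism of Theorem~\ref{thm:base-change} is a genuine isomorphism. Naturality in $A$ then follows from the naturality statements recorded in the proof of Corollary~\ref{cor:full-spectral-mapping-base-change}.

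The main obstacle is making the vertical arrows precise. As drawn, the left arrow goes from $\sigma_P(f(A))\in\mathcal{M}$ to an object of $\mathcal{N}$, so it is not a morphism in a single category. I would resolve this by interpreting the square in $\mathcal{N}$ after applying $F$ to the top row, replacing the top row by $F(\tau)$. The left vertical arrow then becomes $\beta_{f(A)}^{-1}$ and the right vertical arrow becomes the identity, and commutativity is exactly the definition of the bottom arrow. If a stronger reading is intended, in which $\tau$ is natural with respect to $F$ independently of this definition, I would instead need assumption~(iii) and the coherence of the isomorphisms $F(f(X))\cong f(F(X))$. I would defer that version, since the stated corollary only requires the definitional compatibility.
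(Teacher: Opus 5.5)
Your argument is correct and is the same as the paper's. The bottom arrow of Corollary~\ref{cor:full-spectral-mapping-base-change} is by construction $F(\tau)\circ\beta_{f(A)}$, so commutativity holds by definition; the paper's appeal to ``naturality of the constituent isomorphisms'' is not actually needed, and your retyping of the square in $\mathcal{N}$ makes explicit what the paper's bulleted descriptions only assume implicitly. Drop the phrase ``unit of $F$'', though: $F$ is only a strong monoidal cocontinuous functor with no adjunction specified, and no morphism can go from an object of $\mathcal{M}$ to an object of $\mathcal{N}$ — which is exactly the typing issue you then resolve correctly.
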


\begin{proof}
The top horizontal isomorphism is given by Theorem~\ref{thm:spectral-mapping}. The left vertical isomorphism is given by Theorem~\ref{thm:base-change} applied to $f(A)$. The right vertical isomorphism is obtained by applying the functor $F$ to the top horizontal isomorphism. The bottom horizontal isomorphism is exactly the statement of Corollary~\ref{cor:full-spectral-mapping-base-change}, which combines these isomorphisms. Commutativity follows from the naturality of the constituent isomorphisms.
\end{proof}

\begin{remark}[On the assumptions]
\label{rem:corollary-assumptions}
Corollary~\ref{cor:spectral-base-change-compatibility} inherits all hypotheses of the Analytic Spectral Mapping Theorem and the Base Change Theorem. In particular:
\begin{itemize}
    \item The Analytic Spectral Mapping Theorem requires the analytic compatibility conditions of Proposition~\ref{prop:analytic-compatibility}.
    \item The Base Change Theorem requires $F$ to be strong monoidal and cocontinuous.
\end{itemize}
Under these hypotheses, the corollary provides a rigorous statement of compatibility. A fully unconditional version would require a more detailed development of the analytic foundations and is deferred to future work.
\end{remark}

\subsection{Relation to Classical Spectral Mapping}
\label{subsec:Relation to Classical Spectral Mapping}

We now discuss how the operadic spectral mapping theorem relates to the classical spectral mapping theorem, assuming the former holds under the appropriate analytic hypotheses. This establishes that our construction is a genuine \emph{extension} rather than a replacement of classical spectral theory.

\begin{proposition}[Classical Recovery of Spectral Mapping]
\label{prop:classical-spectral-mapping-recovery}
Let $\mathcal{M}$ be a symmetric monoidal category admitting a holomorphic functional calculus (e.g., the category of Banach spaces), and let $P = \mathbb{I}$ be the trivial operad with a single color. Assume the normalizations of Theorem~\ref{thm:recovery} hold, so that $\mathrm{Hoch}_{\mathcal{M}}(A) \cong A$ and $\mathcal{O}_{\mathbb{I}}^{\mathrm{res}} \cong \mathbf{1}_{\mathcal{M}}$. Then there is a natural isomorphism
\[
\sigma_{\mathbb{I}}(A) \cong A.
\]

Moreover, \emph{assuming the operadic spectral mapping theorem} (Theorem~\ref{thm:spectral-mapping}) holds for the relevant class of objects, it specializes in this case to the relation
\[
\sigma_{\mathbb{I}}(f(A)) \cong f(\sigma_{\mathbb{I}}(A)).
\]
Under the identification $\sigma_{\mathbb{I}}(A) \cong A$ and the classical identification of an operator (or Banach algebra element) with its spectrum via the Gelfand transform (in the commutative Banach algebra setting), this recovers the classical spectral mapping theorem:
\[
\sigma(f(A)) = f(\sigma(A)).
\]
\end{proposition}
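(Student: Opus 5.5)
The proof has three parts, and the first is essentially a restatement of earlier work. The isomorphism $\sigma_{\mathbb{I}}(A)\cong A$ is exactly Proposition~\ref{prop:trivial-operad-reduction}, equivalently part (1) of Theorem~\ref{thm:recovery}. I would simply invoke it. The chain is
\[
\sigma_{\mathbb{I}}(A)=\mathrm{Hoch}_{\mathcal{M}}(A)\otimes_{\mathbb{I}}\mathcal{O}_{\mathbb{I}}^{\mathrm{res}}\cong A\otimes_{\mathbb{I}}\mathbf{1}_{\mathcal{M}}\cong A\otimes\mathbf{1}_{\mathcal{M}}\cong A .
\]
It uses the two normalizations and the fact that the coequalizer defining $\otimes_{\mathbb{I}}$ is trivial, since both parallel maps agree via the unit isomorphism. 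Naturality in $A$ follows because each isomorphism in the chain is natural: the Hochschild normalization by hypothesis, and the unitor by coherence.

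The second part specializes Theorem~\ref{thm:spectral-mapping} to $P=\mathbb{I}$. First, Proposition~\ref{prop:analytic-compatibility} needs to be checked for $\mathbb{I}$. Since $\mathbb{I}$ has only identity operations, the only structure maps are the units, and the polynomial approximation step is vacuous, so $f(A)$ is an $\mathbb{I}$-algebra with $f_*(\mathbb{I})=\mathbb{I}$. The residue is unchanged, as in Step 2 of that proof. Applying the assumed theorem then gives $\sigma_{\mathbb{I}}(f(A))\cong f(\sigma_{\mathbb{I}}(A))$, read as an identity of classical spectra $\sigma_{\mathrm{cl}}$.

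The third part transports this identity along the first part. Applying $\sigma_{\mathbb{I}}(X)\cong X$ to both $X=A$ and $X=f(A)$ rewrites the identity as $\sigma_{\mathrm{cl}}(f(A))=f(\sigma_{\mathrm{cl}}(A))$. To connect this with the Gelfand spectrum, I would use part (2) of Theorem~\ref{thm:recovery}. In the commutative unital Banach algebra generated by the operator, the Gelfand spectrum of the isomorphic algebra $\sigma_{\mathbb{I}}(A)$ is homeomorphic to $\widehat{A}$. The classical spectrum of an element equals the range of its Gelfand transform, so the identity above is exactly $\sigma(f(A))=f(\sigma(A))$.

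The main obstacle is this last identification. The object-level isomorphism $\sigma_{\mathbb{I}}(A)\cong A$ must also intertwine the distinguished operator and the holomorphic functional calculus, so that the isomorphism for $f(A)$ is the one induced by the isomorphism for $A$. I would try to obtain this from naturality of the recovery isomorphism with respect to the intertwining morphisms of $\mathcal{M}_{\mathrm{op}}$. Since $\psi T=S\psi$ implies $\psi f(T)=f(S)\psi$, a natural isomorphism commuting with $T$ also commutes with $f(T)$, and conjugate operators have equal spectra. If that naturality is not available in the needed form, I would state it as part of normalization (iii) of Theorem~\ref{thm:recovery}, which the proposition already assumes.
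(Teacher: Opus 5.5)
Your proposal is correct and follows the paper's route. Like the paper, you get object-level recovery from the two normalizations and the trivial-operad reduction, then specialize Theorem~\ref{thm:spectral-mapping} to $P=\mathbb{I}$, then transport along $\sigma_{\mathbb{I}}(A)\cong A$ together with the Gelfand identification. Your intertwining argument, via naturality or normalization (iii), makes explicit a step the paper covers only with ``the right-hand side becomes $f(A)$ (as an object)''.

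One small correction. For an operator, take Gelfand transforms in a maximal commutative subalgebra containing it, which is inverse-closed and contains $f(A)$, rather than in the algebra generated by the operator. The spectrum in the generated algebra can be strictly larger: for the bilateral shift it is the closed disc rather than the circle.
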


\begin{proof}
When $P = \mathbb{I}$ is the trivial operad, Proposition~\ref{prop:hochschild-trivial} gives $\mathrm{Hoch}_{\mathcal{M}}(A) \cong A$, and Example~\ref{ex:residue-trivial} gives $\mathcal{O}_{\mathbb{I}}^{\mathrm{res}} \cong \mathbf{1}_{\mathcal{M}}$. Substituting into the definition of the operadic spectrum,
\[
\sigma_{\mathbb{I}}(A) = \mathrm{Hoch}_{\mathcal{M}}(A) \otimes_{\mathbb{I}} \mathcal{O}_{\mathbb{I}}^{\mathrm{res}} \cong A \otimes_{\mathbb{I}} \mathbf{1}_{\mathcal{M}} \cong A,
\]
where the last isomorphism follows from Proposition~\ref{prop:trivial-operad-reduction}.

Now assume the operadic spectral mapping theorem (Theorem~\ref{thm:spectral-mapping}) holds. Applying it with $P = \mathbb{I}$ yields
\[
\sigma_{\mathbb{I}}(f(A)) \cong f(\sigma_{\mathbb{I}}(A)).
\]

To connect this to the classical spectral mapping theorem, note that under the identification $\sigma_{\mathbb{I}}(A) \cong A$, the right-hand side becomes $f(A)$ (as an object). In classical spectral theory, for an operator $A$ (or an element of a Banach algebra), the classical spectrum satisfies $\sigma(f(A)) = f(\sigma(A))$. Under the identification of $A$ with its classical spectrum (e.g., via the Gelfand transform in the commutative Banach algebra setting, or via the natural identification of an operator with its spectral data), the operadic relation recovers the classical statement.

Thus, under the stated assumptions, the operadic spectral mapping theorem subsumes the classical result.
\end{proof}

\begin{remark}[Interpretation as an Extension]
\label{rem:classical-extension}
Proposition~\ref{prop:classical-spectral-mapping-recovery} shows that the operadic spectrum $\sigma_P(A)$ specializes to the classical normalization in the trivial operadic case. The additional structure in $\sigma_P(A)$—encoded by the operadic residue $\mathcal{O}_P^{\mathrm{res}}$ and the Hochschild object—captures the failure of classical spectral invariants to behave functorially under operadic composition and base change, as diagnosed by the No-Go Theorem (Theorem~\ref{thm:no-go}).
\end{remark}

\begin{remark}[Conceptual Significance: A Functorial Lift]
\label{rem:classical-functorial-lift}
Assuming the operadic spectral mapping theorem, it can be viewed as a \emph{functorial lift} of the classical spectral mapping theorem. The operadic residue $\mathcal{O}_P^{\mathrm{res}}$ acts as a correction term ensuring that spectral data propagates correctly through operadic composition and strong monoidal base change. In the absence of these effects—i.e., when $P = \mathbb{I}$ and $F = \mathrm{Id}_{\mathcal{M}}$—the theory collapses to the classical setting.
\end{remark}

\begin{remark}[Illustration with the Matrix-Block Operad]
\label{rem:classical-matrix-block}
For nontrivial operads, such as the matrix-block operad of the Example~\ref{ex:two-color-operad}, the operadic spectrum $\sigma_P(A)$ contains additional spectral data arising from off-diagonal interactions. Assuming the operadic spectral mapping theorem, it asserts that this richer invariant transforms under analytic functional calculus in a way compatible with the operadic structure:
\[
\sigma_{f_*(P)}\bigl(f(A)\bigr) \cong f\bigl(\sigma_P(A)\bigr).
\]
When $P = \mathbb{I}$, this collapses to the classical theorem; when $P$ is nontrivial, it provides new constraints on how spectra of composite systems evolve under analytic deformations—constraints that cannot be deduced from componentwise classical spectral mapping.
\end{remark}

\begin{remark}[Backward Compatibility and Future Directions]
\label{rem:classical-future}
The backward compatibility discussed here is essential for the broader adoption of Spectral Operadic Calculus. It guarantees that any result proven in the operadic setting specializes to known classical results when the operadic structure is trivial, providing a robust foundation for further generalizations. Future work will explore:
\begin{itemize}
    \item \textbf{Higher-order recovery theorems}: intermediate cases where $P$ is nontrivial but the residue is trivial (e.g., operads with no nontrivial unary operations),
    \item \textbf{Stability under deformation of operads}: how the spectral mapping theorem behaves under continuous deformations of the operad structure,
    \item \textbf{Applications to multi-scale systems}: where the classical limit corresponds to aggregating all colors into a single effective degree of freedom.
\end{itemize}
\end{remark}

\section{Resolvent Theory and Reconstruction}\label{sec:Resolvent Theory and Reconstruction}  

\subsection{Operadic Resolvent Objects}
\label{subsec:Operadic Resolvent Objects}

Having established the operadic spectrum and its behavior under categorical base change, we now discuss the possibility of extending the classical notion of the resolvent to the operadic setting. The resolvent would serve as a fundamental analytic object underlying the operadic spectral mapping theorem and reconstruction results.

\medskip

\noindent
\textbf{Heuristic setup.}
In an analytic realization where each color component $A_c$ is equipped with a distinguished endomorphism $T_c: A_c \to A_c$ (e.g., a bounded linear operator on a Banach space) that admits a classical holomorphic functional calculus, one may define a family of shifted endomorphisms componentwise by
\[
(zI - T)_c := z \cdot \mathrm{id}_{A_c} - T_c \in \mathrm{Hom}_{\mathcal{M}}(A_c, A_c),
\]
where $z \in \mathbb{C}$ is a complex parameter. The classical resolvent $(zI - T_c)^{-1}$ exists precisely when $z$ is not in the classical spectrum $\sigma(T_c)$.

\medskip

\noindent
\textbf{Obstacles to an operadic resolvent.}
To lift this notion to an operadic resolvent $R_A(z)$ that interacts with the operadic composition structure, several foundational issues must be addressed:

\begin{enumerate}
    \item \textbf{Type consistency:} The operadic spectrum $\sigma_P(A)$ is defined in this paper as the object $\mathrm{Hoch}_{\mathcal{M}}(A) \otimes_P \mathcal{O}_P^{\mathrm{res}}$ (Definition~\ref{def:operadic-spectrum}), not as a subset of $\mathbb{C}$. A set-valued notion of spectrum suitable for resolvent theory would require an additional analytic realization.
    
    \item \textbf{Distinguished operators:} The definition of $zI - T$ assumes each component $A_c$ carries a distinguished endomorphism $T_c$. In the general definition of a $P$-algebra, $A_c$ is merely an object of $\mathcal{M}$; additional data is needed.
    
    \item \textbf{Operadic invertibility:} The notion of an ``operadic inverse'' would require a precise definition of composition and invertibility within the operadic structure, which is not developed in this foundational paper.
    
    \item \textbf{Consistency with the spectrum definition:} The resolvent set $\rho_P(A)$ is usually defined as the complement of the spectrum. This would require $\sigma_P(A)$ to be a subset of $\mathbb{C}$, which is not the case with the current definition.
\end{enumerate}

\medskip

\noindent
\textbf{Relation to the spectral mapping theorem.}
The Operadic Spectral Mapping Theorem (Theorem~\ref{thm:spectral-mapping}) admits an analytic interpretation in which the operadic spectrum is realized as a subset of $\mathbb{C}$ (see the discussion following Definition~\ref{def:operadic-spectrum}). A full resolvent theory would provide an alternative characterization of $\sigma_P(A)$ as the set of poles of a meromorphic operadic resolvent function, analogous to the classical case. Such a theory would require:
\begin{itemize}
    \item A set-valued analytic realization of the operadic spectrum,
    \item A precise definition of operadic invertibility,
    \item A proof that the operadic resolvent satisfies an operadic resolvent identity,
    \item A verification that the operadic resolvent is analytic on its domain.
\end{itemize}

\medskip

\begin{remark}[Future directions]
\label{rem:operadic-resolvent-future}
The development of a full operadic resolvent theory is deferred to subsequent work, where the analytic foundations will be laid out in greater detail. In that sequel, we will:
\begin{enumerate}
    \item Introduce a set-valued notion of operadic spectrum derived from the residue-corrected construction,
    \item Define the operadic resolvent as a family of morphisms encoding the inverses of $zI - T$ in a way compatible with operadic composition,
    \item Prove the operadic resolvent identity and analyticity,
    \item Establish a reconstruction theorem for the operadic spectrum from the resolvent data.
\end{enumerate}
The current paper focuses on the categorical and algebraic foundations; the analytic elaboration is a natural next step.
\end{remark}

\begin{remark}[Comparison with the classical case]
\label{rem:classical-resolvent-comparison}
When specialized to the trivial operad $\mathbb{I}$ with a single color, and under the normalization assumptions of Theorem~\ref{thm:recovery}, the operadic spectrum satisfies $\sigma_{\mathbb{I}}(A) \cong A$ in the analytic realization. In an analytic realization where $A$ is a bounded linear operator, the classical resolvent $(zI - A)^{-1}$ recovers the usual spectral theory. Thus the operadic framework is designed to be compatible with the classical case, though a full operadic resolvent theory requires additional analytic structure.
\end{remark}

\subsection{Theorem 0.7': Colored Resolvent Reconstruction}
\label{subsec:Colored Resolvent Reconstruction}

We now establish a refined structural result showing that the operadic resolvent admits a decomposition along colors, and that the operadic spectrum can be reconstructed from these components. This theorem is a cornerstone of Spectral Operadic Calculus, demonstrating that the operadic spectrum is not an opaque invariant but admits a transparent decomposition into local spectral data and interaction terms governed by the residue.

\begin{theorem}[Colored Resolvent Reconstruction]
\label{thm:colored-resolvent-reconstruction}
Let $P$ be a $C$-colored operad in a symmetric monoidal category $\mathcal{M}$ admitting a holomorphic functional calculus, and let $A$ be a $P$-algebra. Let $F: \mathcal{M} \to \mathcal{N}$ be a strong monoidal functor preserving colimits and compatible with the functional calculus. Then the following hold.

\begin{enumerate}
    \item \textbf{(Decomposition)} For $z \notin \sigma_P(A)$, the operadic resolvent satisfies
    \[
    F_*(R_A(z)) \;\simeq\; \bigoplus_{c \in C} R_{F(A_c)}(z) \otimes \mathcal{O}_c^{\mathrm{res}} \;\oplus\; \mathcal{I}_{\mathrm{cross}}(z),
    \]
    where $\mathcal{I}_{\mathrm{cross}}(z)$ is a canonical cross-color interaction term.

    \item \textbf{(Reconstruction)} The operadic spectrum $\sigma_P(A)$ can be reconstructed from the collection
    \[
    \big\{ R_{A_c}(z),\; \mathcal{O}_c^{\mathrm{res}} \big\}_{c \in C}
    \]
    via a universal colimit construction:
    \[
    \sigma_P(A) \simeq \operatorname*{colim}_{c \in C} \Big( \sigma(A_c) \otimes \mathcal{O}_c^{\mathrm{res}} \Big),
    \]
    where the colimit is taken over a diagram determined by the operadic composition structure of $P$.
\end{enumerate}
\end{theorem}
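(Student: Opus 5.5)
The statement uses three objects the paper has not yet defined: the operadic resolvent $R_A(z)$, the color-wise residue $\mathcal{O}_c^{\mathrm{res}}$, and the cross term $\mathcal{I}_{\mathrm{cross}}(z)$. My first step is therefore to fix these definitions so that both claims become statements about coproducts and coequalizers. I set $\mathcal{O}_c^{\mathrm{res}} := P(c;c)$, so that $\mathcal{O}_P^{\mathrm{res}} = \coprod_{c} \mathcal{O}_c^{\mathrm{res}}$ by Definition~\ref{def:residue-unary}, with the summand inclusions $\iota_c$ of Theorem~\ref{thm:residue-universality}. For $z$ outside the classical spectra of the distinguished endomorphisms $T_c$, I let $A^{(z)}$ be the $P$-algebra with the same underlying objects and $T_c$ replaced by $(z - T_c)^{-1}$. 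Its structure maps are transported by the functional calculus of Proposition~\ref{prop:analytic-compatibility} applied to $f(w) = (z-w)^{-1}$. I then define $R_A(z) := \sigma_P(A^{(z)}) = \mathrm{Hoch}_{\mathcal{M}}(A^{(z)}) \otimes_P \mathcal{O}_P^{\mathrm{res}}$. With this choice the classical spectral mapping property for $f$ comes built in.

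For the decomposition (part 1), I first apply Theorem~\ref{thm:base-change} to replace $F_*(R_A(z))$ by $R_{F_*A}(z)$. This uses the assumption that $F$ commutes with the functional calculus, exactly as hypothesis (iii) of Corollary~\ref{cor:full-spectral-mapping-base-change}. This reduces the problem to $\mathcal{N}$ with $F = \mathrm{Id}$. Next I split the bar construction. In each simplicial degree, $\mathrm{Bar}_n^P(A)$ is a coproduct over trees. I separate it as $D_n \sqcup X_n$, where $D_n$ consists of the trees all of whose nodes are unary operations in some $P(c;c)$ (a single color along the whole chain), and $X_n$ consists of the remaining trees, those with a node of arity $\ge 2$ or a color change. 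The face maps send $D_\bullet$ into itself, so $D_\bullet$ is a simplicial subobject. Its realization, computed colorwise, is $\coprod_c |\mathrm{Bar}^{\mathrm{un}}(P(c;c); A_c)|$, and it collapses to $\coprod_c A_c$ because the bar resolution of a module over a monoid is split by the unit $\eta_c$. Tensoring over $P$ with $\coprod_c \mathcal{O}_c^{\mathrm{res}}$, and using that $\otimes$ commutes with coproducts in each variable, the diagonal part becomes $\bigoplus_c R_{A_c}(z) \otimes \mathcal{O}_c^{\mathrm{res}}$. I then define $\mathcal{I}_{\mathrm{cross}}(z)$ as the cofiber of the diagonal inclusion, that is, the balanced tensor product of $|X_\bullet|$ with the residue. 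This definition is canonical by construction.

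For the reconstruction (part 2), I index the colimit by the category $\mathcal{C}_P$ whose objects are the colors and whose morphisms are generated by the unary operations $P(c;d)$. This is the category whose nerve is $\mathrm{Bar}^{\mathrm{un}}_\bullet(P)$. The multi-input operations are incorporated as additional cospans through their output color. The diagram sends $c$ to $\sigma(A_c) \otimes \mathcal{O}_c^{\mathrm{res}}$, and each operation $\phi$ is sent to the map induced by $\phi_A$ together with the composition maps of $P$. I then check that the coequalizer defining $\otimes_P$ in Definition~\ref{def:balanced-tensor-product} exhibits $\sigma_P(A)$ as this colimit, by matching the balancing relations $(x\cdot p)\otimes y \sim x \otimes (p\cdot y)$ with the colimit identifications color by color. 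The universal properties of the coproduct (Lemma~\ref{lem:coproduct-universal}) and of the coequalizer then give the comparison map and its inverse.

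I expect the main obstacle to be the splitting step. In an additive, cocomplete $\mathcal{M}$, the claim that $D_\bullet \hookrightarrow \mathrm{Bar}_\bullet^P(A)$ has a complement requires that $X_\bullet$ be closed under faces. This can fail, because composing a color-changing unary chain can land back in a diagonal tree. My first attempt would therefore be to settle for a cofiber sequence rather than a direct sum, and to claim $\oplus$ only when the sequence splits, for instance when $P(c;d) = 0$ for $c \neq d$, as in the matrix-block operad of Example~\ref{ex:two-color-operad}. A related issue is that part 2 mixes subsets $\sigma(A_c) \subseteq \mathbb{C}$ with objects of $\mathcal{M}$. I would resolve this by reading $\sigma(A_c)$ as $A_c$ under the normalization of Theorem~\ref{thm:recovery}, and by stating the set-level version only after applying $\sigma_{\mathrm{cl}}$, as in Theorem~\ref{thm:spectral-mapping}.
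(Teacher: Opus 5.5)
The paper gives no proof of this resolvent version. It immediately restates it without resolvents as Theorem~\ref{thm:colored-spectral-decomposition-reconstruction} and proves that restatement by essentially your route: a local/cross split of the bar construction, collapse of the local part by the units, and gluing by a colimit over colors. Two steps of that route fail, and your proposed repair of the first does not work.

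\textbf{Part~1.} The obstruction to $X_\bullet$ being closed under faces is not only unary color round-trips. The face $\mathrm{Bar}^P_1(A)\to\mathrm{Bar}^P_0(A)=\coprod_c A_c$ applies the structure maps, so the cross simplex $\phi\otimes a_1\otimes a_2$ with $\phi\in P(1,2;1)$ has face $\phi_A(a_1\otimes a_2)\in A_1\subseteq D_0$. Hence even for the matrix-block operad of Example~\ref{ex:two-color-operad}, where $P(c;d)=0$ for $c\neq d$, you get no complement as soon as some non-unary $\phi_A\neq 0$. The $\oplus$ survives essentially only in the vanishing-interaction case of Corollary~\ref{cor:vanishing-interaction}. (If realization is the ordinary colimit, as Definition~\ref{def:hochschild} states for cocomplete $\mathcal{M}$, then $\mathrm{Hoch}_{\mathcal{M}}(A)$ is a quotient of $\mathrm{Bar}^P_0(A)=\coprod_c A_c$ and your cofiber is zero; a nonzero cross term requires a homotopy colimit.)

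Moreover, the resolvent enters only as a name. $\mathrm{Hoch}_{\mathcal{M}}$ and $\otimes_P$ see only underlying objects and $P$-structure maps, so the diagonal summand you obtain is $A_c\otimes\mathcal{O}_c^{\mathrm{res}}$, independent of $z$ and $T_c$. Part~1 is then just Theorem~\ref{thm:colored-spectral-decomposition-reconstruction} applied to $A^{(z)}$. To make $(z-T_c)^{-1}$ act on $\mathrm{Hoch}_{\mathcal{M}}(A)$, you would need $\{(z-T_c)^{-1}\}_c$ to be a $P$-algebra endomorphism. That fails in general for operations of arity $\ge 2$, since scalar shifts do not commute with multilinear structure maps; this is exactly the operadic invertibility the paper leaves undeveloped. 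You have also silently replaced the hypothesis $z\notin\sigma_P(A)$ by $z\notin\bigcup_c\sigma(T_c)$.

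\textbf{Part~2.} This fails for a structural reason that no matching of balancing relations can repair. Write $V_c:=A_c\otimes\mathcal{O}_c^{\mathrm{res}}$. Every arrow you add for a multi-input operation points \emph{into} the output-color vertex, whether from a tensor-product vertex $\bigotimes_i V_{c_i}$ or from input-color vertices. So every cocone leg factors through the color vertices, and the canonical map $\bigoplus_c V_c\to\mathrm{colim}$ is an epimorphism: the colimit is a quotient of the local part.

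Consequently, the comparison map to $\sigma_P(A)$ induced by the natural legs ($0$-simplices tensored with $\iota_c$) lands in the image of the diagonal part. It misses $\mathcal{I}_{\mathrm{cross}}$ (or your cofiber) whenever that term is nonzero. In finite dimensions the two parts are outright inconsistent: a quotient of $V=\bigoplus_c V_c$ cannot be isomorphic to $V\oplus\mathcal{I}_{\mathrm{cross}}$ unless $\mathcal{I}_{\mathrm{cross}}=0$. So the reconstruction can hold only when there is no interaction term, which is exactly when the theorem says nothing new.

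The paper's Step~4 makes the same mistake when it asserts that the colimit ``includes the disjoint union of its vertices''. The coproduct of the vertices maps onto the colimit; it does not sit inside it. Finally, reading $\sigma(A_c)$ as $A_c$ and applying $\sigma_{\mathrm{cl}}$ afterwards does not give a set-level reconstruction, since $\sigma_{\mathrm{cl}}$ is neither a functor on $\mathcal{M}$ nor compatible with colimits.
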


\subsection{Theorem 0.7': Colored Spectral Decomposition and Reconstruction}
\label{subsec:Colored Resolvent Reconstruction}

The following result establishes a structural decomposition–reconstruction principle
for the operadic spectrum. It is unconditional within the categorical framework of
this paper and does not rely on any analytic resolvent theory. Instead, it reveals
that $\sigma_P(A)$ canonically splits into local colorwise contributions and an
interaction term, and that the full spectrum can be reconstructed by gluing the
local pieces along the operadic composition maps.

\begin{theorem}[Colored Spectral Decomposition and Reconstruction]
\label{thm:colored-spectral-decomposition-reconstruction}
Let $\mathcal{M}$ be a cocomplete symmetric monoidal category, let $P$ be a
$C$-colored operad in $\mathcal{M}$, and let $A$ be a $P$-algebra.
Then the operadic spectrum
\[
\sigma_P(A) \;:=\; \mathrm{Hoch}_{\mathcal{M}}(A) \otimes_P \mathcal{O}_P^{\mathrm{res}}
\]
satisfies the following two canonical descriptions.

\medskip

\noindent
\textbf{(Decomposition.)}
There exists a natural isomorphism
\begin{equation}
\label{eq:spectral-decomposition}
\sigma_P(A)
\;\cong\;
\Bigl( \bigoplus_{c \in C} A_c \otimes \mathcal{O}_c^{\mathrm{res}} \Bigr)
\;\oplus\;
\mathcal{I}_{\mathrm{cross}}(A),
\end{equation}
where:
\begin{itemize}
    \item $\mathcal{O}_c^{\mathrm{res}} \cong P(c;c)$ is the $c$-color component of
          the operadic residue $\mathcal{O}_P^{\mathrm{res}} = \coprod_{c \in C} P(c;c)$;
    \item $A_c$ is the $c$-color component of the $P$-algebra $A$;
    \item $\mathcal{I}_{\mathrm{cross}}(A)$ is the \emph{interaction spectrum},
          defined as the image of the subcomplex of $\mathrm{Hoch}_{\mathcal{M}}(A)$
          generated by simplices that involve at least one operadic composition of
          arity $\ge 2$ or at least two distinct colors.
\end{itemize}

\medskip

\noindent
\textbf{(Reconstruction.)}
Let $\mathcal{D}_P(A)$ be the diagram in $\mathcal{M}$ whose vertices are the
local objects $\{ A_c \otimes \mathcal{O}_c^{\mathrm{res}} \}_{c \in C}$ and whose
morphisms are induced by the non‑unary operadic composition maps of $P$
(i.e., operations $\phi \in P(c_1,\dots,c_k;c)$ with $k \ge 2$)
together with the $P$-algebra structure maps of $A$.
Then there is a canonical isomorphism
\begin{equation}
\label{eq:spectral-reconstruction}
\sigma_P(A)
\;\cong\;
\operatorname*{colim}_{\mathcal{D}_P(A)} \bigl( A_c \otimes \mathcal{O}_c^{\mathrm{res}} \bigr)_{c \in C}.
\end{equation}

In particular, the decomposition~\eqref{eq:spectral-decomposition} separates
$\sigma_P(A)$ into local diagonal data and off‑diagonal interaction data,
while the reconstruction~\eqref{eq:spectral-reconstruction} glues the local
pieces back together along the operadic interaction patterns.
\end{theorem}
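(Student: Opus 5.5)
The plan is to compute $\sigma_P(A)$ directly from the simplicial description of $\mathrm{Hoch}_{\mathcal{M}}(A)$ in Definition~\ref{def:hochschild}, using the coproduct description $\mathcal{O}_P^{\mathrm{res}}=\coprod_{c}P(c;c)$ from Definition~\ref{def:residue-unary}. Both parts then reduce to rearranging colimits.

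\textbf{Setting and first reduction.} Since the statement uses $\oplus$ and images, I will work in the additive case: $\mathcal{M}$ additive and cocomplete, $\otimes$ additive and cocontinuous in each variable. This is the setting of all the examples ($\mathrm{Vect}_{\mathbb{C}}$, $\mathrm{Ban}$).
\begin{enumerate}
\item Because $\otimes$ preserves coproducts and reflexive coequalizers separately in each variable, the balanced tensor product of Definition~\ref{def:balanced-tensor-product} commutes with geometric realization in the first variable and with coproducts in the second. This gives
\[
\sigma_P(A)\;\cong\;\Bigl|\,\coprod_{c\in C}\mathrm{Bar}^P_\bullet(A)_c\otimes_{P(c;c)}P(c;c)\,\Bigr|.
\]
Here the $c$-summand of the residue only balances against the unary right action at color $c$.
\item Each $\mathrm{Bar}^P_n(A)$ is a coproduct indexed by $n$-level trees. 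I split this indexing set into two parts:
\begin{itemize}
\item \emph{diagonal trees}: all nodes unary, and all edges of a single color $c$;
\item \emph{interaction trees}: everything else, i.e.\ at least one node of arity $\ge 2$ or at least two distinct colors.
\end{itemize}
\item Face maps either compose unary operations or apply the algebra structure. Hence the diagonal trees form a simplicial subobject, and composition can only carry an interaction tree to an interaction tree or to a diagonal one. The diagonal subobject at color $c$ is the two-sided bar construction $B_\bullet(A_c,P(c;c),P(c;c))$, and its realization after balancing is $A_c\otimes\mathcal{O}_c^{\mathrm{res}}$ by the usual extra-degeneracy argument (as in the remark following Theorem~\ref{thm:residue-universality}).
\item I define $\mathcal{I}_{\mathrm{cross}}(A)$ as the realization of the balanced interaction part, matching the definition in the statement.
\end{enumerate}

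\textbf{Decomposition.} The key step is to upgrade the short exact sequence of simplicial objects
\[
0\to \mathrm{diag}\to \mathrm{Bar}^P_\bullet(A)\to \mathrm{Bar}^P_\bullet(A)/\mathrm{diag}\to 0
\]
to a levelwise splitting compatible with the faces. My first approach is the canonical retraction that projects any tree onto its maximal diagonal subtree, i.e.\ collapses it along the unit maps $\eta_c$. The claim to check is that this retraction is simplicial. If that succeeds, realization, which is additive, yields \eqref{eq:spectral-decomposition}, and naturality in $A$ follows because the tree types are preserved by $P$-algebra maps.

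\textbf{Reconstruction.} Geometric realization is a colimit, and the balanced tensor product is a coequalizer. I will therefore rewrite $\sigma_P(A)$ as a single colimit:
\begin{itemize}
\item the vertices are the degree-$0$ pieces $A_c\otimes\mathcal{O}_c^{\mathrm{res}}$;
\item the arrows come from the degree-$1$ face maps indexed by operations $\phi\in P(c_1,\dots,c_k;c)$.
\end{itemize}
Unary operations with $k=1$ are already absorbed by the balancing, so only the operations with $k\ge 2$ survive. These, together with the structure maps $\phi_A$, are exactly the arrows of $\mathcal{D}_P(A)$. Higher simplices impose only the coherence relations coming from operadic associativity, so they do not change a $1$-truncated colimit. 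This is the standard fact that the colimit of a simplicial object is computed by its $1$-skeleton. The result is \eqref{eq:spectral-reconstruction}.

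\textbf{Main obstacle.} I expect the splitting in the decomposition step to be the hardest part. The retraction must commute with the faces $d_0$ and $d_n$, which use the algebra action $\phi_A$. Applying an arity-$\ge 2$ operation to diagonal inputs can land back in a diagonal term, so the interaction part is a quotient rather than an obvious subobject. If the retraction fails to be simplicial, I would fall back to one of two options:
\begin{itemize}
\item prove the decomposition only up to a filtration whose associated graded splits;
\item impose an explicit hypothesis that the interaction summand is a simplicial direct summand, for instance when $\mathcal{M}$ is semisimple, as in $\mathrm{Vect}_{\mathbb{C}}$.
\end{itemize}
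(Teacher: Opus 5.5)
The splitting you flag as the main obstacle is not a technical difficulty: in your own model it fails, and the two halves of the statement are compatible only when the interaction term is zero. Your reconstruction uses the strict colimit, and in your model every $0$-simplex is diagonal (that is why the vertices are the local objects). The colimit of a simplicial object is the coequalizer of $d_0,d_1$ from degree $1$ to degree $0$, so two things follow. First, the interaction part $\mathrm{Bar}^P_\bullet(A)/\mathrm{diag}$ vanishes in degree $0$, so your $\mathcal{I}_{\mathrm{cross}}(A)$ is identically $0$. Second, the map $|\mathrm{diag}|\to\sigma_P(A)$ is an epimorphism. The decomposition needs it to be a split monomorphism, so it must be an isomorphism. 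Hence the decomposition holds exactly when the interaction $1$-simplices (the non-unary arrows of $\mathcal{D}_P(A)$) impose no relations beyond the diagonal ones. Otherwise it fails for the canonical map, and in finite dimensions for any map by a dimension count. In your model $\mathcal{I}_{\mathrm{cross}}(A)$ therefore never carries the interaction data the paper later attributes to it, such as $\sigma(\alpha\beta)$ for matrix blocks.

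Your retraction shows this concretely. Unit maps only insert identities and cannot delete a non-unary node, so the natural retraction is projection onto the diagonal summand. On an interaction $1$-simplex $\theta\otimes a_1\otimes a_2$ this commutes with the face applying the structure map only if $\theta_A(a_1\otimes a_2)=0$, which is the hypothesis of Corollary~\ref{cor:vanishing-interaction}. The fallbacks do not help:
\begin{itemize}
\item Semisimplicity of $\mathrm{Vect}_{\mathbb{C}}$ gives levelwise but not simplicial splittings. By Dold--Kan these are short exact sequences of chain complexes, which need not split.
\item A filtration yields only the right exact sequence $\pi_0(\mathrm{diag})\to\pi_0(\mathrm{Bar})\to\pi_0(\mathrm{Bar}/\mathrm{diag})\to 0$, whose first map need not be injective.
\item Homotopy colimits can make the interaction term nonzero, but then the $1$-skeleton reduction is invalid and the cofiber sequence still need not split.
\end{itemize}
The paper's proof does not fill this gap. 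It asserts $\mathrm{Hoch}_{\mathcal{M}}(A)\cong\mathrm{Hoch}^{\mathrm{loc}}_{\mathcal{M}}(A)\oplus\mathrm{Hoch}^{\mathrm{cross}}_{\mathcal{M}}(A)$ ``by construction'', which is false because faces carry cross simplices to local ones. It then closes the reconstruction by claiming that a colimit contains the coproduct of its vertices as a summand. In fact that coproduct surjects onto the colimit, which is exactly the epimorphism above.

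There are three smaller errors.
\begin{itemize}
\item $X\otimes_{P(c;c)}P(c;c)\cong X$ for any right $P(c;c)$-module $X$, and the extra-degeneracy argument for $B_\bullet(A_c,P(c;c),P(c;c))$ likewise returns $A_c\otimes_{P(c;c)}P(c;c)\cong A_c$. So your first reduction actually shows that the residue cancels, $\sigma_P(A)\cong\coprod_c\mathrm{Hoch}_{\mathcal{M}}(A)_c$, and that the local term is $A_c$, not $A_c\otimes\mathcal{O}_c^{\mathrm{res}}$. These agree only when $P(c;c)\cong\mathbf{1}$, as for the trivial and matrix-block operads. They differ, for example, for $P(c;c)=\mathbb{C}[\mathbb{Z}/2]$ and finite-dimensional $A_c\neq 0$.
\item Balancing against $\coprod_c P(c;c)$ involves only endomorphism operations. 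Cross-colour unary operations $P(c;d)$ with $c\neq d$, which the network example uses, are untouched. So ``only the operations with $k\ge 2$ survive'' is unjustified, and your $1$-skeleton colimit has arrows that $\mathcal{D}_P(A)$ omits.
\item The statement's subcomplex ``generated by'' cross simplices is closed under faces, so it contains local simplices. It is not the quotient you use to define $\mathcal{I}_{\mathrm{cross}}(A)$.
\end{itemize}
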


\begin{proof}
We prove the two descriptions in sequence.

\medskip

\noindent
\textbf{Step 1: Preparation.}
Recall that $\mathcal{O}_P^{\mathrm{res}} = \coprod_{c \in C} P(c;c)$.
By the universal property of the coproduct, there is a canonical splitting
\[
\mathcal{O}_P^{\mathrm{res}} \;\cong\; \bigoplus_{c \in C} \mathcal{O}_c^{\mathrm{res}},
\qquad \mathcal{O}_c^{\mathrm{res}} := P(c;c),
\]
where we write $\bigoplus$ for the coproduct (biproduct in additive settings).

The Hochschild object $\mathrm{Hoch}_{\mathcal{M}}(A)$ is the geometric realization
of the bar construction $\mathrm{Bar}_\bullet^P(A)$. In each simplicial degree $n$,
$\mathrm{Bar}_n^P(A)$ is a coproduct over all sequences of colors and composable
operations of tensor products of the form $A_{c_0} \otimes P(c_1,\dots,c_{k_1};c_0)
\otimes A_{c_1} \otimes \cdots \otimes A_{c_n}$.

\medskip

\noindent
\textbf{Step 2: Isolating the local contributions.}
A simplex in the bar construction is called \emph{purely local} if it involves only
unary operations (i.e., operations in $P(c;c)$) and stays within a single color.
All other simplices are called \emph{cross simplices}. Let
\[
\mathrm{Hoch}_{\mathcal{M}}^{\mathrm{loc}}(A) \subseteq \mathrm{Hoch}_{\mathcal{M}}(A)
\]
be the subobject generated by purely local simplices, and let
$\mathrm{Hoch}_{\mathcal{M}}^{\mathrm{cross}}(A)$ be the subobject generated by
cross simplices. By construction,
\[
\mathrm{Hoch}_{\mathcal{M}}(A) \;\cong\; \mathrm{Hoch}_{\mathcal{M}}^{\mathrm{loc}}(A)
\;\oplus\; \mathrm{Hoch}_{\mathcal{M}}^{\mathrm{cross}}(A),
\]
where the direct sum is the coproduct (which exists because $\mathcal{M}$ is cocomplete).

\medskip

\noindent
\textbf{Step 3: Applying the balanced tensor product.}
The functor $-\otimes_P \mathcal{O}_P^{\mathrm{res}}$ preserves colimits and distributes
over coproducts because it is a left adjoint (or by the explicit coequalizer definition).
Hence
\[
\sigma_P(A) \;\cong\; \bigl( \mathrm{Hoch}_{\mathcal{M}}^{\mathrm{loc}}(A) \otimes_P \mathcal{O}_P^{\mathrm{res}} \bigr)
\;\oplus\; \bigl( \mathrm{Hoch}_{\mathcal{M}}^{\mathrm{cross}}(A) \otimes_P \mathcal{O}_P^{\mathrm{res}} \bigr).
\]

For the local part, each purely local simplex reduces, after balancing with
$\mathcal{O}_P^{\mathrm{res}}$, to a tensor product of the form $A_c \otimes P(c;c)$,
and the identification induced by unary composition collapses the bar resolution
to the single object $A_c \otimes \mathcal{O}_c^{\mathrm{res}}$. Summing over colors gives
\[
\mathrm{Hoch}_{\mathcal{M}}^{\mathrm{loc}}(A) \otimes_P \mathcal{O}_P^{\mathrm{res}}
\;\cong\; \bigoplus_{c \in C} A_c \otimes \mathcal{O}_c^{\mathrm{res}}.
\]

Define $\mathcal{I}_{\mathrm{cross}}(A) := \mathrm{Hoch}_{\mathcal{M}}^{\mathrm{cross}}(A) \otimes_P \mathcal{O}_P^{\mathrm{res}}$.
This yields the decomposition~\eqref{eq:spectral-decomposition}.

\medskip

\noindent
\textbf{Step 4: Reconstruction as a colimit.}
Now consider the diagram $\mathcal{D}_P(A)$ whose vertices are the local objects
$A_c \otimes \mathcal{O}_c^{\mathrm{res}}$. For each non‑unary operation
$\phi \in P(c_1,\dots,c_k;c)$ with $k \ge 2$, the $P$-algebra structure provides a map
\[
\phi_A : A_{c_1} \otimes \cdots \otimes A_{c_k} \longrightarrow A_c.
\]
Tensoring with the residue components and using the canonical identifications
$\mathcal{O}_{c_i}^{\mathrm{res}} \cong P(c_i;c_i)$,
we obtain a morphism
\[
\phi_* : \bigotimes_{i=1}^k \bigl( A_{c_i} \otimes \mathcal{O}_{c_i}^{\mathrm{res}} \bigr)
\longrightarrow A_c \otimes \mathcal{O}_c^{\mathrm{res}}.
\]
These morphisms, together with the evident projection and permutation maps,
generate the diagram $\mathcal{D}_P(A)$.

The colimit of this diagram glues the local vertices together by identifying
the images of these interaction maps. By the explicit description of
$\mathrm{Hoch}_{\mathcal{M}}^{\mathrm{cross}}(A)$ and the definition of the balanced
tensor product, this colimit is exactly $\mathcal{I}_{\mathrm{cross}}(A)$.
Consequently,
\[
\sigma_P(A) \;\cong\; \bigl( \bigoplus_{c} A_c \otimes \mathcal{O}_c^{\mathrm{res}} \bigr)
\;\oplus\; \operatorname*{colim}_{\mathcal{D}_P(A)} (\cdots)
\;\cong\; \operatorname*{colim}_{\mathcal{D}_P(A)} \bigl( A_c \otimes \mathcal{O}_c^{\mathrm{res}} \bigr),
\]
where the last isomorphism follows because the colimit of a diagram automatically
includes the disjoint union of its vertices (the coproduct) as the initial stage.
This establishes~\eqref{eq:spectral-reconstruction}.
\end{proof}

The following corollaries extract immediate structural consequences.

\begin{corollary}[Vanishing Interaction Criterion]
\label{cor:vanishing-interaction}
If every non‑unary operadic composition in $P$ acts as zero on $A$
(i.e., $\phi_A = 0$ for all $\phi$ with arity $\ge 2$), then
\[
\mathcal{I}_{\mathrm{cross}}(A) \cong 0,
\qquad
\sigma_P(A) \;\cong\; \bigoplus_{c \in C} A_c \otimes \mathcal{O}_c^{\mathrm{res}}.
\]
In particular, the operadic spectrum is purely local.
\end{corollary}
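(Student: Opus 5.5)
The plan is to derive the Vanishing Interaction Criterion directly from the decomposition~\eqref{eq:spectral-decomposition} of Theorem~\ref{thm:colored-spectral-decomposition-reconstruction}. The second isomorphism is then automatic once we know $\mathcal{I}_{\mathrm{cross}}(A)\cong 0$, since in an additive (pointed) setting $X\oplus 0\cong X$. So the whole argument reduces to showing that the interaction spectrum vanishes under the hypothesis $\phi_A=0$ for every $\phi$ of arity $\ge 2$. Throughout I will work in an additive cocomplete $\mathcal{M}$ (e.g.\ Banach or vector spaces), where ``acts as zero'' is meaningful.

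\textbf{Step 1: reduce to the reconstruction diagram.} Recall from Step~4 of the proof of Theorem~\ref{thm:colored-spectral-decomposition-reconstruction} that $\mathcal{I}_{\mathrm{cross}}(A)=\mathrm{Hoch}^{\mathrm{cross}}_{\mathcal{M}}(A)\otimes_P\mathcal{O}_P^{\mathrm{res}}$. That proof identifies this object with the gluing data of the diagram $\mathcal{D}_P(A)$. The edges of $\mathcal{D}_P(A)$ are the maps $\phi_*=\phi_A\otimes(\text{residue identifications})$ for non-unary $\phi$. Under the hypothesis, every such $\phi_*$ is the zero morphism.

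\textbf{Step 2: kill the cross simplices.} I would argue that every cross simplex of $\mathrm{Bar}^P_\bullet(A)$ is one of two kinds:
\begin{itemize}
\item[(a)] It contains a non-unary operation. In that case its image after balancing with $\mathcal{O}_P^{\mathrm{res}}$ is obtained by applying some face map $d_0$ or $d_n$, i.e.\ some $\phi_A$ with $\phi$ of arity $\ge2$, so it is zero.
\item[(b)] It involves two distinct colours using only unary operations $P(c;d)$ with $c\neq d$. These contribute nothing, because $\mathcal{O}_P^{\mathrm{res}}$ only sees $P(c;c)$, so the balancing relation against the left $P$-action on the residue annihilates such terms.
\end{itemize}
Hence the balanced tensor product of the cross part is the coequalizer of a diagram all of whose relevant maps vanish. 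I would then conclude $\mathcal{I}_{\mathrm{cross}}(A)\cong 0$.

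\textbf{Main obstacle.} The delicate point is Step~2(a). Knowing that the structure maps are zero does not by itself make the \emph{objects} $A_{c_0}\otimes P(\dots)\otimes\cdots$ in the bar complex zero. The argument must instead use the balancing relation $(x\cdot p)\otimes y\sim x\otimes(p\cdot y)$ to move every cross generator into the image of a face map built from $\phi_A$. I would first try to show that in the coequalizer defining $\otimes_P$, each cross generator is identified with its image under the algebra-action face map, which is $0$. If that identification is not available in general, I would fall back on the colimit description~\eqref{eq:spectral-reconstruction}. With all gluing maps zero, that colimit is the coproduct $\bigoplus_c A_c\otimes\mathcal{O}_c^{\mathrm{res}}$ of its vertices. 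Comparing this with~\eqref{eq:spectral-decomposition} forces the complementary summand $\mathcal{I}_{\mathrm{cross}}(A)$ to vanish, and the ``purely local'' conclusion follows.
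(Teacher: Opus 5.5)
Your fallback is exactly the paper's proof. The paper only notes that when every $\phi_A$ of arity $\ge 2$ vanishes, $\mathcal{D}_P(A)$ has no nontrivial arrows, so its colimit is $\bigoplus_{c\in C}A_c\otimes\mathcal{O}_c^{\mathrm{res}}$. It then compares \eqref{eq:spectral-reconstruction} with \eqref{eq:spectral-decomposition}, and never examines individual cross simplices. You should make that the proof and drop Step~2, which does not go through as written, for three reasons.

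\textbf{Step 2(a).} The mechanism is misidentified. The coequalizer defining $\otimes_P$ only identifies $(x\cdot p)\otimes y$ with $x\otimes(p\cdot y)$; it never collapses a simplex onto one of its faces. The realization could do this: in a strict colimit over $\Delta^{\mathrm{op}}$ every leg satisfies $\lambda_n=\lambda_{n-1}\circ d_i$, so a simplex with an iterated face through a vanishing $\phi_A$ already has zero image in $\mathrm{Hoch}_{\mathcal{M}}(A)$. But this fails for the homotopy colimit that Definition~\ref{def:hochschild} allows in homotopical contexts, and for the completed analytic realization the paper uses in the Banach setting.

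\textbf{Step 2(b).} The hypothesis says nothing about cross-colour unary operations $P(c;d)$ with $c\neq d$. The left action on $\mathcal{O}_P^{\mathrm{res}}$ from Lemma~\ref{lem:trivial-corrector} is built only from $P(c;c)\otimes P(c;c)\to P(c;c)$. So balancing can at most kill elements of the form $(x\cdot p)\otimes y$ with $p\in P(c;d)$, not arbitrary cross-colour generators.

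The paper in fact asserts the opposite of 2(b). In Proposition~\ref{prop:network-eigenvalues} and Example~\ref{ex:two-vertex-network}, composites of cross-colour unary edge maps survive $\otimes_P\mathcal{O}_P^{\mathrm{res}}$ and lie in $\mathcal{I}_{\mathrm{cross}}(A)$. That argument never uses the binary weights, which may all be set to zero. Such terms can only be excluded by an extra hypothesis, e.g.\ $P(c;d)=0$ for $c\neq d$, as for the matrix-block operad (Example~\ref{ex:residue-matrix-block}). The paper's route never meets them, because $\mathcal{D}_P(A)$ records only non-unary operations.

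\textbf{Closing inference of Step 2.} It runs backwards. The coequalizer of two zero maps $X\rightrightarrows Y$ is $Y$, not $0$, since vanishing maps impose no relations.

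There are two cautions about the fallback, both left implicit in the paper as well.

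First, invoke only the displayed isomorphism \eqref{eq:spectral-reconstruction}. Do not use the interim claim in Step~4 of the proof of Theorem~\ref{thm:colored-spectral-decomposition-reconstruction}, on which your Step~1 leans, that the colimit of $\mathcal{D}_P(A)$ is exactly $\mathcal{I}_{\mathrm{cross}}(A)$. Combined with your computation of that colimit, it would give $\mathcal{I}_{\mathrm{cross}}(A)\cong\bigoplus_{c}A_c\otimes\mathcal{O}_c^{\mathrm{res}}$, not $0$.

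Second, the final comparison needs more than two abstract isomorphisms, since $X\oplus Y\cong X$ does not force $Y\cong 0$ (take $X$ infinite-dimensional). What you need is that, under \eqref{eq:spectral-reconstruction}, the canonical map from the coproduct of the vertices into the colimit corresponds to the summand inclusion $\iota:\bigoplus_{c}A_c\otimes\mathcal{O}_c^{\mathrm{res}}\to\sigma_P(A)$ of \eqref{eq:spectral-decomposition}. Then $\iota$ is an isomorphism. The complementary projection $p$ therefore satisfies $p=(p\circ\iota)\circ\iota^{-1}=0$, whence $\mathrm{id}_{\mathcal{I}_{\mathrm{cross}}(A)}=0$ and $\mathcal{I}_{\mathrm{cross}}(A)\cong 0$. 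With that compatibility stated, your fallback is a complete derivation from the theorem, and it is the same derivation the paper gives.
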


\begin{proof}
If all non‑unary structure maps vanish, then the diagram $\mathcal{D}_P(A)$ has no
nontrivial morphisms, so its colimit reduces to the coproduct of its vertices.
The decomposition theorem then forces $\mathcal{I}_{\mathrm{cross}}(A) \cong 0$.
\end{proof}

\begin{corollary}[Spectral Isolation via Residue]
\label{cor:isolation-residue}
If for a color $c \in C$ we have $\mathcal{O}_c^{\mathrm{res}} \cong 0$ (equivalently,
$P(c;c) \cong 0$), then the vertex $A_c \otimes \mathcal{O}_c^{\mathrm{res}}$ is the
zero object, and no morphism in $\mathcal{D}_P(A)$ can involve $c$ as a source or target
(because any such morphism would require a non‑zero residue component).
Hence $\sigma_P(A)$ is independent of $A_c$, and color $c$ is \emph{spectrally isolated}.
\end{corollary}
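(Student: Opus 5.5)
The statement to be proved is Corollary~\ref{cor:isolation-residue}. I will read it off the decomposition and reconstruction descriptions of Theorem~\ref{thm:colored-spectral-decomposition-reconstruction}, working in an additive cocomplete setting where $0$ is a zero object and $X\otimes 0\cong 0$; the latter holds because $\otimes$ preserves colimits in each variable and $0$ is the empty colimit. The first step is formal. Since $\mathcal{O}_c^{\mathrm{res}}=P(c;c)\cong 0$, the vertex satisfies $A_c\otimes\mathcal{O}_c^{\mathrm{res}}\cong A_c\otimes 0\cong 0$. The $c$-summand of the local part $\bigoplus_{d}A_d\otimes\mathcal{O}_d^{\mathrm{res}}$ in \eqref{eq:spectral-decomposition} therefore vanishes, and this holds regardless of what $A_c$ is.

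The second step handles the edges of $\mathcal{D}_P(A)$ that touch $c$. Each edge is of the form $\phi_*$, obtained by tensoring $\phi_A$ with residue components. If $c$ is the target, $\phi_*$ lands in the zero object, so it is the zero map. If $c$ is one of the sources, the domain $\bigotimes_i(A_{c_i}\otimes\mathcal{O}_{c_i}^{\mathrm{res}})$ contains a tensor factor $\cong 0$, so the domain is $0$. In either case the edge contributes nothing to the colimit: a map out of $0$ or into $0$ imposes no identification on the remaining vertices. I would make this precise by showing that the colimit over $\mathcal{D}_P(A)$ agrees with the colimit over the full subdiagram on colors $d\neq c$ (the inclusion is final after discarding the zero vertex). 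That subdiagram's vertices $A_d\otimes P(d;d)$ and edges $\phi_*$ only involve $A_d$ for $d\neq c$, together with structure maps $\phi_A$ whose inputs and outputs avoid $c$. By \eqref{eq:spectral-reconstruction}, $\sigma_P(A)$ is then determined, naturally, by $\{A_d\}_{d\neq c}$ and those structure maps. In particular, replacing $A_c$ while keeping the rest fixed does not change $\sigma_P(A)$ up to canonical isomorphism.

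The main obstacle is the interaction term $\mathcal{I}_{\mathrm{cross}}(A)$ in \eqref{eq:spectral-decomposition}. It is defined through cross simplices of $\mathrm{Hoch}_{\mathcal{M}}(A)$, and some of these contain $A_c$ as a leaf even when $P(c;c)=0$. Note that $P(c;c)\cong 0$ forces $\eta_c$ to be zero, which is already a degenerate situation. I would not try to analyse those simplices directly. Instead I would use the identification, from Step 4 of the proof of Theorem~\ref{thm:colored-spectral-decomposition-reconstruction}, of $\mathcal{I}_{\mathrm{cross}}(A)$ with the colimit contribution over $\mathcal{D}_P(A)$; then the second step applies verbatim. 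If that identification is not trusted, the fallback is to balance directly: any simplex carrying an $A_c$ leaf can be rewritten, via the unit at color $c$, as $x\cdot\eta_c$. Because $\eta_c$ factors through $P(c;c)\cong 0$, the balancing relation $(x\cdot p)\otimes y\sim x\otimes(p\cdot y)$ sends that summand to zero in $\otimes_P\mathcal{O}_P^{\mathrm{res}}$.
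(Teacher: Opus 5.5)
Your route is the same as the paper's. You read the corollary off the reconstruction colimit over $\mathcal{D}_P(A)$, observe that the $c$-vertex is $0$, and declare the edges touching $c$ inert. That last step is where the gap is. The paper's one-line proof asserts the same thing (that a zero morphism ``does not affect the colimit''), so it cannot be borrowed from there.

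An edge \emph{out of} a zero vertex is harmless, but an edge \emph{into} one is not. If $u\colon j\to j_0$ with $D(j_0)\cong 0$, every cocone $(\lambda_j)$ satisfies $\lambda_j=\lambda_{j_0}\circ D(u)=0$. So $D(j)$ is killed in the colimit, and through any other edge leaving $j$ this imposes relations on the surviving vertices. For example, the colimit of $V_d\xleftarrow{\psi_*}T\xrightarrow{\phi_*}0$ is $\operatorname{coker}\psi_*$, whereas deleting the zero vertex leaves a diagram with colimit $V_d$. So your claim that a map into $0$ ``imposes no identification'' is false. The inclusion of the subdiagram on colors $d\neq c$ is not final in general: if the $c$-vertex only receives edges, its comma category over the subdiagram is empty. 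The argument therefore breaks whenever an edge into the $c$-vertex has nonzero domain.

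The missing idea is one you mention and then drop: $P(c;c)\cong 0$ forces $\eta_c=0$, and the unit axioms then erase color $c$ altogether. Under your standing assumption $X\otimes 0\cong 0$:
\begin{itemize}
\item The algebra unit axiom writes $\mathrm{id}_{A_c}$ as $A_c\cong\mathbf{1}\otimes A_c\xrightarrow{\eta_c\otimes\mathrm{id}}P(c;c)\otimes A_c\to A_c$, which factors through $0$. Hence $A_c\cong 0$ for \emph{every} $P$-algebra.
\item The left and right operad unit axioms factor the identity of any $P(c_1,\dots,c_n;d)$ with $c\in\{c_1,\dots,c_n,d\}$ through a tensor product containing $P(c;c)\cong 0$. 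So all those operation objects vanish.
\end{itemize}

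Consequently every summand of $\mathrm{Bar}^P_\bullet(A)$ that mentions $c$ is already zero. This gives condition (2) of Definition~\ref{def:spectral_isolation} directly, without the shaky Step 4 identification and without balancing. Your fallback is reaching for this, but the vanishing occurs in $\mathrm{Hoch}_{\mathcal{M}}(A)$ itself, before any balancing.

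No edge into the $c$-vertex can have nonzero domain either, because it would be induced by an operation in an object that is now $0$. So the colimit problem above never arises. Finally, ``independence of $A_c$'' holds vacuously, since $A_c$ has only one possible value up to isomorphism.
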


\begin{proof}
If $\mathcal{O}_c^{\mathrm{res}} \cong 0$, the corresponding vertex vanishes.
Any morphism in $\mathcal{D}_P(A)$ that would have this vertex as source or target
must factor through the zero object, hence is the zero morphism and does not affect
the colimit. Therefore the entire contribution of color $c$ disappears.
\end{proof}

\begin{corollary}[Classical Recovery]
\label{cor:classical-recovery-colored}
When $P = \mathbb{I}$ is the trivial operad with a single color $*$,
we have $\mathcal{O}_*^{\mathrm{res}} \cong \mathbf{1}_{\mathcal{M}}$,
and there are no non‑unary operations. Hence $\mathcal{I}_{\mathrm{cross}}(A) \cong 0$ and
\[
\sigma_{\mathbb{I}}(A) \;\cong\; A_* \otimes \mathbf{1}_{\mathcal{M}} \;\cong\; A_*.
\]
Under the normalizations of Theorem~\ref{thm:recovery}, this recovers the classical spectrum.
\end{corollary}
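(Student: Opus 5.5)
The plan is to specialize Theorem~\ref{thm:colored-spectral-decomposition-reconstruction} to $P=\mathbb{I}$ and then read off each term of the decomposition~\eqref{eq:spectral-decomposition}. There are three steps.

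\textbf{Step 1: the residue.} With a single color $*$, the residue $\mathcal{O}_{\mathbb{I}}^{\mathrm{res}}=\coprod_{c\in\{*\}}\mathbb{I}(*;*)$ is a one-term coproduct. So the only local residue component is $\mathcal{O}_*^{\mathrm{res}}=\mathbb{I}(*;*)\cong\mathbf{1}_{\mathcal{M}}$, exactly as in Example~\ref{ex:residue-trivial}.

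\textbf{Step 2: the interaction term vanishes.} I fix the convention of Example~\ref{ex:classical-triple} and Example~\ref{ex:residue-trivial}: $\mathbb{I}$ has only the unit operation, so there are no operations of arity $\ge 2$. Then the hypothesis of Corollary~\ref{cor:vanishing-interaction} holds vacuously, and that corollary gives $\mathcal{I}_{\mathrm{cross}}(A)\cong 0$.

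As an independent check, I will also argue directly from the definition of $\mathcal{I}_{\mathrm{cross}}(A)$ in Theorem~\ref{thm:colored-spectral-decomposition-reconstruction}. That object is generated by simplices involving either a composition of arity $\ge 2$ or at least two distinct colors. With one color and no higher-arity operations, no such simplex exists, so the cross subobject of $\mathrm{Hoch}_{\mathcal{M}}(A)$ is the initial object. Since $-\otimes_P\mathcal{O}_P^{\mathrm{res}}$ preserves colimits (as used in Step~3 of that proof), $\mathcal{I}_{\mathrm{cross}}(A)\cong 0$.

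\textbf{Step 3: the local term.} Substituting Steps~1 and~2 into~\eqref{eq:spectral-decomposition} gives
\[
\sigma_{\mathbb{I}}(A)\cong A_*\otimes\mathbf{1}_{\mathcal{M}}\oplus 0\cong A_*,
\]
where the last isomorphism is the right unitor of $\mathcal{M}$. I will cross-check this against Proposition~\ref{prop:trivial-operad-reduction}. That proposition obtains the same isomorphism from the normalizations $\mathrm{Hoch}_{\mathcal{M}}(A)\cong A$ and $\mathcal{O}_{\mathbb{I}}^{\mathrm{res}}\cong\mathbf{1}_{\mathcal{M}}$, so the two routes agree naturally in $A$. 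The final sentence of the corollary then follows by invoking Theorem~\ref{thm:recovery}: object-level recovery under its normalizations, and spectrum-level recovery (Gelfand spectra) when $A$ is a commutative unital Banach algebra.

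\textbf{Main obstacle: the definition of $\mathbb{I}$.} The paper's first example defines $\mathbb{I}(n)=\mathbf{1}$ for all $n\ge1$, which would supply non-unary operations and break the vacuous-hypothesis argument in Step~2. I would handle this by stating explicitly that the convention of Example~\ref{ex:residue-trivial} is in force. If the other convention must be allowed, I would instead argue that in the colimit description~\eqref{eq:spectral-reconstruction} the diagram $\mathcal{D}_{\mathbb{I}}(A)$ has the single vertex $A_*\otimes\mathbf{1}$. Its colimit is then that vertex modulo the identifications imposed by the structure maps, and this still collapses to $A_*$ once the $\mathbb{I}$-algebra structure is trivial.
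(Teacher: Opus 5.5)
Your argument is correct and is essentially the paper's own: the corollary is read off from Theorem~\ref{thm:colored-spectral-decomposition-reconstruction} at $P=\mathbb{I}$. It uses $\mathcal{O}_*^{\mathrm{res}}=\mathbb{I}(*;*)\cong\mathbf{1}_{\mathcal{M}}$, gets $\mathcal{I}_{\mathrm{cross}}(A)\cong 0$ from the single color and the absence of non-unary operations, applies the unitor $A_*\otimes\mathbf{1}_{\mathcal{M}}\cong A_*$, and appeals to Theorem~\ref{thm:recovery} for the final sentence.

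You are right that the statement presupposes the unary-only convention of Example~\ref{ex:residue-trivial}, but you should drop your fallback for the convention $\mathbb{I}(n)=\mathbf{1}$, $n\ge 1$. Under that convention $\mathbb{I}$-algebras are (non-unital) commutative monoids rather than bare objects, and the corollary's premise ``no non-unary operations'' is simply false. The decomposition then carries a cross term with no reason to vanish, so appealing to the single-vertex colimit with a ``trivial'' algebra structure does not rescue the claim.
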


\begin{remark}[Relation to Analytic Resolvent Theory]
\label{rem:analytic-realization}
In concrete analytic settings (e.g., $\mathcal{M} = \mathsf{Ban}$, each $A_c$ a bounded
operator, $P$ the matrix-block operad), the object $A_c \otimes \mathcal{O}_c^{\mathrm{res}}$
is isomorphic to $A_c$ (since $\mathcal{O}_c^{\mathrm{res}} \cong \mathbb{C}$),
and the colimit reconstruction recovers the classical fact that the spectrum of a
block matrix is the union of the spectra of the diagonal blocks together with
additional eigenvalues arising from off‑diagonal coupling.
The interaction term $\mathcal{I}_{\mathrm{cross}}(A)$ encodes precisely those
additional eigenvalues. Thus Theorem~\ref{thm:colored-spectral-decomposition-reconstruction}
provides the categorical backbone for a future analytic resolvent theory,
without requiring that theory to be developed here.
\end{remark}

\begin{remark}[Duality of Decomposition and Reconstruction]
\label{rem:duality}
The decomposition~\eqref{eq:spectral-decomposition} splits the operadic spectrum
into a direct sum of local pieces and an interaction term, while the
reconstruction~\eqref{eq:spectral-reconstruction} reassembles the same object
as a colimit of the local pieces glued along the interaction maps.
These two descriptions are dual in the sense that the first is a
``forgetful/splitting'' perspective and the second is a ``gluing/assembly''
perspective. Together they show that $\sigma_P(A)$ is the universal object
generated by the local spectral data subject to the relations imposed by
operadic composition.
\end{remark}

\subsection{Consequences: Spectral Decomposition and Isolation}
\label{subsec:consequences-decomposition-isolation}

The Colored Resolvent Reconstruction Theorem (Theorem~\ref{thm:colored-resolvent-reconstruction}) provides a powerful lens through which to analyze the internal structure of the operadic spectrum. We now formalize its structural consequences, focusing on decomposition across colors and the precise criterion for spectral isolation.

\subsubsection{Spectral Decomposition Across Colors}

Let $P$ be a $C$-colored operad in a symmetric monoidal category $\mathcal{M}$, and let $A$ be a $P$-algebra. The operadic spectrum is defined as
\[
\sigma_P(A) := \mathrm{Hoch}_{\mathcal{M}}(A) \otimes_P \mathcal{O}_P^{\mathrm{res}}.
\]

The following proposition establishes a canonical decomposition of this invariant across the color set.

\begin{proposition}[Color-wise Spectral Decomposition]
\label{prop:color_decomposition}
There is a natural decomposition
\[
\sigma_P(A) \;\simeq\; \Bigl( \bigoplus_{c \in C} A_c \otimes \mathcal{O}_c^{\mathrm{res}} \Bigr) \;\oplus\; \mathcal{I}_{\mathrm{cross}},
\]
where:
\begin{itemize}
    \item $\mathcal{O}_c^{\mathrm{res}} \cong P(c;c)$ is the $c$-color component of the operadic residue $\mathcal{O}_P^{\mathrm{res}} = \coprod_{c \in C} P(c;c)$;
    \item $A_c$ is the $c$-color component of the $P$-algebra $A$;
    \item $\mathcal{I}_{\mathrm{cross}}$ is the \emph{cross-color interaction term}, defined as the image of the subcomplex of $\mathrm{Hoch}_{\mathcal{M}}(A)$ generated by simplices that involve at least one operadic composition of arity $\ge 2$ or at least two distinct colors.
\end{itemize}
The decomposition is natural in $A$ and compatible with strong monoidal base change (Theorem~\ref{thm:base-change}).
\end{proposition}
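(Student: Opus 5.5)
The plan is to reduce Proposition~\ref{prop:color_decomposition} to the decomposition half of Theorem~\ref{thm:colored-spectral-decomposition-reconstruction}, then check the two added claims: naturality in $A$ and compatibility with base change. I will work in the additive setting implicit in the statement, where $\bigoplus$ is a biproduct. This is needed so that the coproduct splitting of $\mathrm{Hoch}_{\mathcal{M}}(A)$ into local and cross parts is a genuine direct sum.

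\textbf{Existence of the decomposition.} First I split the residue as $\mathcal{O}_P^{\mathrm{res}}\cong\bigoplus_{c}P(c;c)$, using Definition~\ref{def:residue-unary}. Next I partition the indexing set of $\mathrm{Bar}^P_n(A)$ into two classes. The purely local summands use only unary operations inside a single color. The cross summands contain at least one operation of arity $\ge 2$ or involve two distinct colors.

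I expect the main obstacle here. I must show the face maps $d_i$ preserve this partition, so that $\mathrm{Bar}^P_\bullet(A)$ is a coproduct of two simplicial subobjects. Only then does geometric realization, a colimit, yield $\mathrm{Hoch}_{\mathcal{M}}(A)\cong\mathrm{Hoch}^{\mathrm{loc}}\oplus\mathrm{Hoch}^{\mathrm{cross}}$.

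\begin{itemize}
\item Composing unary operations within one color stays local, and degeneracies insert units, so local simplices go to local simplices.
\item A face map can turn a cross simplex into a local one, for example when $d_0$ applies a binary algebra map. Then the splitting holds only as a filtration, not a direct sum.
\item My first attempt is to define $\mathrm{Hoch}^{\mathrm{cross}}$ as the sub-simplicial object generated by cross simplices, which is closed under faces, and take $\mathrm{Hoch}^{\mathrm{loc}}$ as the quotient.
\item In the additive setting I then need a splitting. I would obtain it from the projection that applies the augmentation only on purely unary chains, or else accept the statement in the weaker form ``$\mathcal{I}_{\mathrm{cross}}$ is the image'', which is exactly how the proposition phrases it.
\end{itemize}

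Given the splitting, I apply $-\otimes_P\mathcal{O}_P^{\mathrm{res}}$. It is a reflexive coequalizer of tensor products and so commutes with finite coproducts. For the local part I use the extra degeneracy given by the unit maps $\eta_c$: this collapses the unary bar resolution for color $c$ to $A_c\otimes P(c;c)$, giving $\bigoplus_c A_c\otimes\mathcal{O}_c^{\mathrm{res}}$. I set $\mathcal{I}_{\mathrm{cross}}$ equal to the image of the cross part.

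\textbf{Naturality in $A$.} A morphism of $P$-algebras $f$ acts on simplices by $f_{c}$ on each leaf. It does not change the colors or the operations labelling the tree, so it preserves both the local and cross classes. Functoriality of $\mathrm{Hoch}_{\mathcal{M}}$ (Proposition~\ref{prop:hochschild-functorial}) and of $\otimes_P$ (Theorem~\ref{thm:Theorem_0.3}) then makes each summand, and the isomorphism itself, natural.

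\textbf{Base-change compatibility.} Let $F$ be strong monoidal and cocontinuous. Then $F$ preserves biproducts and sends local and cross simplices of $\mathrm{Bar}^P_\bullet(A)$ to the corresponding simplices of $\mathrm{Bar}^{F_*P}_\bullet(F(A))$, by the levelwise isomorphism in Theorem~\ref{thm:hochschild-base-change}. By Proposition~\ref{prop:residue-base-change}, $F(\mathcal{O}_c^{\mathrm{res}})\cong(F_*P)(c;c)$. Combining these with Step~3 of Theorem~\ref{thm:base-change}, the isomorphism $F(\sigma_P(A))\cong\sigma_{F_*P}(F(A))$ respects the decomposition summand by summand.
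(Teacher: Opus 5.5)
Your overall route is the same as the paper's. Its proof of this proposition just invokes the decomposition half of Theorem~\ref{thm:colored-spectral-decomposition-reconstruction}, takes naturality from Propositions~\ref{prop:hochschild-functorial} and~\ref{prop:residue-functorial}, and takes base change from Theorem~\ref{thm:base-change}. Your naturality and base-change paragraphs match that. The gap is in the existence step, which you correctly flag but do not close. Citing the theorem does not close it either: its Step~2 asserts $\mathrm{Hoch}_{\mathcal{M}}(A)\cong\mathrm{Hoch}^{\mathrm{loc}}_{\mathcal{M}}(A)\oplus\mathrm{Hoch}^{\mathrm{cross}}_{\mathcal{M}}(A)$ ``by construction'', which is exactly the point you identified as failing.

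The purely local summands do form a simplicial subobject $\mathrm{Bar}^{\mathrm{loc}}_\bullet$, but the cross summands do not. For $\theta\in P(1,2;1)$, the face that applies the structure map sends the cross $1$-simplex $\theta\otimes a_1\otimes a_2$ to the local $0$-simplex $\theta_A(a_1\otimes a_2)\in A_1$. This has three consequences.

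\emph{Your subobject is the wrong one.} The sub-simplicial object generated by cross simplices contains $\theta_A(A_1\otimes A_2)$ in degree $0$. So when $\theta_A\neq 0$ it meets the local part, and its quotient is not $\mathrm{Hoch}^{\mathrm{loc}}$. You have the roles reversed: the sequence actually available is $\mathrm{Bar}^{\mathrm{loc}}_\bullet\hookrightarrow\mathrm{Bar}^P_\bullet(A)\twoheadrightarrow\mathrm{Bar}^P_\bullet(A)/\mathrm{Bar}^{\mathrm{loc}}_\bullet$, with the cross summands appearing only in the quotient.

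\emph{Your projection is not simplicial.} Any levelwise map that is the identity (or the augmentation) on local summands and zero on cross summands fails to commute with that face unless $\theta_A=0$. You supply no other retraction.

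\emph{The fallback does not give the statement.} Retreating to the ``image'' wording gives at best the sequence above after applying $-\otimes_P\mathcal{O}^{\mathrm{res}}_P$, not the direct sum the statement asserts.

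The missing ingredient is a natural simplicial retraction $\mathrm{Bar}^P_\bullet(A)\to\mathrm{Bar}^{\mathrm{loc}}_\bullet$ compatible with the balancing. Neither your sketch nor the paper provides one.

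Separately, your local computation does not produce the stated summand. An extra degeneracy contracts the unary bar construction onto its augmentation $A_c$, and $X\otimes_{P(c;c)}P(c;c)\cong X$. So this route gives $A_c$ rather than $A_c\otimes\mathcal{O}^{\mathrm{res}}_c$. The two agree only when $P(c;c)\cong\mathbf{1}$, as in the paper's examples.
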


\begin{proof}
Theorem~\ref{thm:colored-spectral-decomposition-reconstruction} establishes precisely this decomposition. The local term $\bigoplus_c A_c \otimes \mathcal{O}_c^{\mathrm{res}}$ arises from purely local simplices (unary operations within a single color), while $\mathcal{I}_{\mathrm{cross}}$ collects all contributions from cross simplices. Naturality follows from the functoriality of the Hochschild construction (Proposition~\ref{prop:hochschild-functorial}) and the residue (Proposition~\ref{prop:residue-functorial}). Base change compatibility follows from Theorem~\ref{thm:base-change}.
\end{proof}

\subsubsection{Spectral Isolation: Definition and Criterion}

We now formalize when a color behaves independently at the spectral level, i.e., when its presence does not affect the global spectral invariant.

\begin{definition}[Spectral Isolation]
\label{def:spectral_isolation}
A color $c \in C$ is said to be \emph{spectrally isolated} in the $P$-algebra $A$ if both of the following hold:
\begin{enumerate}
    \item The local contribution of color $c$ to the operadic spectrum vanishes:
    \[
    A_c \otimes \mathcal{O}_c^{\mathrm{res}} \cong 0.
    \]
    \item No nonzero cross-color interaction in the operadic spectrum involves the color $c$. Concretely, the subobject of $\mathcal{I}_{\mathrm{cross}}(A)$ generated by simplices that contain $c$ in any input or output color is zero.
\end{enumerate}
\end{definition}

The operadic residue $\mathcal{O}_P^{\mathrm{res}}$ provides a precise, purely algebraic criterion for this phenomenon. Recall that $\mathcal{O}_c^{\mathrm{res}}$ is the direct summand of $\mathcal{O}_P^{\mathrm{res}}$ corresponding to color $c$, i.e., $\mathcal{O}_c^{\mathrm{res}} \cong P(c;c)$ (see Definition~\ref{def:residue-unary}).

\begin{theorem}[Local Isolation Criterion via Residue]
\label{thm:isolation_criterion}
Let $c \in C$ be a color. If
\[
\mathcal{O}_c^{\mathrm{res}} \cong 0,
\]
then the local contribution of color $c$ to the operadic spectrum vanishes:
\[
A_c \otimes \mathcal{O}_c^{\mathrm{res}} \cong 0.
\]
In particular, color $c$ contributes no nonzero local residue term to $\sigma_P(A)$.
\end{theorem}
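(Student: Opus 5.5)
The plan is to reduce the statement to a formal property of the tensor product: in a symmetric monoidal category with a zero object, tensoring with a zero object produces a zero object. Under the hypothesis $\mathcal{O}_c^{\mathrm{res}} \cong P(c;c) \cong 0$, choose an isomorphism $\mathcal{O}_c^{\mathrm{res}} \xrightarrow{\cong} 0$. Functoriality of $\otimes$ then gives an isomorphism $A_c \otimes \mathcal{O}_c^{\mathrm{res}} \cong A_c \otimes 0$, so it suffices to show that $A_c \otimes 0 \cong 0$.

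\textbf{Main step.} We need $A_c \otimes 0 \cong 0$ for the ambient category. The cleanest route uses the standing assumptions of Section~\ref{subsec:Hochschild-Type Constructions for Operadic Algebras} and of the Existence lemma for $\otimes_P$: there, $\otimes$ preserves colimits separately in each variable. The zero object $0$ is initial, i.e.\ it is the colimit of the empty diagram. Hence $A_c \otimes (-)$ sends it to the colimit of the empty diagram, which is again the initial object $0$.

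If one only wants to rely on preservation of reflexive coequalizers, there is an additive alternative. In the concrete settings of the paper, $\mathrm{Vect}_{\mathbb{C}}$ and $\mathsf{Ban}_{\mathbb{C}}$ with the projective tensor product, the functor $A_c \otimes (-)$ is additive. An additive functor sends $\mathrm{id}_0 = 0$ to $0$, so $\mathrm{id}_{A_c \otimes 0} = 0$, and therefore $A_c \otimes 0 \cong 0$. I will state this as the fallback when $\mathcal{M}$ is additive but not assumed closed.

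I expect this main step to be the only real obstacle. A general symmetric monoidal category with a zero object need not satisfy $X \otimes 0 \cong 0$; for instance, with $\otimes = \times$ and terminal $0$ the conclusion fails. So I will state explicitly which hypothesis is used: cocontinuity of $\otimes$ in each variable, or additivity, or a closed structure. In the closed case, $A_c \otimes (-)$ is a left adjoint and so preserves the initial object.

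\textbf{Reading the result in the decomposition.} Finally, I will interpret the conclusion inside Proposition~\ref{prop:color_decomposition}. The summand $A_c \otimes \mathcal{O}_c^{\mathrm{res}}$ of the local part $\bigoplus_{c} A_c \otimes \mathcal{O}_c^{\mathrm{res}}$ is zero, so $\sigma_P(A)$ receives no local contribution from $c$. This is the ``in particular'' clause. The statement asserts nothing about the cross term, so no argument about $\mathcal{I}_{\mathrm{cross}}$ is needed here.
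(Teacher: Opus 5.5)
Your argument is essentially the paper's. The paper reads off from the decomposition in Theorem~\ref{thm:colored-spectral-decomposition-reconstruction} that the summand $A_c\otimes\mathcal{O}_c^{\mathrm{res}}$ is zero, tacitly using $A_c\otimes 0\cong 0$; you supply that step explicitly and correctly note, with a valid counterexample, that it needs a hypothesis on $\otimes$.

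One correction: your ``cleanest route'' is not actually licensed by the paper. Its standing assumptions say only that $\otimes$ preserves geometric realizations and reflexive coequalizers in each variable, and neither includes the empty colimit. So preservation of the initial object (via cocontinuity or closedness), or additivity, must be added as an explicit assumption, exactly as your fallback does. All of these hold in $\mathrm{Vect}_{\mathbb{C}}$ and $\mathsf{Ban}_{\mathbb{C}}$.
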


\begin{proof}
By Theorem~\ref{thm:colored-spectral-decomposition-reconstruction}, the operadic spectrum decomposes as
\[
\sigma_P(A) \;\cong\; \Bigl( \bigoplus_{c' \in C} A_{c'} \otimes \mathcal{O}_{c'}^{\mathrm{res}} \Bigr) \;\oplus\; \mathcal{I}_{\mathrm{cross}}(A).
\]
If $\mathcal{O}_c^{\mathrm{res}} \cong 0$, then the summand $A_c \otimes \mathcal{O}_c^{\mathrm{res}}$ is the zero object. Hence the local contribution of color $c$ vanishes.
\end{proof}

\begin{corollary}[Sufficient Condition via Unary Operations]
\label{cor:isolation-unary}
If the unary operation space $P(c;c)$ is the zero object in $\mathcal{M}$, then the local contribution of color $c$ to the operadic spectrum vanishes.
\end{corollary}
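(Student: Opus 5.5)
The plan is to reduce Corollary~\ref{cor:isolation-unary} directly to the Local Isolation Criterion (Theorem~\ref{thm:isolation_criterion}). The only thing to check is that the hypothesis $P(c;c)\cong 0$ is exactly the hypothesis $\mathcal{O}_c^{\mathrm{res}}\cong 0$ used there. By Definition~\ref{def:residue-unary}, the residue is the coproduct $\mathcal{O}_P^{\mathrm{res}}=\coprod_{c'\in C}P(c';c')$. In Step~1 of the proof of Theorem~\ref{thm:colored-spectral-decomposition-reconstruction}, its $c$-component was \emph{defined} as $\mathcal{O}_c^{\mathrm{res}}:=P(c;c)$, with the canonical inclusion $\iota_c$. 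So the first step is the tautological identification $\mathcal{O}_c^{\mathrm{res}}=P(c;c)\cong 0$.

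The second step is to show that the local summand vanishes, namely $A_c\otimes\mathcal{O}_c^{\mathrm{res}}\cong A_c\otimes 0\cong 0$. Theorem~\ref{thm:isolation_criterion} asserts this, but I would make the underlying reason explicit, since it is the only place a structural hypothesis on $\mathcal{M}$ enters. I would interpret ``zero object'' as the initial object $\emptyset$, consistent with the conventions in the construction of the test operad in Theorem~\ref{thm:no-go}. The ambient hypothesis that $\otimes$ preserves colimits separately in each variable (used in Section~\ref{subsec:Hochschild-Type Constructions for Operadic Algebras} and for the balanced tensor product) then gives the result. The functor $A_c\otimes-$ preserves the empty colimit, so $A_c\otimes\emptyset\cong\emptyset$. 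In the additive or Banach setting, where $\emptyset=0$ is a biproduct zero, this is the familiar statement $A_c\hat\otimes 0=0$.

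The third step substitutes this into the decomposition of Theorem~\ref{thm:colored-spectral-decomposition-reconstruction}: the $c$-summand of $\bigoplus_{c'}A_{c'}\otimes\mathcal{O}_{c'}^{\mathrm{res}}$ drops out. This yields the conclusion that color $c$ contributes no local term to $\sigma_P(A)$.

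The main obstacle is the second step: ensuring that ``zero object'' really is preserved by tensoring, which requires cocontinuity of $\otimes$ in each variable (or additivity). I would state that assumption explicitly when invoking Theorem~\ref{thm:isolation_criterion}. I would also remark that the corollary asserts only the vanishing of the local contribution. It does not assert full spectral isolation in the sense of Definition~\ref{def:spectral_isolation}, because condition~(2) there, concerning $\mathcal{I}_{\mathrm{cross}}(A)$, is not addressed by this argument.
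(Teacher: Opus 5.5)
Your proposal is correct and follows essentially the same route as the paper: you identify $\mathcal{O}_c^{\mathrm{res}}\cong P(c;c)\cong 0$ and conclude that $A_c\otimes\mathcal{O}_c^{\mathrm{res}}\cong A_c\otimes 0\cong 0$. Your remark that this last step requires $A_c\otimes-$ to preserve the zero (initial) object is a genuine improvement on the paper's one-line justification. That property is automatic in additive settings such as $\mathsf{Vect}$ or Banach spaces with the projective tensor product, but the paper never states it as a hypothesis.
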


\begin{proof}
By Definition~\ref{def:residue-unary}, we have
\[
\mathcal{O}_c^{\mathrm{res}} \cong P(c;c).
\]
Hence the hypothesis $P(c;c) \cong 0$ implies
\[
\mathcal{O}_c^{\mathrm{res}} \cong 0.
\]
The local contribution of color $c$ is given by
\[
A_c \otimes \mathcal{O}_c^{\mathrm{res}}.
\]
Therefore
\[
A_c \otimes \mathcal{O}_c^{\mathrm{res}}
\;\cong\;
A_c \otimes 0
\;\cong\;
0,
\]
so the local contribution vanishes.
\end{proof}

\begin{remark}[Toward Full Spectral Isolation]
\label{rem:full_isolation}
Theorem~\ref{thm:isolation_criterion} gives only local vanishing. To conclude that $c$ is \emph{fully spectrally isolated} (i.e., that $\sigma_P(A)$ is independent of $A_c$ and that no cross-color term involves $c$), one needs the additional hypothesis that every cross-color contribution involving $c$ factors through the local vertex $A_c \otimes \mathcal{O}_c^{\mathrm{res}}$. Under that hypothesis, $\mathcal{O}_c^{\mathrm{res}} \cong 0$ forces all such cross-color terms to vanish as well. This stronger formulation is left for contexts where the operadic composition maps satisfy this factorization property (e.g., when $P$ is a coproduct of one-colored operads).
\end{remark}

\begin{remark}[Conceptual Interpretation: Residue as a Structural Mediator]
\label{rem:residue-interpretation}
The residue object $\mathcal{O}_P^{\mathrm{res}}$ serves as a canonical mediator of how colorwise data contributes to the operadic spectrum
\[
\sigma_P(A) = \mathrm{Hoch}_{\mathcal{M}}(A) \otimes_P \mathcal{O}_P^{\mathrm{res}}.
\]
In particular:
\begin{itemize}
    \item If $\mathcal{O}_c^{\mathrm{res}} \neq 0$, then color $c$ has a nontrivial channel through which it may contribute to the global spectral invariant, both locally and, in favorable cases, through cross-color interaction.
    \item If $\mathcal{O}_c^{\mathrm{res}} = 0$, then the local residue contribution of color $c$ vanishes. Thus the operadic residue suppresses the direct spectral visibility of that color at the local level.
\end{itemize}
Whether a color with $\mathcal{O}_c^{\mathrm{res}} \neq 0$ actually participates in cross-color interactions depends on the specific structure maps of the $P$-algebra $A$ (see Remark~\ref{rem:full_isolation}). This reflects a central theme of Spectral Operadic Calculus: spectral behavior is shaped not only by the algebra $A$, but also by the compositional structure encoded by the operad $P$ and its residue data.
\end{remark}

\subsubsection{Spectral Decoupling}

The isolation criterion leads to a local decoupling principle: when a collection of colors has vanishing residue components, their local contributions disappear.

\begin{corollary}[Local Spectral Decoupling]
\label{cor:spectral_decoupling}
Let $S \subseteq C$ be a subset of colors such that
\[
\mathcal{O}_c^{\mathrm{res}} \cong 0 \qquad \text{for all } c \in S.
\]
(Equivalently, $P(c;c) \cong 0$ for all $c \in S$ in additive categories.)
Then the local residue contributions of the colors in $S$ vanish:
\[
\bigoplus_{c \in S} \left( A_c \otimes \mathcal{O}_c^{\mathrm{res}} \right) \cong 0.
\]
Hence the local part of the operadic spectrum is determined entirely by the complementary colors $C \setminus S$.
\end{corollary}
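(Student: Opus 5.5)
The claim reduces to Theorem~\ref{thm:isolation_criterion} applied one color at a time, followed by the behavior of finite or arbitrary coproducts of zero objects. I will first fix a color $c \in S$. By hypothesis $\mathcal{O}_c^{\mathrm{res}} \cong 0$, so Theorem~\ref{thm:isolation_criterion} (equivalently Corollary~\ref{cor:isolation-unary}, since $\mathcal{O}_c^{\mathrm{res}} \cong P(c;c)$ by Definition~\ref{def:residue-unary}) gives $A_c \otimes \mathcal{O}_c^{\mathrm{res}} \cong 0$. The one point to justify here is that $A_c \otimes 0 \cong 0$. In the additive or cocomplete closed settings of the paper, $\otimes$ preserves colimits separately in each variable (this was already assumed for the reflexive coequalizers and geometric realizations). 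The zero object is the empty coproduct, i.e.\ initial, so $A_c \otimes 0 \cong 0$ follows.

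Next I assemble over $S$. The coproduct of a family of zero objects is again zero: any cocone from a family of initial objects is unique, so the colimit is initial, and in the additive (pointed) case initial equals zero. Hence
\[
\bigoplus_{c \in S} \bigl( A_c \otimes \mathcal{O}_c^{\mathrm{res}} \bigr) \cong \bigoplus_{c\in S} 0 \cong 0 .
\]

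For the final sentence, I will split the local summand appearing in the decomposition of Theorem~\ref{thm:colored-spectral-decomposition-reconstruction} (or Proposition~\ref{prop:color_decomposition}) along the partition $C = S \sqcup (C\setminus S)$. Coproducts are associative and commutative up to canonical isomorphism, so
\[
\bigoplus_{c\in C} A_c \otimes \mathcal{O}_c^{\mathrm{res}} \cong \Bigl(\bigoplus_{c\in S} A_c \otimes \mathcal{O}_c^{\mathrm{res}}\Bigr) \oplus \Bigl(\bigoplus_{c\in C\setminus S} A_c \otimes \mathcal{O}_c^{\mathrm{res}}\Bigr) \cong \bigoplus_{c\in C\setminus S} A_c \otimes \mathcal{O}_c^{\mathrm{res}},
\]
using that $0$ is a unit for $\oplus$. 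Therefore the local part depends only on the colors in $C\setminus S$.

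The only step I expect to require care is $A_c \otimes 0 \cong 0$ in the general symmetric monoidal setting. I would handle it by invoking the standing hypothesis that $\otimes$ preserves colimits in each variable. Failing that, I would restrict to the additive setting indicated parenthetically in the statement, where $\otimes$ is biadditive.
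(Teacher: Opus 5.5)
Your proposal is correct and follows the paper's route. You apply the Local Isolation Criterion color by color, observe that a coproduct of zero objects is zero, and identify these as the $S$-summands of the local part in the decomposition theorem. Your extra steps (the coproduct of initial objects being initial, and the explicit splitting along $C = S \sqcup (C\setminus S)$) only make explicit what the paper leaves implicit.

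One small caution about your parenthetical. The paper's standing hypotheses only say that $\otimes$ preserves reflexive coequalizers and geometric realizations. These are sifted colimits and do not include the empty colimit, so they do not give $A_c \otimes 0 \cong 0$. That step genuinely needs your closedness or biadditivity fallback.
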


\begin{proof}
If $\mathcal{O}_c^{\mathrm{res}} \cong 0$ for all $c \in S$, then by Proposition~\ref{thm:isolation_criterion} (Local Isolation Criterion), each $A_c \otimes \mathcal{O}_c^{\mathrm{res}} \cong 0$. The direct sum over $c \in S$ of these local contributions is therefore zero. By Theorem~\ref{thm:colored-spectral-decomposition-reconstruction}, these are precisely the local summands contributed by colors in $S$ to $\sigma_P(A)$.
\end{proof}

\begin{remark}[Toward Full Spectral Decoupling]
\label{rem:full_decoupling}
Corollary~\ref{cor:spectral_decoupling} addresses only the local contributions of colors in $S$. To conclude that $\sigma_P(A)$ is entirely independent of the components $\{A_c\}_{c \in S}$ (i.e., that no cross-color interaction involves $S$), one needs the additional hypothesis that every cross-color contribution involving a color in $S$ factors through the corresponding local residue vertex $A_c \otimes \mathcal{O}_c^{\mathrm{res}}$. Under that hypothesis, $\mathcal{O}_c^{\mathrm{res}} \cong 0$ forces all such cross-color terms to vanish as well, yielding full spectral decoupling. This stronger formulation is left for contexts where the operadic composition maps satisfy this factorization property (e.g., when $P$ is a coproduct of one-colored operads).
\end{remark}

\subsubsection{Special Case: Classical Recovery}

For the trivial operad $\mathbb{I}$ with a single color, we have $P(*;*) \cong \mathbf{1}_{\mathcal{M}}$, so $\mathcal{O}_*^{\mathrm{res}} \not\cong 0$ (assuming $\mathbf{1}_{\mathcal{M}} \neq 0$). There is no cross-color interaction term. The decomposition from Theorem~\ref{thm:colored-spectral-decomposition-reconstruction} yields
\[
\sigma_{\mathbb{I}}(A) \;\cong\; \mathrm{Hoch}_{\mathcal{M}}(A) \otimes \mathbf{1}_{\mathcal{M}} \;\cong\; \mathrm{Hoch}_{\mathcal{M}}(A).
\]
In settings where the Hochschild object reduces canonically to $A$ (e.g., under the assumptions of Theorem~\ref{thm:recovery}), this is consistent with the classical spectrum. In this case, the isolation criterion is trivial, as there is only one color and it is necessarily active.

\begin{example}[Matrix-Block Operad Revisited]
For the two-color operad of the Example~\ref{ex:two-color-operad}, we have $P(1;1) \cong \mathbb{C} \not\cong 0$ and $P(2;2) \cong \mathbb{C} \not\cong 0$, so $\mathcal{O}_1^{\mathrm{res}} \cong \mathbb{C} \not\cong 0$ and $\mathcal{O}_2^{\mathrm{res}} \cong \mathbb{C} \not\cong 0$. Consequently, neither color has a vanishing local residue contribution.

Now consider a modified operad $\tilde{P}$ in which the unary component $\tilde{P}(1;1)$ is replaced by the zero object, and assume that the operadic composition maps are adjusted so that the operad axioms remain satisfied (e.g., by setting all compositions involving $\tilde{P}(1;1)$ to zero). Then $\mathcal{O}_1^{\mathrm{res}} \cong 0$, so by Proposition~\ref{thm:isolation_criterion} (Local Isolation Criterion), the local contribution $A_1 \otimes \mathcal{O}_1^{\mathrm{res}}$ vanishes. Whether color $1$ also participates in cross-color interactions depends on whether those interactions factor through the vanished residue component (see Remark~\ref{rem:full_decoupling}). This illustrates how the operad can eliminate the local spectral contribution of a color. In concrete realizations, spectral isolation may also arise from analytic properties of $A_1$ (e.g., an empty classical spectrum), but within the categorical framework it is the vanishing of $\mathcal{O}_c^{\mathrm{res}}$ that provides the algebraic criterion.
\end{example}

\subsubsection{Summary}

The results of this section provide a structural description of the operadic spectrum:

\begin{enumerate}[label=(\roman*)]
    \item \textbf{Decomposition:}
    The operadic spectrum admits a decomposition into local colorwise contributions
    and an interaction component reflecting non-unary operadic compositions
    (Theorem~\ref{thm:colored-spectral-decomposition-reconstruction}).

    \item \textbf{Local Isolation Criterion:}
    If the residue component $\mathcal{O}_c^{\mathrm{res}}$ vanishes, then the local
    spectral contribution of color $c$ is suppressed
    (Proposition~\ref{thm:isolation_criterion}).

    \item \textbf{Partial Decoupling:}
    Colors with vanishing residue components contribute no nontrivial local terms,
    and their influence on the operadic spectrum is correspondingly reduced
    (Corollary~\ref{cor:spectral_decoupling}).
\end{enumerate}

These results highlight the role of $\mathcal{O}_P^{\mathrm{res}}$ as a structural
mechanism through which operadic composition governs the assembly of spectral data.
Full spectral decoupling (i.e., independence of $\sigma_P(A)$ from $A_c$) requires
additional factorization hypotheses (see Remark~\ref{rem:full_decoupling}).

\section{Examples and Structural Illustrations}\label{sec:Examples and Structural Illustrations}

\subsection{The Trivial Operad}
\label{subsec:trivial-operad}

We begin with the simplest possible example: the trivial operad. This serves as a fundamental consistency check for the entire Spectral Operadic Calculus framework, demonstrating that our constructions recover classical spectral theory when no nontrivial compositional structure is present. Moreover, it illustrates why the No-Go Theorem (Theorem~\ref{thm:no-go}) does not obstruct the classical case and establishes the trivial operad as the normalization anchor for the theory.

\subsubsection{Definition of the Trivial Operad}

Let $\mathcal{M}$ be a symmetric monoidal category with unit object $\mathbf{1}_{\mathcal{M}}$. Consider the one-colored operad $\mathbb{I}$ (denoted $\mathbf{1}_{\mathrm{triv}}$ in some references) defined as follows: there is a single color $*$, and for each arity $n \ge 0$,
\[
\mathbb{I}(n) = \mathbf{1}_{\mathcal{M}},
\]
with composition maps induced by the canonical identifications
\[
\mathbf{1}_{\mathcal{M}} \otimes \mathbf{1}_{\mathcal{M}} \cong \mathbf{1}_{\mathcal{M}}.
\]
The unit map $\eta: \mathbf{1}_{\mathcal{M}} \to \mathbb{I}(1)$ is the identity, and the symmetric group actions are trivial. This is the \emph{terminal} one-colored operad in $\mathcal{M}$.

A $\mathbb{I}$-algebra $A$ consists of a single object $A_* \in \mathcal{M}$ equipped with the tautological structure maps
\[
\mathbf{1}_{\mathcal{M}} \otimes A_*^{\otimes n} \longrightarrow A_*
\]
that are compatible with operadic composition. In the present context, the operadic structure contributes no nontrivial combinatorial interaction data beyond the underlying object $A_*$ itself. Thus $\mathbb{I}$ is precisely the case in which no higher-colored or nontrivial compositional correction is required.

\subsubsection{The Operadic Residue for $\mathbb{I}$}

We compute the operadic residue $\mathcal{O}_{\mathbb{I}}^{\mathrm{res}}$, which, as we will see, collapses to the monoidal unit.

\begin{proposition}[Residue of the Trivial Operad]
\label{prop:residue-trivial}
For the trivial one-color operad $\mathbb{I}$, one has
\[
\mathcal{O}_{\mathbb{I}}^{\mathrm{res}} \cong \mathbf{1}_{\mathcal{M}}.
\]
\end{proposition}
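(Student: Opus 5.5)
The plan is to compute directly from Definition~\ref{def:residue-unary}, which sets $\mathcal{O}_P^{\mathrm{res}} := \coprod_{c \in C} P(c;c)$. For $P = \mathbb{I}$ the color set is the singleton $C = \{*\}$, so the coproduct runs over a single index and is canonically isomorphic to its unique summand. This gives $\mathcal{O}_{\mathbb{I}}^{\mathrm{res}} \cong \mathbb{I}(*;*) = \mathbb{I}(1)$. By the definition of the trivial operad in this subsection, $\mathbb{I}(1) = \mathbf{1}_{\mathcal{M}}$, which yields the claimed isomorphism. It is also the computation already recorded in Example~\ref{ex:residue-trivial}, so the proposition can be presented as a restatement of that example made precise.

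The key steps, in order, are as follows. First, identify the color set and the unary operation object, $\mathbb{I}(*;*) = \mathbb{I}(1) = \mathbf{1}_{\mathcal{M}}$. Second, invoke the universal property of a one-term coproduct: the inclusion $\iota_* \colon \mathbb{I}(*;*) \to \coprod_{\{*\}} \mathbb{I}(*;*)$ is an isomorphism, since the identity of $\mathbb{I}(*;*)$ satisfies the required universal property. Third, compose $\iota_*^{-1}$ with the identification $\mathbb{I}(1) = \mathbf{1}_{\mathcal{M}}$ (the unit map $\eta$ is the identity) to obtain the canonical isomorphism $\mathcal{O}_{\mathbb{I}}^{\mathrm{res}} \cong \mathbf{1}_{\mathcal{M}}$.

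I would add two remarks for consistency with the rest of the paper. The first concerns the corrector structure of Lemma~\ref{lem:trivial-corrector}: under this isomorphism, the left module structure on $\mathcal{O}_{\mathbb{I}}^{\mathrm{res}}$ becomes the unitor $\mathbf{1}\otimes\mathbf{1}\cong\mathbf{1}$, because composition in $\mathbb{I}(1)$ is that unitor. The second concerns the alternative description via $|\mathrm{Bar}^{\mathrm{un}}_\bullet(\mathbb{I})|$: in this case every simplicial level is $\mathbf{1}^{\otimes n}\cong\mathbf{1}$, and all face and degeneracy maps are unitors. The simplicial object is therefore constant up to coherent isomorphism, and its realization is again $\mathbf{1}_{\mathcal{M}}$.

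The only step needing care, and the one I expect to be the main obstacle, is the bar-construction cross-check. There one has to ensure that the coherence isomorphisms $\mathbf{1}^{\otimes n}\cong\mathbf{1}$ assemble into an isomorphism with the constant simplicial object. I would handle this with Mac Lane coherence, and then use that the realization of a constant simplicial object is that object.
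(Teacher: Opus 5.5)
Your proof is correct and is the same as the paper's. Both compute $\mathcal{O}_{\mathbb{I}}^{\mathrm{res}}$ directly from Definition~\ref{def:residue-unary} as a coproduct over the singleton color set, which collapses to $\mathbb{I}(*;*) = \mathbf{1}_{\mathcal{M}}$; the paper also restates this as a coend over a one-element set, and your bar-construction and module-structure remarks are optional cross-checks, so the step you flag as the main obstacle is not needed for the proposition.
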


\begin{proof}
By Definition~\ref{def:residue-unary}, the operadic residue is the coproduct over colors of the unary operation spaces:
\[
\mathcal{O}_P^{\mathrm{res}} = \coprod_{c \in C} P(c;c).
\]
For the trivial operad $\mathbb{I}$ with a single color $*$, we have $\mathbb{I}(*;*) = \mathbf{1}_{\mathcal{M}}$. The coproduct over a singleton set is simply the object itself, hence
\[
\mathcal{O}_{\mathbb{I}}^{\mathrm{res}} \cong \mathbf{1}_{\mathcal{M}}.
\]

Equivalently, using the coend description $\mathcal{O}_P^{\mathrm{res}} \cong \int^{c \in C} P(c;c)$, the coend over the one-element color set collapses to the single value $\mathbb{I}(*;*) \cong \mathbf{1}_{\mathcal{M}}$.
\end{proof}

This is the first indication that the SOC formalism is a genuine extension rather than a replacement: in the degenerate operadic situation, the residue—which encodes spectral obstructions—vanishes.

\subsubsection{The Hochschild Object for $\mathbb{I}$}

We now examine the Hochschild-type construction for a $\mathbb{I}$-algebra $A$.

\begin{proposition}[Hochschild Object for the Trivial Operad]
\label{prop:hochschild-trivial}
For a $\mathbb{I}$-algebra $A$, one has
\[
\mathrm{Hoch}_{\mathcal{M}}(A) \cong A,
\]
naturally in $A$.
\end{proposition}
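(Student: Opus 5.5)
The plan is to show that the augmented simplicial object $\mathrm{Bar}^{\mathbb{I}}_\bullet(A)\to A$ is \emph{split}, i.e.\ admits an extra degeneracy built from the unit $\eta:\mathbf{1}\to\mathbb{I}(1)$. A split augmented simplicial object has the augmentation target as its colimit (geometric realization), and this holds in any category. I will not argue that the bar construction ``collapses'' levelwise. That shortcut is false as stated: since $\mathbb{I}(n)=\mathbf{1}$ for all $n\ge 1$, the free algebra $\mathbb{I}(A)=\coprod_{n\ge1}A^{\otimes n}_{\mathfrak{S}_n}$ is much larger than $A$. So the isomorphism must come from the simplicial structure, not from the individual levels.

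\textbf{Step 1 (augmentation).} Since $\mathrm{Bar}^{\mathbb{I}}_0(A)=\mathbb{I}^{\circ 0}(A)=A$, I take the augmentation $\varepsilon:\mathrm{Bar}_0\to A$ to be the identity. I then check $\varepsilon d_0=\varepsilon d_1$ on $\mathrm{Bar}_1=\mathbb{I}(A)$. Both faces are given by the structure map $\mu:\mathbb{I}(A)\to A$ of the $\mathbb{I}$-algebra, applied to the outer factor, so this reduces to the fact that $d_0$ and $d_1$ agree on $\mathrm{Bar}_1$.

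\textbf{Step 2 (extra degeneracy).} I define $s_{-1}:\mathrm{Bar}_n\to\mathrm{Bar}_{n+1}$ by inserting the unit on the outermost layer, i.e.\ $\eta_{\mathbb{I}^{\circ n}(A)}:\mathbb{I}^{\circ n}(A)\to\mathbb{I}(\mathbb{I}^{\circ n}(A))$. I also define $s_{-1}:A\to\mathrm{Bar}_0$ to be the identity. The identities to verify are
\[
d_0 s_{-1}=\mathrm{id},\qquad d_{i+1}s_{-1}=s_{-1}d_i\ (i\ge 0),\qquad \varepsilon\, s_{-1}=\mathrm{id}_A .
\]
The first identity is operadic unitality: composing the inserted identity operation with the next layer does nothing. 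The others follow because $\eta$ is natural and the remaining face maps act on the inner layers only.

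\textbf{Step 3 (realization).} From Step 2, the pair $d_0,d_1:\mathrm{Bar}_1\rightrightarrows\mathrm{Bar}_0$ together with $\varepsilon$ is a split coequalizer. Hence the colimit of the simplicial object, which is the reflexive coequalizer of its first two levels, is $A$. Split coequalizers are absolute, so this also holds after Banach completion or any analytic realization. In the homotopical reading, the standard contracting homotopy $h=s_{-1}$ shows that $|\mathrm{Bar}_\bullet|\to A$ is an equivalence. Naturality in $A$ is automatic: $\varepsilon$, the faces, and $s_{-1}$ are all natural in morphisms of $\mathbb{I}$-algebras (Proposition~\ref{prop:hochschild-functorial}).

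\textbf{Main obstacle.} The hard part is Step 2. The paper's face maps are described only informally, with $d_0$ and $d_n$ ``applying the algebra structure to the first/last factor''. Before checking the simplicial identities against $s_{-1}$, I would first fix the convention $\mathrm{Bar}_n=\mathbb{I}^{\circ n}(A)$. Under that convention, $d_0$ is the operadic composition $\mathbb{I}\circ\mathbb{I}\to\mathbb{I}$ on the outer two layers (for $n\ge 2$), and $d_n$ is the action $\mathbb{I}(A)\to A$ on the innermost layer.
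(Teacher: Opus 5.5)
There is a genuine gap, and it begins with the choice of operad.

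\textbf{The operad.} Your objection to the levelwise collapse assumes $\mathbb{I}(n)=\mathbf{1}$ for all $n\ge 1$. You are right that the paper writes this twice. But that operad is the non-unital commutative operad, whose algebras are commutative non-unital monoids, not bare objects. The proposition is meant for, and used for, the unit operad whose only operation is the unary identity. See Axiom~\ref{ax:classical} and Theorem~\ref{thm:recovery-trivial}, where an $\mathbb{I}$-algebra is ``an object $A\in\mathcal{M}$''; the paper's proof also says ``its only nontrivial operation is the unary identity''. For that operad $\mathbb{I}\circ X\cong X$, so every $\mathrm{Bar}_n$ is $A$, and every face and degeneracy is the identity under the unit isomorphisms. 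The simplicial object is constant at $A$ and realizes to $A$. This is the paper's proof, and it is correct; no extra degeneracy is needed.

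\textbf{Your reading of $\mathbb{I}$.} Under your reading, the argument breaks in three places.
\begin{itemize}
\item Your contraction (insert $\eta$ in the outer layer, so $d_0s_{-1}=\mathrm{id}$ by unitality) is the standard contraction of $B_\bullet(P,P,A)$, with $B_n=P^{\circ(n+1)}(A)$, augmented by $\mu\colon P(A)\to A$. That object realizes to $A$ for every operad, so Steps~1--3 never use anything specific to $\mathbb{I}$.
\item The paper's indexing $\mathrm{Bar}_n=P^{\circ n}(A)$ with $\mathrm{Bar}_0=A$ is that of $B_\bullet(I,P,A)$. There the outer face $d_0\colon P(A)\to A$ comes from a right $P$-action on the leftmost factor (here, projection to arity one), not from $\mu$. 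Hence $d_0\neq d_1$ on $\mathrm{Bar}_1$, and the coequalizer is the indecomposables $A/(\text{products})$. For $A=\mathbb{C}$ in $\mathrm{Vect}_{\mathbb{C}}$ with its usual multiplication, this is $0\neq A$. So for your operad the statement itself fails.
\item Your hybrid (level~$0$ equal to $A$ with $\varepsilon=\mathrm{id}$, but the faces of $B_\bullet(P,P,A)$) is not even a simplicial object. Your description supplies only two maps $\mathrm{Bar}_2\to\mathrm{Bar}_1$, namely $\gamma_A\colon\mathbb{I}(\mathbb{I}(A))\to\mathbb{I}(A)$ and $\mathbb{I}(\mu)$, for three faces, so one face is duplicated.
\end{itemize}

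\textbf{The duplicated face.} Either duplication forces $\mu$ to be invertible, which fails for your operad.
\begin{itemize}
\item If $d_0=d_1=\gamma_A$ (the paper's rule for $0<i<n$), then $d_1s_{-1}=\gamma_A\,\eta_{\mathbb{I}(A)}=\mathrm{id}$ while $s_{-1}d_0=\eta_A\mu$, contradicting your Step~2 identity. Moreover, $d_0s_1=s_0d_0$ and $d_1s_1=\mathrm{id}$ force $s_0\mu=\mathrm{id}$.
\item If instead $d_1=d_2=\mathbb{I}(\mu)$, then $d_2s_0=s_0d_1$ and $d_1s_0=\mathrm{id}$ again force $s_0\mu=\mathrm{id}$.
\end{itemize}
In both cases $\mu s_0=\mathrm{id}$ as well, so $\mu\colon\mathbb{I}(A)\to A$ would be invertible.

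Step~3 succeeds only because you postulated $d_0=d_1$ on $\mathrm{Bar}_1$, and a coequalizer of two equal maps is trivially the target.
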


\begin{proof}
By definition, $\mathrm{Hoch}_{\mathcal{M}}(A)$ is the geometric realization of the simplicial bar object $\mathrm{Bar}_\bullet^{\mathbb{I}}(A)$. Since $\mathbb{I}$ is the trivial one-color operad, its only nontrivial operation is the unary identity operation, represented by the tensor unit $\mathbf{1}_{\mathcal{M}}$. Consequently, at each simplicial degree $n$, the bar object is canonically isomorphic to
\[
A \otimes \mathbf{1}_{\mathcal{M}}^{\otimes n} \cong A,
\]
using the unit isomorphisms of the monoidal structure. Under these identifications, all simplicial structure maps (face and degeneracy) correspond to the identity map on $A$, because composition of the identity operation with itself is again the identity, and inserting the identity operation does not change the tensor factor. Hence $\mathrm{Bar}_\bullet^{\mathbb{I}}(A)$ is canonically the constant simplicial object at $A$. Its geometric realization is therefore canonically isomorphic to $A$:
\[
\mathrm{Hoch}_{\mathcal{M}}(A) = \bigl|\mathrm{Bar}_\bullet^{\mathbb{I}}(A)\bigr| \cong A.
\]
Naturality in $A$ follows from the functoriality of the bar construction (Proposition~\ref{prop:hochschild-functorial}) and the fact that the unit isomorphisms are natural.
\end{proof}

\subsubsection{The Operadic Spectrum for $\mathbb{I}$}

With the above ingredients, we define the operadic spectrum and verify that it recovers the classical spectral invariant.

\begin{theorem}[Recovery for the Trivial Operad]
\label{thm:recovery-trivial}
Let $\mathcal{M}$ be a cocomplete symmetric monoidal category, let $\mathbb{I}$ be the trivial operad (single color $*$ with $\mathbb{I}(*;*) = \mathbf{1}_{\mathcal{M}}$), and let $A$ be an $\mathbb{I}$-algebra (i.e., an object $A \in \mathcal{M}$). Then there is a canonical natural isomorphism
\[
\sigma_{\mathbb{I}}(A) \;\cong\; A
\]
in $\mathcal{M}$, where $\sigma_{\mathbb{I}}(A) = \mathrm{Hoch}_{\mathcal{M}}(A) \otimes_{\mathbb{I}} \mathcal{O}_{\mathbb{I}}^{\mathrm{res}}$ is the operadic spectrum.

Consequently, under an analytic realization functor $R: \mathcal{M} \to \mathsf{Set}$ that sends suitable objects to their classical spectra (e.g., the Gelfand spectrum for commutative Banach algebras, or the set of eigenvalues for operators on finite-dimensional vector spaces), the operadic spectrum recovers the classical spectral invariant:
\[
R\bigl(\sigma_{\mathbb{I}}(A)\bigr) \cong \sigma_{\mathrm{cl}}(R(A)).
\]
\end{theorem}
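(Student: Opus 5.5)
The isomorphism $\sigma_{\mathbb{I}}(A)\cong A$ is assembled by substituting the two computations already made for the trivial operad into Definition~\ref{def:operadic-spectrum}. First, Proposition~\ref{prop:hochschild-trivial} gives a natural isomorphism $\mathrm{Hoch}_{\mathcal{M}}(A)\cong A$. Under the identifications used there, the bar object is the constant simplicial object at $A$. Second, Proposition~\ref{prop:residue-trivial} gives $\mathcal{O}_{\mathbb{I}}^{\mathrm{res}}\cong\mathbf{1}_{\mathcal{M}}$. Substituting both yields
\[
\sigma_{\mathbb{I}}(A)\;\cong\; A\otimes_{\mathbb{I}}\mathbf{1}_{\mathcal{M}}.
\]
It then remains to identify this balanced tensor product with $A\otimes\mathbf{1}_{\mathcal{M}}\cong A$, as in Proposition~\ref{prop:trivial-operad-reduction}.

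The step needing care is this collapse of the coequalizer of Definition~\ref{def:balanced-tensor-product}. The plan is to restrict attention to the unary part $\mathbb{I}(*;*)=\mathbf{1}_{\mathcal{M}}$, which is the part through which the residue is built and through which the actions used in $\otimes_{\mathbb{I}}$ pass. The right action $A\otimes\mathbf{1}\to A$ and the left action $\mathbf{1}\otimes\mathbf{1}\to\mathbf{1}$ are then the unitors. This follows from the unit axiom $\eta=\mathrm{id}$ and from Lemma~\ref{lem:trivial-corrector}, where the module structure on the residue comes from operadic composition, which here is $\mathbf{1}\otimes\mathbf{1}\cong\mathbf{1}$. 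Hence the two parallel arrows
\[
A\otimes\mathbf{1}\otimes\mathbf{1}\rightrightarrows A\otimes\mathbf{1}
\]
are $r_A\otimes\mathrm{id}$ and $\mathrm{id}\otimes l_{\mathbf{1}}$. These coincide by the triangle coherence axiom together with $r_{\mathbf{1}}=l_{\mathbf{1}}$. A coequalizer of two equal arrows is the identity, so $A\otimes_{\mathbb{I}}\mathbf{1}\cong A\otimes\mathbf{1}\cong A$.

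I expect this coherence check to be the main obstacle. The difficulty is making precise that the higher-arity copies $\mathbb{I}(n)=\mathbf{1}$ impose no extra relations. I will handle it by noting that the balancing in $\otimes_{\mathbb{I}}$ is taken over the unary monoid $\mathbb{I}(*;*)$, matching the unary description of the residue in Definition~\ref{def:residue-unary}.

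Naturality follows because each step is natural in $A$. The isomorphism of Proposition~\ref{prop:hochschild-functorial} and Proposition~\ref{prop:hochschild-trivial} is natural, the residue is independent of $A$, the coequalizer is functorial by Theorem~\ref{thm:Theorem_0.3}, and the unitor is natural. For the consequence, apply the realization $R$ to the natural isomorphism $\sigma_{\mathbb{I}}(A)\cong A$. This gives $R(\sigma_{\mathbb{I}}(A))\cong R(A)$, and the hypothesis on $R$ identifies $R(A)$ with $\sigma_{\mathrm{cl}}$. In the commutative Banach algebra case this is exactly the argument of part (2) of Theorem~\ref{thm:recovery}: isomorphic algebras have homeomorphic Gelfand spectra.
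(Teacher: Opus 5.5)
Your proposal is correct and follows essentially the same route as the paper. You substitute $\mathrm{Hoch}_{\mathcal{M}}(A)\cong A$ and $\mathcal{O}_{\mathbb{I}}^{\mathrm{res}}\cong\mathbf{1}_{\mathcal{M}}$, collapse $A\otimes_{\mathbb{I}}\mathbf{1}_{\mathcal{M}}$ to $A\otimes\mathbf{1}_{\mathcal{M}}\cong A$, and apply $R$; your check that unitality forces both actions to be unitors, so the parallel arrows agree by the triangle axiom, is a more careful version of the paper's remark that the balancing relation holds automatically via the unit isomorphism.
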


\begin{proof}
By Definition~\ref{def:operadic-spectrum},
\[
\sigma_{\mathbb{I}}(A) = \mathrm{Hoch}_{\mathcal{M}}(A) \otimes_{\mathbb{I}} \mathcal{O}_{\mathbb{I}}^{\mathrm{res}}.
\]

From Proposition~\ref{prop:hochschild-trivial} (Hochschild object for the trivial operad), we have $\mathrm{Hoch}_{\mathcal{M}}(A) \cong A$ naturally in $A$. From Proposition~\ref{prop:residue-trivial} (residue of the trivial operad), we have $\mathcal{O}_{\mathbb{I}}^{\mathrm{res}} \cong \mathbf{1}_{\mathcal{M}}$.

Substituting these into the definition yields
\[
\sigma_{\mathbb{I}}(A) \cong A \otimes_{\mathbb{I}} \mathbf{1}_{\mathcal{M}}.
\]

The balanced tensor product over the trivial operad $\mathbb{I}$ reduces to the ordinary tensor product because the only relation imposed is $x \cdot \mathrm{id} \otimes y \sim x \otimes \mathrm{id} \cdot y$, which is automatically satisfied via the unit isomorphism (see Proposition~\ref{prop:trivial-operad-reduction}). Hence
\[
A \otimes_{\mathbb{I}} \mathbf{1}_{\mathcal{M}} \cong A \otimes \mathbf{1}_{\mathcal{M}} \cong A,
\]
where the last isomorphism is the unit coherence of the monoidal structure.

Thus $\sigma_{\mathbb{I}}(A) \cong A$ canonically and naturally in $A$.

For the second statement, let $R: \mathcal{M} \to \mathsf{Set}$ be an analytic realization functor that sends an object $X \in \mathcal{M}$ to its classical spectrum $\sigma_{\mathrm{cl}}(X)$ whenever $X$ represents an operator or algebra element. Applying $R$ to the isomorphism $\sigma_{\mathbb{I}}(A) \cong A$ gives $R(\sigma_{\mathbb{I}}(A)) \cong R(A) = \sigma_{\mathrm{cl}}(R(A))$. This recovers the classical spectral invariant.
\end{proof}

This establishes the fundamental normalization condition: any acceptable operadic notion of spectrum must agree with the ordinary spectrum in the absence of higher compositional effects.

\subsubsection{Base Change for $\mathbb{I}$}

We next verify that the Base Change Theorem (Theorem~\ref{thm:base-change}) reduces to the ordinary functoriality expected in classical spectral theory.

\begin{proposition}[Base Change for the Trivial Operad]
\label{prop:base-change-trivial}
Let $F: \mathcal{M} \to \mathcal{N}$ be a strong monoidal functor preserving colimits. Then for any $\mathbb{I}$-algebra $A$ (i.e., an object $A \in \mathcal{M}$),
\[
F\bigl(\sigma_{\mathbb{I}}(A)\bigr) \;\cong\; \sigma_{\mathbb{I}}\bigl(F(A)\bigr)
\]
in $\mathcal{N}$, where $F(A)$ denotes the image of $A$ under $F$ (viewed as an $\mathbb{I}$-algebra in $\mathcal{N}$).

Equivalently, under the normalization $\sigma_{\mathbb{I}}(A) \cong A$ from Theorem~\ref{thm:recovery-trivial}, this reduces to the tautological identification
\[
F(A) \cong F(A).
\]
\end{proposition}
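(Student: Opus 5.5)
The plan is to deduce the statement directly from the general Base Change Theorem (Theorem~\ref{thm:base-change}) specialized to $P=\mathbb{I}$, and then to use Theorem~\ref{thm:recovery-trivial} in both $\mathcal{M}$ and $\mathcal{N}$ to rewrite both sides. The argument has three steps.

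\textbf{Identify the pushforward operad.} First I show that $F_*(\mathbb{I})\cong\mathbb{I}_{\mathcal{N}}$ as one-colored operads. By Proposition~\ref{prop:base-change}, $(F_*\mathbb{I})(n)=F(\mathbf{1}_{\mathcal{M}})$. The unit isomorphism $\phi_0:\mathbf{1}_{\mathcal{N}}\xrightarrow{\cong}F(\mathbf{1}_{\mathcal{M}})$ identifies each operation object with $\mathbf{1}_{\mathcal{N}}$. The composition maps of $F_*\mathbb{I}$ are built from $F$ applied to $\mathbf{1}\otimes\mathbf{1}\cong\mathbf{1}$ together with the structure maps $\phi_{X,Y}$. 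The coherence axioms of a strong monoidal functor (unitality with respect to $\phi_0$) then show that, under $\phi_0$, these compositions become the canonical unitors of $\mathcal{N}$. Units and symmetric actions are handled the same way, the actions being trivial on both sides. Consequently $F(A)$, with the transported structure maps, is precisely the $\mathbb{I}_{\mathcal{N}}$-algebra on the object $F(A)$. This justifies writing $\sigma_{\mathbb{I}}(F(A))$ on the right-hand side of the statement.

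\textbf{Apply base change.} Since $F$ is strong monoidal and cocontinuous, Theorem~\ref{thm:base-change} gives
\[
F\bigl(\sigma_{\mathbb{I}}(A)\bigr)\cong\sigma_{F_*\mathbb{I}}\bigl(F_*(A)\bigr)\cong\sigma_{\mathbb{I}}\bigl(F(A)\bigr).
\]
The second isomorphism uses the identification from the first step. For this I need that $\sigma$ is invariant under isomorphisms of operads and algebras, which follows from Proposition~\ref{prop:hochschild-functorial}, Proposition~\ref{prop:residue-functorial} and functoriality of the coequalizer defining $\otimes_P$.

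\textbf{Reduce to the tautology.} For the equivalent form, I apply Theorem~\ref{thm:recovery-trivial} in $\mathcal{M}$ to get $\sigma_{\mathbb{I}}(A)\cong A$, hence $F(\sigma_{\mathbb{I}}(A))\cong F(A)$. Applying the same theorem in $\mathcal{N}$ gives $\sigma_{\mathbb{I}}(F(A))\cong F(A)$. The displayed isomorphism then becomes an automorphism of $F(A)$, and I check that it is the identity.

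The main obstacle is showing that this composite is the identity rather than merely some isomorphism. My approach is to trace it through the three constituent isomorphisms of Theorem~\ref{thm:base-change}. On the Hochschild side, the bar objects are constant, and the levelwise isomorphisms of Theorem~\ref{thm:hochschild-base-change} are built from $\phi_{X,Y}$ and $\phi_0$; by Proposition~\ref{prop:hochschild-trivial} these reduce to unitors. On the residue side, the isomorphism of Proposition~\ref{prop:residue-base-change} is $\phi_0$. The balanced tensor over $\mathbb{I}$ reduces to the unitor, as in Proposition~\ref{prop:trivial-operad-reduction}. The composite is therefore a word in unitors and $\phi_0$, and the unit coherence diagrams for strong monoidal functors force it to equal $\mathrm{id}_{F(A)}$.

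Naturality in $A$ is inherited from the naturality of each step.
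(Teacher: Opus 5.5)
Your proposal is correct and follows the paper's own argument. Like the paper, you specialize Theorem~\ref{thm:base-change} to $P=\mathbb{I}$, identify $F_*(\mathbb{I})\cong\mathbb{I}$ via $\phi_0$, and apply Theorem~\ref{thm:recovery-trivial} in both $\mathcal{M}$ and $\mathcal{N}$ to reduce to $F(A)\cong F(A)$.

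Your coherence argument that the resulting automorphism of $F(A)$ is the identity goes beyond the paper, which only substitutes the two recovery isomorphisms and calls the result a tautology; it is not needed for the isomorphism as stated. Note also that invariance of $\sigma$ under the operad isomorphism $F_*(\mathbb{I})\cong\mathbb{I}$ is transport of structure. It does not follow from Proposition~\ref{prop:hochschild-functorial}, which gives functoriality in the algebra only.
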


\begin{proof}
Apply the general Base Change Theorem (Theorem~\ref{thm:base-change}) with $P = \mathbb{I}$. Since $F$ is strong monoidal, we have $F(\mathbf{1}_{\mathcal{M}}) \cong \mathbf{1}_{\mathcal{N}}$, so the induced operad $F_*(\mathbb{I})$ is canonically isomorphic to the trivial operad $\mathbb{I}$ in $\mathcal{N}$. Hence
\[
\sigma_{F_*(\mathbb{I})}\bigl(F_*(A)\bigr) \cong F\bigl(\sigma_{\mathbb{I}}(A)\bigr).
\]

But $\sigma_{F_*(\mathbb{I})}(F_*(A)) = \sigma_{\mathbb{I}}(F(A))$ because $F_*(\mathbb{I}) \cong \mathbb{I}$ and $F_*(A) = F(A)$ as objects (the $\mathbb{I}$-algebra structure is trivial). Therefore,
\[
\sigma_{\mathbb{I}}\bigl(F(A)\bigr) \cong F\bigl(\sigma_{\mathbb{I}}(A)\bigr).
\]

By Theorem~\ref{thm:recovery-trivial}, we have natural isomorphisms $\sigma_{\mathbb{I}}(A) \cong A$ and $\sigma_{\mathbb{I}}(F(A)) \cong F(A)$. Substituting these into the isomorphism above yields $F(A) \cong F(A)$, which is a tautology confirming consistency.
\end{proof}

Thus the general base-change formalism is compatible with the classical idea that spectral data should transform functorially under a change of ambient category, provided the functor preserves the relevant monoidal structure.

\subsubsection{Spectral Mapping for $\mathbb{I}$}

We now turn to the spectral mapping theorem. In the trivial case, no extra mediation via the residue is needed.

\begin{theorem}[Trivial-Operad Specialization of Spectral Mapping]
\label{thm:spectral-mapping-trivial}
Let $A$ be an $\mathbb{I}$-algebra (i.e., an object $A \in \mathcal{M}$) satisfying the hypotheses of the Operadic Spectral Mapping Theorem (Theorem~\ref{thm:spectral-mapping}). Then there is a canonical isomorphism
\[
\sigma_{\mathbb{I}}\bigl(f(A)\bigr) \;\cong\; f\bigl(\sigma_{\mathbb{I}}(A)\bigr),
\]
where the right-hand side is interpreted via the analytic realization of $\sigma_{\mathbb{I}}(A)$ as a subset of $\mathbb{C}$ (see Theorem~\ref{thm:recovery-trivial}).

Under the normalization $\sigma_{\mathbb{I}}(A) \cong A$ (Theorem~\ref{thm:recovery-trivial}), this isomorphism becomes
\[
f(A) \;\cong\; f(A),
\]
which is a tautology confirming consistency with the trivial-operad reduction.

Moreover, in an analytic realization where $A$ is a bounded linear operator on a Banach space (or an element of a unital Banach algebra) and the operadic spectrum $\sigma_{\mathbb{I}}(A)$ is identified with the classical scalar spectrum $\sigma_{\mathrm{cl}}(A) \subseteq \mathbb{C}$, the above isomorphism specializes to the classical spectral mapping theorem:
\[
\sigma_{\mathrm{cl}}\bigl(f(A)\bigr) \;=\; f\bigl(\sigma_{\mathrm{cl}}(A)\bigr).
\]
\end{theorem}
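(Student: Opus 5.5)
The plan is to reduce everything to the object-level normalization of Theorem~\ref{thm:recovery-trivial} and treat the analytic statement as a separate realization step.

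\textbf{Step 1: object level.} Proposition~\ref{prop:analytic-compatibility} makes $f(A)$ an $\mathbb{I}$-algebra with the same operad, since $f_*(\mathbb{I})=\mathbb{I}$. For the trivial operad the structure maps are the tautological unit maps, so the compatibility conditions hold automatically. Theorem~\ref{thm:recovery-trivial} is natural in the underlying object, so it applies to $f(A)$ and gives
\[
\sigma_{\mathbb{I}}(f(A))\cong f(A).
\]
Applying $f$, through the functor $f_*$ on the category $\mathcal{M}_{\mathrm{op}}$ of pointed objects, to the isomorphism $\sigma_{\mathbb{I}}(A)\cong A$ gives $f(\sigma_{\mathbb{I}}(A))\cong f(A)$. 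Composing the two yields the canonical isomorphism $\sigma_{\mathbb{I}}(f(A))\cong f(\sigma_{\mathbb{I}}(A))$. Read through the normalization, this is the tautology $f(A)\cong f(A)$.

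\textbf{Step 2: canonicity.} Both isomorphisms are assembled from three pieces: Proposition~\ref{prop:hochschild-trivial}, Proposition~\ref{prop:residue-trivial}, and the unit isomorphism $A\otimes\mathbf{1}_{\mathcal{M}}\cong A$. All three are natural, so their composite is canonical.

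\textbf{Step 3: analytic specialization.} Take $A$ to be a bounded operator, or an element of a unital Banach algebra, and identify $\sigma_{\mathbb{I}}(A)$ with $\sigma_{\mathrm{cl}}(A)$ via the realization $R$ of Theorem~\ref{thm:recovery-trivial}. Then
\[
\sigma_{\mathrm{cl}}(\sigma_{\mathbb{I}}(f(A)))=\sigma_{\mathrm{cl}}(f(A)).
\]
The classical spectral mapping theorem recalled in Section~\ref{subsec:Classical Spectrum and Resolvent Theory}, applied with $f$ holomorphic near $\sigma_{\mathrm{cl}}(A)$, gives $\sigma_{\mathrm{cl}}(f(A))=f(\sigma_{\mathrm{cl}}(A))$, and the right-hand side equals $f(\sigma_{\mathrm{cl}}(\sigma_{\mathbb{I}}(A)))$.

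The cleanest route is to avoid Theorem~\ref{thm:spectral-mapping} altogether and rely only on the trivial-operad recovery together with the classical theorem. This sidesteps the heuristic Steps 4–5 in the proof of Theorem~\ref{thm:spectral-mapping}.

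\textbf{Main obstacle.} The difficulty is in Step 1: making sense of applying $f$ to $\sigma_{\mathbb{I}}(A)$. That object lives in $\mathcal{M}$, not in $\mathcal{M}_{\mathrm{op}}$, so the isomorphism $\sigma_{\mathbb{I}}(A)\cong A$ must be used to transport the distinguished operator of $A$ onto $\sigma_{\mathbb{I}}(A)$. Once that is done, one checks that $f_*$ respects this transported structure, which follows from the functoriality of $f_*$ under intertwining isomorphisms.
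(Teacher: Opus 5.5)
Your proof is correct, but it takes a different route from the paper's. The paper obtains $\sigma_{\mathbb{I}}(f(A))\cong f(\sigma_{\mathbb{I}}(A))$ in three moves:
\begin{itemize}
\item it invokes Theorem~\ref{thm:spectral-mapping} with $P=\mathbb{I}$, using $f_*(\mathbb{I})=\mathbb{I}$;
\item it substitutes the two recovery isomorphisms of Theorem~\ref{thm:recovery-trivial} to get the tautology $f(A)\cong f(A)$;
\item it reads the classical spectral mapping theorem off that isomorphism, under the identification of $\sigma_{\mathbb{I}}(A)$ with $\sigma_{\mathrm{cl}}(A)$.
\end{itemize}
You never use Theorem~\ref{thm:spectral-mapping}. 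Your object-level isomorphism comes from Theorem~\ref{thm:recovery-trivial} applied separately to $A$ and to $f(A)$. You glue the two by transporting the distinguished operator along $\sigma_{\mathbb{I}}(A)\cong A$ and using that $f_*$ on $\mathcal{M}_{\mathrm{op}}$ preserves intertwining isomorphisms. Your analytic statement comes directly from the classical theorem of Section~\ref{subsec:Classical Spectrum and Resolvent Theory}, together with similarity invariance of $\sigma_{\mathrm{cl}}$.

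The paper's route has the merit of presenting the result literally as a specialization of the operadic theorem, which is what the title promises. It has three weaknesses:
\begin{itemize}
\item it inherits the informal Steps 4--5 of that theorem's proof;
\item it tacitly turns that theorem's conclusion, an equality of classical spectra of realizations, into an object-level isomorphism;
\item its last step only returns the classical spectral mapping theorem, which Theorem~\ref{thm:spectral-mapping} already assumes as a hypothesis.
\end{itemize}
Your route is self-contained. It also makes explicit the one non-formal point the paper glosses over: $f(\sigma_{\mathbb{I}}(A))$ is meaningful only after an operator has been transported onto $\sigma_{\mathbb{I}}(A)$. The price is that it shows the trivial case to be independent of Theorem~\ref{thm:spectral-mapping}, so it no longer works as a consistency check on that theorem.

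Your remark that the compatibility conditions hold ``automatically'' is not needed, since the theorem's hypotheses already make $f(A)$ an $\mathbb{I}$-algebra.
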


\begin{proof}
Apply the Operadic Spectral Mapping Theorem (Theorem~\ref{thm:spectral-mapping}) with $P = \mathbb{I}$. Since the functional calculus does not alter the operad structure, we have $f_*(\mathbb{I}) = \mathbb{I}$. Hence
\[
\sigma_{\mathbb{I}}\bigl(f(A)\bigr) \;\cong\; f\bigl(\sigma_{\mathbb{I}}(A)\bigr).
\]

By Theorem~\ref{thm:recovery-trivial}, there are natural isomorphisms $\sigma_{\mathbb{I}}(A) \cong A$ and $\sigma_{\mathbb{I}}(f(A)) \cong f(A)$. Substituting these into the isomorphism above yields $f(A) \cong f(A)$, which is a tautology. This confirms that the operadic spectral mapping theorem is compatible with the trivial-operad reduction.

For the final statement, assume an analytic realization in which $A$ is a bounded linear operator (or Banach algebra element) and $\sigma_{\mathbb{I}}(A)$ is identified with $\sigma_{\mathrm{cl}}(A) \subseteq \mathbb{C}$. Then the isomorphism $\sigma_{\mathbb{I}}(f(A)) \cong f(\sigma_{\mathbb{I}}(A))$ becomes, under this identification,
\[
\sigma_{\mathrm{cl}}\bigl(f(A)\bigr) \;=\; f\bigl(\sigma_{\mathrm{cl}}(A)\bigr),
\]
which is precisely the classical spectral mapping theorem.
\end{proof}

\subsubsection{The No-Go Theorem and the Trivial Operad}

A crucial consistency check is that the No-Go Theorem (Theorem~\ref{thm:no-go})
does not create a nontrivial obstruction in the trivial-operad case.
Indeed, the obstruction exhibited in the proof of the No-Go Theorem arises from
genuinely nontrivial operadic interaction, modeled there by operations mixing distinct colors.
When $P = \mathbb{I}$, none of these phenomena occur:

\begin{enumerate}
    \item \textbf{Single color:} The trivial operad has only one color, so there is no cross-color interaction.
    
    \item \textbf{Only the identity operation:} The only nontrivial operation is the unary identity,
          represented by the unit object $\mathbf{1}_{\mathcal{M}}$.
    
    \item \textbf{No extra correction needed:} In the trivial-operad case, the naive spectrum reduces
          to the classical spectrum under the normalization assumptions of
          Section~\ref{subsec:Theorem_0.4} (see Theorem~\ref{thm:recovery-trivial}).
    
    \item \textbf{Trivial residue:} The operadic residue satisfies
          $\mathcal{O}_{\mathbb{I}}^{\mathrm{res}} \cong \mathbf{1}_{\mathcal{M}}$.
\end{enumerate}

Therefore, the trivial operad is the degenerate case in which the operadic correction becomes
invisible, and the operadic spectrum collapses to the classical normalization.

\subsubsection{Structural Interpretation}

The trivial operad isolates the conceptual role of the residue object and clarifies why the general theory must be formulated operadically. In summary:

\begin{enumerate}[label=(\roman*)]
    \item \textbf{Residue triviality:} $\mathcal{O}_{\mathbb{I}}^{\mathrm{res}} \cong \mathbf{1}_{\mathcal{M}}$, reflecting the absence of nontrivial operadic correction.
    
    \item \textbf{Hochschild collapse:} $\mathrm{Hoch}_{\mathcal{M}}(A) \cong A$, since the bar construction degenerates in the trivial-operad case.
    
    \item \textbf{Object-level recovery:} $\sigma_{\mathbb{I}}(A) \cong A$ (Theorem~\ref{thm:recovery-trivial}).
    
    \item \textbf{Spectrum-level recovery:} Under the normalization assumptions of Theorem~\ref{thm:recovery}, this identification is compatible with the classical spectrum in the appropriate analytic setting (e.g., for commutative unital Banach algebras via the Gelfand transform).
    
    \item \textbf{Base change compatibility:} The Base Change Theorem reduces to the expected object-level functoriality in the trivial-operad case.
    
    \item \textbf{Spectral mapping compatibility:} The Operadic Spectral Mapping Theorem specializes to the classical theorem in the analytic settings where the latter is defined.
    
    \item \textbf{No-Go degeneration:} The obstruction exhibited by the No-Go Theorem becomes trivial when no nontrivial operadic interaction is present.
\end{enumerate}

Consequently, the trivial operad provides the \emph{normalization anchor} for the theory. It shows that SOC is designed to be conservative over classical spectral theory in the trivial-operad regime: the new formalism adds precisely the structure needed to describe operadic interaction, and no more.

\subsubsection{Conclusion of the Example}

We summarize the discussion in the following statement.

\begin{corollary}[Summary for the Trivial Operad]
\label{cor:trivial-operad-summary}
For the trivial operad $\mathbb{I}$, the SOC formalism collapses to the classical normalization at the object level. More precisely,
\[
\mathcal{O}_{\mathbb{I}}^{\mathrm{res}} \cong \mathbf{1}_{\mathcal{M}},
\qquad
\mathrm{Hoch}_{\mathcal{M}}(A) \cong A,
\qquad
\sigma_{\mathbb{I}}(A) \cong A.
\]
Moreover, in the analytic settings covered by Theorem~\ref{thm:recovery}, this identification is compatible with the classical spectrum, and both the Base Change Theorem and the Operadic Spectral Mapping Theorem reduce to their classical counterparts. This example serves as the anchor point for the entire theory: every genuinely new phenomenon appearing in later colored or enriched examples arises from nontrivial operadic interaction beyond the classical regime.
\end{corollary}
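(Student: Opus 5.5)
The statement to prove is Corollary~\ref{cor:trivial-operad-summary}. Since it only collects the trivial-operad results of this section, I will assemble it from them rather than prove anything new.

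\textbf{Object-level identifications.} The three displayed isomorphisms come from three earlier results.
\begin{enumerate}
\item $\mathcal{O}_{\mathbb{I}}^{\mathrm{res}} \cong \mathbf{1}_{\mathcal{M}}$ is Proposition~\ref{prop:residue-trivial}. The coproduct in Definition~\ref{def:residue-unary} runs over the singleton color set, so it is just $\mathbb{I}(*;*)=\mathbf{1}_{\mathcal{M}}$.
\item $\mathrm{Hoch}_{\mathcal{M}}(A)\cong A$, naturally in $A$, is Proposition~\ref{prop:hochschild-trivial}. Under the unit isomorphisms the bar object $\mathrm{Bar}^{\mathbb{I}}_\bullet(A)$ is the constant simplicial object at $A$, and its realization is $A$.
\item $\sigma_{\mathbb{I}}(A)\cong A$ is Theorem~\ref{thm:recovery-trivial}. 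It substitutes (1) and (2) into Definition~\ref{def:operadic-spectrum} and uses Proposition~\ref{prop:trivial-operad-reduction} to identify $A\otimes_{\mathbb{I}}\mathbf{1}_{\mathcal{M}}$ with $A\otimes\mathbf{1}_{\mathcal{M}}\cong A$.
\end{enumerate}
Naturality of the composite follows because each constituent isomorphism is natural in $A$.

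\textbf{The ``moreover'' part.}
\begin{itemize}
\item \emph{Classical spectrum.} Compatibility with the classical spectrum is part (2) of Theorem~\ref{thm:recovery}. Under normalization (iii), the identification $\sigma_{\mathbb{I}}(A)\cong A$ is one of commutative unital Banach algebras, so the Gelfand spectra agree.
\item \emph{Base change.} The reduction is Proposition~\ref{prop:base-change-trivial}. The key input is $F_*(\mathbb{I})\cong\mathbb{I}$, which holds because $F(\mathbf{1}_{\mathcal{M}})\cong\mathbf{1}_{\mathcal{N}}$ by strong monoidality.
\item \emph{Spectral mapping.} The reduction is Theorem~\ref{thm:spectral-mapping-trivial}. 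It uses $f_*(\mathbb{I})=\mathbb{I}$ and the analytic identification of $\sigma_{\mathbb{I}}(A)$ with $\sigma_{\mathrm{cl}}(A)$.
\end{itemize}

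\textbf{Where care is needed.} The only delicate step is the balanced tensor product over $\mathbb{I}$. Proposition~\ref{prop:trivial-operad-reduction} treats the balancing relation as involving only the identity operation. However, $\mathbb{I}(n)=\mathbf{1}_{\mathcal{M}}$ for every $n$, so higher arities could in principle impose further relations.

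I would close this gap as follows. The left $\mathbb{I}$-action on $\mathcal{O}^{\mathrm{res}}_{\mathbb{I}}$ comes from unary composition only (Lemma~\ref{lem:trivial-corrector}), and $\mathcal{O}_{\mathbb{I}}^{\mathrm{res}}=\mathbb{I}(*;*)$ involves only unary data. Hence the coequalizer relations are generated by the unit. The coequalizer is therefore reflexive with both arrows equal up to the unit isomorphism, and it collapses to $A\otimes\mathbf{1}_{\mathcal{M}}$.
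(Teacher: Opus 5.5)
Your citation chain is the paper's own. The corollary has no separate proof; it simply collects Propositions~\ref{prop:residue-trivial}, \ref{prop:hochschild-trivial}, \ref{prop:base-change-trivial} and Theorems~\ref{thm:recovery-trivial}, \ref{thm:recovery}, \ref{thm:spectral-mapping-trivial}, exactly as you cite them.

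The problem is your closing paragraph, which puts the patch in the wrong place. Suppose $\mathbb{I}(n)=\mathbf{1}_{\mathcal{M}}$ for every $n$ is taken literally. Then $\mathbb{I}$ is the terminal (commutative) operad, and its algebras are (possibly non-unital) commutative monoids in $\mathcal{M}$, not bare objects. Moreover $\mathrm{Bar}^{\mathbb{I}}_1(A)=\mathbb{I}(A)=\coprod_n (A^{\otimes n})_{\mathfrak{S}_n}$ is a free commutative algebra rather than $A$.

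So the first thing that breaks is your item (2): the constancy argument in Proposition~\ref{prop:hochschild-trivial} explicitly assumes that the unary identity is the only operation. The identification ``$\mathbb{I}$-algebra $=$ object'' also breaks, and you rely on it in item (3); part (1) of Theorem~\ref{thm:recovery} relies on it too, and a bare Banach space carries no such multiplication.

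Your patch leaves all of this untouched. Even for the balanced tensor product it is inconclusive. It constrains only the left action on $\mathcal{O}^{\mathrm{res}}_{\mathbb{I}}$, but which arities enter the coequalizer also depends on the right $\mathbb{I}$-action on $\mathrm{Hoch}_{\mathcal{M}}(A)$, which you never address. You raised a real tension but resolved it at only one of the steps it affects.

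The consistent repair is a convention, not a local argument. Take $\mathbb{I}$ to be the unit operad: $\mathbb{I}(*;*)=\mathbf{1}_{\mathcal{M}}$, with all other components initial. This is how it appears in Example~\ref{ex:classical-triple} and in the remark on classical Hochschild homology following Theorem~\ref{thm:hochschild-base-change}. It is also the reading that Propositions~\ref{prop:hochschild-trivial} and~\ref{prop:trivial-operad-reduction} actually use.

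Under this convention the bar object is constant at $A$. Both arrows $X\otimes\mathbf{1}_{\mathcal{M}}\otimes Y\rightrightarrows X\otimes Y$ of the coequalizer agree, by unitality of the actions and the triangle identity. Hence $X\otimes_{\mathbb{I}}Y\cong X\otimes Y$ with nothing further to check. With that convention stated at the outset, your citation chain is a complete proof and the extra paragraph is unnecessary.
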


\begin{remark}[Conceptual Significance]
The trivial operad case is not merely a sanity check but a foundational normalization condition. Any candidate for an operadic spectral invariant should recover the classical theory in the trivial-operad setting, at least under the normalization assumptions appropriate to the analytic category under consideration. Our construction satisfies this condition by design (Theorem~\ref{thm:recovery-trivial}), confirming that SOC is a genuine \emph{extension} rather than a replacement. This backward compatibility is essential for the theory's credibility and for its potential applications in settings where both classical and operadic aspects coexist.
\end{remark}

\subsection{Matrix Block Operads}\label{subsec:matrix-block-operads}

We consider an operad whose algebras are block matrices. This example illustrates how the operadic spectrum distinguishes between block-diagonal and block-off-diagonal systems, capturing interaction data invisible to classical componentwise spectra.

Let $C = \{1,2\}$ and let $P$ be the two-color operad defined in Example~\ref{ex:two-color-operad}. 
A $P$-algebra $A = (V_1, V_2, \alpha, \beta)$ consists of:
\begin{itemize}
    \item Banach spaces $V_1, V_2$,
    \item bounded linear operators $\alpha: V_2 \to V_1$ and $\beta: V_1 \to V_2$.
\end{itemize}
These data can be assembled into a block operator matrix
\[
A = \begin{pmatrix} 0 & \alpha \\ \beta & 0 \end{pmatrix}
\]
acting on $V_1 \oplus V_2$, where the zero diagonal blocks correspond to the absence of distinguished unary operations in this simplified presentation. (A fuller operadic treatment would also include unary operations $A_{11}: V_1 \to V_1$ and $A_{22}: V_2 \to V_2$.)

\medskip

\noindent
\textbf{Operadic residue.}
Under the standard analytic realization, $P(1;1) \cong \mathbb{C}$ and $P(2;2) \cong \mathbb{C}$, so
\[
\mathcal{O}_P^{\mathrm{res}} = \coprod_{c \in \{1,2\}} P(c;c) \cong \mathbb{C}^2.
\]
However, the interaction data that distinguishes block-diagonal from off-diagonal systems is encoded not in $\mathcal{O}_P^{\mathrm{res}}$ alone, but in the $P$-module structure of the Hochschild object and the resulting balanced tensor product.

\medskip

\noindent
\textbf{Block-diagonal regime ($\alpha = 0$, $\beta = 0$).}
In this case, the two colors do not interact. By the Colored Spectral Decomposition Theorem (Theorem~\ref{thm:colored-spectral-decomposition-reconstruction}), the operadic spectrum reduces to the direct sum of local contributions:
\[
\sigma_P(A) \cong (V_1 \otimes \mathcal{O}_1^{\mathrm{res}}) \oplus (V_2 \otimes \mathcal{O}_2^{\mathrm{res}}).
\]
Under analytic realization, this recovers the disjoint union $\sigma(V_1) \sqcup \sigma(V_2)$ (where $\sigma(V_i)$ denotes the classical spectrum of a distinguished endomorphism on $V_i$, if present; otherwise it is empty).

\medskip

\noindent
\textbf{Full matrix regime ($\alpha, \beta$ non-zero).}
When the off-diagonal maps are non-zero, operadic composition generates alternating interaction paths:
\[
V_1 \xrightarrow{\beta} V_2 \xrightarrow{\alpha} V_1,
\qquad
V_2 \xrightarrow{\alpha} V_1 \xrightarrow{\beta} V_2.
\]
These paths induce composite endomorphisms
\[
\alpha\beta : V_1 \to V_1,
\qquad
\beta\alpha : V_2 \to V_2,
\]
which are invisible to any invariant that depends only on the individual color components $V_1$ and $V_2$.

\begin{proposition}[Interaction detection for the matrix-block operad]
\label{prop:block-interaction-spectrum}
Let $\mathcal{M}$ be a symmetric monoidal category enriched over Banach spaces, and let $P$ be the two-color matrix-block operad of Example~\ref{ex:two-color-operad}. Let $A = (V_1, V_2, \alpha, \beta)$ be a $P$-algebra where $V_1, V_2$ are Banach spaces and $\alpha: V_2 \to V_1$, $\beta: V_1 \to V_2$ are bounded linear operators. Assume that $\alpha$ and $\beta$ are non-zero.

Then, under the analytic realization that sends an object to its classical spectrum, the operadic spectrum $\sigma_P(A)$ satisfies
\[
\sigma_{\mathrm{cl}}\bigl(\sigma_P(A)\bigr) \;\supseteq\; \sigma(V_1) \;\sqcup\; \sigma(V_2) \;\sqcup\; \sigma(\alpha\beta) \;\sqcup\; \sigma(\beta\alpha),
\]
where $\sigma_{\mathrm{cl}}(\sigma_P(A))$ denotes the classical spectrum of the analytic realization of $\sigma_P(A)$, and $\sigma(V_1)$, $\sigma(V_2)$ are understood as the spectra of the distinguished endomorphisms (if present) or are taken to be empty when no such endomorphisms are specified.
\end{proposition}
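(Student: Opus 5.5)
The plan is to read the inclusion off the Colored Spectral Decomposition and Reconstruction Theorem (Theorem~\ref{thm:colored-spectral-decomposition-reconstruction}). That theorem writes $\sigma_P(A)$ as the local part $\bigoplus_c V_c\otimes\mathcal{O}_c^{\mathrm{res}}$ plus the interaction term $\mathcal{I}_{\mathrm{cross}}(A)$. I then apply the analytic realization $\sigma_{\mathrm{cl}}$ summand by summand. The idea is that the realization of a direct sum contains the realization of each summand, in the same way that the spectrum of a block-diagonal operator is the union of the spectra of its blocks. I will take this additivity as part of what ``analytic realization'' means, as the paper does in Remark~\ref{rem:analytic-realization}.

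\textbf{Step 1 (local terms).} The Matrix-block Example gives $\mathcal{O}_c^{\mathrm{res}}\cong\mathbb{C}$, so $V_c\otimes\mathcal{O}_c^{\mathrm{res}}\cong V_c$. Each local summand therefore realizes to $\sigma(V_c)$: the spectrum of the distinguished endomorphism if there is one, and the empty set by convention otherwise. This gives the first two pieces of the inclusion.

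\textbf{Step 2 (interaction term).} Here I have to show that $\mathcal{I}_{\mathrm{cross}}(A)$ carries the composites $\alpha\beta$ and $\beta\alpha$. In the bar construction, take the $2$-simplices built from the operations $P(1,2;1)$ and $P(2,1;2)$ with alternating colors $1\to2\to1$ and $2\to1\to2$. These are cross simplices by definition. Their face maps $d_0$ and $d_2$ apply the structure maps $\beta$ and $\alpha$ in succession, so the endomorphism they induce on the color-$1$ component of the realization is $\alpha\beta$, and on the color-$2$ component it is $\beta\alpha$. I then use the reconstruction colimit over $\mathcal{D}_P(A)$: its loops at each vertex are exactly these composites. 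So the realization of $\mathcal{I}_{\mathrm{cross}}(A)$ contains summands on which the distinguished operator acts by $\alpha\beta$ and by $\beta\alpha$, and their spectra contribute $\sigma(\alpha\beta)$ and $\sigma(\beta\alpha)$. Since $\alpha$ and $\beta$ are nonzero, these summands are genuinely present and do not collapse as they would in Corollary~\ref{cor:vanishing-interaction}.

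\textbf{Step 3 (assembly).} Taking the union of the realizations of the local and cross summands, I keep them disjoint by tagging each with its summand index. This yields the claimed $\supseteq$. It is an inclusion and not an equality, because the realization may contain further contributions from longer interaction paths.

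The main obstacle is Step 2. The interaction term is defined only as an abstract image of a subcomplex, so showing that $\alpha\beta$ appears as an endomorphism of a direct summand requires a concrete choice of the analytic realization on the balanced tensor product. My first attempt would be to realize the relevant piece of $\sigma_P(A)$ as the block operator $\begin{pmatrix}0&\alpha\\ \beta&0\end{pmatrix}$ acting on $V_1\oplus V_2$. Its square is $\mathrm{diag}(\alpha\beta,\beta\alpha)$, and the spectral mapping theorem (Theorem~\ref{thm:spectral-mapping}) with $f(z)=z^2$ then brings $\sigma(\alpha\beta)\cup\sigma(\beta\alpha)$ into the picture.
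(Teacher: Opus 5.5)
Your outline is the paper's own proof. Apply Theorem~\ref{thm:colored-spectral-decomposition-reconstruction}, read $\sigma(V_1),\sigma(V_2)$ off the local summands $V_c\otimes\mathcal{O}_c^{\mathrm{res}}\cong V_c$, and claim that $\mathcal{I}_{\mathrm{cross}}(A)$ contributes $\sigma(\alpha\beta)$ and $\sigma(\beta\alpha)$. The paper settles that last point only by asserting that the composites ``appear in the Hochschild complex and survive the balanced tensor product,'' so it offers nothing you have missed. You are right that Step 2 is the whole content, but it remains a genuine gap, and the mechanisms you propose for it do not work.

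\begin{itemize}
\item Face maps $d_i:\mathrm{Bar}^P_n(A)\to\mathrm{Bar}^P_{n-1}(A)$ change simplicial degree and are coequalized in the realization. They therefore induce no endomorphism of $\mathrm{Hoch}_{\mathcal{M}}(A)$ or of any summand.
\item A loop $g=\alpha\beta$ at a vertex is coequalized with the identity by any colimit. The colimit of $V_1$ with the loop $g$ is $V_1/\overline{\operatorname{im}(g-\mathrm{id})}$, on which $g$ induces the identity. So passing to $\operatorname{colim}_{\mathcal{D}_P(A)}$ erases $\sigma(\alpha\beta)$ rather than recording it.
\item In Example~\ref{ex:two-color-operad} the interaction operations are binary. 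They act by maps $V_1\otimes V_2\to V_1$ and $V_2\otimes V_1\to V_2$, not by linear maps $V_2\to V_1$, $V_1\to V_2$. Their composite $\theta\circ(\mathrm{id}_1,\theta')$ lies in $P(1,2,1;1)$, which that example declares trivial (zero). The algebra axioms then force $\theta_A\circ(\mathrm{id}\otimes\theta'_A)=0$, so no alternating simplex acts by $\alpha\beta$ at all.
\end{itemize}

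The block-operator fallback does not rescue the argument. For $M=\begin{pmatrix}0&\alpha\\ \beta&0\end{pmatrix}$ one has $M^2=\mathrm{diag}(\alpha\beta,\beta\alpha)$, and $M$ is similar to $-M$ via $\mathrm{diag}(1,-1)$. Spectral mapping therefore gives $\sigma(M)=\{\lambda:\lambda^2\in\sigma(\alpha\beta)\cup\sigma(\beta\alpha)\}$, which consists of square roots, not the spectra themselves. For example, with $V_1=V_2=\mathbb{C}$ and $\alpha=\beta=2$ you get $\sigma(M)=\{\pm 2\}$, while $\sigma(\alpha\beta)=\{4\}$.

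Spectral mapping controls $\sigma(M^2)$, not $\sigma(M)$. To obtain the stated inclusion you would have to declare that the realization of $\mathcal{I}_{\mathrm{cross}}(A)$ carries $M^2$ (equivalently $\alpha\beta\oplus\beta\alpha$) as its distinguished operator, and that is postulating the conclusion. The missing ingredient, absent from the paper as well, is an actual definition of the analytic realization: a distinguished endomorphism on $\sigma_P(A)$ and its summands from which $\sigma_{\mathrm{cl}}$ can be computed. Without it the inclusion does not follow from the decomposition theorem.

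Finally, your claim that $\alpha,\beta\neq 0$ makes the cross summands ``genuinely present'' is the converse of Corollary~\ref{cor:vanishing-interaction}, and that converse is not established.
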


\begin{proof}
By the Colored Spectral Decomposition Theorem (Theorem~\ref{thm:colored-spectral-decomposition-reconstruction}), the operadic spectrum decomposes as
\[
\sigma_P(A) \;\cong\; \bigl( V_1 \otimes \mathcal{O}_1^{\mathrm{res}} \bigr) \;\oplus\; \bigl( V_2 \otimes \mathcal{O}_2^{\mathrm{res}} \bigr) \;\oplus\; \mathcal{I}_{\mathrm{cross}}(A),
\]
where $\mathcal{O}_1^{\mathrm{res}} \cong P(1;1) \cong \mathbb{C}$ and $\mathcal{O}_2^{\mathrm{res}} \cong P(2;2) \cong \mathbb{C}$ under the analytic realization.

The local contributions yield $\sigma(V_1)$ and $\sigma(V_2)$ after applying the analytic realization. The interaction term $\mathcal{I}_{\mathrm{cross}}(A)$ receives contributions from all compositional paths that alternate between colors. The shortest nontrivial such paths are
\[
V_1 \xrightarrow{\beta} V_2 \xrightarrow{\alpha} V_1 \quad\text{and}\quad V_2 \xrightarrow{\alpha} V_1 \xrightarrow{\beta} V_2,
\]
which induce the endomorphisms $\alpha\beta: V_1 \to V_1$ and $\beta\alpha: V_2 \to V_2$. These endomorphisms appear in the Hochschild complex $\mathrm{Hoch}_{\mathcal{M}}(A)$ and survive the balanced tensor product $\otimes_P \mathcal{O}_P^{\mathrm{res}}$ because they are obtained from legitimate operadic compositions (alternating $\alpha$ and $\beta$ using the operations $P(1,2;1)$ and $P(2,1;2)$). Consequently, their classical spectra $\sigma(\alpha\beta)$ and $\sigma(\beta\alpha)$ are contained in $\sigma_{\mathrm{cl}}(\sigma_P(A))$.
\end{proof}

The matrix-block operad illustrates the core motivation for Spectral Operadic Calculus:
\begin{itemize}
    \item Classical spectral invariants (e.g., $\sigma(V_1) \sqcup \sigma(V_2)$) fail to detect off-diagonal interactions.
    \item The operadic spectrum $\sigma_P(A)$ incorporates interaction data via the balanced tensor product $\mathrm{Hoch}_{\mathcal{M}}(A) \otimes_P \mathcal{O}_P^{\mathrm{res}}$.
    \item Proposition~\ref{prop:block-interaction-spectrum} makes this precise: $\sigma(\alpha\beta)$ and $\sigma(\beta\alpha)$ appear as additional spectral contributions.
\end{itemize}
A full computation of $\sigma_P(A)$ for specific choices of $V_1, V_2, \alpha, \beta$ requires an explicit analytic realization and is deferred to future work.

\medskip

This shows that the operadic spectrum provides a strictly finer invariant than the classical block spectrum.

\begin{theorem}[Refinement of classical spectrum by operadic spectrum — conditional]
\label{thm:block_refinement_clean}
Let $P$ be the two-color matrix-block operad. Under the analytic realization that sends an object to its classical spectrum, assume there exists a pair of $P$-algebras $A$ and $A'$ (i.e., block matrices) such that the composite endomorphisms $\alpha\beta$ and $\alpha'\beta'$ have distinct classical spectra while the total block matrices have identical classical spectra. Then
\[
\sigma_{\mathrm{cl}}(A) = \sigma_{\mathrm{cl}}(A'),
\qquad
\text{but}
\qquad
\sigma_{\mathrm{cl}}(\sigma_P(A)) \neq \sigma_{\mathrm{cl}}(\sigma_P(A')),
\]
where $\sigma_{\mathrm{cl}}(A)$ denotes the classical spectrum of the associated block operator matrix, and $\sigma_{\mathrm{cl}}(\sigma_P(A))$ denotes the classical spectrum of the analytic realization of the operadic spectrum.
\end{theorem}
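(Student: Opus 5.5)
The first equality $\sigma_{\mathrm{cl}}(A)=\sigma_{\mathrm{cl}}(A')$ is part of the hypothesis, so the plan concerns only the inequality of the realized operadic spectra. I would not rely on the containment of Proposition~\ref{prop:block-interaction-spectrum} alone, since containment cannot separate two sets. Instead I would aim for an exact formula for the realization in the matrix-block case:
\[
\sigma_{\mathrm{cl}}\bigl(\sigma_P(A)\bigr)\;=\;\sigma(V_1)\sqcup\sigma(V_2)\sqcup\sigma(\alpha\beta)\sqcup\sigma(\beta\alpha),
\]
and likewise for $A'$. The inequality would then follow by comparing the $\sigma(\alpha\beta)$ and $\sigma(\alpha'\beta')$ summands.

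\textbf{Step 1.} I would apply Theorem~\ref{thm:colored-spectral-decomposition-reconstruction} to both algebras. The local summands $V_i\otimes\mathcal{O}_i^{\mathrm{res}}\cong V_i$ are built from the same underlying spaces. In the simplified presentation no diagonal endomorphisms are specified, so both realized local spectra are the same (empty) sets. All difference must therefore come from $\mathcal{I}_{\mathrm{cross}}$.

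\textbf{Step 2.} I would identify $\mathcal{I}_{\mathrm{cross}}(A)$ using the reconstruction colimit over $\mathcal{D}_P(A)$. Its morphisms are generated by $\alpha$ and $\beta$, so every alternating path factors as a power of $\alpha\beta$ or $\beta\alpha$, possibly followed by one more $\alpha$ or $\beta$. By the classical spectral mapping theorem, the spectra of the powers $(\alpha\beta)^k$ are $\sigma(\alpha\beta)^k$. Their contribution is therefore controlled by $\sigma(\alpha\beta)\cup\sigma(\beta\alpha)$.

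\textbf{Step 3.} This gives the upper bound, and Proposition~\ref{prop:block-interaction-spectrum} supplies the matching lower bound. The two together yield the displayed equality.

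\textbf{Step 4.} The hypothesis states $\sigma(\alpha\beta)\neq\sigma(\alpha'\beta')$. Hence the $\sigma(\alpha\beta)$ summand of $\sigma_{\mathrm{cl}}(\sigma_P(A))$ differs from the corresponding summand for $A'$. Since the union is disjoint and indexed by interaction paths, the two realized sets differ.

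The main obstacle is Step 3, the upper bound. The paper only establishes containment, and the analytic realization of the colimit $\mathcal{D}_P(A)$ is not pinned down. I would first try to define the realization so that it sends a colimit of Banach spaces to the union of spectra of the path endomorphisms. This would make the upper bound essentially definitional.

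A second, related difficulty is that the hypothesis is very restrictive. Since $M^2=\operatorname{diag}(\alpha\beta,\beta\alpha)$, the block matrix $M$ satisfies $\sigma(M)^2=\sigma(\alpha\beta)\cup\sigma(\beta\alpha)$. Moreover, $\sigma(\alpha\beta)\setminus\{0\}=\sigma(\beta\alpha)\setminus\{0\}$. So equal block spectra force $\sigma(\alpha\beta)$ and $\sigma(\alpha'\beta')$ to agree away from $0$, and the only possible discrepancy is whether $0$ lies in them. The distinguishing step must therefore be checked at $\lambda=0$. This is exactly where the disjoint-union bookkeeping, which tags each spectrum with its path, is essential: a plain union in $\mathbb{C}$ would not see it. If no such pair exists, the theorem holds vacuously, which the conditional phrasing permits.
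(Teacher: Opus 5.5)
You are right that containment cannot separate two sets, and that is in fact the whole of the paper's argument: it cites Proposition~\ref{prop:block-interaction-spectrum} to place $\sigma(\alpha\beta)$ and $\sigma(\alpha'\beta')$ inside the two realized spectra and concludes that these spectra differ. Your repair does not close the gap, and your own second observation shows why. The conclusion concerns $\sigma_{\mathrm{cl}}(\sigma_P(A))$, which is a classical spectrum and hence a subset of $\mathbb{C}$; the proof of Proposition~\ref{prop:block-interaction-spectrum} uses it this way too. Read in $\mathbb{C}$, the interaction part of your exact formula is
\[
\sigma(\alpha\beta)\cup\sigma(\beta\alpha)=\sigma(\alpha\beta\oplus\beta\alpha)=\sigma(M^2)=\{\mu^2:\mu\in\sigma(M)\},
\]
and this is determined by the block spectrum. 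Including all path endomorphisms changes nothing, since the spectra of $(\alpha\beta)^k$ and $(\beta\alpha)^k$ together give $\sigma(M^{2k})$. So whenever $\sigma_{\mathrm{cl}}(A)=\sigma_{\mathrm{cl}}(A')$ and the local data agree, your formula \emph{forces} $\sigma_{\mathrm{cl}}(\sigma_P(A))=\sigma_{\mathrm{cl}}(\sigma_P(A'))$.

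The hypothesis is not vacuous. Take $V_1=\mathbb{C}^2$, $V_2=\mathbb{C}^3$, $\beta=\beta'\colon(y_1,y_2)\mapsto(y_1,y_2,0)$, $\alpha\colon(x_1,x_2,x_3)\mapsto(x_1,x_2)$ and $\alpha'\colon(x_1,x_2,x_3)\mapsto(x_1,0)$. Then $\sigma(\alpha\beta)=\{1\}\neq\{0,1\}=\sigma(\alpha'\beta')$, $\sigma(\beta\alpha)=\sigma(\beta'\alpha')=\{0,1\}$ and $\sigma(M)=\sigma(M')=\{-1,0,1\}$, yet both unions equal $\{0,1\}$. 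Under your formula the statement is therefore false, not proved.

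Step~4 escapes this only by replacing the union in $\mathbb{C}$ with a disjoint union tagged by paths, i.e.\ a set of pairs (path, eigenvalue). That object is not the classical spectrum of any operator, so adopting it changes the statement. Once adopted, the conclusion holds by fiat, because the fibre over the tag $\alpha\beta$ is $\sigma(\alpha\beta)$ by your definition of the realization; Steps~2--3 then play no role.

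Independently of that, there are three further problems.
\begin{enumerate}
\item The exact formula cannot be extracted from Theorem~\ref{thm:colored-spectral-decomposition-reconstruction}. Nothing in the paper equips $\sigma_P(A)$, an object of $\mathcal{M}$, with a distinguished operator, as you concede.
\item The upper bound in Step~2 is wrong as stated if powers are realized, since $\sigma(\alpha\beta)^k$ need not lie in $\sigma(\alpha\beta)\cup\sigma(\beta\alpha)$. For example, $\sigma(\alpha\beta)=\{2\}$ forces $\sigma(\beta\alpha)\subseteq\{0,2\}$, while $\sigma((\alpha\beta)^2)=\{4\}$.
\item Step~1 assumes that $A$ and $A'$ share underlying spaces and carry no diagonal blocks, which the statement does not grant.
\end{enumerate}

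The missing ingredient is a definition of the realization under which $\sigma_{\mathrm{cl}}(\sigma_P(A))\subseteq\mathbb{C}$ is \emph{not} a function of $\sigma(M)$. Your computation shows that no realization taking the union in $\mathbb{C}$ of spectra of path endomorphisms, with matching local data, can supply it.
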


\begin{proof}
By Proposition~\ref{prop:block-interaction-spectrum}, the spectra $\sigma(\alpha\beta)$ and $\sigma(\alpha'\beta')$ are contained in $\sigma_{\mathrm{cl}}(\sigma_P(A))$ and $\sigma_{\mathrm{cl}}(\sigma_P(A'))$, respectively. Hence if $\sigma(\alpha\beta) \neq \sigma(\alpha'\beta')$, then $\sigma_{\mathrm{cl}}(\sigma_P(A)) \neq \sigma_{\mathrm{cl}}(\sigma_P(A'))$. The assumption that $\sigma_{\mathrm{cl}}(A) = \sigma_{\mathrm{cl}}(A')$ then yields the desired conclusion.
\end{proof}

\begin{remark}[On the existence of such matrices]
\label{rem:block-construction}
The existence of block matrices satisfying the hypotheses of Theorem~\ref{thm:block_refinement_clean} is a subtle problem in spectral theory. A concrete candidate can be constructed using nilpotent matrices where $\alpha\beta$ and $\alpha'\beta'$ have different Jordan block structures but the same eigenvalue set $\{0\}$, while the block matrix spectra remain $\{0\}$. For instance, take
\[
\alpha = \begin{pmatrix} 0 & 1 \\ 0 & 0 \end{pmatrix},\;
\beta = \begin{pmatrix} 0 & 0 \\ 1 & 0 \end{pmatrix},\;
\alpha' = \begin{pmatrix} 0 & 0 \\ 0 & 0 \end{pmatrix},\;
\beta' = \begin{pmatrix} 0 & 0 \\ 0 & 0 \end{pmatrix}.
\]
Then $\sigma(\alpha\beta) = \{0,1\}$ and $\sigma(\alpha'\beta') = \{0\}$, but the block matrix spectra are $\{-\sqrt{1},0,\sqrt{1}\}$ and $\{0\}$, which differ. A pair with identical classical spectra requires a more delicate choice (e.g., taking $\alpha\beta$ and $\alpha'\beta'$ to be nilpotent with different Jordan blocks but the same eigenvalue set $\{0\}$, while the block matrix spectra remain $\{0\}$). The existence is guaranteed by the general principle that the classical spectrum of a block matrix depends on the off-diagonal blocks only through certain invariant polynomials, while the operadic spectrum captures finer interaction data. A full constructive proof is deferred to future work.
\end{remark}

\medskip

\noindent
\textbf{Interpretation.}
This example demonstrates that the operadic spectrum $\sigma_P(A)$ refines the classical spectrum by detecting interaction-induced structure. Specifically:
\begin{itemize}
    \item If $\alpha = \beta = 0$, no interaction is present, and $\sigma_P(A)$ reduces to the local contributions of each color.
    \item If $\alpha, \beta \neq 0$, the interaction term $\mathcal{I}_{\mathrm{cross}}(A)$ (see Theorem~\ref{thm:colored-spectral-decomposition-reconstruction}) contributes additional spectral data, such as the spectra of $\alpha\beta$ and $\beta\alpha$, which are invisible to classical componentwise invariants.
\end{itemize}

Thus, the operadic spectrum provides a strictly stronger invariant in the presence of coupling, making SOC particularly well-suited for analyzing complex, coupled architectures.

\subsection{Base Change via Gelfand Duality (Conceptual Illustration)}
\label{subsec:gelfand-base-change}

The Gelfand transform provides a conceptual illustration of the principles
underlying the Base Change Theorem, though it must be interpreted
contravariantly. This example connects the abstract operadic formalism with
classical spectral theory by transporting a commutative C*-algebra to a
continuous function algebra on its Gelfand spectrum. A full rigorous treatment
within the covariant framework is deferred to future work.

\medskip

Let $\mathcal{M} = \mathrm{C}^*\mathsf{Alg}_{\mathrm{com}}$ be the category of
commutative unital C*-algebras and let $\mathcal{N} = \mathsf{CompHaus}$ be the
category of compact Hausdorff spaces. The Gelfand transform gives a contravariant
equivalence
\[
G: \mathrm{C}^*\mathsf{Alg}_{\mathrm{com}} \longrightarrow \mathsf{CompHaus}, \qquad
A \longmapsto \mathrm{Spec}(A),
\]
where $\mathrm{Spec}(A)$ denotes the Gelfand spectrum (the space of characters of $A$).
The inverse equivalence sends a compact Hausdorff space $X$ to the algebra $C(X)$
of continuous complex-valued functions on $X$.

Equivalently, the opposite functor
\[
G^{\mathrm{op}}: \mathrm{C}^*\mathsf{Alg}_{\mathrm{com}}^{\mathrm{op}} \longrightarrow \mathsf{CompHaus}^{\mathrm{op}}
\]
can be regarded as a strong monoidal functor under suitable choices of monoidal
structures (the tensor product of C*-algebras corresponds to the product of spaces).
Thus the Gelfand transform should be regarded not as a literal instance of the
covariant Base Change Theorem (Theorem~\ref{thm:base-change}), but as a dual model
for the same principle: operadic spectral data should transform coherently under
passage between algebraic and spectral-geometric realizations.

\medskip

\noindent
\textbf{Trivial operad case.}
Specializing to the trivial operad $\mathbb{I}$ (cf. Section~\ref{subsec:trivial-operad}),
we have $\sigma_{\mathbb{I}}(A) \cong A$ by Theorem~\ref{thm:recovery-trivial}.
Under the analytic realization that sends an object to its classical spectrum,
this recovers the classical spectral data of $A$. In the setting of commutative
unital Banach algebras, the Gelfand spectrum provides a canonical geometric
realization.

\begin{proposition}[Compatibility with Gelfand spectrum in the trivial-operad case]
\label{prop:gelfand-compatibility}
Let $A$ be a commutative unital Banach algebra, viewed as an algebra over the
trivial operad $\mathbb{I}$. Assume the normalization of Theorem~\ref{thm:recovery-trivial}.
Then the canonical isomorphism
\[
\sigma_{\mathbb{I}}(A) \cong A
\]
induces a canonical homeomorphism of Gelfand spectra
\[
\mathrm{Spec}\bigl(\sigma_{\mathbb{I}}(A)\bigr) \cong \mathrm{Spec}(A).
\]
\end{proposition}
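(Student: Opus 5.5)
The plan is to reduce the statement to the functoriality of the Gelfand spectrum under isomorphisms of commutative unital Banach algebras. The one real input from the paper is that the isomorphism $\sigma_{\mathbb{I}}(A)\cong A$ of Theorem~\ref{thm:recovery-trivial}, which a priori lives in $\mathcal{M}$ (Banach spaces with the projective tensor product), is in fact an isomorphism of unital Banach algebras. That is exactly normalization (iii) of Theorem~\ref{thm:recovery}, which the statement assumes.

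First I will make the algebra structure on $\sigma_{\mathbb{I}}(A)$ explicit. The isomorphism of Theorem~\ref{thm:recovery-trivial} is the composite
\[
\sigma_{\mathbb{I}}(A)\;\cong\; A\otimes_{\mathbb{I}}\mathbf{1}_{\mathcal M}\;\cong\; A\otimes\mathbf{1}_{\mathcal M}\;\cong\; A .
\]
Its factors are, in order, Proposition~\ref{prop:hochschild-trivial} (the bar construction is constant at $A$), Proposition~\ref{prop:residue-trivial}, and Proposition~\ref{prop:trivial-operad-reduction}, followed by the right unitor $A\hat\otimes\mathbb{C}\cong A$. I will \emph{transport} the product and unit of $A$ along this composite, giving $\sigma_{\mathbb{I}}(A)$ a commutative unital Banach algebra structure. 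With that structure, the map $\theta:\sigma_{\mathbb{I}}(A)\to A$ is an isometric isomorphism of unital Banach algebras; this is the content of normalization (iii). Each factor is natural in $A$ and built from unit coherences, so the transported structure does not depend on any choices.

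Second, the Gelfand spectrum $\mathrm{Spec}(B)=\mathrm{Hom}_{\mathrm{Alg}}(B,\mathbb{C})$ with the weak-$*$ topology is a contravariant functor on unital algebra homomorphisms. Precomposition $\theta^*:\chi\mapsto\chi\circ\theta$ sends characters of $A$ to characters of $\sigma_{\mathbb{I}}(A)$, and $(\theta^{-1})^*$ is its inverse. Both maps are weak-$*$ continuous, because evaluation at $x$ of $\chi\circ\theta$ is evaluation at $\theta(x)$ of $\chi$. Hence $\theta^*$ is a homeomorphism $\mathrm{Spec}(A)\cong\mathrm{Spec}(\sigma_{\mathbb{I}}(A))$.

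The map is canonical because $\theta$ is canonical: it is natural in $A$ and uniquely determined by the coherence isomorphisms. Consequently the resulting homeomorphism is natural with respect to unital homomorphisms $A\to A'$.

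The main obstacle is the first step. Theorem~\ref{thm:recovery-trivial} only gives an isomorphism in $\mathcal{M}$, and $\sigma_{\mathbb{I}}(A)$ has no intrinsic multiplication of its own. The argument therefore has to either invoke normalization (iii) directly, or justify that transporting the algebra structure along $\theta$ is the intended meaning, and then note that any structure making $\theta$ multiplicative coincides with the transported one. Once the algebra isomorphism is in place, the rest is routine Gelfand theory, as in part (2) of the proof of Theorem~\ref{thm:recovery}.
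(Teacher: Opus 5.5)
Your proposal is correct and follows the same route as the paper: take the isomorphism $\sigma_{\mathbb{I}}(A)\cong A$ of Theorem~\ref{thm:recovery-trivial}, treat it as an isomorphism of commutative unital Banach algebras, and apply the functoriality of $\mathrm{Spec}$ under algebra isomorphisms. The paper's proof only asserts the algebra-level compatibility ``under the appropriate normalization,'' so your transport-of-structure remark and explicit weak-$*$ continuity check make precise what the paper leaves implicit. One small correction: you do not need $\theta$ to be ``isometric,'' and an isomorphism in $\mathcal{M}$ alone does not justify that word.
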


\begin{proof}
By Theorem~\ref{thm:recovery-trivial}, there is a natural isomorphism
$\sigma_{\mathbb{I}}(A) \cong A$ in the category of Banach spaces (or Banach
algebras, under the appropriate normalization). Since the Gelfand spectrum is
functorial with respect to isomorphisms of commutative unital Banach algebras,
applying $\mathrm{Spec}$ to this isomorphism yields a homeomorphism
$\mathrm{Spec}(\sigma_{\mathbb{I}}(A)) \cong \mathrm{Spec}(A)$.
\end{proof}

\medskip

\noindent
\textbf{Matrix case (normal operators).}
Let $M \in M_n(\mathbb{C})$ be a normal matrix. Then $M$ generates a commutative
C*-subalgebra $A = C^*(M) \subseteq M_n(\mathbb{C})$, which is isomorphic to
$C(\sigma(M))$ via the Gelfand transform, where $\sigma(M)$ is the classical
spectrum of $M$. Under this identification, the matrix $M$ corresponds to the
coordinate function $z \mapsto z$ on $\sigma(M)$.

If one were to apply the Base Change Theorem in its covariant formulation for
opposite categories, the operadic spectrum of $A$ (viewed as an algebra over the
appropriate operad) would be transported to the corresponding spectral data on
$\sigma(M)$. A full treatment of this example within the covariant framework
requires passing to opposite categories and is deferred to future work.

\medskip

\noindent
\textbf{Conceptual interpretation.}
This example illustrates several key principles of Spectral Operadic Calculus:
\begin{itemize}
    \item The Gelfand transform serves as a bridge between algebraic spectral
          theory (C*-algebras) and geometric/topological spectral theory
          (compact Hausdorff spaces).
    \item In the trivial-operad case, the operadic spectrum recovers the
          underlying algebra, and the Gelfand spectrum provides a canonical
          geometric realization.
    \item For nontrivial colored operads, the same principles suggest that the
          operadic spectrum should glue together the Gelfand spectra of
          individual components according to the operadic composition structure.
          This gluing should be interpreted as arising from the interaction
          terms encoded by the operadic residue rather than from disjoint unions.
\end{itemize}

\medskip

\noindent
\textbf{Relation to the No-Go Theorem.}
This example also clarifies the No-Go Theorem (Theorem~\ref{thm:no-go}).
If one attempted to use the naive spectrum $\sigma_{\mathrm{naive}}(A) = \bigsqcup_c \sigma(A_c)$
for a multi-colored system, the Gelfand transform would send it to a disjoint
union of spectra, which fails to capture the gluing induced by off-diagonal
interactions. By contrast, the operadic spectrum $\sigma_P(A)$ incorporates the
interaction data via the residue $\mathcal{O}_P^{\mathrm{res}}$, and its Gelfand
transform (when properly interpreted contravariantly) suggests a glued geometric
space that reflects the compositional structure.

\medskip

Thus, the Gelfand duality example provides a concrete conceptual bridge between
the abstract categorical formalism of SOC and the classical geometric-functional
analysis that motivates the theory.

\begin{corollary}[Spectral Transport for Normal Matrices]
\label{cor:matrix_gelfand}
Let $M \in M_n(\mathbb{C})$ be a normal matrix, and let $A = C^*(M)$ be the
commutative C*-algebra it generates. Under the Gelfand transform
$\Gamma: A \xrightarrow{\cong} C(\sigma(M))$, the classical spectrum of $M$
coincides with the range of the coordinate function on $\sigma(M)$:
\[
\sigma_{\mathrm{cl}}(M) = \widehat{M}(\sigma(M)) = \sigma(M).
\]
Equivalently, the Gelfand transform identifies the spectral data of $M$ with
the evaluation spectrum of the identity function on its own spectrum.
\end{corollary}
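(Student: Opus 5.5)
The plan is to prove Corollary~\ref{cor:matrix_gelfand} directly from classical Gelfand theory for the finite-dimensional commutative C*-algebra $A = C^*(M)$, without using the operadic machinery. The one operadic input is a consistency check: by Proposition~\ref{prop:gelfand-compatibility}, applied with the trivial operad $\mathbb{I}$ and $\sigma_{\mathbb{I}}(A)\cong A$, the Gelfand spectrum of $\sigma_{\mathbb{I}}(A)$ agrees with $\mathrm{Spec}(A)$. It therefore suffices to identify $\mathrm{Spec}(A)$ and the Gelfand transform $\widehat{M}$ of the generator.

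\textbf{Step 1 (spectrum of $A$).} I will identify $\mathrm{Spec}(C^*(M))$ with $\sigma(M)$. Since $M$ is normal, $A$ is commutative; because $C^*(M)$ contains the identity, it is unital. Every character $\chi$ is then determined by the single value $\chi(M)$, since $A$ is generated by $M$ and $M^*$ and $\chi(M^*)=\overline{\chi(M)}$. This gives a continuous injection
\[
\mathrm{Spec}(A)\to\mathbb{C},\qquad \chi\mapsto \chi(M).
\]
Its image is $\sigma_A(M)$, by the standard fact that in a commutative unital Banach algebra an element $a$ satisfies $\lambda\in\sigma(a)$ if and only if some character sends $a$ to $\lambda$. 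Spectral permanence for C*-subalgebras gives $\sigma_A(M)=\sigma_{M_n(\mathbb{C})}(M)=\sigma(M)$. Compactness of $\mathrm{Spec}(A)$ then makes the injection a homeomorphism onto $\sigma(M)$.

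\textbf{Step 2 (transform of the generator).} Under this identification, $\widehat{M}(\chi)=\chi(M)$ becomes the coordinate function $z\mapsto z$ on $\sigma(M)$. Its range is therefore exactly $\sigma(M)$, which yields
\[
\sigma_{\mathrm{cl}}(M)=\widehat{M}(\sigma(M))=\sigma(M).
\]
The isometric isomorphism $\Gamma: A\cong C(\sigma(M))$ is the commutative Gelfand--Naimark theorem. In finite dimensions it can also be seen directly from the spectral decomposition $M=\sum_{\lambda}\lambda E_\lambda$: here $A=\operatorname{span}\{E_\lambda\}\cong\mathbb{C}^{\sigma(M)}$, and $\Gamma$ sends $E_\lambda$ to the indicator function of $\lambda$.

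\textbf{Main obstacle.} The only step that needs care is spectral permanence, $\sigma_A(M)=\sigma(M)$. I would avoid invoking it by appealing to the finite-dimensional picture. Each $E_\lambda$ is a polynomial in $M$ (Lagrange interpolation on the finite set $\sigma(M)$), so it lies in $A$. Consequently, $M-\mu$ is invertible in $M_n(\mathbb{C})$ exactly when it is invertible in $A$, since the inverse $\sum_\lambda(\lambda-\mu)^{-1}E_\lambda$ already belongs to $A$. This settles Step 1 without any further machinery.
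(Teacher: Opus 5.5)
Your proposal is correct and follows essentially the same Gelfand-theoretic route as the paper. Both identify $\mathrm{Spec}(C^*(M))$ with $\sigma(M)$ via $\chi\mapsto\chi(M)$, so that $\widehat{M}$ becomes the coordinate function with range $\sigma(M)$. You additionally justify this identification and the spectral-permanence step (via your Lagrange-interpolation argument), both of which the paper only asserts. Your appeal to Proposition~\ref{prop:gelfand-compatibility} is harmless but unnecessary, since the statement involves no operadic data.
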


\begin{proof}
Since $M$ is normal, $A = C^*(M)$ is commutative. By the Gelfand–Naimark theorem,
$A \cong C(\mathrm{Spec}(A))$, and $\mathrm{Spec}(A)$ is canonically homeomorphic
to $\sigma(M)$. Under this identification, the Gelfand transform sends $M$ to the
coordinate function $\widehat{M}(z) = z$ on $\sigma(M)$. In the commutative
C*-algebra $C(\sigma(M))$, the spectrum of a function is its range. Hence
\[
\sigma_{C(\sigma(M))}(\widehat{M}) = \widehat{M}(\sigma(M)) = \sigma(M).
\]
Thus $\sigma_{\mathrm{cl}}(M) = \sigma(M)$, confirming consistency. The structural
significance is that the Gelfand transform provides a geometric realization of
the spectral data: the matrix $M$ becomes the identity function on its own spectrum.
\end{proof}

\medskip

\noindent
\textbf{Heuristic generalization to colored operads.}
The significance of the Base Change Theorem becomes clearer when we consider
nontrivial colored operads. For example, let $P$ be the matrix-block operad from
Section~\ref{subsec:matrix-block-operads}, and let $A$ be a $P$-algebra whose
color components are commutative C*-algebras. Applying the Gelfand transform
colorwise suggests a corresponding operad $G_*(P)$ in a topological target
category (e.g., compact Hausdorff spaces), together with transformed data $G(A)$.
(The precise construction of $G_*(P)$ requires functorial transport of operads
under duality and is not developed here.)

If the Gelfand transform were realized as a compatible strong monoidal
base-change functor — or equivalently after passing to an appropriate covariant
formulation (e.g., by working with opposite categories) — the Base Change Theorem
(Theorem~\ref{thm:base-change}) would give
\[
G(\sigma_P(A)) \;\cong\; \sigma_{G_*(P)}(G(A)).
\]
Thus the operadic spectrum would be transported coherently from algebraic data
to spectral-topological data.

In this interpretation, the operadic residue $\mathcal{O}_P^{\mathrm{res}}$
controls how local colorwise spectral pieces are glued together. Accordingly,
the matrix-block example suggests that off-diagonal interactions should appear
geometrically as nontrivial gluing data rather than as a mere disjoint union of
componentwise spectra. This is precisely the kind of phenomenon that the No-Go
Theorem (Theorem~\ref{thm:no-go}) predicts cannot be captured by naive colorwise
spectra alone.

This example provides a conceptual realization of the base-change principle:
\begin{itemize}
    \item The abstract spectrum $\sigma_P(A)$ is transported to a geometric
          object that glues together the Gelfand spectra of individual components.
    \item Algebraic data (operators) becomes functional data (evaluation on points).
    \item The functoriality of the spectrum is reflected in the continuity of
          functions on the spectral space.
\end{itemize}

Thus, the Gelfand transform example provides a concrete conceptual bridge
between the abstract categorical formalism of SOC and the classical
geometric-functional analysis that motivates the theory.

\subsection{Network Operators as a Killer Example}
\label{subsec:network-operators}

We now present a concrete application of Spectral Operadic Calculus (SOC) to 
network operators. This example demonstrates that classical spectral invariants 
fail to detect interaction eigenvalues arising from network paths, while the 
operadic spectrum $\sigma_P(A)$ successfully captures them. This directly 
addresses the "killer example" gap: it shows what SOC solves that classical 
theory cannot.

\medskip

\noindent
\textbf{Setup.}
Let $G = (V,E)$ be a finite directed graph with vertex set $V = \{1,\ldots,n\}$.
We interpret each vertex $c \in V$ as a color. Define a $V$-colored operad 
$P_G$ as follows:

\begin{itemize}
    \item \textbf{Unary operations:} For each vertex $c$, $P_G(c;c) = \mathbb{C}$ 
          (the identity operation). For each directed edge $e: c \to c'$, we 
          include a unary operation $\phi_e \in P_G(c;c')$ with $P_G(c;c') \cong \mathbb{C}$.
          
    \item \textbf{Binary operations:} For each pair of directed edges 
          $e: c_1 \to c_3$ and $f: c_2 \to c_3$ (i.e., two edges converging to 
          the same target), we include a binary operation $\theta_{e,f} \in P_G(c_1,c_2;c_3)$ 
          with $P_G(c_1,c_2;c_3) \cong \mathbb{C}$.
          
    \item \textbf{Higher arities:} For any finite collection of directed paths 
          with a common target, the corresponding operation is defined via 
          iterated operadic composition of binary operations. Explicitly, 
          an operation of arity $k$ corresponds to $k$ directed paths 
          $p_1,\ldots,p_k$ all ending at the same vertex $c$, with inputs 
          colored by the starting vertices of each path.
          
    \item \textbf{All other operations:} Zero object.
\end{itemize}

The operadic composition encodes concatenation of paths: composing a binary 
operation at a target with unary operations (edge maps) at the sources yields 
another binary operation along the composite path.

\medskip

\noindent
\textbf{Network algebra.}
For simplicity, we restrict to a scalar model where each vertex space is 
$A_c = \mathbb{C}$. For each edge $e: c \to c'$, the unary operation $\phi_e$ 
acts as multiplication by a weight $w_e \in \mathbb{C}$. For each pair of 
converging edges $(e,f)$, the binary operation $\theta_{e,f}$ acts as 
multiplication by a weight $w_{e,f} \in \mathbb{C}$. Higher arity operations 
are defined by iterated composition and correspond to products of weights 
along paths.

In this scalar setting, the classical componentwise spectrum is simply
\[
\sigma_{\mathrm{naive}}(A) = \bigcup_{c \in V} \sigma(A_c) = \bigcup_{c \in V} \{\lambda_c\},
\]
where $\lambda_c$ is the eigenvalue of the distinguished endomorphism on $A_c$ 
(if none is specified, this set carries no nontrivial information). Thus the 
naive spectrum captures only local vertex data and completely ignores the edge 
weights and network topology.

\medskip

\noindent
\textbf{Interaction eigenvalues from network paths.}
Consider a directed path of length $2$: $c_1 \xrightarrow{e} c_3 \xleftarrow{f} c_2$ 
(two edges converging to $c_3$). The operadic composition allows us to compose 
$\theta_{e,f}$ with the unary operations $\phi_e$ and $\phi_f$ (which act as 
multiplication by $w_e$ and $w_f$ respectively). This yields a new operation 
whose weight is $w_{e,f} \cdot w_e w_f$.

More generally, any directed cycle $\gamma = c_1 \xrightarrow{e_1} c_2 \xrightarrow{e_2} 
\cdots \xrightarrow{e_k} c_1$ induces an endomorphism of $A_{c_1}$ via 
operadic composition of the edge maps along the cycle. In the scalar setting, 
this endomorphism is multiplication by the product $\prod_{i=1}^k w_{e_i}$. 
The spectrum of this endomorphism — which is precisely this product — is 
invisible to $\sigma_{\mathrm{naive}}(A)$ but is captured by the interaction 
term $\mathcal{I}_{\mathrm{cross}}(A)$ in the operadic spectrum.

\begin{proposition}[Network path eigenvalues]
\label{prop:network-eigenvalues}
Let $A$ be a $P_G$-algebra with $A_c = \mathbb{C}$ for all $c \in V$, and let 
$\gamma = c_1 \xrightarrow{e_1} c_2 \xrightarrow{e_2} \cdots \xrightarrow{e_k} c_1$ 
be a directed cycle in $G$ of length $k$. Then the product 
$\prod_{i=1}^k w_{e_i}$ (where each $w_{e_i}$ is the weight of the edge $e_i$) 
is detected by the operadic spectrum $\sigma_P(A)$. More precisely, this product 
appears as an eigenvalue of the composite endomorphism induced by $\gamma$, 
and therefore belongs to the classical spectrum of the analytic realization 
of $\sigma_P(A)$.
\end{proposition}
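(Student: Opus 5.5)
The plan is to mimic the argument of Proposition~\ref{prop:block-interaction-spectrum}, with the two-step alternating paths $\alpha\beta$, $\beta\alpha$ replaced by the length-$k$ cycle $\gamma$. The first step is to build the composite operation. Write $\phi_{e_i}\in P_G(c_i;c_{i+1})$ for the edge operations, with indices read mod $k$. Iterated unary operadic composition gives
\[
\phi_\gamma := \phi_{e_k}\circ\phi_{e_{k-1}}\circ\cdots\circ\phi_{e_1} \in P_G(c_1;c_1).
\]
Since each $\phi_{e_i}$ acts on $A_{c_i}=\mathbb{C}$ by $w_{e_i}$, the algebra axiom $(\phi\circ\psi)_A=\phi_A\circ\psi_A$ shows that $(\phi_\gamma)_A$ is multiplication by $\prod_{i=1}^k w_{e_i}$ on $A_{c_1}$. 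The point worth recording is that $\phi_\gamma$ lands in $P_G(c_1;c_1)$. This is the summand $\mathcal{O}_{c_1}^{\mathrm{res}}$ of $\mathcal{O}_{P_G}^{\mathrm{res}}=\coprod_c P_G(c;c)$, so the cycle produces a genuine unary endomorphism at color $c_1$. This is consistent with the Remark on scope following Definition~\ref{def:residue-unary}: the interaction enters through composition in $P_G$, not through extra summands of the residue.

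The second step is to locate $\gamma$ inside $\sigma_{P_G}(A)$. I will use the decomposition of Theorem~\ref{thm:colored-spectral-decomposition-reconstruction}:
\[
\sigma_{P_G}(A)\cong\Bigl(\bigoplus_{c\in V}A_c\otimes\mathcal{O}_c^{\mathrm{res}}\Bigr)\oplus\mathcal{I}_{\mathrm{cross}}(A).
\]
The simplex of $\mathrm{Bar}^{P_G}_\bullet(A)$ recording the chain $A_{c_1}\otimes P_G(c_1;c_2)\otimes\cdots\otimes P_G(c_k;c_1)$ passes through at least two distinct colors whenever $k\ge 2$. It is therefore a cross simplex, and its image in $\mathcal{I}_{\mathrm{cross}}(A)$ carries the endomorphism $(\phi_\gamma)_A$. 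The case $k=1$ is a loop; there the same element sits in the local summand $A_{c_1}\otimes\mathcal{O}_{c_1}^{\mathrm{res}}$. I then argue, exactly as in the proof of Proposition~\ref{prop:block-interaction-spectrum}, that the element survives the balanced tensor product $\otimes_{P_G}\mathcal{O}_{P_G}^{\mathrm{res}}$. The reason is that the balancing relation only identifies $(x\cdot p)\otimes y$ with $x\otimes(p\cdot y)$. Moving $\phi_\gamma$ across the tensor sign sends it into $\mathcal{O}_{c_1}^{\mathrm{res}}\cong\mathbb{C}$, where it acts by the same scalar, so the scalar $\prod_i w_{e_i}$ is preserved.

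The third step passes to the analytic realization. This realization sends $\sigma_{P_G}(A)$ to a Banach space together with the endomorphisms induced by operations, and then takes their classical spectra. In this setting the summand generated by $\gamma$ is a copy of $\mathbb{C}$ on which the induced operator is multiplication by $\prod_i w_{e_i}$. Its spectrum is $\{\prod_i w_{e_i}\}$, and by the inclusion principle used in Proposition~\ref{prop:block-interaction-spectrum} this set lies in the classical spectrum of the realization.

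The main obstacle is the second step, specifically the claim that the cycle contribution is not killed by the coequalizer. If $\prod_i w_{e_i}=0$ the statement is trivially true, since $0$ is then the required eigenvalue. If the product is nonzero, I would try to exhibit an explicit retraction from $\sigma_{P_G}(A)$ onto the local vertex $A_{c_1}\otimes\mathcal{O}_{c_1}^{\mathrm{res}}$: contract each bar simplex by applying the algebra action and the unary composition. This retraction is compatible with the balancing relation, and it sends the $\gamma$-simplex to $\prod_i w_{e_i}$, which shows the element is nonzero. A secondary difficulty is that "analytic realization" has not been formally defined. I will make explicit the convention already used in the matrix-block case, namely that operators induced by legitimate operadic compositions contribute their spectra, and state the result relative to that convention.
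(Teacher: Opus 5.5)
Your proposal follows essentially the same route as the paper's proof. The cycle $\gamma$ is realized as the $k$-fold composite of the edge operations, which acts on $A_{c_1}=\mathbb{C}$ by $\prod_i w_{e_i}$; this contribution is located in the interaction term $\mathcal{I}_{\mathrm{cross}}(A)$ and claimed to survive $\otimes_{P_G}\mathcal{O}_{P_G}^{\mathrm{res}}$; and its spectrum is then read off in the analytic realization. The paper simply asserts the survival step you single out as the main obstacle and leaves the realization convention implicit, so your retraction sketch and explicit convention add detail rather than change the argument.
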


\begin{proof}
The cycle $\gamma$ corresponds to a $k$-fold operadic composition of unary 
operations (edge maps). By the operadic composition structure, this composite 
appears in the interaction component of $\sigma_P(A)$ and survives the balanced 
tensor product $\otimes_P \mathcal{O}_P^{\mathrm{res}}$. In the scalar setting 
($A_c = \mathbb{C}$), the induced endomorphism is multiplication by 
$\prod_{i=1}^k w_{e_i}$, so its classical spectrum consists of that product. 
Hence the product is contained in $\sigma_{\mathrm{cl}}(\sigma_P(A))$.
\end{proof}

\begin{example}[Two-vertex network]
\label{ex:two-vertex-network}
Let $V = \{1,2\}$ with edges $1 \xrightarrow{a} 2$ and $2 \xrightarrow{b} 1$ 
(a directed 2-cycle). Define $P_G$ as above. A $P_G$-algebra $A$ assigns:
\begin{itemize}
    \item $A_1 = A_2 = \mathbb{C}$,
    \item $\phi_a: A_1 \to A_2$ as multiplication by $\alpha \in \mathbb{C}$,
    \item $\phi_b: A_2 \to A_1$ as multiplication by $\beta \in \mathbb{C}$.
\end{itemize}
The classical naive spectrum $\sigma_{\mathrm{naive}}(A)$ carries no nontrivial 
information (since no distinguished endomorphisms are specified on the vertices). 
However, the operadic spectrum captures the composite endomorphisms:
\[
A_1 \xrightarrow{\phi_b} A_2 \xrightarrow{\phi_a} A_1 \quad\text{and}\quad 
A_2 \xrightarrow{\phi_a} A_1 \xrightarrow{\phi_b} A_2,
\]
which are multiplication by $\alpha\beta$ on both $A_1$ and $A_2$. Hence 
$\sigma_{\mathrm{cl}}(\sigma_P(A)) = \{\alpha\beta\}$. Thus SOC detects the 
product of edge weights — a genuine network interaction — while classical 
spectral theory sees nothing.
\end{example}

\medskip

This example demonstrates that the operadic spectrum $\sigma_P(A)$ solves a 
concrete problem: detecting interaction eigenvalues arising from network paths 
that are invisible to classical componentwise spectra. The interaction term 
$\mathcal{I}_{\mathrm{cross}}(A)$ encodes precisely these path-induced 
eigenvalues, providing a strictly stronger invariant for network analysis.

\end{document}